\documentclass[a4paper,reqno,10pt]{amsart}

\usepackage{hyperref}
\usepackage{a4wide}

\usepackage[foot]{amsaddr}

\usepackage{amssymb}
\usepackage{amstext}
\usepackage{amsmath}
\usepackage{amscd}
\usepackage{amsthm}
\usepackage{amsfonts}

\usepackage{graphicx}
\usepackage{latexsym}
\usepackage{mathrsfs}

\usepackage{enumitem}
\setlist[enumerate,1]{label={\upshape(\arabic*)}}
\setlist[enumerate,2]{label={\upshape(\alph*)}}

\usepackage{tikz}
\usetikzlibrary{cd,arrows,matrix,backgrounds,shapes,positioning,calc,decorations.markings,decorations.pathmorphing,decorations.pathreplacing}
\tikzset{blackv/.style={circle,fill=black,inner sep=3pt,outer sep=3pt},
         whitev/.style={circle,fill=white,draw=black,inner sep=3pt,outer sep=3pt},
         blabel/.style={circle,draw=black,inner sep=1.5pt,outer sep=0pt},
         redv/.style={circle,fill=red,inner sep=3pt,outer sep=3pt},
         block/.style={draw,rectangle split,rectangle split horizontal,rectangle split parts=#1},
         symbol/.style={
           draw=none,
           every to/.append style={
             edge node={node [sloped, allow upside down, auto=false]{$#1$}}}}
}

\usepackage{array}
\newcolumntype{C}{>{$}c<{$}}

\newtheorem{theorem}{Theorem}[section]
\newtheorem{theoremi}{Theorem}

\newtheorem{corollary}[theorem]{Corollary}
\newtheorem{lemma}[theorem]{Lemma}
\newtheorem*{lemma*}{Lemma}
\newtheorem*{theorem*}{Theorem}
\newtheorem{proposition}[theorem]{Proposition}
\newtheorem{definition-proposition}[theorem]{Definition-Proposition}

\newtheorem{question}[theorem]{Question}

\theoremstyle{definition}
\newtheorem{definition}[theorem]{Definition}

\newtheorem{remark}[theorem]{Remark}
\newtheorem{example}[theorem]{Example}

\newtheorem*{ack}{Acknowledgments}
\newtheorem*{conv}{Conventions and notation}
\newtheorem*{org}{Organization}

\newcommand{\qedb}{\hfill\blacksquare}

\newcommand{\la}{\langle}
\newcommand{\ra}{\rangle}
\newcommand{\al}{\alpha}
\newcommand{\be}{\beta}
\newcommand{\ga}{\gamma}

\renewcommand{\AA}{\mathcal{A}}
\newcommand{\CC}{\mathcal{C}}
\newcommand{\GG}{\mathcal{G}}
\newcommand{\DD}{\mathcal{D}}
\newcommand{\FF}{\mathcal{F}}
\newcommand{\FFF}{\mathsf{F}}
\newcommand{\HH}{\mathcal{H}}

\newcommand{\PP}{\mathcal{P}}

\renewcommand{\SS}{\mathcal{S}}
\newcommand{\TT}{\mathcal{T}}
\newcommand{\TTT}{\mathsf{T}}
\newcommand{\UU}{\mathcal{U}}
\newcommand{\WW}{\mathcal{W}}

\renewcommand{\P}{\mathbb{P}}

\newcommand{\Ext}{\operatorname{Ext}\nolimits}
\newcommand{\Hom}{\operatorname{Hom}\nolimits}
\newcommand{\End}{\operatorname{End}\nolimits}
\newcommand{\op}{\operatorname{op}\nolimits}

\newcommand{\Image}{\operatorname{Im}\nolimits}
\newcommand{\Kernel}{\operatorname{Ker}\nolimits}
\newcommand{\Cokernel}{\operatorname{Coker}\nolimits}

\newcommand{\coker}{\Cokernel}
\newcommand{\im}{\Image}
\renewcommand{\ker}{\Kernel}
\newcommand{\un}{\underline}
\newcommand{\ov}{\overline}

\newcommand{\ot}{\leftarrow}

\DeclareMathOperator{\tr}{\mathsf{tr}}

\DeclareMathOperator{\moduleCategory}{\mathsf{mod}} \renewcommand{\mod}{\moduleCategory}

\DeclareMathOperator{\proj}{\mathsf{proj}}

\DeclareMathOperator{\ind}{\mathsf{ind}}
\DeclareMathOperator{\simp}{\mathsf{sim}}

\DeclareMathOperator{\ice}{\mathsf{ice}}
\DeclareMathOperator{\icep}{\mathsf{ice_p}}
\DeclareMathOperator{\dfice}{\mathsf{df-ice}}

\DeclareMathOperator{\ftors}{\mathsf{f-tors}}
\DeclareMathOperator{\stors}{\mathsf{s-tors}}
\DeclareMathOperator{\sftors}{\mathsf{sf-tors}}
\DeclareMathOperator{\wide}{\mathsf{wide}}
\DeclareMathOperator{\fwide}{\mathsf{f-wide}}
\DeclareMathOperator{\tors}{\mathsf{tors}}
\DeclareMathOperator{\torf}{\mathsf{torf}}

\DeclareMathOperator{\ttilt}{\tau\mathsf{-tilt}}
\DeclareMathOperator{\sttilt}{\mathsf{s}\tau\mathsf{-tilt}}
\DeclareMathOperator{\ttiltp}{\tau\mathsf{-tilt-pair}}
\DeclareMathOperator{\wttilt}{\mathsf{w}\tau\mathsf{-tilt}}
\DeclareMathOperator{\wttiltp}{\mathsf{w}\tau\mathsf{-tilt-pair}}
\DeclareMathOperator{\rigid}{\mathsf{rigid}}

\DeclareMathOperator{\sint}{\mathsf{s-int}}
\DeclareMathOperator{\Hasse}{\mathsf{Hasse}}

\DeclareMathOperator{\Sub}{\mathsf{Sub}}
\DeclareMathOperator{\Fac}{\mathsf{Fac}}
\DeclareMathOperator{\Filt}{\mathsf{Filt}}
\DeclareMathOperator{\add}{\mathsf{add}}
\DeclareMathOperator{\WR}{\mathsf{W}_R}

\DeclareMathOperator{\ccok}{\mathsf{cok}}
\DeclareMathOperator{\kker}{\mathsf{ker}}

\DeclareMathOperator{\supp}{\mathsf{supp}}

\newcommand{\iso}{\cong}

\newcommand{\defl}{\twoheadrightarrow}

\newcommand{\equi}{\simeq}
\newcommand{\sst}[1]{\substack{#1}}

\numberwithin{equation}{section}

\begin{document}
\title[ICE-closed subcategories and wide $\tau$-tilting modules]{ICE-closed subcategories and wide $\tau$-tilting modules}

\author[H. Enomoto]{Haruhisa Enomoto}
\address{H. Enomoto:  Graduate School of Science, Osaka Prefecture University, 1-1 Gakuen-cho, Naka-ku, Sakai, Osaka 599-8531, Japan}
\email{{\rm H. Enomoto: }the35883@osakafu-u.ac.jp}

\author[A. Sakai]{Arashi Sakai}
\address{A. Sakai: Graduate School of Mathematics, Nagoya University, Chikusa-ku, Nagoya. 464-8602, Japan}
\email{{\rm A. Sakai: }m20019b@math.nagoya-u.ac.jp}
\subjclass[2010]{18E10, 18E40, 16G10}
\keywords{ICE-closed subcategory, torsion class, wide subcategory, wide $\tau$-tilting module}
\begin{abstract}
  In this paper, we study ICE-closed (= Image-Cokernel-Extension-closed) subcategories of an abelian length category using torsion classes.
  To each interval $[\mathcal{U},\mathcal{T}]$ in the lattice of torsion classes, we associate a subcategory $\mathcal{T} \cap \mathcal{U}^\perp$ called the heart. We show that every ICE-closed subcategory can be realized as a heart of some interval of torsion classes, and give a lattice-theoretic characterization of intervals whose hearts are ICE-closed.
  In particular, we prove that ICE-closed subcategories are precisely torsion classes in some wide subcategories.
  For an artin algebra, we introduce the notion of wide $\tau$-tilting modules as a generalization of support $\tau$-tilting modules. Then we establish a bijection between wide $\tau$-tilting modules and doubly functorially finite ICE-closed subcategories, which extends Adachi--Iyama--Reiten's bijection on torsion classes.
  For the hereditary case, we discuss the Hasse quiver of the poset of ICE-closed subcategories by introducing a mutation of rigid modules.
\end{abstract}

\maketitle

\tableofcontents

\section{Introduction}\label{sec:1}
\subsection{Background}
The notion of \emph{torsion classes} in an abelian category was introduced by Dickson \cite{dickson} to generalize the notion of torsion abelian groups. They are subcategories closed under taking extensions and quotients, and they have been playing an important role in the representation theory of algebras from various viewpoints, e.g. tilting theory \cite{brebut}, $t$-structures \cite{HRS}, cluster theory \cite{IT}, stability conditions \cite{brid} and so on.
Recently, \cite{AIR} established a bijection between functorially finite torsion classes in $\mod\Lambda$ for an artin algebra $\Lambda$ and so-called \emph{support $\tau$-tilting modules}. Since then, the study of torsion classes via $\tau$-tilting modules has been one of the main topics in this area, e.g. \cite{MS,asai,DIJ}.

Besides torsion classes, \emph{wide subcategories} of an abelian category, that is, exact abelian subcategories closed under extensions, have been studied in various contexts, e.g. stability conditions \cite{BKT}, ring epimorphisms and universal localizations \cite{MS}, bijections with torsion classes \cite{IT,MS,enomono} and classifications \cite{hovey,ringel}.

Recently, the notion of \emph{ICE-closed subcategories} of an abelian category was introduced and studied by the first author in \cite{enomono,rigid}, which are subcategories closed under taking Images, Cokernels and Extensions (Definition \ref{def:basicdef}).
Both torsion classes and wide subcategories are typical examples of ICE-closed subcategories. In \cite{rigid}, for a Dynkin quiver $Q$, a bijection between ICE-closed subcategories of $\mod kQ$ and rigid $kQ$-modules was established, which extends a bijection between torsion classes and support ($\tau$-)tilting modules given in \cite{IT,AIR}. This motivates us to study ICE-closed subcategories and find a similar bijection for the non-hereditary case.

\subsection{Main results}
In this paper, we study ICE-closed subcategories of an abelian length category using torsion classes and $\tau$-tilting theory.
It is easily checked that every torsion class in a wide subcategory (viewed as an abelian category) is ICE-closed, see Lemma \ref{lem:torsice}. One of the main results of this paper is that the converse holds:
\begin{theoremi}[= Corollary \ref{cor:widetors}] \label{thm:a}
  Let $\AA$ be an abelian length category and $\CC$ a subcategory of $\AA$. Then the following are equivalent.
  \begin{enumerate}
    \item $\CC$ is an ICE-closed subcategory of $\AA$.
    \item There is some wide subcategory $\WW$ of $\AA$ such that $\CC$ is a torsion class in $\WW$.
  \end{enumerate}
\end{theoremi}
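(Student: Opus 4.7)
My plan is to handle the two directions separately. For $(2) \Rightarrow (1)$, which is exactly Lemma \ref{lem:torsice}, the key point I would record is that a wide subcategory $\WW \subseteq \AA$ is an exact abelian subcategory, so kernels, cokernels, and images of morphisms between objects of $\WW$ computed in $\WW$ coincide with those in $\AA$. For a torsion class $\CC$ in $\WW$ and any $f : X \to Y$ in $\CC$, both $\im f$ and $\coker f$ appear as quotients in $\WW$ of objects of $\CC$, hence lie in $\CC$; extension-closure in $\AA$ follows from wideness of $\WW$ together with the torsion-class property in $\WW$. This shows $\CC$ is ICE-closed.

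The substantive direction is $(1) \Rightarrow (2)$. Given an ICE-closed $\CC$, I would first realize $\CC$ as the heart of an interval in $\tors \AA$. Take $\TT := \Filt(\Fac \CC)$, the smallest torsion class containing $\CC$, and choose $\UU \subseteq \TT$ to be the largest torsion class with $\CC \subseteq \UU^\perp$. By construction $\CC \subseteq \HH := \TT \cap \UU^\perp$. For the reverse inclusion, each $X \in \TT$ admits a filtration by objects of $\Fac \CC$, and the $\UU^\perp$-condition on $X$ should allow one to realize each filtration quotient as a cokernel (computed in $\AA$) of a morphism between objects of $\CC$; closure of $\CC$ under images and cokernels then places the filtration factors in $\CC$, and closure under extensions gives $X \in \CC$. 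This is the step where all three ICE-axioms enter simultaneously, and is the principal obstacle of the proof.

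Having established $\CC = \HH$, I would then extract a wide subcategory from the interval $[\UU, \TT]$. The natural candidate, modeled on the Ingalls--Thomas/Marks--\v{S}\v{t}ov\'i\v{c}ek correspondence between torsion classes and wide subcategories relativized to an interval, is
\[
\WW := \{ X \in \TT : \ker g \in \UU \text{ for every } g : Y \to X \text{ with } Y \in \TT \}.
\]
Routine verifications should show that $\WW$ is a wide subcategory of $\AA$ and that $\CC \subseteq \WW$. To see that $\CC$ is a torsion class in $\WW$, closure under extensions is inherited from extension-closure of $\CC$ in $\AA$, while closure under quotients taken in $\WW$ follows because such a quotient has kernel in $\UU$, forcing the quotient itself to lie in $\TT \cap \UU^\perp = \CC$ by the first step. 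The subtlety here is only verifying the claim at the level of the interval, since the realization $\CC = \HH$ has done the essential work.

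The hardest part of the plan is the heart realization $\CC = \HH$: the inclusion $\HH \subseteq \CC$ combines a filtration coming from the generation $\TT = \Filt(\Fac \CC)$ with the orthogonality built into the choice of $\UU$, and converts the filtration factors --- a priori only arbitrary quotients of objects of $\CC$ --- into images or cokernels of morphisms actually lying in $\CC$. The lattice-theoretic characterization of intervals with ICE-closed heart announced in the abstract is presumably the clean way to encode this maneuver, and I would expect the proof of Theorem A to reduce formally to invoking that characterization once the interval $[\UU, \TT]$ has been chosen as above.
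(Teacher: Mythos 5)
Your direction $(2)\Rightarrow(1)$ is correct and is exactly the paper's Lemma \ref{lem:torsice}. The direction $(1)\Rightarrow(2)$ has the same two-stage architecture as the paper (realize $\CC$ as the heart of an interval of torsion classes, then produce a wide subcategory from that interval), but both stages contain genuine gaps. For the heart realization, you take the interval $[\UU,\TT]$ with $\TT=\TTT(\CC)$ and $\UU=\TT\cap{}^\perp\CC$, whereas the paper works with $[{}^\perp\CC,\,{}^\perp\CC\vee\TTT(\CC)]$ (equivalently, on the torsion-free side, with $[\FFF(\CC)\cap\CC^\perp,\FFF(\CC)]$); your identity $\CC=\TT\cap\UU^\perp$ is plausibly true but is not established by your sketch. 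The filtration factors $M_i/M_{i-1}$ of an object $X\in\TT=\Filt(\Fac\CC)$ are subquotients of $X$, and $\UU^\perp$ is closed only under subobjects and extensions, so the hypothesis $X\in\UU^\perp$ does not pass to these factors; moreover, membership in $\Fac\CC$ only exhibits a factor as $C/N$ for an arbitrary subobject $N\le C$ with $C\in\CC$, not as a cokernel of a morphism between objects of $\CC$. This is precisely the hard point, and the paper's Proposition \ref{prop:arashi} resolves it by a three-step induction: first $\Sub\CC\cap\Fac({}^\perp\CC\cup\CC)\subseteq\CC$, using the observation that any map from ${}^\perp\CC$ to $\CC$ vanishes (so an object that is both a quotient of $Y\oplus C$ and a subobject of $C'$ is the image of a map $C\to C'$), and then inductions on $\Fac$- and $\Sub$-filtration lengths using pushouts together with closure under cokernels and extensions. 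Nothing in your outline supplies this mechanism, and appealing to the ``lattice-theoretic characterization announced in the abstract'' is circular, since that characterization (Theorem \ref{thm:iceintchar}) is itself proved using Proposition \ref{prop:arashi}.

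The second stage also fails as written. Your candidate $\WW=\{X\in\TT:\ker g\in\UU\text{ for every }g\colon Y\to X\text{ with }Y\in\TT\}$ is empty in every nontrivial case: applying the condition to the zero map $g=0\colon Y\to X$ gives $\ker g=Y$, so every $Y\in\TT$ would have to lie in $\UU$, forcing $\TT=\UU$ (restricting to epimorphisms does not help, since $Y=X\oplus X'\twoheadrightarrow X$ has kernel $X'$). The correct construction has the ambient class, not $\UU$, as the target: the paper uses $\WR(\FF)=\{X\in\FF:\coker\varphi\in\FF\text{ for all }\varphi\colon X\to F\}$ applied to $\FF=\FFF(\CC)$, and the containment of the heart in $\WR(\FF)$ is exactly the Claim inside the proof of Theorem \ref{thm:iceintchar}, established there by a pushout argument showing that cokernels of maps out of the heart remain in $\FF$; it is not a routine verification. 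Your final step, that quotients of objects of $\CC$ taken inside $\WW$ land back in $\CC$, does work once one knows the relevant kernels lie in $\UU$ and that $\CC=\TT\cap\UU^\perp$, but both of those inputs are precisely the unproven points above.
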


For the proof of this and further investigation, the \emph{hearts of twin torsion pairs} introduced by Tattar \cite{tattar} are quite useful. For an interval $[\UU, \TT]$ in the lattice $\tors\AA$ of torsion classes in an abelian length category $\AA$, we define a subcategory $\HH_{[\UU,\TT]}:= \TT \cap \UU^\perp$.
This subcategory played a key role in several papers \cite{DIRRT,AP,tattar}, and we call it the \emph{heart} of $[\UU,\TT]$ following \cite{tattar}.
It is known that every wide subcategory can be realized as a heart of some  interval, and such an interval called a \emph{wide interval} was investigated by Asai and Pfeifer \cite{AP}. In this paper, we obtain the following result on ICE-closed subcategories in this direction.
\begin{theoremi}[= Proposition \ref{prop:arashi}, Theorem \ref{thm:iceintchar}] \label{thm:b}
  Let $\AA$ be an abelian length category.
  \begin{enumerate}
    \item Every ICE-closed subcategory can be realized as a heart of some interval in $\tors\AA$.
    \item Let $[\UU,\TT]$ be an interval in $\tors\AA$. Then the following are equivalent.
    \begin{enumerate}
      \item $[\UU,\TT]$ is an ICE interval, that is, its heart is an ICE-closed subcategory of $\AA$.
      \item There is some $\TT' \in \tors\AA$ with $\UU \subseteq \TT \subseteq \TT'$such that $[\UU,\TT']$ is a wide interval.
      \item We have $\TT \subseteq \UU^+$, where $\UU^+:= \UU \vee \bigvee \{ \UU' \in \tors\AA \, | \, \text{there is a Hasse arrow $\UU' \to \UU$} \}$.
    \end{enumerate}
  \end{enumerate}
\end{theoremi}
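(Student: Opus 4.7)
For Part~(1), my plan is to make a canonical assignment $\CC \mapsto (\UU, \TT)$ from an ICE-closed subcategory $\CC$ and verify $\HH_{[\UU,\TT]} = \CC$. Set $\TT := \Filt(\Fac\CC)$, the torsion closure of $\CC$ in $\AA$, and let $\UU$ be the largest torsion class of $\AA$ contained in $\TT \cap {}^\perp\CC$; such a maximal element exists because the property of killing $\CC$ on the right is stable under joins in $\tors\AA$, by d\'evissage along $\Filt$-filtrations. The inclusion $\CC \subseteq \TT \cap \UU^\perp$ is built into these definitions. The converse $\TT \cap \UU^\perp \subseteq \CC$ is the technical crux: for $M \in \TT \cap \UU^\perp$, I would use that $M \in \Filt(\Fac\CC)$ to induct on the length of a filtration with pieces in $\Fac\CC$, and leverage the image-, cokernel-, and extension-closure of $\CC$ together with the maximality of $\UU$ to extract $M \in \CC$.

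For Part~(2), I would prove the two equivalences (b) $\Leftrightarrow$ (c) and (a) $\Leftrightarrow$ (c) separately. The equivalence (b) $\Leftrightarrow$ (c) should follow from the Asai--Pfeifer theory of wide intervals~\cite{AP} combined with the observation that $[\UU, \UU^+]$ is itself a wide interval (the maximal such with bottom $\UU$), so the existence of a wide $[\UU, \TT']$ with $\TT \subseteq \TT'$ is equivalent to $\TT \subseteq \UU^+$. For the direction (c) $\Rightarrow$ (a), if $\TT \subseteq \UU^+$ then $\HH_{[\UU,\TT]} = \TT \cap \UU^\perp$ sits inside the wide subcategory $\WW := \UU^+ \cap \UU^\perp$, and the standard correspondence between torsion classes of $\AA$ in $[\UU, \UU^+]$ and torsion classes of the abelian category $\WW$ realises $\HH_{[\UU,\TT]}$ as a torsion class in $\WW$; Lemma~\ref{lem:torsice} then delivers ICE-closedness. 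For the harder direction (a) $\Rightarrow$ (c), given ICE-closed $\CC := \HH_{[\UU,\TT]}$, I would apply Part~(1) to produce a canonical interval $[\UU_0, \TT_0]$ with $\HH_{[\UU_0,\TT_0]} = \CC$, and then exploit the comparison of the two intervals to show, by Hasse-arrow analysis, that every element of $[\UU, \TT]$ arises as a join of Hasse covers of $\UU$, placing $\TT$ within $\UU^+$.

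The main obstacles I anticipate are the reverse inclusion in Part~(1), where the ICE-closure of $\CC$ must be leveraged in just the right way against the maximality of $\UU$, and the implication (a) $\Rightarrow$ (c) in Part~(2), where the abstract ICE-closure of a heart has to be converted into concrete Hasse-arrow information in $\tors\AA$. Both rely on the interplay between the intrinsic structure of $\CC$ and the ambient lattice $\tors\AA$, and it is here that Part~(1) will do most of the heavy lifting for Part~(2); the remaining implications reduce to wide-interval theory and Lemma~\ref{lem:torsice}.
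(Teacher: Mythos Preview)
Your construction in Part~(1) is incorrect: the interval $[\TTT(\CC) \cap {}^\perp\CC,\, \TTT(\CC)]$ need not have heart equal to $\CC$. Take $\AA = \mod kQ$ for the linear quiver $Q\colon 1 \leftarrow 2 \leftarrow 3 \leftarrow 4$, and write $[i,j]$ for the indecomposable with socle $S_i$ and top $S_j$. Let $\CC = \add\{[1,4],\,[3,4]\}$; this is ICE-closed (the only nontrivial map is the surjection $[1,4]\twoheadrightarrow[3,4]$, with image $[3,4]\in\CC$ and cokernel $0$, and all relevant $\Ext^1$ vanish). One computes $\TTT(\CC)=\Fac[1,4]=\add\{[1,4],[2,4],[3,4],[4,4]\}$ and ${}^\perp\CC=\add\{[2,2],[4,4]\}$, so your $\UU=\add[4,4]$ and the heart is $\add\{[1,4],[2,4],[3,4]\}\supsetneq\CC$. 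The obstruction is structural: the ${}^\perp\CC$-torsion part of $[2,4]$ is $[2,2]$, which lies in ${}^\perp\CC$ but \emph{not} in $\TTT(\CC)$, so it escapes your $\UU$; the ``maximality of $\UU$'' cannot fix this. This is exactly why the paper works on the torsion-free side (see Remark~\ref{rem:heart}): cokernel-closure interacts well with $\Sub\CC$ and $\FFF(\CC)$, not with $\Fac\CC$ and $\TTT(\CC)$. The paper's interval, translated to $\tors\AA$, is $[{}^\perp\CC,\, {}^\perp\CC \vee \TTT(\CC)]$---note the bottom is ${}^\perp\CC$, not the top $\TTT(\CC)$---and the three-step induction in Proposition~\ref{prop:arashi} exploits cokernel-closure through a pushout against objects of $\Sub\CC$.

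Your plan for (a)~$\Rightarrow$~(c) in Part~(2) also diverges from the paper and is too vague to succeed as stated. The paper does \emph{not} feed Part~(1) into this implication; instead it proves directly that $\HH_{[\GG,\FF]} \subseteq \WR(\FF)$ whenever the heart is ICE-closed (the Claim inside Theorem~\ref{thm:iceintchar}). The argument takes $X\in\HH_{[\GG,\FF]}$ and an arbitrary map $X\to F$ with $F\in\FF$, factors through the decomposition $F\in\HH_{[\GG,\FF]}*\GG$, and uses cokernel-closure of the heart together with a pushout to conclude $\coker(X\to F)\in\FF$. Once $\HH_{[\GG,\FF]}\subseteq\WR(\FF)$, the inclusion $\FF^-\subseteq\GG$ follows from Lemma~\ref{lem:interval}(2). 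Your ``Hasse-arrow analysis'' comparing $[\UU,\TT]$ with a canonical interval $[\UU_0,\TT_0]$ would need a mechanism to transport lattice information between two intervals that share only their heart, and no such mechanism is supplied. Your treatment of (b)~$\Leftrightarrow$~(c) and (c)~$\Rightarrow$~(a) is essentially in line with the paper.
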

We deduce Theorem \ref{thm:a} by using this theorem. Moreover, this theorem enables us to construct all ICE-closed subcategories of $\AA$ as hearts when $\tors\AA$ is given.

Next, we focus on the case where $\AA = \mod\Lambda$ for an artin algebra $\Lambda$.
By Theorem \ref{thm:a}, every ICE-closed subcategory of $\mod\Lambda$ is a torsion class in some wide subcategory of $\mod\Lambda$. From the viewpoint of $\tau$-tilting theory, it is natural to focus on ICE-closed subcategories which are \emph{functorially finite} torsion classes in some \emph{functorially finite} wide subcategories, and we call such ICE-closed subcategories \emph{doubly functorially finite} (Definition \ref{def:dfice}).

On the other side, we call a $\Lambda$-module \emph{wide $\tau$-tilting} if it is $\tau_\WW$-tilting in some functorially finite wide subcategory $\WW$ (Definition \ref{def:wtt}). This notion generalizes support $\tau$-tilting modules, see Proposition \ref{prop:serrettilt}.
In this setting, we establish the following bijection.
\begin{theoremi}[= Theorem \ref{thm:wttiltbij}, Proposition \ref{prop:ttfice}]\label{thm:c}
  Let $\Lambda$ be an artin algebra. Then we have a bijection between the following sets.
  \begin{enumerate}
    \item The set $\dfice\Lambda$ of doubly functorially finite ICE-closed subcategories of $\mod\Lambda$.
    \item The set $\wttilt\Lambda$ of isomorphism classes of basic wide $\tau$-tilting $\Lambda$-modules.
  \end{enumerate}
  Moreover, if $\Lambda$ is $\tau$-tilting finite, then every ICE-closed subcategory is doubly functorially finite, and there are only finitely many ICE-closed subcategories.
\end{theoremi}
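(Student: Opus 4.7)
The plan is to reduce the bijection to the Adachi--Iyama--Reiten theorem applied inside each functorially finite wide subcategory, using Theorem~\ref{thm:a} as the bridge between ICE-closed subcategories and torsion classes in wide subcategories. First I would observe that any functorially finite wide subcategory $\WW$ of $\mod\Lambda$ is an abelian length category equivalent to $\mod\Gamma$ for some artin algebra $\Gamma$: as a contravariantly finite exact abelian subcategory it admits a projective generator $P$, and $\Hom_\WW(P,-)$ induces an equivalence $\WW \simeq \mod(\End_\WW P)^{\op}$. In particular, AIR applies inside $\WW$ and provides a bijection $\ftors\WW \leftrightarrow \sttilt\WW$ sending a torsion class to its $\Ext$-projective generator.

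Combining this with Theorem~\ref{thm:a}, every doubly functorially finite ICE-closed subcategory $\CC$ can be realized as $\CC \in \ftors\WW$ for some functorially finite wide subcategory $\WW$, and via AIR inside $\WW$ corresponds to a basic support $\tau_\WW$-tilting pair $(M, P_\WW)$. Setting $\WW' := (\add P_\WW)^{\perp_\WW}$, the module $M$ becomes a $\tau_{\WW'}$-tilting module in the functorially finite wide subcategory $\WW'$, and hence is wide $\tau$-tilting. This yields a map $\dfice\Lambda \to \wttilt\Lambda$ sending $\CC \mapsto M$, with inverse sending a wide $\tau$-tilting $M$ (realized as $\tau_{\WW'}$-tilting in $\WW'$) to the torsion class $\Fac_{\WW'} M$. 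The main technical obstacle is well-definedness: one must show that $\WW'$ depends only on $\CC$ and not on the initial choice of ambient $\WW$, and conversely that $\WW'$ can be recovered from $M$ alone. I would address this by identifying $\WW'$ with the canonical wide subcategory attached to $\CC$ by the construction of \cite{IT,MS,enomono} (also visible through the heart realization of Theorem~\ref{thm:b}), since this wide subcategory is defined intrinsically from $\CC$. Uniqueness of $M$ given $\CC$ is then immediate from AIR inside $\WW'$.

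For the $\tau$-tilting finite part, \cite{DIJ} gives that $|\tors\Lambda| < \infty$ and every torsion class is functorially finite. By Theorem~\ref{thm:b}, every wide subcategory of $\mod\Lambda$ arises as the heart of some interval in $\tors\Lambda$, so there are only finitely many wide subcategories, all of them functorially finite. Each such $\WW \simeq \mod\Gamma$ inherits $\tau$-tilting finiteness from $\Lambda$ (its torsion classes embed into intervals in $\tors\Lambda$ via the heart construction), so every torsion class in $\WW$ is functorially finite. Combined with Theorem~\ref{thm:a}, this shows that every ICE-closed subcategory of $\mod\Lambda$ is doubly functorially finite, and $|\ice\Lambda|$ is bounded by the finite sum $\sum_{\WW} |\ftors\WW|$ over the finite set of wide subcategories.
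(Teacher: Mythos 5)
Your proposal is correct and follows essentially the same route as the paper: reduce to a sincere functorially finite torsion class in a functorially finite wide subcategory (your passage to $\WW'=(\add P_\WW)^{\perp_\WW}$ is the paper's Serre-subcategory reduction in Propositions \ref{prop:icestors} and \ref{prop:dfice}), apply Adachi--Iyama--Reiten inside $\WW'\simeq\mod\Gamma$, and resolve well-definedness by identifying $\WW'$ with the canonical wide subcategory $\la\CC\ra_{\wide}=\WR(\FFF(\CC))$ as in Theorem \ref{thm:classifyice}. The only cosmetic difference is that the paper phrases the two maps intrinsically as $M\mapsto\ccok M$ and $\CC\mapsto P(\CC)$ (which makes independence of the ambient wide subcategory manifest via Lemma \ref{lem:icecok}), and your finiteness argument summing $|\ftors\WW|$ over wide subcategories matches the paper's count via hearts of intervals.
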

The bijection sends $\CC \in \dfice\Lambda$ to the $\Ext$-progenerator $P(\CC)$ of $\CC$, and $M \in \wttilt\Lambda$ to $\ccok M$, the subcategory of $\mod\Lambda$ consisting of cokernels of maps in $\add M$. This bijection extends Adachi--Iyama--Reiten's bijection, namely, we have the following commutative diagram whose horizontal maps are bijective (Corollary \ref{cor:compati}):
\[
\begin{tikzcd}
  \wttilt\Lambda \rar[shift left, "\ccok"] & \dfice\Lambda \lar[shift left, "P(-)"] \\
  \sttilt\Lambda \rar[shift left, "\Fac"] \uar[symbol=\subseteq] & \ftors\Lambda \uar[symbol=\subseteq] \lar[shift left, "P(-)"]
\end{tikzcd}
\]
Moreover, if $\Lambda$ is hereditary, then this recovers the result in \cite{rigid} mentioned above, since wide $\tau$-tilting modules are precisely rigid modules in this case (Proposition \ref{prop:heredwttilt}).

By combining Theorems \ref{thm:b} and \ref{thm:c}, we obtain an algorithm to compute all wide $\tau$-tilting modules if $\Lambda$ is $\tau$-tilting finite and the poset of support $\tau$-tilting modules is given, see Corollary \ref{cor:wttiltfromint}. It seems to be an interesting but difficult problem to give a homological characterization of wide $\tau$-tilting modules except for the hereditary case.

Finally, we investigate the Hasse quiver of the poset $\dfice\Lambda$ ordered by inclusion. By the isomorphism $\ccok \colon \wttilt\Lambda \xrightarrow{\sim} \dfice\Lambda$ in Theorem \ref{thm:c}, we can endow $\wttilt\Lambda$ with the poset structure, namely, $M \leq N$ if $\ccok M \subseteq \ccok N$ holds.
As for $\sttilt\Lambda$, which is a subset of $\wttilt\Lambda$, the theory of \emph{mutations} developed in \cite{AIR} enables us to compute the Hasse quiver of $\sttilt\Lambda$.
By restricting our attention to the hereditary case, we can generalize mutations of support $\tau$-tilting modules to wide $\tau$-tilting modules (= rigid modules), and show the following combinatorial consequence on the structure of the Hasse quiver of $\wttilt\Lambda$.
\begin{theoremi}[= Corollary \ref{cor:arrow}] \label{thm:d}
  Let $\Lambda$ be a hereditary artin algebra. Then for $M \in \wttilt\Lambda$, there are exactly $|M|$ arrows starting at $M$ in the Hasse quiver of $\wttilt\Lambda$. More precisely, for each indecomposable direct summand $X$ of $M$, we can define a mutation $\mu_X(M)$, and there is an arrow $M \to \mu_X(M)$ for each $X$.
\end{theoremi}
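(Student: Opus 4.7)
The plan is to use the bijection $\ccok \colon \wttilt\Lambda \xrightarrow{\sim} \dfice\Lambda$ from Theorem \ref{thm:c} to translate the question about arrows in the Hasse quiver of $\wttilt\Lambda$ into the question of counting covers from below in the poset $\dfice\Lambda$. Concretely, I need to show that for each rigid $M$, the subcategory $\ccok M$ is covered by exactly $|M|$ ICE-closed subcategories, each of the form $\ccok \mu_X(M)$ for an indecomposable summand $X$ of $M$.

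For the definition of the mutation, I would proceed along Adachi--Iyama--Reiten lines inside the wide subcategory $\WW := \WW(M)$ generated by $M$. Since $\Lambda$ is hereditary, $\WW$ is itself hereditary and equivalent to $\mod\Lambda'$ for some hereditary artin algebra $\Lambda'$; by Proposition \ref{prop:heredwttilt} and the minimality of $\WW(M)$, the module $M$ is a tilting module in $\WW$ with $|M| = \mathrm{rank}(\WW)$. For an indecomposable summand $X$ of $M$, I would take $\mu_X(M) := (M/X) \oplus \coker(f)$, where $f\colon X\to B$ is a minimal left $\add(M/X)$-approximation in $\mod\Lambda$; when $\coker(f) = 0$, this is the ``drop a summand'' case. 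A standard $\Ext$-computation using the resulting exchange sequence confirms that $\mu_X(M)$ is rigid, hence wttilt by Proposition \ref{prop:heredwttilt}.

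Next I would verify that $\ccok\mu_X(M)\lessdot\ccok M$ in $\dfice\Lambda$. Strict containment combines AIR's cover property $\Fac_\WW \mu_X(M) \lessdot \Fac_\WW M$ in $\ftors\WW$ with the identification $\ccok N = \Fac_{\WW(N)}N$ valid for wttilt $N$. For the covering property, I would invoke Theorem \ref{thm:a} to rewrite any ICE-closed $\CC$ with $\ccok\mu_X(M)\subsetneq\CC\subsetneq\ccok M$ as a torsion class in some wide subcategory of $\mod\Lambda$; a further argument using that $\ccok M = \Fac_\WW M$ shows $\CC$ is already a torsion class in $\WW$ itself, so AIR's bijection in $\WW$ excludes intermediates. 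Distinctness of the $\mu_X(M)$ as $X$ varies is immediate from AIR's indexing of mutations by indecomposable summands.

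The main obstacle is completeness: every Hasse arrow $M\to M'$ in $\wttilt\Lambda$ is of the form $M\to\mu_X(M)$. Given a cover $\ccok M'\lessdot\ccok M$, I would split cases according to whether $\WW(M')=\WW(M)$ or $\WW(M')\subsetneq\WW(M)$; the equality case is handled directly by AIR inside $\WW$, while the strict-inclusion case corresponds to drop-type mutations (where $\coker(f)=0$) and requires identifying the removed summand $X$ from $M'$ alone. The hereditary hypothesis simplifies this identification via the $\Hom$-vanishing $\Hom(X,M/X)=0$ characteristic of the drop case, together with the fact that $M'$ is tilting in the proper sub-wide $\WW(M')$. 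Putting the construction, existence, distinctness, and completeness together, Theorem \ref{thm:d} follows by counting: each of the $|M|$ summands $X$ gives a distinct mutation $\mu_X(M)$, and these exhaust all Hasse arrows starting at $M$.
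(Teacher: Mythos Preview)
Your overall plan is right and the mutation definition matches the paper's, but the covering argument has a genuine gap. You assert that an intermediate $\CC \in \dfice\Lambda$ with $\ccok\mu_X(M)\subsetneq\CC\subsetneq\ccok M$ ``is already a torsion class in $\WW$ itself,'' and then appeal to AIR inside $\WW$. This claim is not obvious and you do not justify it: writing $\CC = \ccok T'$ for some rigid $T'$, one has $\ccok T' = \Fac_{\WW(T')} T'$ with $\WW(T') = \langle T'\rangle_{\wide} \subseteq \WW$, and when $\WW(T')\subsetneq\WW$ this need not be closed under quotients in $\WW$. (For instance $\add\sst{2\\1}$ is not a torsion class in $\mod kA_2$.) The same issue infects your completeness argument: the case split on whether $\WW(M')=\WW(M)$ does not line up with the drop/non-drop dichotomy, and ``identifying the removed summand $X$ from $M'$ alone'' is not made precise.

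The paper avoids this by proving a single key lemma (Lemma~\ref{lem:rigid2}): for \emph{any} rigid $T'$ with $\ccok T'\subsetneq\ccok T$, there is an indecomposable summand $X$ of $T$ with $\ccok T'\subseteq\ccok\mu_X(T)$. The trick is not to claim $\ccok T'$ is a torsion class in $\WW$, but to pass to $\Fac_\WW T'$, which \emph{is} a torsion class in $\WW$ since $T'$ is partial tilting there; then AIR (applied in $\WW$) gives $\Fac_\WW T'\subseteq\Fac_\WW\mu_X(T)=\ccok\mu_X(T)$, and hence $T'\in\ccok\mu_X(T)$. Combined with the incomparability $\ccok\mu_X(T)\not\subseteq\ccok\mu_Y(T)$ for $X\not\cong Y$ (Lemma~\ref{lem:rigid1}), this one lemma yields both the cover $T\to\mu_X(T)$ and completeness of the list, uniformly and without case analysis. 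You should replace your covering/completeness sketch with this argument.
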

\begin{example}\label{ex:intro}
  Let $Q$ be a quiver $1 \ot 2$ and consider its path algebra $kQ$. Then one can compute the Hasse quivers of $\wttilt \Lambda$ and $\ice \Lambda$ as the following figures.
  \begin{figure}[hbt]
    \centering
    \begin{minipage}{0.3\textwidth}
      \centering
      \begin{tikzpicture}
        \node (1) at (-1,1) {$\sst{1}$};
        \node (2) at (1,1) {$\sst{2}$};
        \node (12) at (0,2) {$\sst{2 \\ 1}$};

        \draw[->] (1) -- (12);
        \draw[->] (12) -- (2);
        \draw[dashed] (1) -- (2);
      \end{tikzpicture}
      \caption{$\mod kQ$}
    \end{minipage}
    \begin{minipage}{0.3\textwidth}
      \centering
      \begin{tikzpicture}
        \node (10) at (5,0) {$\sst{0}$};
        \node (9) at (4.5,1) {$\sst{2}$};
        \node (8) at (4,2) {$\sst{2}\oplus\sst{2 \\ 1}$};
        \node (7) at (6,1.5) {$\sst{1}$};
        \node (6) at (5,3) {$\sst{1}\oplus\sst{2 \\ 1}$};
        \node[red] (0) at (3,1) {$\sst{2 \\ 1}$};
        \draw[->] (6) -- (7);
        \draw[->] (6) -- (8);
        \draw[->] (8) -- (9);
        \draw[->] (9) -- (10);
        \draw[->] (7) -- (10);
        \draw[->] (0) -- (10);
        \draw[->] (8) -- (0);
      \end{tikzpicture}
      \caption{$\Hasse(\wttilt kQ)$}
    \end{minipage}
    \begin{minipage}{0.3\textwidth}
      \centering
      \begin{tikzpicture}
        \node (10) at (5,0) [rectangle, rounded corners, draw] {
        \begin{tikzpicture}[scale=0.4, every node/.style={scale=0.5}]
          \node (1) at (-1,1)  {};
          \node (2) at (1,1) {};
          \node (12) at (0,2) {};
          \draw[->] (1) -- (12);
          \draw[->] (12) -- (2);
        \end{tikzpicture}
        };
        \node (9) at (4.5,1) [rectangle, rounded corners, draw] {
        \begin{tikzpicture}[scale=0.4, every node/.style={scale=0.5}]
          \node (1) at (-1,1)  {};
          \node (2) at (1,1) [blackv] {};
          \node (12) at (0,2)  {};
          \draw[->] (1) -- (12);
          \draw[->] (12) -- (2);
        \end{tikzpicture}
        };
        \node (8) at (4,2) [rectangle, rounded corners, draw] {
        \begin{tikzpicture}[scale=0.4, every node/.style={scale=0.5}]
          \node (1) at (-1,1)  {};
          \node (2) at (1,1) [blackv] {};
          \node (12) at (0,2) [blackv] {};

          \draw[->] (1) -- (12);
          \draw[->] (12) -- (2);
        \end{tikzpicture}
        };
        \node (7) at (6,1.5) [rectangle, rounded corners, draw] {
        \begin{tikzpicture}[scale=0.4, every node/.style={scale=0.5}]
          \node (1) at (-1,1) [blackv] {};
          \node (2) at (1,1)  {};
          \node (12) at (0,2)  {};

          \draw[->] (1) -- (12);
          \draw[->] (12) -- (2);
        \end{tikzpicture}
        };
        \node (6) at (5,3) [rectangle, rounded corners, draw] {
        \begin{tikzpicture}[scale=0.4, every node/.style={scale=0.5}]
          \node (1) at (-1,1) [blackv] {};
          \node (2) at (1,1) [whitev] {};
          \node (12) at (0,2) [blackv] {};

          \draw[->] (1) -- (12);
          \draw[->] (12) -- (2);
        \end{tikzpicture}
        };
        \node[red] (0) at (3,1) [rectangle, rounded corners, draw] {
        \begin{tikzpicture}[scale=0.4, every node/.style={scale=0.5}]
          \node (1) at (-1,1)  {};
          \node (2) at (1,1) {};
          \node (12) at (0,2) [blackv] {};

          \draw[black, ->] (1) -- (12);
          \draw[black, ->] (12) -- (2);
        \end{tikzpicture}
        };

        \draw[->] (6) -- (7);
        \draw[->] (6) -- (8);
        \draw[->] (8) -- (9);
        \draw[->] (9) -- (10);
        \draw[->] (7) -- (10);
        \draw[->] (0) -- (10);
        \draw[->] (8) -- (0);
      \end{tikzpicture}
      \caption{$\Hasse(\ice kQ)$}
    \end{minipage}
  \end{figure}
  The left figure is the Auslander-Reiten quiver of $\mod kQ$, and the middle and the right ones are the Hasse quivers of $\wttilt kQ$ and $\ice kQ$ respectively. In the right figure, the black vertices correspond to wide $\tau$-tilting modules and the white are the rest.
  Observe that $\Hasse(\wttilt kQ)$ contains $\Hasse(\sttilt kQ)$ as a full subquiver, which is always the case (Proposition \ref{prop:asfullsub}). We have a new wide $\tau$-tilting module $\sst{2\\1}$ which is not a support $\tau$-tilting. This corresponds to an ICE-closed subcategory $\add \sst{2\\1}$, which is a wide subcategory but not a torsion class.
  There are exactly two arrows starting at $\sst{2} \oplus \sst{2\\1}$, and we have $\mu_{\sst{2}}(\sst{2} \oplus \sst{2\\1}) = \sst{2\\1}$ and $\mu_{\sst{2\\1}}(\sst{2} \oplus \sst{2\\1}) = \sst{2}$.
\end{example}
For more examples of $\Hasse(\wttilt \Lambda)$, see Section \ref{sec:ex}.
The statement of Theorem \ref{thm:d} makes sense for the non-hereditary case, and some examples of $\Hasse(\wttilt \Lambda)$ suggest that there should be more classes of algebras which satisfy Theorem \ref{thm:d}, see Question \ref{question}.

\begin{org}
This paper is organized as follows.
In Section \ref{sec:2}, we collect basic properties of hearts of intervals which we use throughout this paper.
In Section \ref{sec:3}, we give a proof of Theorem \ref{thm:b}.
In Section \ref{sec:4}, we focus on artin algebras and study wide $\tau$-tilting modules and establish a bijection in Theorem \ref{thm:c}. Then we discuss how to obtain wide $\tau$-tilting modules from support $\tau$-tilting modules.
In Section \ref{sec:5}, we focus on hereditary artin algebras and develop the theory of mutations of rigid modules.
In Section \ref{sec:6}, we give some examples of computations of ICE-closed subcategories and wide $\tau$-tilting modules, and the Hasse quivers of ICE-closed subcategories.
\end{org}

\begin{conv}
  Throughout this paper, we assume that \emph{all categories are skeletally small}, that is, the isomorphism classes of objects form a set. In addition, \emph{$\AA$ always denotes an abelian length category}, that is, an abelian category in which every object has finite length. We also assume that \emph{all subcategories are full and closed under isomorphisms}.
\end{conv}

\section{Preliminaries on hearts of intervals}\label{sec:2}
We begin by introducing basic definitions and notations in an abelian length category $\AA$.
\begin{definition}\label{def:basicdef}
  Let $\AA$ be an abelian length category and $\CC$ a subcategory of $\AA$.
  \begin{enumerate}
    \item $\CC$ is \emph{closed under extensions} if, for any short exact sequence in $\AA$
    \[
    \begin{tikzcd}
      0 \rar & L \rar & M \rar & N \rar & 0,
    \end{tikzcd}
    \]
    we have that $L,N \in \CC$ implies $M \in \CC$.
    \item $\CC$ is \emph{closed under quotients (resp. subobjects) in $\AA$} if, for every object $C \in \CC$, any quotients (resp. subobjects) of $C$ in $\AA$ belong to $\CC$.
    \item $\CC$ is a \emph{torsion class (resp. torsion-free class) in $\AA$} if $\CC$ is closed under extensions and quotients in $\AA$ (resp. extensions and subobjects).
    \item $\CC$ is closed under \emph{images (resp. kernels, cokernels)} if, for every map $\varphi \colon C_1 \to C_2$ with $C_1, C_2 \in \CC$, we have $\im\varphi \in \CC$ (resp. $\ker\varphi\in\CC$, $\coker\varphi\in\CC$).
    \item $\CC$ is a \emph{wide subcategory of $\AA$} if $\CC$ is closed under extensions, kernels and cokernels.
    \item $\CC$ is an \emph{ICE-closed subcategory of $\AA$} if $\CC$ is closed under images, cokernels and extensions.
  \end{enumerate}
\end{definition}
Every wide subcategory $\WW$ of $\AA$ becomes an abelian length category, and we always regard $\WW$ as such.
Both wide subcategories and torsion classes are clearly ICE-closed subcategories. Moreover, every torsion class in a wide subcategory (viewed as an abelian category) is ICE-closed:
\begin{lemma}\label{lem:torsice}
  Let $\WW$ be a wide subcategory of $\AA$ and $\CC$ a torsion class in $\WW$. Then $\CC$ is an ICE-closed subcategory of $\AA$.
\end{lemma}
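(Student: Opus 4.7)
The plan is to exploit the fact that a wide subcategory $\WW$ of $\AA$, viewed as an abelian category in its own right, has kernels, cokernels, images and short exact sequences that \emph{coincide} with those computed in $\AA$. Once this compatibility is in hand, the torsion-class properties of $\CC$ in $\WW$ translate directly into the required closure conditions in $\AA$.

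First I would record the compatibility explicitly: for any morphism $\varphi\colon X\to Y$ with $X,Y\in\WW$, the kernel, cokernel and image of $\varphi$ formed in $\AA$ lie in $\WW$ (because $\WW$ is closed under kernels and cokernels in $\AA$, and the image can be written as a kernel of a cokernel) and they serve as the corresponding objects in $\WW$. Likewise, a sequence $0\to L\to M\to N\to 0$ with all three terms in $\WW$ is exact in $\AA$ if and only if it is exact in $\WW$, and $\WW$ is closed under extensions in $\AA$.

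Next I would verify the three closure properties one by one. Take a morphism $\varphi\colon C_1\to C_2$ with $C_1,C_2\in\CC\subseteq\WW$. By the compatibility, $\im\varphi$ (resp.\ $\coker\varphi$) computed in $\AA$ coincides with the one computed in $\WW$, and it is a quotient of $C_1$ (resp.\ $C_2$) in $\WW$; since $\CC$ is closed under quotients in $\WW$, both $\im\varphi$ and $\coker\varphi$ belong to $\CC$. For extensions, take a short exact sequence $0\to L\to M\to N\to 0$ in $\AA$ with $L,N\in\CC$. Closure of $\WW$ under extensions in $\AA$ gives $M\in\WW$, so this is an exact sequence in $\WW$, and closure of $\CC$ under extensions in $\WW$ yields $M\in\CC$.

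There is no serious obstacle here; the only point that requires a moment of care is articulating precisely why kernels, cokernels and short exact sequences in the abelian category $\WW$ agree with those in $\AA$, which is standard for exact abelian subcategories but worth stating so that the torsion-class hypothesis applied inside $\WW$ can be transported back to $\AA$.
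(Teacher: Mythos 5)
Your proposal is correct and follows essentially the same route as the paper: the paper likewise notes that $\ker\varphi$, $\im\varphi$ and $\coker\varphi$ lie in $\WW$ because $\WW$ is wide, realizes $\im\varphi$ and $\coker\varphi$ as quotients (of $C_1$ and $C_2$ respectively) via short exact sequences in $\WW$, and then invokes closure of $\CC$ under quotients and extensions in $\WW$. Your explicit remark about the compatibility of kernels, cokernels and exact sequences between $\WW$ and $\AA$ is exactly the point the paper uses implicitly.
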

\begin{proof}
  Obviously $\CC$ is extension-closed. Take any map $\varphi \colon C \to C'$ with $C,C' \in \CC$. Then since $\WW$ is wide, $\ker\varphi$, $\coker\varphi$ and $\im\varphi$ belong to $\WW$. Therefore, we obtain the following short exact sequences in $\WW$:
  \[
  \begin{tikzcd}[row sep=0]
    0 \rar & \ker\varphi \rar & C \rar & \im\varphi \rar & 0, \\
    0 \rar & \im\varphi \rar & C' \rar & \coker\varphi \rar & 0.
  \end{tikzcd}
  \]
  Then since $\CC$ is closed under quotients in $\WW$, we obtain $\im\varphi, \coker\varphi \in \CC$. Therefore, $\CC$ is closed under images and cokernels in $\AA$.
\end{proof}
We will see later in Corollary \ref{cor:widetors} that the converse holds, that is, every ICE-closed subcategory is a torsion class in some wide subcategory.

Throughout this paper, we will use the following notation.
\begin{itemize}
  \item $\tors\AA$ denotes the set of torsion classes in $\AA$.
  \item $\torf\AA$ denotes the set of torsion-free classes in $\AA$.
  \item $\wide\AA$ denotes the set of wide subcategories of $\AA$.
  \item $\ice\AA$ denotes the set of ICE-closed subcategories of $\AA$.
\end{itemize}
These sets are partially ordered by inclusion, and clearly closed under arbitrary (set-theoretic) intersections, thus they are complete meet-semilattices. It is well-known that each complete meet semi-lattice is a complete lattice, hence these four sets become complete lattices. We denote by $\vee$ the join in these lattices.

For a collection $\CC$ of objects in $\AA$, we define the following full subcategories of $\AA$.
\begin{itemize}
  \item $\add\CC$ denotes the subcategory consisting of direct summands of finite direct sums of objects in $\CC$.
  \item $\Fac\CC$ denotes the subcategory consisting of quotients of objects in $\add\CC$.
  \item $\Sub\CC$ denotes the subcategory consisting of subobjects of objects in $\add\CC$.
  \item $\Filt\CC$ denotes the smallest extension-closed subcategory of $\AA$ containing $\CC$, or equivalently, the subcategory consisting of $M \in \AA$ such that there is a sequence of subobjects of $M$
  \[
  0 = M_0 < M_1 < \cdots < M_n = M
  \]
  satisfying $M_i/M_{i-1} \in \CC$ for each $i$.
  \item For two collections $\CC$ and $\DD$ of objects in $\AA$, we denote by $\CC * \DD$ the subcategory of $\AA$ consisting of $M \in \AA$ such that there is a short exact sequence
  \[
  \begin{tikzcd}
    0 \rar & C \rar & M \rar & D \rar & 0
  \end{tikzcd}
  \]
  with $C \in \CC$ and $D \in \DD$.
  \item $\TTT(\CC)$ (resp. $\FFF(\CC)$) denotes the smallest torsion class containing $\CC$ (resp. the smallest torsion-free class containing $\CC$).
  \item $\CC^\perp$ (resp. $^\perp\CC$) denotes the subcategory consisting of $M \in \AA$ satisfying $\AA(\CC,M) = 0$ (resp. $\AA(M,\CC) = 0$).
\end{itemize}

The following are well-known properties of torsion(-free) classes in $\AA$.
\begin{proposition}\label{prop:torsbasic}
  Let $\AA$ be an abelian length category and $\CC$ a subcategory of $\AA$.
  \begin{enumerate}
    \item $\AA = \TT * \TT^\perp$ and $\AA = {}^\perp\FF * \FF$ hold for $\TT \in \tors\AA$ and $\FF \in \torf\AA$.
    \item \cite[Lemma 3.1]{MS}, \cite[Lemma 3.11]{enomono}
    $\TTT(\CC) = {}^\perp (\CC^\perp) = \Filt(\Fac\CC)$ and $\FFF(\CC) = ({}^\perp\CC)^\perp = \Filt(\Sub\CC)$ hold.
    \item $(-)^\perp \colon \tors\AA \to \torf\AA$ and $^\perp (-) \colon \torf\AA \to \tors\AA$ are mutually inverse anti-isomorphisms of complete lattices.
  \end{enumerate}
\end{proposition}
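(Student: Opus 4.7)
For (1), I would construct the torsion radical: given $\TT \in \tors\AA$ and $M \in \AA$, define $t(M) \subseteq M$ to be the largest subobject of $M$ lying in $\TT$. This exists because $\AA$ is a length category and $\TT$ is closed under finite sums (which follows from closure under extensions and quotients). To see $M/t(M) \in \TT^\perp$, a nonzero map $f \colon T \to M/t(M)$ with $T \in \TT$ would pull back along $M \twoheadrightarrow M/t(M)$ to a subobject $N \subseteq M$ fitting into $0 \to t(M) \to N \to \im f \to 0$; this forces $N \in \TT$, strictly enlarging $t(M)$ and contradicting maximality. The torsion-free half is formally dual.

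For (2), I would establish the torsion-class identities and deduce the torsion-free ones by duality. The inclusion $\Filt(\Fac\CC) \subseteq \TTT(\CC)$ is immediate, and the reverse reduces to showing that $\Filt(\Fac\CC)$ is itself a torsion class. Extension closure is by construction; quotient closure follows by induction on filtration length using the \emph{pushforward} filtration: if $0 = M_0 < \cdots < M_n = M$ witnesses $M \in \Filt(\Fac\CC)$ and $N \subseteq M$, then the subobjects $(M_i + N)/N$ of $M/N$ have successive quotients that are quotients of the $M_i/M_{i-1} \in \Fac\CC$, hence remain in $\Fac\CC$. For $\TTT(\CC) = {}^\perp(\CC^\perp)$, the inclusion $\subseteq$ is clear since the right-hand side is a torsion class containing $\CC$; conversely, $\CC \subseteq \TTT(\CC)$ gives $\TTT(\CC)^\perp \subseteq \CC^\perp$, and applying (1) to $\TTT(\CC)$, any $M \in {}^\perp(\TTT(\CC)^\perp)$ must have $M/t(M) = 0$, so $M \in \TTT(\CC)$.

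Part (3) is then essentially a corollary of (1). Both assignments are plainly order-reversing and, by direct inspection of the closure axioms, land in the correct lattices. For the composite ${}^\perp((-)^\perp)$ on $\tors\AA$: any $M \in {}^\perp(\TT^\perp)$ admits a canonical quotient onto $M/t(M) \in \TT^\perp$ which must vanish, so $M = t(M) \in \TT$; the dual gives $({}^\perp\FF)^\perp = \FF$. An order-reversing bijection between complete lattices is automatically an anti-isomorphism of complete lattices, converting arbitrary joins to arbitrary meets.

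The most delicate point will be the quotient-closure of $\Filt(\Fac\CC)$ in (2): the naive approach of filtering $N$ by $N \cap M_i$ fails because $\Fac\CC$ is not subobject-closed. Using the pushforward filtration $(M_i+N)/N$ on $M/N$ sidesteps this, after which the induction is routine.
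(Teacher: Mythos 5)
Your proposal is correct. Note that the paper itself gives no proof of this proposition: part (2) is cited from [MS, Lemma 3.1] and [Eno2, Lemma 3.11], and parts (1) and (3) are stated as well known, so there is no in-paper argument to compare against; your write-up is the standard one found in those references (torsion radical via the maximal $\TT$-subobject, which exists by the length hypothesis, then $\TTT(\CC)={}^\perp(\CC^\perp)=\Filt(\Fac\CC)$ and the perpendicular anti-isomorphisms as formal consequences). You correctly identify and resolve the one genuinely delicate step, namely that quotient-closure of $\Filt(\Fac\CC)$ requires the pushforward filtration $(M_i+N)/N$ with successive quotients $(M_i+N)/(M_{i-1}+N)\cong M_i/(M_i\cap(M_{i-1}+N))$, a quotient of $M_i/M_{i-1}$, rather than the intersection filtration (which is the right tool only for the dual statement about $\Filt(\Sub\CC)$ and subobjects).
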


For a poset $P$ and $a,b \in P$ with $a \leq b$, we define $[a,b]:= \{ c \in P \, | \, a \leq c \leq b \}$, which is a full subposet of $P$. We call it the \emph{interval in $P$}.
For a poset $P$, its \emph{Hasse quiver $\Hasse P$} is a quiver defined as follows.
\begin{itemize}
  \item The vertex set of $\Hasse P$ is $P$.
  \item We draw an arrow $p \to q$ if $p$ covers $q$ in $P$, that is, $p > q$ holds and there is no $r\in P$ with $p > r > q$.
\end{itemize}

To each interval in $\tors\AA$ or $\torf\AA$, we can associate a subcategory of $\AA$ which is in some sense the ``difference." This construction was introduced and used by several papers such as \cite{jasso, DIRRT, AP, tattar}. In this paper, following \cite{tattar}, we call this construction the \emph{heart} of the interval.
\begin{definition}
  Let $[\GG,\FF]$ be an interval in $\torf\AA$.
  \begin{enumerate}
    \item The \emph{heart} of the interval $[\GG,\FF]$ in $\torf\AA$ is a subcategory of $\AA$ defined by
    \[
    \HH_{[\GG,\FF]} := \FF \cap {}^\perp \GG.
    \]
    \item The interval $[\GG,\FF]$ in $\torf\AA$ is called a \emph{wide interval} (resp. \emph{ICE interval}) if its heart is a wide subcategory of $\AA$ (resp. an ICE-closed subcategory of $\AA$).
  \end{enumerate}
\end{definition}
\begin{remark}\label{rem:heart}
  There is a dual construction, which associates to an interval $[\UU,\TT]$ in $\tors\AA$ the subcategory $\HH_{[\UU,\TT]}:= \TT \cap \UU^\perp$.
  This construction for torsion classes is more common in the past literature, and these two are dual to each other via the anti-isomorphism $\tors\AA \rightleftarrows \torf\AA$ in Proposition \ref{prop:torsbasic} (3), that is, we have $\HH_{[\UU,\TT]} = \HH_{[\TT^\perp,\UU^\perp]}$.
  When investigating ICE-closed subcategories, using torsion-free classes is more suitable than torsion classes since we have to deal with $\FFF(\CC)$ and $\WR(\FFF(\CC))$, see Corollary \ref{cor:widetors} for example.
  On the other hand, it is more appropriate to work on torsion classes when studying wide $\tau$-tilting modules. Thus in this paper, \emph{we mainly use intervals in $\torf\AA$ in Sections \ref{sec:2} and \ref{sec:3}, and intervals in $\tors\AA$ in Sections \ref{sec:4} and \ref{sec:6}}.
\end{remark}
Every torsion class and torsion-free class can be realized as a heart of intervals in $\torf\AA$. Indeed, we can easily check the following by definition and Proposition \ref{prop:torsbasic}.
\begin{lemma}\label{lem:torsint}
  Let $\AA$ be an abelian length category and $\FF$ a torsion-free class in $\AA$.
  \begin{enumerate}
    \item $\HH_{[0,\FF]} = \FF$ holds, thus it is a torsion-free class in $\AA$.
    \item $\HH_{[\FF,\AA]} = {}^\perp \FF$ holds, thus it is a torsion class in $\AA$. In particular, every torsion class $\TT$ in $\AA$ is a heart of $[\TT^\perp,\AA]$ in $\torf\AA$.
    \item The heart of every interval in $\torf\AA$ is closed under extensions and images.
  \end{enumerate}
\end{lemma}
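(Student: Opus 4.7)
The lemma is essentially a bookkeeping exercise once the relevant closure properties from Proposition \ref{prop:torsbasic} are in place; I would organise the proof into three short parts matching the three items.

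For part (1), I would simply unwind the definition: ${}^\perp 0 = \AA$, so $\HH_{[0,\FF]} = \FF \cap {}^\perp 0 = \FF$. For part (2), I would again unwind: $\HH_{[\FF,\AA]} = \AA \cap {}^\perp \FF = {}^\perp \FF$, and then invoke Proposition \ref{prop:torsbasic}(3) which says ${}^\perp(-) \colon \torf\AA \to \tors\AA$ is a bijection, so ${}^\perp \FF \in \tors\AA$. For the ``in particular'' clause, given $\TT \in \tors\AA$, I would take $\FF := \TT^\perp \in \torf\AA$ and note $\HH_{[\TT^\perp,\AA]} = {}^\perp(\TT^\perp) = \TT$, again by Proposition \ref{prop:torsbasic}(3).

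For part (3), let $[\GG,\FF]$ be an interval in $\torf\AA$. The strategy is to argue that each of the two factors defining $\HH_{[\GG,\FF]} = \FF \cap {}^\perp\GG$ is already closed under the operations we need. Since $\FF$ is a torsion-free class it is closed under extensions and under subobjects, and since $\GG$ is a torsion-free class, ${}^\perp\GG$ is a torsion class (Proposition \ref{prop:torsbasic}(3)), hence closed under extensions and under quotients. For extension-closedness of the heart, I would take a short exact sequence $0 \to L \to M \to N \to 0$ with $L,N \in \HH_{[\GG,\FF]}$ and deduce $M \in \FF$ and $M \in {}^\perp\GG$ from the extension-closedness of each factor. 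For image-closedness, given $\varphi \colon C \to C'$ with $C,C' \in \HH_{[\GG,\FF]}$, I would observe that $\im \varphi$ is a subobject of $C' \in \FF$, hence in $\FF$, and a quotient of $C \in {}^\perp\GG$, hence in ${}^\perp\GG$.

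There is essentially no obstacle here: the only thing to be careful about is to invoke Proposition \ref{prop:torsbasic}(3) in the right direction to convert the torsion-free class $\GG$ into the torsion class ${}^\perp \GG$, so that the closure under quotients (rather than subobjects) becomes available; after that, the proof is a direct diagram-chase on the standard factorisation of $\varphi$ through $\im\varphi$.
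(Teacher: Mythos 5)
Your proposal is correct and matches what the paper intends: the paper omits the proof of this lemma, noting only that it follows easily from the definitions and Proposition \ref{prop:torsbasic}, and your argument — computing $\HH_{[0,\FF]} = \FF \cap {}^\perp 0$ and $\HH_{[\FF,\AA]} = \AA \cap {}^\perp\FF$ directly, and for (3) combining extension- and subobject-closure of $\FF$ with extension- and quotient-closure of the torsion class ${}^\perp\GG$ — is exactly the intended routine verification.
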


The following useful result was established in \cite{AP}, which gives a characterization and a reduction of wide intervals.
\begin{proposition}\cite[Theorems 4.2, 5.2]{AP}\label{prop:wideint}
  Let $[\GG,\FF]$ be an interval in $\torf\AA$.
  \begin{enumerate}
    \item $[\GG,\FF]$ is a wide interval if and only if $\GG = \FF \cap \bigcap_{i \in I} \FF_i$ holds for some $\FF_i$'s such that there is an arrow $\FF \to \FF_i$ in $\Hasse(\torf\AA)$ for each $i \in I$.
    \item Suppose that $[\GG,\FF]$ is a wide interval. Then we have isomorphisms of lattices:
    \[
    \begin{tikzcd}[column sep = large]
      {[\GG,\FF]} \rar["{(-)\cap {}^\perp\GG}", shift left] & \torf\HH_{[\GG,\FF]} \lar["{(-)*\GG}", shift left].
    \end{tikzcd}
    \]
  \end{enumerate}
\end{proposition}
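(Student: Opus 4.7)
The plan is to prove (2) first and then deduce (1) from it using standard results on brick labels of Hasse arrows.

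For (2), I would define $\Phi := (-)\cap{}^\perp\GG \colon [\GG,\FF] \to \torf\HH_{[\GG,\FF]}$ and $\Psi := (-)*\GG \colon \torf\HH_{[\GG,\FF]} \to [\GG,\FF]$ and show they are mutually inverse order isomorphisms. Well-definedness of $\Phi$ is immediate: $\FF'\cap\HH_{[\GG,\FF]}$ is closed under subobjects and extensions inside the abelian category $\HH_{[\GG,\FF]}$. For $\Psi$, the inclusions $\GG \subseteq \Psi(\FF'') \subseteq \FF$ are clear, and that $\Psi(\FF'')$ is a torsion-free class in $\AA$ uses the torsion pair $({}^\perp\GG,\GG)$ of Proposition \ref{prop:torsbasic}(1): for $M \in \FF''*\GG$, the presentation $X*Y$ with $X \in \FF'' \subseteq {}^\perp\GG$ and $Y\in\GG$ is precisely the torsion decomposition of $M$. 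Closure under subobjects then follows because for $N\subseteq M$, the torsion part $N_t$ sits as a subobject of $M_t = X$ inside $\HH_{[\GG,\FF]}$, so $N_t \in \FF''$. Closure under extensions is where the wideness of $\HH_{[\GG,\FF]}$ is essential: the left-exact torsion coreflector converts a short exact sequence with outer terms in $\FF''*\GG$ into a left-exact sequence $0\to L_t\to M_t\to N_t$ inside the abelian length category $\HH_{[\GG,\FF]}$, so the image of $M_t\to N_t$ lies in $\FF''$ by subobject-closure and $M_t\in\FF''$ follows by extension-closure. The identities $\Psi\Phi = \mathrm{id}$ and $\Phi\Psi = \mathrm{id}$ are then routine from the torsion-pair decomposition together with $\FF''\subseteq {}^\perp\GG$.

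For the $(\Rightarrow)$ direction of (1), write $\HH := \HH_{[\GG,\FF]}$ and, for each simple object $S$ of the abelian length category $\HH$, set $\FF_S^\circ := \{M\in\HH \mid \Hom(S,M)=0\}$. This is a torsion-free class in $\HH$ and a Hasse cover of $\HH$ in $\torf\HH$, since under the anti-isomorphism $\tors\HH\cong\torf\HH$ it corresponds to the minimal nonzero torsion class $\Filt(S)$ of $\HH$. Transporting via $\Psi$, each $\Psi(\FF_S^\circ)$ is a Hasse cover of $\FF$ inside $[\GG,\FF]$; convexity of $[\GG,\FF]$ in $\torf\AA$ upgrades this to a Hasse cover of $\FF$ in $\torf\AA$. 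Since every nonzero object of $\HH$ admits a simple subobject, $\bigcap_S \FF_S^\circ = 0$ in $\torf\HH$, and $\Psi$ preserves meets as a lattice isomorphism, whence $\GG = \Psi(0) = \bigcap_S \Psi(\FF_S^\circ) = \FF\cap\bigcap_S\Psi(\FF_S^\circ)$.

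For the $(\Leftarrow)$ direction of (1), I would invoke the theory of brick labels of Hasse arrows: each Hasse arrow $\FF\to\FF_i$ in $\Hasse(\torf\AA)$ admits a brick $B_i\in\FF$ with $\HH_{[\FF_i,\FF]} = \Filt(B_i)$, and the collection of brick labels of Hasse arrows starting at a fixed torsion-free class forms a semibrick, a known result in the DIRRT framework. Consequently $\Filt\{B_i\}$ is a wide subcategory. The main task is to identify $\HH_{[\GG,\FF]}$ with $\Filt\{B_i\}$; the inclusion $\Filt\{B_i\}\subseteq\HH_{[\GG,\FF]}$ follows from $B_i\in\HH_{[\FF_i,\FF]}\subseteq\HH_{[\GG,\FF]}$ together with the extension-closure of hearts from Lemma \ref{lem:torsint}(3). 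The reverse inclusion is the main obstacle and would proceed by induction on the length of $M\in\HH_{[\GG,\FF]}$: for nonzero $M$ the relation $M\notin\GG = \FF\cap\bigcap_i\FF_i$ forces $M\notin\FF_i$ for some $i$, and the brick characterization of the Hasse arrow $\FF\to\FF_i$ produces a subobject or quotient of $M$ in $\Filt(B_i)$, allowing length reduction.
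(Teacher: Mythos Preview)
The paper does not supply its own proof of this proposition; it is quoted from \cite{AP} (Theorems~4.2 and~5.2) and used as a black box. So there is no in-paper argument to compare against, and your proposal should be assessed on its own merits.

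Your argument for (2) is correct and identifies the key point precisely: extension-closure of $\FF''*\GG$ is exactly where wideness of $\HH_{[\GG,\FF]}$ is needed, so that the left-exact sequence $0\to L_t\to M_t\to N_t$ produced by the torsion coreflector for $({}^\perp\GG,\GG)$ can be analysed inside the abelian category $\HH_{[\GG,\FF]}$. The identities $\Psi\Phi=\mathrm{id}$ and $\Phi\Psi=\mathrm{id}$ are, as you say, routine; the first is exactly Lemma~\ref{lem:interval}(1) applied to $[\GG,\FF']$.

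For (1)$(\Rightarrow)$, the construction via simples of $\HH$ and the convexity argument are correct.

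For (1)$(\Leftarrow)$, the strategy is right, but your final induction step is stated imprecisely. What one actually does is: given $0\neq M\in\HH_{[\GG,\FF]}$, the hypothesis $\GG=\FF\cap\bigcap_i\FF_i$ together with $M\in\FF\setminus\GG$ yields $M\notin\FF_i$ for some $i$; then the decomposition $\FF=\Filt(B_i)*\FF_i$ from Lemma~\ref{lem:interval}(1) gives $0\to X\to M\to Y\to 0$ with $0\neq X\in\Filt(B_i)$ and $Y\in\FF_i\subseteq\FF$. Since ${}^\perp\GG$ is a torsion class and hence quotient-closed, $Y\in\FF\cap{}^\perp\GG=\HH_{[\GG,\FF]}$ has strictly smaller length, and induction finishes. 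Your phrase ``a subobject or quotient of $M$ in $\Filt(B_i)$'' obscures this: one always obtains a nonzero \emph{subobject} in $\Filt(B_i)$, and it is the \emph{quotient} that stays in the heart. You should also cite the semibrick property of the labels $\{B_i\}$ precisely (e.g.\ \cite{DIRRT} or \cite{asai}), since the implication ``semibrick $\Rightarrow$ $\Filt$ is wide'' (due to Ringel, see \cite{ringel}) is doing real work.
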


As for hearts, we have the following basic equalities.
\begin{lemma}\label{lem:interval}
  Let $[\GG,\FF]$ be an interval in $\torf\AA$. Then the following equalities hold:
  \begin{enumerate}
    \item $\FF = \HH_{[\GG,\FF]} * \GG$.
    \item $\GG = \FF \cap \HH_{[\GG,\FF]}^\perp$.
  \end{enumerate}
\end{lemma}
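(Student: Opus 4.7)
The plan is to prove the two equalities in order, using the anti-isomorphism between torsion and torsion-free classes (Proposition \ref{prop:torsbasic}) together with the fact that $\FF$ is closed under both subobjects and extensions.

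For (1), the inclusion $\HH_{[\GG,\FF]}*\GG \subseteq \FF$ is immediate: since $\HH_{[\GG,\FF]} = \FF \cap {}^\perp\GG \subseteq \FF$ and $\GG \subseteq \FF$, and $\FF$ is closed under extensions, any object obtained by extending something in $\GG$ by something in $\HH_{[\GG,\FF]}$ lies in $\FF$. For the reverse inclusion, I would use that $\GG \in \torf\AA$ forces ${}^\perp\GG \in \tors\AA$ by Proposition \ref{prop:torsbasic}(3), and then apply Proposition \ref{prop:torsbasic}(1) to the torsion pair $({}^\perp\GG,\GG)$: any $F \in \FF$ fits into a canonical short exact sequence
\[
0 \to H \to F \to G \to 0
\]
with $H \in {}^\perp\GG$ and $G \in \GG$. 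Since $\FF$ is a torsion-free class and hence closed under subobjects, the subobject $H$ of $F$ lies in $\FF$, so $H \in \FF \cap {}^\perp\GG = \HH_{[\GG,\FF]}$, giving $F \in \HH_{[\GG,\FF]}*\GG$.

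For (2), the inclusion $\GG \subseteq \FF \cap \HH_{[\GG,\FF]}^\perp$ is unwinding definitions: $\GG \subseteq \FF$, and $\HH_{[\GG,\FF]} \subseteq {}^\perp\GG$ means exactly $\Hom(\HH_{[\GG,\FF]},\GG) = 0$, so $\GG \subseteq \HH_{[\GG,\FF]}^\perp$. For the reverse inclusion, take $M \in \FF \cap \HH_{[\GG,\FF]}^\perp$. Applying (1), decompose $M$ as an extension $0 \to H \to M \to G \to 0$ with $H \in \HH_{[\GG,\FF]}$ and $G \in \GG$. The inclusion $H \hookrightarrow M$ is a morphism from $\HH_{[\GG,\FF]}$ to $M$, and since $M \in \HH_{[\GG,\FF]}^\perp$, this map must vanish, forcing $H = 0$ and hence $M \cong G \in \GG$.

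Neither direction looks genuinely difficult; the only subtlety is to remember to invoke Proposition \ref{prop:torsbasic}(3) to promote ${}^\perp\GG$ to a torsion class so that the canonical torsion sequence is available, and to use torsion-freeness of $\FF$ (i.e., closure under subobjects) to keep the left-hand term of that sequence inside $\FF$. Beyond that, the argument is a short formal manipulation, and (2) is essentially a direct consequence of (1).
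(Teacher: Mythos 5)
Your proposal is correct and follows essentially the same route as the paper: both inclusions in (1) are handled exactly as in the paper's proof (extension-closure of $\FF$ for one direction, the canonical sequence for the torsion pair $({}^\perp\GG,\GG)$ plus closure of $\FF$ under subobjects for the other), and (2) is deduced from (1) by observing that the monomorphism $\HH_{[\GG,\FF]}\hookrightarrow M$ must vanish. No gaps.
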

\begin{proof}
  Although some variants of this lemma seem to be well-known (e.g. \cite[Theorem 3.4]{DIRRT}, \cite[Lemma 3.1]{AP}), we give a proof here for the convenience of the reader.

  (1)
  Since $\FF$ is extension-closed, we have $\HH_{[\GG,\FF]} * \GG \subseteq \FF * \GG \subseteq \FF$. Conversely, let $F \in \FF$. Since $\AA = {}^\perp\GG * \GG$ by Proposition \ref{prop:torsbasic}, we have the following short exact sequence
  \[
  \begin{tikzcd}
    0 \rar & L \rar & F \rar & N \rar & 0
  \end{tikzcd}
  \]
  with $L \in {}^\perp\GG$ and $N \in \GG$. Then we have $L \in \FF$ since $\FF$ is closed under subobjects. Thus $L \in \HH_{[\GG,\FF]}$, and we obtain $F \in \HH_{[\GG,\FF]} * \GG$.

  (2)
  Since $\GG \subseteq \FF$ and $\HH_{[\GG,\FF]} \subseteq {}^\perp\GG$, clearly we have $\GG \subseteq \FF \cap \HH_{[\GG,\FF]}^\perp$. Conversely, let $F \in \FF \cap \HH_{[\GG,\FF]}^\perp$. Then by (1), we have the following short exact sequence
  \[
  \begin{tikzcd}
    0 \rar & L \rar["\iota"] & F \rar & N \rar & 0
  \end{tikzcd}
  \]
  with $L \in \HH_{[\GG,\FF]}$ and $N \in \GG$. Then $F \in \HH_{[\GG,\FF]}^\perp$ implies $\iota = 0$, thus $F \iso N \in \GG$.
\end{proof}

We will use the following invariance property of hearts.
\begin{lemma}\label{lem:preserveheart}
  Let $[\GG,\FF]$ be a wide interval in $\torf\AA$. Then the bijection $(-)\cap {}^\perp\GG \colon [\GG,\FF] \xrightarrow{\sim} \torf\HH_{[\GG,\FF]}$ in Proposition \ref{prop:wideint} preserves hearts, that is,
  for every interval $[\FF_1,\FF_2]$ in $[\GG,\FF]$,
  \[
  \HH_{[\FF_1,\FF_2]} = \HH_{[\FF_1 \cap {}^\perp\GG, \FF_2 \cap {}^\perp\GG]}
  \]
  holds, where $\HH$ in the right hand side is considered in an abelian category $\HH_{[\GG,\FF]}$.
\end{lemma}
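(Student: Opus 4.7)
The plan is to unwind both sides of the claimed equality into explicit Hom-vanishing conditions in $\AA$ and then use Lemma \ref{lem:interval}(1) applied to the subinterval $[\GG,\FF_1]$ as the key decomposition. Write $\HH := \HH_{[\GG,\FF]} = \FF \cap {}^\perp\GG$, and set $\FF_i' := \FF_i \cap {}^\perp\GG$ for $i=1,2$. Because $\HH$ is a wide, hence full, subcategory of $\AA$, a $\Hom_\HH$-perp in $\HH$ is the same as the $\Hom_\AA$-perp intersected with $\HH$. Thus
\[
\HH_{[\FF_1',\FF_2']} = \FF_2' \cap \{\,X \in \HH \mid \AA(\FF_1', X)=0\,\},
\]
and since $\FF_2 \subseteq \FF$ forces $\FF_2' \subseteq \HH$ automatically, this simplifies to $\{X \in \FF_2 \cap {}^\perp\GG \mid \AA(\FF_1',X) = 0\}$. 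On the other side, $\HH_{[\FF_1,\FF_2]} = \FF_2 \cap {}^\perp\FF_1$.

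The inclusion $\HH_{[\FF_1,\FF_2]} \subseteq \HH_{[\FF_1',\FF_2']}$ is immediate: if $X \in \FF_2 \cap {}^\perp\FF_1$, then $\GG \subseteq \FF_1$ gives $X \in {}^\perp\GG$, and $\FF_1' \subseteq \FF_1$ gives $\AA(\FF_1',X) \subseteq \AA(\FF_1,X) = 0$.

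For the reverse inclusion, take $X \in \FF_2 \cap {}^\perp\GG$ with $\AA(\FF_1',X)=0$, and any $F \in \FF_1$; we must show $\AA(F,X)=0$. Here Lemma \ref{lem:interval}(1) applied to the interval $[\GG,\FF_1]$ in $\torf\AA$ yields a short exact sequence
\[
\begin{tikzcd}
0 \rar & L \rar & F \rar & N \rar & 0
\end{tikzcd}
\]
with $L \in \HH_{[\GG,\FF_1]} = \FF_1 \cap {}^\perp\GG = \FF_1'$ and $N \in \GG$. Applying $\Hom_\AA(-,X)$ gives $\Hom(N,X) = 0$ from $X \in {}^\perp\GG$, and $\Hom(L,X)=0$ from $\AA(\FF_1',X)=0$, so $\Hom(F,X) = 0$, giving $X \in {}^\perp\FF_1$, as required.

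I don't anticipate a serious obstacle: the whole argument is a clean bookkeeping exercise once one identifies that the essential input is the filtration $\FF_1 = \HH_{[\GG,\FF_1]} * \GG$ provided by Lemma \ref{lem:interval}(1); the only subtle point is recognizing that perps inside the abelian category $\HH$ agree with perps in $\AA$ because $\HH$ is a full subcategory, so no ambient reinterpretation of Hom spaces is needed.
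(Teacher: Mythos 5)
Your proof is correct and follows essentially the same route as the paper's: the easy inclusion is the same containment argument, and for the reverse inclusion both proofs apply Lemma \ref{lem:interval}(1) to the subinterval $[\GG,\FF_1]$ to write each $F \in \FF_1$ as an extension of an object of $\GG$ by an object of $\HH_{[\GG,\FF_1]} = \FF_1 \cap {}^\perp\GG$, then kill $\Hom(F,X)$ term by term. The only cosmetic difference is that you invoke left-exactness of $\Hom(-,X)$ where the paper chases the factorization of a single morphism explicitly.
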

\begin{proof}
  The proof is essentially the same as \cite[Theorem 4.2 (2)]{AP}, but we provide a proof for completeness.
  Let $M$ be in $\HH_{[\FF_1,\FF_2]}$.
  Then since $\GG \subseteq \FF_1$, $\FF_1 \cap {}^\perp\GG \subseteq \FF_1$ and $M \in {}^\perp\FF_1$,
  we have $M \in {}^\perp\GG$ and $M \in {}^\perp(\FF_1 \cap {}^\perp\GG)$.
  Thus $M$ belongs to ${}^\perp(\FF_1\cap {}^\perp\GG) \cap (\FF_2 \cap {}^\perp\GG) = \HH_{[\FF_1 \cap {}^\perp\GG, \FF_2 \cap {}^\perp\GG]}$.

  Conversely, let $M$ be in $\HH_{[\FF_1 \cap {}^\perp\GG, \FF_2 \cap {}^\perp\GG]}$. Then we have $M \in \FF_2 \cap {}^\perp\GG \subseteq \FF_2$, thus it suffices to show $M \in {}^\perp\FF_1$. Take any map $\varphi \colon M \to F_1$ with $F_1 \in \FF_1$.
  Then we obtain the following exact commutative diagram with $T \in \HH_{[\GG,\FF_1]}$ and $G \in \GG$ by applying Lemma \ref{lem:interval} to $[\GG,\FF_1]$.
  \[
  \begin{tikzcd}
    & & M \dar["\varphi"] \ar[dl, dashed, "\ov{\varphi}"']\ar[rd, "0"]\\
    0 \rar & T \rar["\iota"'] & F_1 \rar["\pi"'] & G \rar & 0
  \end{tikzcd}
  \]
  Since $M \in {}^\perp \GG$ and $G \in \GG$, we have $\pi\varphi = 0$. Thus we have a map $\ov{\varphi}$ which makes the above diagram commute.
  On the other hand, we have $T \in \HH_{[\GG,\FF_1]} = \FF_1 \cap {}^\perp\GG$. Thus by $M \in {}^\perp (\FF_1 \cap {}^\perp\GG)$, we obtain $\ov{\varphi}= 0$. Therefore we have $\varphi = 0$.
\end{proof}

\section{Characterization of ICE intervals}\label{sec:3}
First, we will show that every ICE-closed subcategory can be realized as a heart of some interval in $\torf\AA$ (or $\tors\AA$). This observation is the starting point of this paper.
\begin{proposition}\label{prop:arashi}
  Let $\CC$ be an ICE-closed subcategory of an abelian length category $\AA$.
  Then $\CC$ is equal to the heart of $[\FFF(\CC) \cap \CC^\perp, \FFF(\CC)]$ in $\torf\AA$, that is, $\CC =\FFF(\CC)\cap {}^\perp(\FFF(\CC) \cap \CC^\perp)$ holds.
\end{proposition}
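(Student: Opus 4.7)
The plan has two sides: the easy inclusion $\CC\subseteq\HH_{[\FFF(\CC)\cap\CC^\perp,\FFF(\CC)]}$ and the harder reverse. First I would observe that $\CC^\perp$ is closed under extensions, subobjects, and products, so it is a torsion-free class; hence $\FFF(\CC)\cap\CC^\perp$ is one too and the interval is well-defined. The inclusion $\CC\subseteq\FFF(\CC)$ is tautological, and $\CC\subseteq{}^\perp(\CC^\perp)\subseteq{}^\perp(\FFF(\CC)\cap\CC^\perp)$ since the left orthogonal reverses inclusions; together these give $\CC\subseteq\HH_{[\FFF(\CC)\cap\CC^\perp,\FFF(\CC)]}$.

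For the reverse inclusion, let $M$ lie in this heart. By Proposition~\ref{prop:torsbasic}(2), $M\in\FFF(\CC)=\Filt(\Sub\CC)$, so $M$ admits a filtration $0=M_0\subsetneq M_1\subsetneq\cdots\subsetneq M_n=M$ whose successive quotients embed into objects of $\CC$. Since $\CC$ is extension-closed, $\Filt\,\CC=\CC$, so it suffices to arrange such a filtration with each factor already in $\CC$, and I would induct on $n$. Every quotient of $M$ inherits the orthogonality (because ${}^\perp(-)$ is always closed under quotients) and remains in $\FFF(\CC)$ by truncating the filtration, so the induction reduces to the base case $n=1$: an embedding $M\hookrightarrow C$ with $C\in\CC$. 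For this base case I would decompose the cokernel $Q:=C/M$ using the two torsion pairs $(\TTT(\CC),\CC^\perp)$ and $({}^\perp\FFF(\CC),\FFF(\CC))$, producing a canonical quotient of $Q$ that lies in $\FFF(\CC)\cap\CC^\perp$; the orthogonality $M\in{}^\perp(\FFF(\CC)\cap\CC^\perp)$ combined with ICE-closure of $\CC$ (applied to the images and cokernels appearing in the decomposition) should force this quotient to vanish, leaving $M\cong C\in\CC$.

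I expect the main obstacle to be twofold. First, the heart is not a priori closed under subobjects, so the inductive passage from $M$ to $M_{n-1}$ must be arranged with care --- a workable fix is to argue only with quotients of $M$, which stay in the heart automatically, and then use extension-closure of $\CC$ to reassemble the bottom piece from the top. Second, the base case requires reconciling the $\FFF(\CC)$-reflection and the $\CC^\perp$-reflection of $Q$ simultaneously; this is precisely where ICE-closure (closure under images and cokernels, not merely under extensions) becomes essential, as it is what guarantees that the intermediate pieces produced by the two decompositions land back in $\CC$.
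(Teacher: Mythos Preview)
The outline for the easy inclusion is fine, and your inductive reduction from general $n$ to $n=1$ is essentially the paper's Step~3: the fix you gesture at is the pushout of $M_1\hookrightarrow C_1$ (with $C_1\in\CC$) along $M_1\hookrightarrow M$, which exhibits $M$ itself as a subobject of an extension of $C_1$ by $M/M_1\in\CC$, hence $M\in\Sub\CC$. So that obstacle is surmountable, though you should say ``pushout'' explicitly.

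The genuine gap is your base case. First, the expected conclusion ``$M\cong C$'' is simply false: take $\CC=\AA$, or any $\CC$ containing a proper inclusion between two of its objects. The goal is $M\in\CC$, not $Q=0$, so forcing some torsion-free quotient of $Q$ to vanish is beside the point. Second, and more seriously, the condition $M\in{}^\perp(\FFF(\CC)\cap\CC^\perp)$ is a bare $\Hom$-orthogonality; it gives you no control over the extension $0\to M\to C\to Q\to 0$, and decomposing $Q$ via your two torsion pairs produces pieces whose membership in $\CC$ you have no way to verify. In fact, since $\CC\subseteq{}^\perp(\CC^\perp)\subseteq{}^\perp(\FFF(\CC)\cap\CC^\perp)$, the object $C$ itself already lies in the torsion class you are reflecting against, so the decomposition of $C$ is trivial and you learn nothing.

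What the paper does instead is the step you are missing: it rewrites the torsion-class side of the heart as
\[
{}^\perp\bigl(\FFF(\CC)\cap\CC^\perp\bigr)\;=\;{}^\perp\CC\vee\TTT(\CC)\;=\;\TTT({}^\perp\CC\cup\CC)\;=\;\Filt\Fac({}^\perp\CC\cup\CC),
\]
using Proposition~\ref{prop:torsbasic}. This replaces an opaque orthogonality by a concrete presentation: in the $\Fac$ case one gets a surjection $Y\oplus C_0\twoheadrightarrow M$ with $Y\in{}^\perp\CC$ and $C_0\in\CC$. Composing with $M\hookrightarrow C$ kills the $Y$-component, so $M$ is the image of a map $C_0\to C$ in $\CC$, and image-closure applies. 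A second induction (the paper's Step~2, on the $\Fac({}^\perp\CC\cup\CC)$-filtration length) then handles the passage from $\Fac$ to $\Filt\Fac$, using cokernel-closure and another pushout. Your proposal has no substitute for this mechanism.
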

\begin{proof}
  First, note that we have ${}^\perp(\FFF(\CC) \cap \CC^\perp) = {}^\perp\CC \vee \TTT(\CC) = \TTT({}^\perp\CC \cup \CC)$ by Proposition \ref{prop:torsbasic}.
  Hence it suffices to show $\CC = \FFF(\CC) \cap \TTT({}^\perp\CC \cup \CC)$. Clearly $\CC \subseteq\FFF(\CC) \cap \TTT({}^\perp\CC \cup \CC)$ holds, hence it is enough to prove the converse inclusion.
  We divide the proof into three steps.

  \un{(Step 1): $\CC \supseteq \Sub\CC\cap\Fac(^{\perp}\CC\cup\CC)$}. Let $X$ be an object in $\Sub\CC\cap\Fac(^{\perp}\CC\cup\CC)$.
  Then there exist a surjection $Y \oplus C \defl X$ with $Y \in {}^\perp \CC$ and $C \in \CC$, and an injection $X \hookrightarrow C'$ with $C' \in \CC$. Thus $X$ is the image of the composition $\varphi \colon Y\oplus C \defl X \hookrightarrow C'$. Since $\AA(Y,C') = 0$, the image of $\varphi$ is equal to that of the composition of $\varphi$ and the inclusion $C \hookrightarrow Y\oplus C$.
  Thus $X$ is the image of $C \to C'$. Since $\CC$ is closed under images, $X$ is in $\CC$.

  \un{(Step 2): $\CC \supseteq \Sub\CC\cap \TTT(^{\perp}\CC\cup\CC)$}. Let $X$ be in $\Sub\CC\cap \TTT({}^\perp\CC\cup\CC)$.
  Recall that $\TTT({}^\perp \CC \cup \CC) = \Filt(\Fac({}^\perp \CC \cup \CC))$ by Proposition \ref{prop:torsbasic} (2).
  We will show $X \in \CC$ by induction on the $\Fac(^{\perp}\CC\cup\CC)$-filtration length $n$ of $X$. If $n=1$, then this follows from (Step 1).
  Suppose $n>1$. There is a short exact sequence
    \[
    \begin{tikzcd}
      0 \rar & Y \rar & X \rar & Z \rar & 0,
    \end{tikzcd}
    \]
  where $Z$ is in $\Fac({}^\perp \CC\cup\CC)$ and the $\Fac(^{\perp}\CC\cup\CC)$-filtration length of $Y$ is smaller than $n$. Since $X$ is in $\Sub\CC$, so is $Y$. By the induction hypothesis, we have $Y\in\CC$. Since $X$ is in $\Sub\CC$, there is an injection $X \hookrightarrow C$ with $C \in\CC$. Then we obtain the following diagram
    \[
    \begin{tikzcd}
      0 \rar & Y \rar \dar[equal] & X \dar[hookrightarrow] \ar[rd, phantom, "{\rm p.o.}"] \rar & Z \rar \dar[hookrightarrow] & 0 \\
      0 \rar & Y \rar & C \rar & C/Y \rar & 0
    \end{tikzcd}
    \]
  where the right square is a pushout diagram. Since $\CC$ is closed under cokernels, we have $C/Y\in\CC$. Then $Z$ is in $\Sub\CC$, thus in $\CC$ by (Step 1). Now $X \in \CC$ holds since $\CC$ is closed under extensions.

  \un{(Step 3): $\CC \supseteq \FFF(\CC)\cap \TTT(^{\perp}\CC\cup\CC)$}. Let $X$ be in $\FFF(\CC)\cap \TTT(^{\perp}\CC\cup\CC)$.
  Recall that $\FFF(\CC) = \Filt(\Sub\CC)$ holds by Proposition \ref{prop:torsbasic} (2).
  We show $X \in \CC$ by the induction on the $\Sub\CC$-filtration length $n$ of $X$. If $n=1$, then this follows from (Step 2).
  Suppose $n>1$. There is a short exact sequence
    \[
    \begin{tikzcd}
      0 \rar & Y \rar & X \rar & Z \rar & 0,
    \end{tikzcd}
    \]
  where $Y$ is in $\Sub\CC$ and the $\Sub\CC$-filtration length of $Z$ is smaller than $n$. Since $X$ is in $\TTT(^{\perp}\CC\cup\CC)$, so is $Z$. By the induction hypothesis, we have $Z\in\CC$. Since $Y$ is in $\Sub\CC$, there is an injection $Y \hookrightarrow C$ with $C\in \CC$. Then we can take the following pushout diagram.
    \[
    \begin{tikzcd}
      0 \rar & Y \ar[rd, phantom, "{\rm p.o.}"] \rar \dar[hookrightarrow] & X \dar[hookrightarrow] \rar & Z \rar \dar[equal] & 0 \\
      0 \rar & C \rar & E \rar & Z \rar & 0
    \end{tikzcd}
    \]
  Since $\CC$ is extension-closed, we have $E\in\CC$, hence $X$ is in $\Sub\CC$. By (Step 2), we have $X\in\CC$.
\end{proof}

Next, we will characterize ICE intervals in $\torf\AA$ by using wide intervals. We will use the following construction of wide subcategories and its relation to wide intervals.
\begin{definition}\cite{IT,MS}
  Let $\FF$ be a torsion-free class in $\AA$. Then we define the subcategory $\WR(\FF)$ of $\AA$ as follows:
  \[
  \WR(\FF):= \{ X \in \FF \, | \, \text{for every map $\varphi \colon X \to F$ with $F \in \FF$, we have $\coker\varphi \in \FF$} \}.
  \]
\end{definition}
It was shown in \cite{MS} that $\WR(\FF)$ is a wide subcategory of $\AA$ (see also \cite[Theorem 4.5]{enomono}). Moreover, the following result is implicitly shown in \cite{AP}.
\begin{proposition}\label{prop:torfwide}
  Let $\FF$ be a torsion-free class in $\AA$. Then the following hold.
  \begin{enumerate}
    \item $\WR(\FF)$ coincides with the heart of $[\FF \cap \WR(\FF)^\perp, \FF]$.
    \item $\FF\cap \WR(\FF)^\perp = \FF \cap \bigcap \{ \FF'\in\torf\AA \, | \, \text{there is an arrow $\FF \to \FF'$ in $\Hasse(\torf\AA)$} \}$.
  \end{enumerate}
  In particular, $[\FF\cap \WR(\FF)^\perp,\FF]$ is a wide interval.
\end{proposition}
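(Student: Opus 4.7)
The plan is to prove (2) first, then invoke Proposition \ref{prop:wideint}(1) to obtain that $[\FF \cap \WR(\FF)^\perp, \FF]$ is a wide interval, and finally identify its heart with $\WR(\FF)$ to establish (1).

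For (2), the natural tool is the \emph{brick labeling} of the Hasse quiver of $\torf\AA$: each Hasse arrow $\FF \to \FF'$ is labeled by a unique simple object $S$ of $\WR(\FF)$ with $\FF' = \FF \cap S^\perp$, and every simple of $\WR(\FF)$ arises this way. This immediately gives
\[
\FF \cap \bigcap_{\FF \to \FF'} \FF' = \FF \cap \bigcap_{S \in \simp \WR(\FF)} S^\perp,
\]
and it remains to replace $\simp \WR(\FF)$ by all of $\WR(\FF)$ inside the perp. For this one uses that $\WR(\FF)$ is an abelian length category closed under images in $\AA$: any nonzero morphism $Y \to X$ with $Y \in \WR(\FF)$ has image in $\WR(\FF)$, which contains a simple subobject of $\WR(\FF)$, yielding a nonzero map from $\simp\WR(\FF)$ to $X$. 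Hence $\FF \cap (\simp\WR(\FF))^\perp = \FF \cap \WR(\FF)^\perp$.

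With (2) established, Proposition \ref{prop:wideint}(1) yields that $[\FF \cap \WR(\FF)^\perp, \FF]$ is a wide interval, so its heart $\HH := \FF \cap {}^\perp(\FF \cap \WR(\FF)^\perp)$ is a wide subcategory of $\AA$. The inclusion $\WR(\FF) \subseteq \HH$ is immediate: $\WR(\FF) \subseteq \FF$ by definition, and $\FF \cap \WR(\FF)^\perp \subseteq \WR(\FF)^\perp$ gives $\WR(\FF) \subseteq {}^\perp\WR(\FF)^\perp \subseteq {}^\perp(\FF \cap \WR(\FF)^\perp)$. For the reverse inclusion, let $X \in \HH$ and $\varphi \colon X \to F$ with $F \in \FF$; one must show $\coker\varphi \in \FF$. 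By Lemma \ref{lem:interval}(1) applied to $\FF = \HH * (\FF \cap \WR(\FF)^\perp)$, choose a short exact sequence $0 \to H \to F \to Z \to 0$ with $H \in \HH$ and $Z \in \FF \cap \WR(\FF)^\perp$. Since $X \in {}^\perp(\FF \cap \WR(\FF)^\perp)$, the composition $X \to F \to Z$ vanishes, so $\varphi$ factors as $X \xrightarrow{\tilde\varphi} H \hookrightarrow F$. Because $\HH$ is wide, $\coker\tilde\varphi \in \HH \subseteq \FF$, and the induced sequence
\[
0 \to \coker\tilde\varphi \to \coker\varphi \to Z \to 0
\]
coming from $\im\varphi \subseteq H \subseteq F$ exhibits $\coker\varphi$ as an extension of two objects of $\FF$; by extension-closure, $\coker\varphi \in \FF$. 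Hence $X \in \WR(\FF)$, completing (1); the final "in particular" claim is then immediate from (2) together with Proposition \ref{prop:wideint}(1).

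The main obstacle I expect is Step 1, the brick labeling: while classical in the torsion/$\tau$-tilting literature, it must be invoked with care, and the passage from $\simp\WR(\FF)$ to $\WR(\FF)$ requires the small argument above about images. Once (2) is secured, the remaining identification of the heart is a clean combination of Proposition \ref{prop:wideint} with Lemma \ref{lem:interval}(1), with no further new ideas needed.
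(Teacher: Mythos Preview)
Your argument is correct, but the route differs from the paper's. The paper proves (1) first by a direct citation of \cite[Proposition 6.3]{AP}, and then deduces (2) from (1) together with \cite[Theorem 5.2, Propositions 3.2, 6.5]{AP}: once $[\FF\cap\WR(\FF)^\perp,\FF]$ is known to be wide, it is a meet interval, and the set of Hasse arrows out of $\FF$ is already contained in this interval. You reverse the order: you establish (2) directly via the description of Hasse arrows $\FF\to\FF'$ as $\FF\cap S^\perp$ for $S\in\simp\WR(\FF)$ (this is exactly the content of \cite[Propositions 3.2, 6.5]{AP}, so you are using the same external input, only phrased as ``brick labeling''), and then give a self-contained proof of (1) using Lemma~\ref{lem:interval}(1) and the wideness of $\HH$. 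Your direct argument for $\HH\subseteq\WR(\FF)$ is essentially the ``Claim'' in the paper's proof of Theorem~\ref{thm:iceintchar} (3)$\Rightarrow$(1), so you are in effect reproving \cite[Proposition 6.3]{AP} in this special case rather than citing it. Two minor remarks: the passage from $(\simp\WR(\FF))^\perp$ to $\WR(\FF)^\perp$ can be said more quickly by noting $\WR(\FF)=\Filt(\simp\WR(\FF))$, though your image argument is also valid (a quotient in $\FF$ of an object of $\WR(\FF)$ lies in $\WR(\FF)$, since cokernels of compositions through a surjection agree); and your proof of (1) genuinely depends on first knowing $\HH$ is wide, so the logical order (2) $\Rightarrow$ wide interval $\Rightarrow$ (1) is essential and you have it right.
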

\begin{proof}
  (1)
  This immediately follows from (the torsion-free version of) \cite[Proposition 6.3]{AP}.

  (2)
  Define $\FF^-:=\FF \cap \WR(\FF)^\perp$. Then the interval $[\FF^-, \FF]$ in $\torf\AA$ is a wide interval by (1). Thus it is a meet interval in their terminology by \cite[Theorem 5.2]{AP}, that is,
  \[
  \FF^- = \FF \cap \bigcap \{ \FF'\in[\FF^-,\FF] \, | \, \text{there is an arrow $\FF \to \FF'$ in $\Hasse(\torf\AA)$} \}.
  \]
  On the other hand, by \cite[Propositions 3.2, 6.5]{AP}, we have
  \begin{align*}
  \{ \FF'\in[\FF^-,\FF] \, | \, \text{there is an arrow $\FF \to \FF'$ in $\Hasse(\torf\AA)$} \} \\
  =
  \{ \FF'\in \torf\AA \, | \, \text{there is an arrow $\FF \to \FF'$ in $\Hasse(\torf\AA)$} \}.
  \end{align*}
  Thus the assertion follows.
\end{proof}

For a fixed $\FF \in \torf\AA$, this interval $[\FF\cap \WR(\FF)^\perp,\FF]$ is the maximal wide interval of the form $[\GG,\FF]$ by Proposition \ref{prop:wideint} (1).
We define $\FF^-$ as follows:
\[
\FF^- := \FF \cap \WR(\FF)^\perp = \FF \bigcap \{ \FF'\in\torf\AA \, | \, \text{there is an arrow $\FF \to \FF'$ in $\Hasse(\torf\AA)$} \}.
\]
Note that this construction of $\FF^-$ only depends on the lattice structure of $\torf\AA$. Also note that $\FF^- =  \bigcap \{ \FF'\in\torf\AA \, | \, \text{there is an arrow $\FF \to \FF'$ in $\Hasse(\torf\AA)$} \}$ if there is at least one arrow in $\Hasse(\torf\AA)$ starting at $\FF$.
Now we are ready to state purely lattice-theoretical characterization of ICE intervals.
\begin{theorem}\label{thm:iceintchar}
  Let $[\GG,\FF]$ be an interval in $\torf\AA$. Then the following are equivalent.
  \begin{enumerate}
    \item $\FF^- \subseteq \GG \subseteq \FF$ holds.
    \item There is a torsion-free class $\GG'$ with $\GG' \subseteq \GG \subseteq \FF$ and $[\GG',\FF]$ is a wide interval.
    \item $[\GG,\FF]$ is an ICE interval.
  \end{enumerate}
  In the situation of {\upshape (2)}, we have that $\HH_{[\GG,\FF]}$ is a torsion class in $\HH_{[\GG',\FF]}$.
  In particular, if $[\GG,\FF]$ is an ICE interval, then $\HH_{[\GG,\FF]}$ is a torsion class in $\WR(\FF)$.
\end{theorem}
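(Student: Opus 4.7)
The plan is to establish the cycle $(1)\Rightarrow(2)\Rightarrow(3)\Rightarrow(1)$, and to extract the final two sentences of the theorem as by-products of $(2)\Rightarrow(3)$. The implication $(1)\Rightarrow(2)$ is immediate: I will take $\GG':=\FF^-$, for which Proposition \ref{prop:torfwide} already guarantees that $[\FF^-,\FF]$ is a wide interval, and the hypothesis places $\GG$ inside it.

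For $(2)\Rightarrow(3)$, I will transport $\GG$ through the lattice isomorphism $[\GG',\FF]\xrightarrow{\sim}\torf\WW$ of Proposition \ref{prop:wideint} (2), where $\WW:=\HH_{[\GG',\FF]}$. Lemma \ref{lem:preserveheart} identifies $\HH_{[\GG,\FF]}$ with the heart $\HH_{[\GG\cap{}^\perp\GG',\,\WW]}$ computed inside $\WW$, and Lemma \ref{lem:torsint} (2) then expresses this heart as ${}^\perp(\GG\cap{}^\perp\GG')$, a torsion class of $\WW$. This is precisely the parenthetical statement that $\HH_{[\GG,\FF]}$ is a torsion class in $\HH_{[\GG',\FF]}$; Lemma \ref{lem:torsice} then promotes it to an ICE-closed subcategory of $\AA$. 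The ``in particular'' clause drops out by specialising $\GG':=\FF^-$, which is legal once (1) is known and which makes $\HH_{[\GG',\FF]}=\WR(\FF)$ by Proposition \ref{prop:torfwide}.

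The hard direction is $(3)\Rightarrow(1)$. Writing $\CC:=\HH_{[\GG,\FF]}$, the identities $\GG=\FF\cap\CC^\perp$ from Lemma \ref{lem:interval} (2) and $\FF^-=\FF\cap\WR(\FF)^\perp$ reduce $\FF^-\subseteq\GG$ to the inclusion $\CC\subseteq\WR(\FF)$. To prove this I will take an arbitrary $X\in\CC$ and a morphism $\varphi\colon X\to F$ with $F\in\FF$, and verify that $\coker\varphi\in\FF$. Lemma \ref{lem:interval} (1) supplies a short exact sequence $0\to C\to F\to D\to 0$ with $C\in\CC$ and $D\in\GG$; since $X\in{}^\perp\GG$, the composite $X\to F\to D$ vanishes, so $\varphi$ factors as $X\xrightarrow{\psi}C\hookrightarrow F$. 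A direct comparison of cokernels then yields a short exact sequence $0\to\coker\psi\to\coker\varphi\to D\to 0$, in which $\coker\psi\in\CC\subseteq\FF$ by cokernel-closedness of $\CC$, whence $\coker\varphi\in\FF$ by extension-closedness of $\FF$. The main obstacle is exactly this short factor-and-cokernel computation, for it is where the ICE-closedness hypothesis is used in an essential way; everything else is formal bookkeeping within the heart/interval/torsion-class dictionary assembled in Section \ref{sec:2}.
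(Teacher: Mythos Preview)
Your proposal is correct and follows essentially the same route as the paper's own proof: the cycle $(1)\Rightarrow(2)\Rightarrow(3)\Rightarrow(1)$, with $(1)\Rightarrow(2)$ via $\GG':=\FF^-$, $(2)\Rightarrow(3)$ via the lattice isomorphism of Proposition~\ref{prop:wideint}(2) plus Lemma~\ref{lem:preserveheart} and Lemma~\ref{lem:torsint}(2), and $(3)\Rightarrow(1)$ by reducing $\FF^-\subseteq\GG$ to $\CC\subseteq\WR(\FF)$ and verifying the latter by the factor-through-$\CC$ argument yielding the short exact sequence $0\to\coker\psi\to\coker\varphi\to D\to 0$. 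The only cosmetic difference is that the paper draws the explicit pushout diagram for your ``direct comparison of cokernels''; you should note that the orthogonal in your expression ${}^\perp(\GG\cap{}^\perp\GG')$ is taken inside $\WW$, not in $\AA$.
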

\begin{proof}
  (1) $\Rightarrow$ (2):
  $\GG':= \FF^-$ satisfies the condition by Proposition \ref{prop:torfwide}.

  (2) $\Rightarrow$ (3):
  Put $\WW:=\HH_{[\GG',\FF]}$ for simplicity, which is a wide subcategory of $\AA$.
  Since $[\GG',\FF]$ is a wide interval, we have an isomorphism of posets
  \[
  \begin{tikzcd}[column sep = large]
    {[\GG',\FF]} \rar["\Phi"] & \torf\WW
  \end{tikzcd}
  \]
  defined as $\Phi(-) = (-) \cap {}^\perp \GG'$ by Proposition \ref{prop:wideint} (2).
  Moreover, $\Phi$ preserves hearts by Lemma \ref{lem:preserveheart}.
  Therefore, we have $\HH_{[\GG,\FF]} = \HH_{[\Phi(\GG),\Phi(\FF)]}$ by $\GG' \subseteq \GG \subseteq \FF$, where the second $\HH$ is considered inside $\WW$.
  Since $\Phi(\FF) = \FF \cap {}^\perp\GG' = \HH_{[\GG',\FF]} = \WW$ holds, $\HH_{[\GG,\FF]}$ coincides with $\HH_{[\Phi(\GG),\WW]}$, which is a torsion class in $\WW$ by Lemma \ref{lem:torsint}.
  Thus it is an ICE-closed subcategory of $\AA$ by Lemma \ref{lem:torsice}.

  (3) $\Rightarrow$ (1):
  Suppose that $[\GG,\FF]$ is an ICE interval, and put $\WW:=\WR(\FF)$.
  First, we will show the following claim.

  {\bf (Claim): $\HH_{[\GG,\FF]} \subseteq \WW$.}

  \emph{Proof of the claim.}
  Let $X$ be an object in $\HH_{[\GG,\FF]}$. We have $X \in \HH_{[\GG,\FF]} \subseteq \FF$, thus in order to show $X \in \WW$, it suffices to show that every map $\varphi\colon X \to F$ with $F \in \FF$ satisfies $\coker\varphi\in\FF$.
  By Lemma \ref{lem:interval} we have $\FF = \HH_{[\GG,\FF]} * \GG$, thus we obtain the following short exact sequence
  \[
  \begin{tikzcd}
    & & X \dar["\varphi"] \\
    0 \rar & H \rar["\iota"] & F \rar["\pi"] & G \rar & 0
  \end{tikzcd}
  \]
  with $H \in \HH_{[\GG,\FF]}$ and $G \in \GG$. Then since $X \in \HH_{[\GG,\FF]} = \FF \cap {}^\perp\GG$, we have $\pi \varphi = 0$. Thus $\varphi$ factors through $\iota$, and we obtain the following commutative diagram.
  \[
  \begin{tikzcd}
    & X \rar[equal] \dar["\ov{\varphi}"']& X \dar["\varphi"] \\
    0 \rar & H \ar[rd, phantom, "{\rm p.o.}"] \rar["\iota"]\dar & F \rar["\pi"] \dar & G \dar[equal]\rar & 0 \\
    0 \rar & \coker \ov{\varphi} \dar \rar & \coker\varphi \dar \rar & G \rar & 0 \\
    & 0 & 0
  \end{tikzcd}
  \]
  The bottom left square is checked to be a pushout square, hence the bottom sequence is a short exact sequence.
  Since $\HH_{[\GG,\FF]}$ is closed under cokernels, we have $\coker\ov{\varphi} \in \HH_{[\GG,\FF]} \subseteq \FF$. Then we obtain $\coker\varphi \in \FF$ since $G \in \GG \subseteq \FF$ and $\FF$ is extension-closed. $\qedb$

  By the claim, we have $\WW^\perp \subseteq \HH_{[\GG,\FF]}^\perp$, which implies $\FF^- = \FF\cap\WW^\perp \subseteq \FF \cap \HH_{[\GG,\FF]}^\perp = \GG$ (the last equality follows from Lemma \ref{lem:interval}). Thus we obtain (1).

  The remaining assertions follow from the proof of (2) $\Rightarrow$ (3) and Proposition \ref{prop:torfwide}.
\end{proof}
As an immediate corollary, we obtain one of the main results in this paper:
\begin{corollary}\label{cor:widetors}
  Let $\CC$ be a subcategory of $\AA$. Then the following are equivalent.
  \begin{enumerate}
    \item $\CC$ is an ICE-closed subcategory of $\AA$.
    \item $\CC$ is a torsion class in a wide subcategory $\WR(\FFF(\CC))$.
    \item $\CC$ is a torsion class in some wide subcategory of $\AA$.
  \end{enumerate}
\end{corollary}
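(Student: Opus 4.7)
The plan is to cycle through the three implications, with the nontrivial content supplied by Proposition~\ref{prop:arashi} and Theorem~\ref{thm:iceintchar}.

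For (1)$\Rightarrow$(2), I would start from an ICE-closed subcategory $\CC$ and apply Proposition~\ref{prop:arashi} to realize it as a heart: $\CC = \HH_{[\FFF(\CC)\cap\CC^\perp,\,\FFF(\CC)]}$. Since the heart is ICE-closed, the interval $[\FFF(\CC)\cap\CC^\perp,\,\FFF(\CC)]$ is an ICE interval in $\torf\AA$ by definition. Then the last sentence of Theorem~\ref{thm:iceintchar} applies directly with $\FF = \FFF(\CC)$, giving that $\CC = \HH_{[\FFF(\CC)\cap\CC^\perp,\,\FFF(\CC)]}$ is a torsion class in $\WR(\FFF(\CC))$.

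The implication (2)$\Rightarrow$(3) is immediate, since $\WR(\FFF(\CC))$ is a wide subcategory of $\AA$ (as recalled right before Proposition~\ref{prop:torfwide}). For (3)$\Rightarrow$(1), we simply invoke Lemma~\ref{lem:torsice}: any torsion class in any wide subcategory of $\AA$ is ICE-closed in $\AA$.

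I do not expect any genuine obstacle, since all the work has already been done: Proposition~\ref{prop:arashi} exhibits the canonical interval realizing $\CC$, and the ``In particular'' clause at the end of Theorem~\ref{thm:iceintchar} is tailored exactly to this situation. The only thing to be careful about is to check that the specific interval produced by Proposition~\ref{prop:arashi} lies in the right form to feed into Theorem~\ref{thm:iceintchar}, i.e.\ that its top endpoint is indeed $\FFF(\CC)$ so that $\WR(\FF) = \WR(\FFF(\CC))$ in the conclusion.
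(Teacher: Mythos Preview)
Your proposal is correct and matches the paper's proof essentially verbatim: the paper also invokes Proposition~\ref{prop:arashi} to realize $\CC$ as the heart of an interval with top $\FFF(\CC)$, then applies the final clause of Theorem~\ref{thm:iceintchar} for (1)$\Rightarrow$(2), calls (2)$\Rightarrow$(3) trivial, and cites Lemma~\ref{lem:torsice} for (3)$\Rightarrow$(1).
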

\begin{proof}
  (1) $\Rightarrow$ (2):
  Let $\CC$ be an ICE-closed subcategory. By Proposition \ref{prop:arashi}, it is a heart of some interval in $\torf\AA$ of the form $[\GG,\FFF(\CC)]$.
  Then Theorem \ref{thm:iceintchar} shows that $\CC$ is a torsion class in $\WR(\FFF(\CC))$.

  (2) $\Rightarrow$ (3): Trivial.

  (3) $\Rightarrow$ (1): This is Lemma \ref{lem:torsice}.
\end{proof}
The construction of a wide subcategory $\WR(\FFF(\CC))$ from an ICE-closed subcategory $\CC$ is somewhat technical, but we will see later in Theorem \ref{thm:classifyice} that it coincides with the smallest wide subcategory containing $\CC$.

Theorem \ref{thm:iceintchar} together with Proposition \ref{prop:arashi} gives a way to realize all ICE-closed subcategories as hearts if the classification of all torsion(-free) classes is given:
\begin{corollary}\label{cor:ice}
  All ICE-closed subcategories of $\AA$ can be obtained by the following process.
  \begin{itemize}
    \item Choose a torsion-free class $\FF$ in $\AA$.
    \item Compute $\FF^- := \FF \bigcap \{ \FF'\in\torf\AA \, | \, \text{there is an arrow $\FF \to \FF'$ in $\Hasse(\torf\AA)$} \}$.
    \item For each $\GG \in [\FF^-,\FF]$, the heart $\HH_{[\GG,\FF]} = \FF \cap {}^\perp\GG$ is an ICE-closed subcategory.
  \end{itemize}
  Dually, all ICE-closed subcategories of $\AA$ can be obtained by the following process.
  \begin{itemize}
    \item Choose a torsion class $\UU$ in $\AA$.
    \item Compute $\UU^+ := \UU \bigvee \{ \UU'\in\tors\AA \, | \, \text{there is an arrow $\UU \ot \UU'$ in $\Hasse(\tors\AA)$} \}$.
    \item For each $\TT \in [\UU,\UU^+]$, the heart $\HH_{[\UU,\TT]} = \TT \cap \UU^\perp$ is an ICE-closed subcategory.
  \end{itemize}
\end{corollary}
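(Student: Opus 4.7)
The plan is to deduce the corollary by assembling Proposition \ref{prop:arashi} (which realizes every ICE-closed subcategory as the heart of an interval anchored at $\FFF(\CC)$) with Theorem \ref{thm:iceintchar} (which characterizes ICE intervals by the inequality $\FF^- \subseteq \GG \subseteq \FF$), and then dualizing via the anti-isomorphism $(-)^\perp \colon \tors\AA \xrightarrow{\sim} \torf\AA$ from Proposition \ref{prop:torsbasic}(3).

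For the torsion-free version I would prove two inclusions. That every subcategory produced by the process is ICE-closed is immediate from Theorem \ref{thm:iceintchar}, implication (1)$\Rightarrow$(3): any $\GG \in [\FF^-,\FF]$ satisfies $\FF^- \subseteq \GG \subseteq \FF$, so $[\GG,\FF]$ is an ICE interval and its heart $\HH_{[\GG,\FF]}$ is ICE-closed in $\AA$. Conversely, for an arbitrary $\CC \in \ice\AA$, Proposition \ref{prop:arashi} yields $\CC = \HH_{[\GG,\FF]}$ with $\FF := \FFF(\CC)$ and $\GG := \FFF(\CC) \cap \CC^\perp$; since this heart is ICE-closed by hypothesis, Theorem \ref{thm:iceintchar}, implication (3)$\Rightarrow$(1), forces $\FF^- \subseteq \GG$, placing $\GG$ in $[\FF^-,\FF]$ and exhibiting $\CC$ as an output of the process with these choices.

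For the dual statement I would transport the torsion-free version along the order-reversing bijection $(-)^\perp$, using Remark \ref{rem:heart} to identify $\HH_{[\UU,\TT]} = \HH_{[\TT^\perp,\UU^\perp]}$. Under $(-)^\perp$, an arrow $\FF \to \FF'$ in $\Hasse(\torf\AA)$ corresponds to an arrow $\UU' \to \UU$ in $\Hasse(\tors\AA)$, where $\FF = \UU^\perp$ and $\FF' = \UU'^\perp$; joins are interchanged with meets, so the definition $\FF^- = \FF \cap \bigcap\{\FF' \mid \FF \to \FF'\}$ translates precisely into $\UU^+ = \UU \vee \bigvee\{\UU' \mid \UU \ot \UU'\}$. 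Consequently an interval $[\GG,\FF]$ with $\GG \in [\FF^-,\FF]$ corresponds to an interval $[\UU,\TT]$ with $\TT \in [\UU,\UU^+]$, and the two descriptions of the process coincide.

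The proof is essentially assembly of results already established in Sections \ref{sec:2} and \ref{sec:3}; the only minor subtlety is the bookkeeping around the lattice anti-isomorphism $(-)^\perp$, namely verifying that it reverses Hasse arrows and swaps joins with meets, so that $\UU^+$ on the torsion side really does match $\FF^-$ on the torsion-free side. This verification is routine, so I anticipate no substantive obstacle.
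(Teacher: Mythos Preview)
Your proposal is correct and follows essentially the same approach as the paper: the paper's proof also cites Theorem \ref{thm:iceintchar} together with Proposition \ref{prop:arashi} for the torsion-free half, and then invokes the anti-isomorphisms $(-)^\perp \colon \tors\AA \rightleftarrows \torf\AA$ of Proposition \ref{prop:torsbasic} and Remark \ref{rem:heart} for the dual half. Your write-up is simply more explicit about the two inclusions and the bookkeeping under $(-)^\perp$, but the logical content is identical.
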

\begin{proof}
  The former half is clear from Theorem \ref{thm:iceintchar} and Proposition \ref{prop:arashi}.
  The latter half follows from the anti-isomorphisms $(-)^\perp \colon \tors\AA \rightleftarrows \torf\AA \colon {}^\perp(-)$ of posets in Proposition \ref{prop:torsbasic}, which preserve hearts by Remark \ref{rem:heart}.
\end{proof}

We end this section with examples of computations of ICE intervals for a finite-dimensional algebra. In the rest of this section, $k$ denotes an algebraically closed field.
For a finite-dimensional $k$-algebra $\Lambda$, we denote by $\mod\Lambda$ the category of finitely generated right $\Lambda$-modules, and we simply write $\tors\Lambda$ instead of $\tors(\mod\Lambda)$.

For actual computations, the \emph{brick labeling} of $\tors\AA$ introduced in \cite{DIRRT} is useful and intuitive. Recall that a \emph{brick $B$} in $\AA$ is an object such that $\End_\AA(B)$ is a division ring.
\begin{proposition}[{\cite[Theorems 3.3, 3.4]{DIRRT}}]\label{prop:label}
  Let $q \colon \TT \to \UU$ be an arrow in $\Hasse (\tors\AA)$. Then there is a unique brick $B_q$ in $\HH_{[\UU,\TT]}$ up to isomorphism, and $\HH_{[\UU,\TT]} = \Filt B_q$ holds.
\end{proposition}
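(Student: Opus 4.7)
The plan is to realize $B_q$ as an object of minimal positive length inside the heart $\HH := \HH_{[\UU,\TT]}$, verify via the image-closure of hearts (Lemma~\ref{lem:torsint}(3)) that it is a brick, and then leverage the Hasse-cover hypothesis to obtain both $\HH = \Filt B_q$ and the uniqueness of $B_q$.

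First I would check that $\HH$ is nonempty: since $\UU \subsetneq \TT$, any $X \in \TT \setminus \UU$ decomposes via the torsion pair $(\UU,\UU^\perp)$ as a short exact sequence $0 \to tX \to X \to X/tX \to 0$ with $tX \in \UU$ and $X/tX \in \UU^\perp$; since $\TT$ is quotient-closed, $X/tX \in \TT \cap \UU^\perp = \HH$ and it is nonzero. I would then pick $B_q \in \HH$ of minimal positive length. For any $f \in \End_\AA(B_q)$ the image $\im f$ lies in $\HH$ by Lemma~\ref{lem:torsint}(3), so minimality forces $\im f = 0$ or $\im f = B_q$; in the latter case $f$ is surjective, hence an isomorphism by additivity of length. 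Thus $\End_\AA(B_q)$ is a division ring and $B_q$ is a brick.

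Second, the Hasse-cover hypothesis lets me show $\HH = \Filt B_q$ as follows. For any nonzero $M \in \HH$ the torsion class $\TTT(\UU \cup \{M\})$ lies in $[\UU,\TT]$ and cannot equal $\UU$ (as $0 \neq M \in \UU^\perp$ forces $M \notin \UU$), hence equals $\TT$. Combining with $\TTT(\CC) = \Filt \Fac \CC$ from Proposition~\ref{prop:torsbasic}(2) and observing that any quotient of $U \oplus M^k$ with $U \in \UU$ is an extension of an object in $\Fac M$ by an object in $\UU$, I obtain $\TT = \Filt(\UU \cup \Fac M)$. Applied with $M = B_q$, any $N \in \HH$ acquires a filtration $0 = N_0 \subset \cdots \subset N_n = N$ with factors in $\UU \cup \Fac B_q$; the top factor $N/N_{n-1}$ cannot lie in $\UU$, else the quotient map would contradict $N \in \UU^\perp$, so it lies in $\Fac B_q$, and the subobject $N_{n-1}$ is still in $\UU^\perp$ since torsion-free classes are subobject-closed. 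An induction on $n$ reduces the task to showing $\Fac B_q \cap \UU^\perp \subseteq \Filt B_q$, for which I would pick a minimal surjection $B_q^k \twoheadrightarrow Q$ and analyse its kernel inductively, using extension-closure of $\HH$ and the minimal-length property of $B_q$ to rule out nontrivial quotients.

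Third, uniqueness is immediate once $\HH = \Filt B_q$ is established: any brick $B' \in \HH$ lies in $\Filt B_q$, but a brick admitting a nontrivial $B_q$-filtration would possess a nonzero, non-invertible endomorphism (project onto and then include the bottom subobject), contradicting the brick property; thus $B' \cong B_q$. The main obstacle is the reduction $\Fac B_q \cap \UU^\perp \subseteq \Filt B_q$, where one must control precisely how kernels of maps $B_q^k \to Q$ interact with the torsion-free class $\UU^\perp$; the remainder of the proof is essentially formal, flowing from image- and extension-closure of hearts together with the Hasse-cover property of $[\UU,\TT]$, which is exactly what forces $\TTT(\UU \cup \{M\})$ to jump all the way to $\TT$ for each nonzero $M \in \HH$.
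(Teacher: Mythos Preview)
The paper does not prove this proposition; it is quoted from \cite{DIRRT} without argument, so there is no in-paper proof to compare against. Your overall strategy---take an object of minimal length in $\HH$, show it is a brick via image-closure of hearts, use the cover relation to force $\TTT(\UU \cup \{M\}) = \TT$, and deduce uniqueness from the brick property---is the standard one, and your brick and uniqueness steps are correct.

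The genuine gap is in your filtration argument for $\HH \subseteq \Filt B_q$. You write that the top factor $N/N_{n-1}$ cannot lie in $\UU$ because ``the quotient map would contradict $N \in \UU^\perp$'', but $N \in \UU^\perp$ only forbids nonzero maps \emph{from} $\UU$ \emph{into} $N$; it says nothing about quotients of $N$ landing in $\UU$. The correct observation concerns the \emph{bottom} factor: if $N_1 \in \UU$ then the inclusion $N_1 \hookrightarrow N$ contradicts $N \in \UU^\perp$. With that fix, however, your induction still does not close, since $N/N_1$ need not remain in $\UU^\perp$. Your declared ``main obstacle'' $\Fac B_q \cap \UU^\perp \subseteq \Filt B_q$ is likewise not a true reduction: because $\Fac B_q \subseteq \TT$, the left-hand side already sits inside $\HH$, so this is a special case of the goal rather than a stepping stone toward it; and even granting it, the induction on the $(\UU \cup \Fac B_q)$-filtration does not terminate for the reason just given.

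The missing ingredient is to apply your cover argument \emph{twice}, once in each direction. Filtering $B_q$ (not $N$) by $\UU \cup \Fac M$ and looking at the bottom factor shows, via minimality of $B_q$, that every nonzero $M \in \HH$ admits a surjection $M \twoheadrightarrow B_q$. The same reasoning applied to the Hasse arrow $\UU^\perp \to \TT^\perp$ in $\torf\AA$ gives an injection $B_q \hookrightarrow M$. With both facts in hand, take $B_q \hookrightarrow N$ and let $N' \subseteq N$ be the preimage of the $\UU$-torsion part of $N/B_q$; one checks $N' \in \HH$ and $N'/B_q \in \UU$. Now the surjection $N' \twoheadrightarrow B_q$ either splits the inclusion (forcing $N'/B_q \in \UU \cap \UU^\perp = 0$) or factors through $N'/B_q \in \UU$ (impossible since $B_q \in \UU^\perp$), so $N' = B_q$, hence $N/B_q \in \HH$, and induction on length finishes.
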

In the above situation, we call $B_q$ the \emph{brick label} of $q$.
If there is a path from $\TT$ to $\UU$, then we have the following description of $\HH_{[\UU,\TT]}$ using the brick labeling.
\begin{proposition}\label{prop:brickseq}
  Let $\UU \xleftarrow{B_1} \bullet \xleftarrow{B_2} \cdots \bullet \xleftarrow{B_n} \TT$ be a path in $\Hasse(\tors\AA)$, where each $B_i$ is the brick label. Then we have
  \[
  \HH_{[\UU,\TT]} = (\Filt B_1) * (\Filt B_2) * \cdots * (\Filt B_n) = \Filt \{ B_1,\dots,B_n\}.
  \]
\end{proposition}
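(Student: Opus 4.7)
The plan is to induct on the length $n$ of the Hasse path. Write it as $\UU = \TT_0 \xleftarrow{B_1} \TT_1 \xleftarrow{B_2} \cdots \xleftarrow{B_n} \TT_n = \TT$; the base case $n=1$ is exactly Proposition \ref{prop:label}. The main tool I will need is the analogue of Lemma \ref{lem:interval}(1) for intervals in $\tors\AA$, namely $\TT' = \UU' * \HH_{[\UU',\TT']}$ for an interval $[\UU',\TT']$ in $\tors\AA$. This is dual to Lemma \ref{lem:interval}(1) via Remark \ref{rem:heart}, but can also be verified directly: for $X \in \TT'$, the canonical sequence $0 \to uX \to X \to X/uX \to 0$ arising from the torsion pair $(\UU',(\UU')^\perp)$ has $uX \in \UU'$ and $X/uX \in \TT' \cap (\UU')^\perp = \HH_{[\UU',\TT']}$, since $\TT'$ is closed under quotients.

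To establish the first equality, I would argue both inclusions in the inductive step. For $\subseteq$, given $X \in \HH_{[\UU,\TT]} = \TT_n \cap \TT_0^\perp$, the identity $\TT_n = \TT_{n-1} * \HH_{[\TT_{n-1},\TT_n]} = \TT_{n-1} * \Filt B_n$ gives a short exact sequence $0 \to Y \to X \to Z \to 0$ with $Y \in \TT_{n-1}$ and $Z \in \Filt B_n$. Since the torsion-free class $\TT_0^\perp$ is closed under subobjects, $Y \in \TT_{n-1} \cap \TT_0^\perp = \HH_{[\TT_0,\TT_{n-1}]}$, which by the inductive hypothesis equals $(\Filt B_1) * \cdots * (\Filt B_{n-1})$, so $X \in (\Filt B_1) * \cdots * (\Filt B_n)$. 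For $\supseteq$, if $X$ sits in such a sequence with $Y \in \HH_{[\TT_0,\TT_{n-1}]}$ and $Z \in \HH_{[\TT_{n-1},\TT_n]}$, then both summands lie in $\TT_n$; moreover $Y \in \TT_0^\perp$ directly, and $Z \in \TT_{n-1}^\perp \subseteq \TT_0^\perp$ since $\TT_0 \subseteq \TT_{n-1}$. Extension-closure of both $\TT_n$ and the torsion-free class $\TT_0^\perp$ then places $X$ in $\HH_{[\UU,\TT]}$.

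For the second equality, observe that each $B_i \in \HH_{[\TT_{i-1},\TT_i]} \subseteq \HH_{[\UU,\TT]}$ because $\TT_i \subseteq \TT_n$ and $\TT_{i-1}^\perp \subseteq \TT_0^\perp$. Since hearts of intervals in $\tors\AA$ are extension-closed (by the dual of Lemma \ref{lem:torsint}(3), via Remark \ref{rem:heart}), this forces $\Filt\{B_1,\dots,B_n\} \subseteq \HH_{[\UU,\TT]}$. The opposite containment $(\Filt B_1) * \cdots * (\Filt B_n) \subseteq \Filt\{B_1,\dots,B_n\}$ is immediate by concatenating filtrations, so together with the first equality all three subcategories coincide. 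The only subtle point is the bookkeeping between the $\torf\AA$-side formulations of Section \ref{sec:2} and the $\tors\AA$-side statement of the proposition; once Remark \ref{rem:heart} is invoked to transport Lemma \ref{lem:interval}(1) to the torsion-class side, the rest is a clean induction.
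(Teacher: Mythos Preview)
Your proof is correct and follows essentially the same approach as the paper's. The paper isolates the identity $\HH_{[\TT_1,\TT_3]} = \HH_{[\TT_1,\TT_2]} * \HH_{[\TT_2,\TT_3]}$ for $\TT_1 \subseteq \TT_2 \subseteq \TT_3$ in $\tors\AA$ (proved just as in Lemma \ref{lem:interval}(1)) and then inducts; your inductive step with $\TT_1 = \TT_0$, $\TT_2 = \TT_{n-1}$, $\TT_3 = \TT_n$ is exactly a spelled-out instance of this identity, and your handling of the second equality via extension-closure of hearts is the natural complement the paper leaves implicit.
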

\begin{proof}
  For $\TT_1 \subseteq \TT_2 \subseteq \TT_3$ in $\tors\AA$, one can easily check that $\HH_{[\TT_1,\TT_3]} = \HH_{[\TT_1,\TT_2]} * \HH_{[\TT_2,\TT_3]}$ in the same way as in Lemma \ref{lem:interval} (1). Then the assertion inductively follows from this.
  See also \cite[Theorem 3.5]{enobinv} for the related argument.
\end{proof}
\begin{example}\label{ex:a2}
  Let $Q$ be a quiver $1 \ot 2$ and consider its path algebra $kQ$ as in Example \ref{ex:intro}. Then the following is $\Hasse(\tors kQ)$ together with its brick labeling.
  \[
  \begin{tikzcd}[row sep=tiny]
    &\TT_4:=\mod\Lambda \ar[dl, "1"'] \ar[ddr, "2"] \\
    \TT_3:=\add \{ \sst{2\\1}, \sst{2} \} \ar[dd, "\sst{2\\1}"'] \\
    & & \TT_1:=\add 1 \ar[ddl, "1"]\\
    \TT_2:=\add 2 \ar[dr, "2"'] \\
    & 0
  \end{tikzcd}
  \]
  By Theorem \ref{thm:iceintchar} (more precisely, its dual Theorem \ref{thm:b}), we have that $[0,\TT_2]$ and $[\TT_1,\TT_4]$ are both ICE intervals,
  but Proposition \ref{prop:brickseq} shows that their hearts are $\Filt 2 = \add 2$. Thus different intervals may give the same ICE-closed subcategory.
  Using Corollary \ref{cor:ice} and Proposition \ref{prop:brickseq}, we obtain the following list of ICE intervals and their hearts.
  \begin{table}[htp]
    \begin{tabular}{C|C}
      \text{ICE intervals in $\tors kQ$} & \text{ICE-closed subcategories} \\ \hline \hline
      [\TT,\TT] \text{ for every $\TT \in \tors kQ$} & 0 \\ \hline
      [0,\TT_1],[\TT_3,\TT_4] & \Filt 1 = \TT_1 \\ \hline
      [0,\TT_2], [\TT_1,\TT_4] & \Filt 2 = \TT_2  \\ \hline
      [0,\TT_3] & \Filt \{\sst{2}, \sst{2\\1}\} = \TT_3 \\ \hline
      [0,\TT_4] & \Filt \{1,2\} = \TT_4   \\ \hline
      [\TT_2,\TT_3] & \Filt \{\sst{2\\1}\} = \add \sst{2\\1}
    \end{tabular}
  \end{table}
\end{example}

\begin{example}
  Let $Q$ be a 2-Kronecker quiver, that is, $Q:1\leftleftarrows2$, and consider its path algebra $kQ$.
  For any integer $m\geq 1$, we set
  \[
    P_{m}=\left\{ \begin{array}{ll}
      \tau^{-(m-1)/2}P(1) & ({\rm m \ is \ odd}) \\
      \tau^{-(m-2)/2}P(2) & ({\rm m \ is \ even})
    \end{array} \right. , \quad
   I_{m}=\left\{ \begin{array}{ll}
      \tau^{(m-1)/2}I(2) & ({\rm m \ is \ odd}) \\
      \tau^{(m-2)/2}I(1) & ({\rm m \ is \ even})
    \end{array} \right. ,
  \]
  where $P(i)$ (resp. $I(i)$) is the indecomposable projective (resp. injective) module
  corresponding to the vertex $i$, and $\tau$ is the Auslander-Reiten translation.  We denote by $R_{\lambda}$ the indecomposable regular brick corresponding to $\lambda\in \P^{1}(k)$.
  For each $X \subseteq \P^1(k)$, define $\TT(X):=\TTT(\{ R_\lambda\}_{\lambda \in X})$ for $X \neq \varnothing$ and $\TT(\varnothing):= \bigcap_{X \neq \varnothing}\TT(X) = \add \{ I_m\}_{m=1}^{\infty}$.
  Then the complete list of torsion classes in $\mod kQ$ is as follows:
  $\mod kQ = \Fac (P_1 \oplus P_2)$, $\Fac P_i = \Fac (P_i \oplus P_{i+1})$ for $i \geq 2$, $\add 1 = \add P_1$, $\Fac I_i$ for $i \geq 1$, and $\TT(X)$ for $X \subseteq\P^1(k)$.
  By using the description of $\Hasse (\tors kQ)$ given in \cite[Example 3.6]{DIRRT}, we can obtain the following list of ICE intervals and their hearts.
  \begin{table}[htp]
    \begin{tabular}{c|c}
      ICE intervals in $\tors kQ$ & ICE-closed subcategories \\ \hline \hline
      $[0,\TT]$ for each $\TT \in \tors kQ$ & $\TT$ \\ \hline
      $[\add P_1,\mod kQ]$ & $\add 2 = \add I_1$ \\ \hline
      $[\Fac I_i,\Fac I_{i+1}]$ for $i \geq 1$ & $\add I_{i+1}$ \\ \hline
      $[\Fac (P_{i+1}\oplus P_{i+2}),\Fac (P_i \oplus P_{i+1})]$ for $i \geq 1$
      & $\add P_i$  \\ \hline
      $[\TT(X),\TT(Y)]$ for $X \subsetneq Y \subseteq \P^1(k)$ & $\Filt \{ R_\lambda \}_{\lambda \in Y \setminus X}$ \\ \hline
      $[\TT,\TT]$ for each $\TT \in \tors kQ$ & $0$
    \end{tabular}
  \end{table}
\end{example}
See Section \ref{sec:ex} for further examples of ICE intervals and ICE-closed subcategories.

\section{Wide $\tau$-tilting modules}\label{sec:4}
In this section, we develop a way to classify ICE-closed subcategories of a given abelian category, especially $\mod\Lambda$ for an artin algebra $\Lambda$.
By Corollary \ref{cor:widetors}, we have a surjection
\[
\{ (\WW,\CC) \, | \, \text{ $\WW \in \wide\AA$ and $\CC \in \tors\WW$} \} \defl \ice\AA
\]
defined by $(\WW,\CC) \mapsto \CC$. However, this map is not an injection in general. For example, for any simple object $S$ in $\AA$, we have that $\Filt S$ is a torsion class in every wide subcategory which contains $S$.
One of the aim of this section is to modify the above map to obtain a bijection.

\subsection{Sincere torsion classes}
Let us begin by introducing \emph{sincere subcategories} of an abelian length category.
\begin{definition}
  Let $\AA$ be an abelian length category and $\CC$ a subcategory of $\AA$.
  \begin{enumerate}
    \item $\simp\AA$ denotes the set of isomorphism classes of simple objects in $\AA$.
    \item For $M \in \AA$, we denote by $\supp M$ the subset of $\simp\AA$ consisting of composition factors of $M$. In addition, we define $\supp\CC$ by $\supp\CC := \bigcup_{M \in \CC} \supp M$.
    \item $\CC$ is \emph{sincere} if $\supp \CC = \simp\AA$ holds, that is, every simple object in $\AA$ appears as a composition factor of some object in $\CC$.
    \item $\stors\AA$ denotes the set of sincere torsion classes in $\AA$.
  \end{enumerate}
  When we want to emphasize the abelian category $\AA$, we will write as $\supp_\AA M$ and $\supp_\AA \CC$.
\end{definition}
Then we can show that any ICE-closed subcategory can be realized as a \emph{sincere} torsion class in some wide subcategory:
\begin{proposition}\label{prop:icestors}
  Let $\CC$ be an ICE-closed subcategory of $\AA$. Then there is some wide subcategory $\WW$ of $\AA$ such that $\CC$ is a sincere torsion class in $\WW$.
\end{proposition}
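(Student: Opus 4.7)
The plan is to begin with the wide subcategory $\WW := \WR(\FFF(\CC))$ of $\AA$, in which $\CC$ is already a torsion class by Corollary \ref{cor:widetors}, and then to carve out a smaller wide subcategory in which $\CC$ becomes sincere. Concretely, I would set $\SS := \supp_\WW \CC$ and define
\[
\WW' := \{X \in \WW \mid \supp_\WW X \subseteq \SS\},
\]
the full subcategory of $\WW$ consisting of objects whose $\WW$-composition factors all appear in some object of $\CC$.

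The first step is to verify that $\WW'$ is a wide subcategory of $\AA$. Since composition factors in $\WW$ are inherited by subobjects, quotients and extensions, $\WW'$ is a Serre subcategory of $\WW$, and in particular wide in $\WW$. Wideness composes along wide inclusions: for any morphism $f$ in $\WW'$, the kernel, image and cokernel computed in $\AA$ agree with those computed in $\WW$ (as $\WW$ is wide in $\AA$), which in turn lie in $\WW'$ (as $\WW'$ is wide in $\WW$); the same argument handles extensions. Hence $\WW' \in \wide\AA$.

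Next I would show that $\CC$ is a sincere torsion class in $\WW'$. The inclusion $\CC \subseteq \WW'$ is immediate from the definition of $\SS$, and since quotients and extensions in $\WW'$ coincide with those in $\WW$, the fact that $\CC \in \tors \WW$ transfers to $\CC \in \tors\WW'$. For sincerity, the crucial observation is that $\WW'$ is closed under subobjects in $\WW$, so that an object of $\WW'$ is simple in $\WW'$ precisely when it is simple in $\WW$ and already lies in $\SS$; this yields $\simp\WW' = \SS$. Combined with the equality $\supp_{\WW'} C = \supp_\WW C$ for $C \in \WW'$ (since any composition series of $C$ in $\WW$ still consists of objects of $\WW'$), this gives $\supp_{\WW'}\CC = \SS = \simp\WW'$, as required.

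The only real subtlety is the transitivity of wideness along the composable inclusions $\WW' \subseteq \WW \subseteq \AA$ in the first step; once this is in hand, the remainder is a formal bookkeeping exercise about composition factors inside Serre subcategories.
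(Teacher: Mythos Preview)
Your proof is correct and follows essentially the same route as the paper: starting from a wide subcategory $\WW$ in which $\CC$ is a torsion class (via Corollary~\ref{cor:widetors}), you pass to the Serre subcategory $\WW' = \Filt(\supp_\WW \CC)$ of $\WW$, which is wide in $\AA$ and in which $\CC$ is sincere by construction. The paper's argument is the same but terser, simply invoking that a Serre subcategory of a wide subcategory is wide in $\AA$ without spelling out the transitivity, and not detailing the identification $\simp\WW' = \SS$ as you do.
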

\begin{proof}
  By Corollary \ref{cor:widetors}, there is a wide subcategory $\WW'$ such that $\CC$ is a torsion class in $\WW'$. Consider $\WW:= \Filt(\supp_{\WW'} \CC)$. Since $\WW$ is a Serre subcategory of $\WW'$, it is a wide subcategory of $\AA$, and clearly $\CC$ is a torsion class in $\WW$. Moreover, $\CC$ is a sincere subcategory of $\WW$ by construction.
  Thus $\CC$ is a sincere torsion class in $\WW$.
\end{proof}

Now we can state the first result of this section. For a subcategory $\CC$ of $\AA$, we denote by $\kker\CC$ the subcategory of $\AA$ consisting of kernels of morphisms in $\CC$.
\begin{theorem}\label{thm:classifyice}
  We have a bijection
  \[
  \begin{tikzcd}
    \{ (\WW,\CC) \, | \, \text{$\WW \in \wide\AA$ and $\CC \in \stors\WW$} \} \rar["\Theta", "\sim"'] & \ice\AA
  \end{tikzcd}
  \]
  given by $\Theta(\WW,\CC)= \CC$. Its inverse is given by $\CC \mapsto (\la\CC\ra_{\wide},\CC)$, where $\la\CC\ra_{\wide}$ is the smallest wide subcategory containing $\CC$. Moreover, we have
  $\la\CC\ra_{\wide} = \Filt(\kker\CC) = \WR(\FFF(\CC))$.
\end{theorem}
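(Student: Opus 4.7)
The plan is to verify surjectivity immediately from Proposition~\ref{prop:icestors}, then establish injectivity together with the formula $\la\CC\ra_\wide = \Filt(\kker\CC)$, and finally identify this wide subcategory with $\WR(\FFF(\CC))$. The central observation---and the step I expect to be the main subtlety---is that although the naive guess $\simp\WW \subseteq \CC$ fails in general, whenever $\CC$ is a sincere torsion class in a wide subcategory $\WW$ one has $\simp\WW \subseteq \kker\CC$. Indeed, for $S \in \simp\WW$, sincerity yields some $C \in \CC$ with a composition series $0 = C_0 \subsetneq \cdots \subsetneq C_n = C$ in $\WW$ and an index $i$ with $S \iso C_i/C_{i-1}$; the quotients $E := C/C_{i-1}$ and $E/S \iso C/C_i$ both lie in $\CC$ by torsion-closedness in $\WW$, exhibiting $S = \ker(E \twoheadrightarrow E/S) \in \kker\CC$.

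From this key fact, injectivity follows along with the first equality. Since $\WW$ is itself an abelian length category, we have $\WW = \Filt_\AA(\simp\WW) \subseteq \Filt(\kker\CC)$. Conversely, $\la\CC\ra_\wide$ is a wide subcategory containing $\CC$, hence is closed under kernels of its own morphisms and under extensions, so $\kker\CC \subseteq \la\CC\ra_\wide$ and therefore $\Filt(\kker\CC) \subseteq \la\CC\ra_\wide$. Minimality of $\la\CC\ra_\wide$ among wide subcategories containing $\CC$ gives $\la\CC\ra_\wide \subseteq \WW$, so chaining yields $\WW = \la\CC\ra_\wide = \Filt(\kker\CC)$; in particular $\WW$ is intrinsically determined by $\CC$.

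To identify this with $\WR(\FFF(\CC))$: Corollary~\ref{cor:widetors} asserts that $\CC$ is a torsion class in $\WR(\FFF(\CC))$, giving $\la\CC\ra_\wide \subseteq \WR(\FFF(\CC))$. For the reverse inclusion it suffices, by the same extension-generation argument, to prove $\simp\WR(\FFF(\CC)) \subseteq \kker\CC$. Let $S$ be simple in $\WR(\FFF(\CC))$. If $S \in \CC$ there is nothing to check; otherwise $S \in \FFF(\CC) = \Filt(\Sub\CC)$ together with the simplicity of $S$ forces $S \in \Sub\CC$, so there is a monomorphism $S \hookrightarrow C$ with $C \in \CC$. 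Inside the wide subcategory $\WR(\FFF(\CC))$ the cokernel $C/S$ is a quotient of $C$ and therefore belongs to the torsion class $\CC$ by Corollary~\ref{cor:widetors}, so $S = \ker(C \twoheadrightarrow C/S) \in \kker\CC$, completing the proof.
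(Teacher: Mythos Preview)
Your proof is correct and follows essentially the same strategy as the paper: surjectivity via Proposition~\ref{prop:icestors}, the key inclusion $\simp\WW \subseteq \kker\CC$ established through composition factors and quotient-closure, the chain $\WW \subseteq \Filt(\kker\CC) \subseteq \la\CC\ra_{\wide} \subseteq \WW$, and finally the identification with $\WR(\FFF(\CC))$ via Corollary~\ref{cor:widetors}.

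One step deserves an extra sentence of justification. You write that ``$S \in \FFF(\CC) = \Filt(\Sub\CC)$ together with the simplicity of $S$ forces $S \in \Sub\CC$.'' Be careful: $S$ is simple in $\WR(\FFF(\CC))$, not in $\AA$, and the $\Sub\CC$-filtration lives in $\AA$; its intermediate terms need not lie in $\WR(\FFF(\CC))$, so simplicity does not immediately collapse the filtration. What does work is that the top subquotient of such a filtration produces a nonzero map $S \to C$ for some $C \in \CC$, and since $C \in \WR(\FFF(\CC))$ and $S$ is simple there, this map is a monomorphism (in $\WR(\FFF(\CC))$, hence in $\AA$), giving $S \in \Sub\CC$. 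The paper reaches the same nonzero map by the contrapositive: if $\Hom_\AA(S,\CC)=0$ then $\CC \subseteq S^\perp$, whence $\FFF(\CC) \subseteq S^\perp$, contradicting $0 \neq S \in \FFF(\CC)$. Once this mono is in hand, your remaining argument coincides with the paper's.
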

\begin{proof}
  This map $\Theta$ is surjective by Proposition \ref{prop:icestors}.
  Let $\CC$ be a sincere torsion class in a wide subcategory $\WW$. We claim that $\WW = \Filt(\kker\CC)$ holds.

  Since $\CC$ is contained in $\WW$ and $\WW$ is closed under kernels and extensions, $\Filt(\kker\CC) \subseteq \WW$ holds.
  Conversely, let $S$ be a simple object in $\WW$. Then since $\CC$ is a sincere subcategory of $\WW$, there is some $M \in \CC$ which contains $S$ as a composition factor in $\WW$ (note that we consider composition series in the abelian category $\WW$ here, not in $\AA$).
  This implies that there are two subobjects $ M_1 < M_2 \leq M$ of $M$ satisfying $M_1,M_2, M/M_1,M/M_2 \in \WW$ and $M_2/M_1 \iso S$.
  Then we have the following short exact sequence in $\WW$.
  \[
  \begin{tikzcd}
    0 \rar & S \rar & M/M_1 \rar & M/M_2 \rar & 0
  \end{tikzcd}
  \]
  Since $\CC$ is a torsion class in $\WW$, we have $M/M_1, M/M_2 \in \CC$. This implies $S \in \kker\CC$. Therefore, we have $\simp\WW \subseteq \kker\CC$, which implies $\WW = \Filt(\simp\WW) \subseteq \Filt(\kker\CC)$. Thus $\WW = \Filt(\kker\CC)$ holds.

  Suppose that $\Theta(\WW_1,\CC) = \CC = \Theta(\WW_2,\CC)$ holds. Then we have $\WW_1 = \Filt(\kker\CC) = \WW_2$ because $\CC$ is a sincere torsion class in both $\WW_1$ and $\WW_2$. Thus $\Theta$ is injective, so $\Theta$ is a bijection.

  We have shown that for an ICE-closed subcategory $\CC$, there is some $\WW \in \wide\AA$ satisfying $\CC \in \stors\WW$, and this $\WW$ has turned out to be equal to $\Filt(\kker\CC)$.
  Therefore, $\Filt(\kker\CC)$ is always a wide subcategory of $\AA$. Moreover, any wide subcategory $\WW'$ containing $\CC$ must contain $\Filt(\kker\CC)$ since $\WW'$ is closed under kernels and extensions. Hence $\la\CC\ra_{\wide} = \Filt(\kker\CC)$ holds.

  Finally, we will show $\WR(\FFF(\CC)) = \la\CC\ra_{\wide}$. Put $\FF:= \FFF(\CC)$ for simplicity.
  To show this, by the previous argument, it suffices to show that $\CC$ is a sincere torsion class in $\WR(\FF)$.
  Since Corollary \ref{cor:widetors} shows that $\CC$ is a torsion class in $\WR(\FF)$, it is enough to prove that $\CC$ is a sincere subcategory of $\WR(\FF)$.
  Let $S$ be a simple object in an abelian category $\WR(\FF)$. Suppose that $\AA(S,C) = 0$ for all $C \in \CC$. This implies that $\CC$ is contained in a torsion-free class $S^\perp$. Then we have $\FFF(\CC) \subseteq S^\perp$ by the minimality of $\FFF(\CC)$. Since $0 \neq S$ belongs to $\WR(\FF) \subseteq \FF = \FFF(\CC)$, this is a contradiction.
  Therefore, there is some non-zero map $\varphi \colon S \to C$ with $C \in \CC$. Since $C \in \CC \subseteq \WR(\FF)$ and $S$ is simple in $\WR(\FF)$, this map is an injection by the Schur's lemma in the abelian category $\WR(\FF)$. Thus we obtain the following short exact sequence.
  \[
  \begin{tikzcd}
    0 \rar & S \rar["\varphi"] & C \rar & \coker\varphi \rar & 0
  \end{tikzcd}
  \]
  Since $\WR(\FF)$ is a wide subcategory, $\coker\varphi$ belongs to $\WR(\FF)$, hence the above exact sequence is a short exact sequence in $\WR(\FF)$. This shows that $S \in \supp_{\WR(\FF)}(C) \subseteq \supp_{\WR(\FF)}(\CC)$. Therefore, $\CC$ is a sincere subcategory of $\WR(\FF)$.
\end{proof}
Thanks to this, the classification of ICE-closed subcategories is completely decomposed into the following two steps.
\begin{enumerate}
  \item First determine $\wide\AA$, the set of all wide subcategories in $\AA$.
  \item For each $\WW \in \wide\AA$, find all sincere torsion classes in $\WW$.
\end{enumerate}
Of course, one cannot expect a general method to achieve this, but if $\AA$ is the module category of artin algebras, we can make use of $\tau$-tilting theory developed in \cite{AIR}.

\subsection{Wide $\tau$-tilting modules and doubly functorially finite ICE-closed subcategories}
In the rest of this section, \emph{we denote by $\Lambda$ an artin $R$-algebra over a commutative artinian ring $R$}, that is, $\Lambda$ is an $R$-algebra which is finitely generated as an $R$-module.
We often omit the base ring $R$ and simply say that $\Lambda$ is an artin algebra. We denote by $\mod\Lambda$ (resp. $\proj\Lambda$) the category of finitely generated right (resp. projective) $\Lambda$-modules, and by $\tau$ the Auslander-Reiten translation. For $M \in \mod\Lambda$, we denote by $|M|$ the number of non-isomorphic indecomposable direct summands of $M$.
We refer the reader to \cite{ASS,ARS} for the basics of the representation theory of artin algebras.

For an abelian length category $\AA$, we denote by $\ftors\AA$, $\sftors\AA$, and $\fwide\AA$ the posets of functorially finite torsion classes, sincere functorially finite torsion classes, and functorially finite wide subcategories of $\AA$ respectively.
We simply write $\tors\Lambda := \tors(\mod\Lambda)$, and similarly $\torf\Lambda$, $\ftors\Lambda$, $\sftors\Lambda$, $\ice\Lambda$, $\wide\Lambda$ and $\fwide\Lambda$.

For an extension-closed subcategory $\CC$ of $\AA$, we define the following notions of $\Ext$-projectives.
\begin{itemize}
  \item $P \in \CC$ is \emph{$\Ext$-projective in $\CC$} if $\Ext_\AA^1(P,\CC) = 0$. We denote by $\PP(\CC)$ the subcategory of $\CC$ consisting of $\Ext$-projective objects in $\CC$.
  \item $\CC$ \emph{has enough $\Ext$-projectives} if, for every $C \in \CC$, there is a short exact sequence in $\AA$
  \[
  \begin{tikzcd}
    0 \rar & C' \rar & P \rar & C \rar & 0
  \end{tikzcd}
  \]
  satisfying $P \in \PP(\CC)$ and $C' \in \CC$.
  \item $\CC$ \emph{has an $\Ext$-progenerator $P$} if $\CC$ has enough $\Ext$-projectives and $\PP(\CC) = \add P$ holds.
  \item If $\CC$ has an $\Ext$-progenerator, then we denote by $P(\CC)$ the unique basic $\Ext$-progenerator, or equivalently, $P(\CC)$ is a direct sum of non-isomorphic indecomposable $\Ext$-projective objects in $\CC$.
\end{itemize}

Every ICE-closed subcategory can be recovered from its $\Ext$-progenerator if it exists, as the following lemma says. For $M \in \AA$, we denote by $\ccok M$ the subcategory of $\AA$ consisting of cokernels of morphisms in $\add M$.
\begin{lemma}\label{lem:icecok}
  Let $\CC$ be an ICE-closed subcategory of $\AA$. If $\CC$ has an $\Ext$-progenerator $M$, then we have $\CC = \ccok M$.
\end{lemma}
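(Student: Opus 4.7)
The plan is to prove both inclusions $\ccok M \subseteq \CC$ and $\CC \subseteq \ccok M$ separately.

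The first inclusion is immediate from the ICE-closure: since $M \in \PP(\CC) \subseteq \CC$, we have $\add M \subseteq \CC$, and for any map $f \colon M_1 \to M_0$ in $\add M \subseteq \CC$, the cokernel $\coker f$ lies in $\CC$ because $\CC$ is closed under cokernels. Hence $\ccok M \subseteq \CC$.

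The second inclusion is where the hypothesis of having an $\Ext$-progenerator does the real work. For $C \in \CC$, I would apply the enough-$\Ext$-projectives property twice. First, pick a short exact sequence
\[
\begin{tikzcd}
0 \rar & C' \rar & M_0 \rar["\pi"] & C \rar & 0
\end{tikzcd}
\]
with $M_0 \in \add M$ and $C' \in \CC$. Since $C' \in \CC$, apply the property again to get
\[
\begin{tikzcd}
0 \rar & C'' \rar & M_1 \rar["\rho"] & C' \rar & 0
\end{tikzcd}
\]
with $M_1 \in \add M$. Composing the surjection $\rho \colon M_1 \twoheadrightarrow C'$ with the inclusion $C' \hookrightarrow M_0$ gives a map $\varphi \colon M_1 \to M_0$ whose image is $C'$, so $\coker \varphi = M_0/C' \iso C$. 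This exhibits $C$ as the cokernel of a map in $\add M$, giving $C \in \ccok M$.

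There is no real obstacle here; the only point that requires attention is the double resolution step, since a single short exact sequence $0 \to C' \to M_0 \to C \to 0$ only writes $C$ as a cokernel of a map whose source is $C'$ (not a priori in $\add M$), and one needs to resolve $C'$ once more to replace it by something in $\add M$. The fact that $\CC$ has \emph{enough} $\Ext$-projectives (not merely that $\PP(\CC) = \add M$) is crucial for this.
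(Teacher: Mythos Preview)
Your proof is correct and follows essentially the same approach as the paper's. The paper's proof of the reverse inclusion simply states that ``it is easy to check that every object in $\CC$ is a cokernel of a morphism in $\add M$,'' and your double-resolution argument is precisely the standard way to verify this.
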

\begin{proof}
  Since $\CC$ is closed under cokernels and $M \in \CC$, we have $\ccok M \subseteq \CC$. Conversely, since $M$ is an $\Ext$-progenerator of $\CC$, it is easy to check that every object in $\CC$ is a cokernel of a morphism in $\add M$. Thus $\CC \subseteq \ccok M$ holds.
\end{proof}

Using $\Ext$-projectives, \cite{AIR} established a bijection between functorially finite (sincere) torsion classes in $\mod\Lambda$ and a certain class of modules defined below.
\begin{definition}
  Let $\Lambda$ be an artin algebra and $M \in \mod\Lambda$.
  \begin{enumerate}
    \item $M$ is \emph{$\tau$-rigid} if $\Hom_\Lambda(M,\tau M) = 0$.
    \item $M$ is \emph{$\tau$-tilting} if $M$ is $\tau$-rigid and $|M| = |\Lambda|$ holds. We denote by $\ttilt\Lambda$ the set of isomorphism classes of basic $\tau$-tilting $\Lambda$-modules.
    \item For $P \in \proj\Lambda$, a pair $(M,P)$ is a \emph{$\tau$-tilting pair} if $M$ is $\tau$-rigid, $\Hom_\Lambda(P,M) = 0$ and $|M| + |P| = |\Lambda|$. We denote by $\ttiltp \Lambda$ the set of isomorphism classes of basic $\tau$-tilting pairs.
    \item $M$ is \emph{support $\tau$-tilting} if there is some $P \in \proj\Lambda$ such that $(M,P)$ is a $\tau$-tilting pair. We denote by $\sttilt\Lambda$ the set of isomorphism classes of basic support $\tau$-tilting $\Lambda$-modules.
    \item $\Lambda$ is \emph{$\tau$-tilting finite} if $\sttilt\Lambda$ is a finite set.
  \end{enumerate}
\end{definition}
Now we collect some results on $\tau$-tilting theory which we need. Note that it was assumed in \cite{AIR} that $\Lambda$ is a finite-dimensional algebra over an algebraically closed field, but the same proof applies for an artin algebra.
\begin{theorem}\label{thm:ttt}
  Let $\Lambda$ be an artin algebra. Then the following hold.
  \begin{enumerate}
    \item \cite[Theorem]{Sma} $\TT \in \tors\Lambda$ is functorially finite if and only if so is $\TT^\perp \in \torf\Lambda$.
    \item \cite[Theorem 5.10]{AS2} $\Fac M$ is a functorially finite torsion class for a $\tau$-rigid module $M$.
    \item \cite[Lemma 2.6, Theorem 2.7, Corollary 2.8, Corollary 2.13]{AIR}
    A torsion class is functorially finite if and only if it has an $\Ext$-progenerator, and we have the following commutative diagram with all the horizontal maps bijective.
    \[
    \begin{tikzcd}
      \ttiltp \Lambda \rar["{(M,P)\mapsto M}"] & [2em]
      \sttilt \Lambda  \rar["\Fac", shift left] & \lar["P", shift left] \ftors\Lambda, \\
      & \ttilt \Lambda \rar["\Fac", shift left] \ar[lu, "{M \mapsto (M,0)}", hookrightarrow] \uar[symbol=\subseteq] & \lar["P", shift left] \sftors\Lambda. \uar[symbol=\subseteq]
    \end{tikzcd}
    \]
    \item \cite[Theorem 3.8]{DIJ} $\Lambda$ is $\tau$-tilting finite if and only if $\tors\Lambda = \ftors\Lambda$ holds if and only if $\tors\Lambda$ is a finite set.
  \end{enumerate}
\end{theorem}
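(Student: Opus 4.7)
The plan is to handle each of the four parts separately, since they are results from independent sources assembled here as background for the rest of the paper. Throughout I would use that $\mod\Lambda$ is a Krull-Schmidt abelian length category with an exact duality $D \colon \mod\Lambda \to \mod\Lambda^{\op}$ given by the Matlis dual over the base ring $R$, and the Auslander--Reiten formula $\Hom_\Lambda(X,\tau Y) \cong D\overline{\Ext^1_\Lambda}(Y,X)$ (where $\overline{\Hom}$ denotes morphisms modulo projectives).

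For (1), I would exploit the duality $D$. Since $D$ interchanges left and right modules, torsion and torsion-free classes, and contravariantly and covariantly finite subcategories, it reduces showing ``$\TT$ contravariantly finite'' to ``$\TT^\perp$ covariantly finite'' after one dualizes. The remaining link (contravariantly finite torsion class $\Leftrightarrow$ covariantly finite torsion class) follows from the fact that in a torsion pair $(\TT,\FF)$ the canonical sequence $0 \to tM \to M \to M/tM \to 0$ makes a right $\TT$-approximation of $M$ out of any right $\TT$-approximation of $tM$, and one can build left $\FF$-approximations from right $\TT$-approximations via pushout. For (2), given a $\tau$-rigid $M$, I would show $\add M$ consists of $\Ext$-projectives of $\Fac M$ (immediate from the Auslander--Reiten formula and $\Hom(M,\tau M) = 0$), hence gives right $\Fac M$-approximations; left approximations of an arbitrary $N$ into $\Fac M$ are constructed by taking a minimal projective presentation of $N$ and factoring through an injective hull of $\tau M$, again using the Auslander--Reiten formula.

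For (3), the AIR bijection, the two directions are: starting from $M \in \sttilt\Lambda$, part (2) gives $\Fac M \in \ftors\Lambda$; conversely, starting from $\TT \in \ftors\Lambda$, one takes a minimal right $\TT$-approximation $f \colon T_\Lambda \to \Lambda$ of $\Lambda$ and defines $P(\TT):= T_\Lambda$. The technical content is to verify (a) $T_\Lambda$ is $\tau$-rigid (using that $f$ is surjective onto a projective cover of $\TT$, and applying the Auslander--Reiten formula to show $\Hom(T_\Lambda,\tau T_\Lambda)=0$), (b) the summands of $T_\Lambda$ cover the ``support'' of $\TT$ in a suitable sense so that $(T_\Lambda,P')$ with $P'$ the complement in $\proj\Lambda$ forms a $\tau$-tilting pair, and (c) the two operations are mutually inverse, which amounts to $\Fac T_\Lambda = \TT$ (by Ext-projectivity and surjectivity of $f$) and $P(\Fac M) = M$ (from uniqueness of the basic $\Ext$-progenerator). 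Sincerity is handled by restricting along the inclusion $\sttilt \Lambda \supseteq \ttilt \Lambda$.

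For (4), the direction ``$|\sttilt\Lambda|<\infty$ implies $\tors\Lambda = \ftors\Lambda$'' is the substantive part and is the main obstacle I foresee; the other equivalences are easier. My plan would follow the DIJ strategy: if some $\TT \in \tors\Lambda$ is not functorially finite, then inside $\TT$ one can find an infinite descending (or ascending) chain in $\tors\Lambda$ by repeatedly removing ``bricks'' via the brick labeling of the Hasse quiver. Each distinct functorially finite torsion class in the chain corresponds to a distinct support $\tau$-tilting module by (3), contradicting finiteness; alternatively one uses that functorially finite torsion classes are closed under interval intersections and invokes a compactness argument on the lattice. The converse ``$\ftors\Lambda = \tors\Lambda$ implies finiteness'' follows from the fact that the lattice $\tors\Lambda$ satisfies the ascending/descending chain condition precisely when every torsion class is functorially finite, combined with the bijection in (3) so that chains in $\tors\Lambda$ translate into mutation sequences in $\sttilt\Lambda$.
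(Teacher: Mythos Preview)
The paper does not prove this theorem at all: it is stated as a compilation of results from the cited sources \cite{Sma,AS2,AIR,DIJ}, with only the remark that the proofs in \cite{AIR} carry over from finite-dimensional algebras over algebraically closed fields to arbitrary artin algebras. So your decision to sketch arguments goes beyond what the paper does; that is fine in principle, but several of your sketches contain genuine errors.

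The most serious is in (3). You propose to construct $P(\TT)$ as the domain of a minimal right $\TT$-approximation $f\colon T_\Lambda \to \Lambda$. But the minimal right $\TT$-approximation of any module $X$ is simply the inclusion of the torsion radical $tX \hookrightarrow X$, so your $T_\Lambda$ is just $t\Lambda$, which is almost never the $\Ext$-progenerator. For instance, for $\Lambda = k[1\leftarrow 2]$ and $\TT = \add(S_2)$, one has $t\Lambda = 0$ while $P(\TT) = S_2$. The actual construction in \cite{AIR} goes via a \emph{left} $\TT$-approximation of $\Lambda$ (or, equivalently, via Bongartz completion of a generator of $\TT$), and the verification that the resulting module is support $\tau$-tilting is where the real work lies. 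Relatedly, in (2) you write that $\Ext$-projectivity of $M$ ``gives right $\Fac M$-approximations''; right approximations come for free from the torsion radical and do not use $\tau$-rigidity. What $\tau$-rigidity buys is that $\Fac M$ is closed under extensions (hence a torsion class) and is \emph{covariantly} finite; your sketch of the left approximation is too vague to assess. In (1), invoking the duality $D$ only swaps the two directions of the equivalence and does not by itself prove either one; the substantive step is Smal\o's argument that covariant finiteness of $\TT$ is equivalent to contravariant finiteness of $\TT^\perp$, which you gesture at but do not carry out. Given that the paper itself is content to cite these results, the cleanest fix is simply to do the same.
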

Our aim is to extend this bijection to include ICE-closed subcategories. By Theorem \ref{thm:classifyice}, we have to deal with both a wide subcategory $\WW$ and a sincere torsion class $\CC$ in $\WW$. To study $\WW$ and $\CC$, we can make use of Theorem \ref{thm:ttt} if the following two conditions are satisfied:
\begin{itemize}
  \item $\WW$ is equivalent to $\mod \Gamma$ for some artin algebra $\Gamma$, and
  \item $\CC$ is functorially finite in $\WW$.
\end{itemize}
The first condition is known to be equivalent to the functorial finiteness of $\WW$.
\begin{lemma}\label{lem:fwide}
  A wide subcategory $\WW$ of $\mod\Lambda$ is functorially finite if and only if $\WW$ has an $\Ext$-progenerator if and only if there is an artin algebra $\Gamma$ such that $\WW$ is equivalent to $\mod\Gamma$.
\end{lemma}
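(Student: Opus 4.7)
The plan is to establish the equivalences via (b) $\Leftrightarrow$ (c) and (a) $\Leftrightarrow$ (b). The first is essentially Morita/Gabriel recognition for abelian length categories: since $\WW$ is wide, the inclusion $\WW \hookrightarrow \mod\Lambda$ is exact and $\Ext^1_\WW(-,-)$ agrees with $\Ext^1_\Lambda(-,-)|_\WW$, so an Ext-progenerator $M$ of $\WW$ is exactly a projective generator of the abelian length category $\WW$. Gabriel--Mitchell recognition then produces an equivalence $\Hom_\Lambda(M,-) \colon \WW \xrightarrow{\sim} \mod\Gamma$ with $\Gamma := \End_\Lambda(M)^{\op}$. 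Since $M$ is finitely generated over the artinian base ring $R$, so is $\Gamma \subseteq \End_R(M)$, making $\Gamma$ an artin $R$-algebra; the converse takes the image of the regular module $\Gamma$ under the inverse equivalence as the Ext-progenerator.

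For (c) $\Rightarrow$ (a), I will use the equivalence to produce approximations explicitly. Given $X \in \mod\Lambda$, the counit $\Hom_\Lambda(M,X) \otimes_\Gamma M \to X$ of the adjunction $(-\otimes_\Gamma M,\,\Hom_\Lambda(M,-))$ is the candidate right $\WW$-approximation. Its source lies in $\WW$ because $\Hom_\Lambda(M,X)$ is finitely generated over $\Gamma$ (being a submodule of the $R$-finite module $\Hom_R(M,X)$) and the inverse equivalence $\mod\Gamma \to \WW$ is $-\otimes_\Gamma M$; the universal property of the approximation follows from the adjunction, which identifies $\Hom_\Lambda(W,-)$ applied to this counit with an identity for each $W \in \WW$. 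Left $\WW$-approximations come dually from the injective cogenerator of $\WW$ corresponding to the Matlis dual of $\Gamma$.

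For (a) $\Rightarrow$ (b), I construct an Ext-progenerator directly from the approximations. Since $\WW$ is covariantly finite, for each indecomposable projective $\Lambda$-module $P$ pick a minimal left $\WW$-approximation $g_P \colon P \to N_P$ and set $N := \bigoplus_P N_P$. Generation by $N$ is immediate: any surjection $\Lambda^n \twoheadrightarrow X$ with $X \in \WW$ factors through $N^n$ by the defining property of the left approximation, yielding a surjection $N^n \twoheadrightarrow X$ in $\WW$. The main obstacle is Ext-projectivity of $N$ in $\WW$, which is not automatic from the approximation property alone. I plan to overcome it by a bootstrapping argument: combine $N$ with the dual object $M := \bigoplus_I M_I$ obtained from right approximations of the indecomposable injectives, and show that $\Hom_\Lambda(M,-) \colon \WW \to \mod\End_\Lambda(M)^{\op}$ is exact and fully faithful via the approximation properties. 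This will force $M$ to be projective in $\WW$ and identify $N$ with its dual injective cogenerator, closing the loop to (c) and yielding the desired Ext-progenerator.
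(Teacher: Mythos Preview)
The paper does not actually prove this lemma; it defers to \cite[Proposition 4.12]{rigid}, so there is no in-paper argument to compare against. Your outline for (b) $\Leftrightarrow$ (c) and for (c) $\Rightarrow$ (a) is essentially the standard one and is fine (though the sentence ``applying $\Hom_\Lambda(W,-)$ to the counit gives an identity'' is not literally correct; what you really use is that the counit at $W\in\WW$ is an isomorphism, which together with naturality gives the factorisation).

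The genuine gap is in (a) $\Rightarrow$ (b). You correctly build $N=\bigoplus_P N_P$ from minimal left $\WW$-approximations $g_P\colon P\to N_P$ of the indecomposable projectives, and your generation argument is right. But you then declare $\Ext$-projectivity of $N$ to be ``the main obstacle \dots\ not automatic from the approximation property alone'' and propose a vague bootstrapping via right approximations of injectives and a functor $\Hom_\Lambda(M,-)$ (which is the wrong variance for an injective cogenerator $M$, so the plan as written is confused). In fact the obstacle dissolves immediately once you use \emph{minimality} together with projectivity of $P$: given any epimorphism $\pi\colon W\twoheadrightarrow N_P$ in $\WW$, projectivity of $P$ lifts $g_P$ through $\pi$ to some $h\colon P\to W$; since $W\in\WW$, the approximation property factors $h$ through $g_P$ as $h=k\,g_P$; then $\pi k\,g_P=g_P$, and left-minimality of $g_P$ forces $\pi k$ to be an automorphism, so $\pi$ splits. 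Hence each $N_P$ is projective in the abelian category $\WW$, i.e.\ $\Ext$-projective in $\WW$. Since $\WW$ is closed under kernels, the surjection $N^n\twoheadrightarrow X$ you already produced has kernel in $\WW$, so $N$ is an $\Ext$-progenerator. Replace your bootstrapping paragraph with this two-line argument and the proof is complete.
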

This seems to be a folklore result, and see \cite[Proposition 4.12]{rigid} for the proof.
The above argument leads to the following finiteness condition on ICE-closed subcategories.
\begin{definition}\label{def:dfice}
  Let $\CC$ be an ICE-closed subcategory of $\mod\Lambda$.
  \begin{enumerate}
    \item $\CC$ is \emph{doubly functorially finite} if there is a functorially finite wide subcategory $\WW$ of $\mod\Lambda$ such that $\CC$ is a functorially finite torsion class in $\WW$.
    \item $\dfice\Lambda$ denotes the set of doubly functorially finite ICE-closed subcategories of $\mod\Lambda$.
  \end{enumerate}
\end{definition}
We have the following characterization of the doubly functorial finiteness.
\begin{proposition}\label{prop:dfice}
  For an ICE-closed subcategory $\CC$ of $\mod\Lambda$, the following are equivalent.
  \begin{enumerate}
    \item $\CC$ is doubly functorially finite.
    \item There is some functorially finite wide subcategory $\WW$ such that $\CC$ is a sincere functorially finite torsion class in $\WW$.
    \item $\CC$ and $\WR(\FFF(\CC))$ are both functorially finite.
  \end{enumerate}
  Consequently, if $\CC$ and $\FFF(\CC)$ are functorially finite, then $\CC$ is doubly functorially finite.
\end{proposition}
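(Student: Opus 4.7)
The plan is to prove (1) $\Rightarrow$ (2) $\Rightarrow$ (3) $\Rightarrow$ (1) cyclically and then deduce the last sentence. For (1) $\Rightarrow$ (2), the idea is to refine the ambient wide subcategory so that $\CC$ becomes sincere, mimicking the construction used in Proposition \ref{prop:icestors}. Starting from a functorially finite wide $\WW$ with $\CC$ a functorially finite torsion class in $\WW$, I would set $\WW' := \Filt(\supp_\WW\CC)$, a Serre subcategory of $\WW$ (and hence wide in $\mod\Lambda$) in which $\CC$ is tautologically a sincere torsion class. Since $\WW \simeq \mod\Gamma$ for some artin algebra $\Gamma$ by Lemma \ref{lem:fwide}, the set $\simp\WW$ is finite, so $\WW'$ is cut out by a finite set of simples. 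Inside $\WW$, this Serre subcategory is functorially finite because the right (resp.\ left) $\WW'$-approximation of any $W\in\WW$ is just the largest subobject (resp.\ largest quotient) of $W$ lying in $\WW'$, which exists by noetherianness and closure under sub/quotient/extension. Composing right $\WW$-approximations in $\mod\Lambda$ with right $\WW'$-approximations in $\WW$ (and similarly on the other side) shows $\WW'$ is functorially finite in $\mod\Lambda$; restricting approximations from $\WW$ to $\WW'$ shows that $\CC$ remains functorially finite in $\WW'$.

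For (2) $\Rightarrow$ (3), the key point is that when $\CC$ is sincere in the ambient $\WW$, Theorem \ref{thm:classifyice} forces $\WW = \la\CC\ra_\wide = \WR(\FFF(\CC))$, so the functorial finiteness of $\WR(\FFF(\CC))$ comes for free. To obtain functorial finiteness of $\CC$ in $\mod\Lambda$, for any $X \in \mod\Lambda$ I would compose a right $\WW$-approximation $W\to X$ with a right $\CC$-approximation $C\to W$ in $\WW$, and verify that any $C'\to X$ with $C'\in\CC\subseteq\WW$ factors first through $W$ (by the $\WW$-approximation) and then through $C$ (by the $\CC$-approximation); a dual argument handles left approximations. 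For (3) $\Rightarrow$ (1), set $\WW := \WR(\FFF(\CC))$, which is functorially finite wide by hypothesis and contains $\CC$ as a torsion class by Corollary \ref{cor:widetors}. A right $\CC$-approximation $C\to W$ of any $W \in \WW$ taken in $\mod\Lambda$ has source in $\CC\subseteq \WW$, so it is automatically a right $\CC$-approximation in $\WW$, and symmetrically for left approximations; thus $\CC$ is functorially finite in $\WW$. Finally, the ``consequently'' clause reduces by (3) $\Rightarrow$ (1) to the statement that $\FFF(\CC)$ functorially finite implies $\WR(\FFF(\CC))$ functorially finite, which is the torsion-free incarnation of the Ingalls--Thomas--Marks--\v{S}\v{t}ov\'{\i}\v{c}ek bijection between functorially finite torsion(-free) classes and functorially finite wide subcategories.

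The main obstacle I anticipate is the (1) $\Rightarrow$ (2) step, specifically verifying that the Serre-subcategory refinement $\WW' = \Filt(\supp_\WW\CC)$ remains functorially finite in $\mod\Lambda$ rather than merely in $\WW$; this is where the passage between two nested levels of functorial finiteness needs care, and it is what makes the transitivity of ``approximation-closure'' through $\mod\Lambda \supseteq \WW \supseteq \WW'$ essential. The secondary subtlety is the invocation of $\WR$-preservation of functorial finiteness in the final clause, which, although standard, is nontrivial and should be cited (or recalled) rather than re-proved here.
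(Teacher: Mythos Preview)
Your proposal is correct and follows essentially the same route as the paper: the Serre-subcategory refinement for (1) $\Rightarrow$ (2), the identification $\WW = \WR(\FFF(\CC))$ via Theorem~\ref{thm:classifyice} for (2) $\Rightarrow$ (3), Corollary~\ref{cor:widetors} for (3) $\Rightarrow$ (1), and a citation for the final clause. The only cosmetic difference is that, for the functorial finiteness of the Serre refinement $\WW'$ in $\mod\Lambda$, the paper observes that a Serre subcategory of $\WW \simeq \mod\Gamma$ is itself equivalent to a module category and invokes Lemma~\ref{lem:fwide} directly, whereas you argue by composing approximations through $\WW$; both work.
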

\begin{proof}
  (1) $\Rightarrow$ (2):
  Clearly $\CC$ is functorially finite in $\mod\Lambda$.
  Since $\CC$ is doubly functorially finite, there is $\WW' \in \fwide\Lambda$ satisfying $\CC \in \ftors\WW'$. By the proof of Proposition \ref{prop:icestors}, there is a Serre subcategory $\WW$ of $\WW'$ such that $\CC$ is a sincere torsion class in $\WW$.
  On the other hand, since $\WW'$ is equivalent to the module category of some artin algebra by Lemma \ref{lem:fwide}, so is its Serre subcategory $\WW$. Thus $\WW$ is also functorially finite in $\mod\Lambda$ by the same lemma.

  (2) $\Rightarrow$ (3):
  Clearly $\CC$ is functorially finite in $\mod\Lambda$.
  By Theorem \ref{thm:classifyice}, we have $\WW = \WR(\FFF(\CC))$ since $\CC$ is a sincere torsion class in $\WW$. Thus $\WR(\FFF(\CC))$ is functorially finite.

  (3) $\Rightarrow$ (1):
  Clear from Corollary \ref{cor:widetors}.

  To prove the remaining assertion, it suffices to show that $\WR(\FFF(\CC))$ is functorially finite if so is $\FFF(\CC)$. We refer the reader to \cite[Proposition 2.28]{asai} or \cite[Lemma 3.8]{MS} for the proof of this fact. Instead, this also follows from Lemma \ref{lem:2-3} below and Proposition \ref{prop:torfwide} (1).
\end{proof}
For example, by using this criterion, we will see later in Proposition \ref{prop:ttfice} that $\ice\Lambda = \dfice\Lambda$ holds for a $\tau$-tilting finite algebra $\Lambda$.
\begin{remark}
  There may exist a functorially finite wide subcategory $\WW$ of $\mod\Lambda$ such that $\FFF(\WW)$ is not functorially finite (see \cite[Example 4.13]{asai}). Such a wide subcategory is doubly functorially finite, thus the converse of the last assertion in Proposition \ref{prop:dfice} does not hold in general.
\end{remark}
We are in a position to introduce wide $\tau$-tilting modules.
\begin{definition}\label{def:wtt}
  Let $\Lambda$ be an artin algebra.
  \begin{enumerate}
    \item For $\WW \in \fwide\Lambda$ and $M \in \WW$, fix an equivalence $F \colon \WW \equi \mod\Gamma$ for an artin algebra $\Gamma$ (which exists by Lemma \ref{lem:fwide}). We say that $M$ is \emph{$\tau_\WW$-tilting} if $F(M)$ is a $\tau$-tilting $\Gamma$-module.
    \item A pair $(\WW,M)$ is a \emph{wide $\tau$-tilting pair} if $\WW \in \fwide\Lambda$ and $M \in \WW$ is $\tau_\WW$-tilting. This pair is called \emph{basic} if so is $M$, and we denote by $\wttiltp \Lambda$ the set of isomorphism classes of basic wide $\tau$-tilting pairs.
    \item $M$ is a \emph{wide $\tau$-tilting module} if there is some $\WW \in \fwide\Lambda$ such that $M$ is $\tau_\WW$-tilting. We denote by $\wttilt\Lambda$ the set of isomorphism classes of basic wide $\tau$-tilting modules.
  \end{enumerate}
\end{definition}

The relation between support $\tau$-tilting modules and wide $\tau$-tilting modules is summarized as follows: support $\tau$-tilting modules are nothing but ``Serre" $\tau$-tilting modules, as the following proposition shows.
\begin{proposition}\label{prop:serrettilt}
  We have the following commutative diagram,
  \begin{equation}\label{eq:kakan}
    \begin{tikzcd}
      \wttiltp \Lambda \rar["{(\WW,M)\mapsto M}"] & [2em] \wttilt\Lambda \\
      \ttiltp\Lambda \uar["\iota", hookrightarrow] \rar["{(M,P) \mapsto M}", "\sim"'] & \sttilt\Lambda \uar[symbol=\subseteq]
    \end{tikzcd}
  \end{equation}
  where $\iota$ is defined by $\iota(M,P) = (P^\perp,M)$. Moreover, $\iota$ is injective, and its image consists of $(\WW,M) \in \wttiltp\Lambda$ such that $\WW$ is a Serre subcategory of $\mod\Lambda$. Consequently, the following are equivalent for $M \in \mod\Lambda$.
  \begin{enumerate}
    \item $M$ is a support $\tau$-tilting module.
    \item There is some Serre subcategory $\WW$ of $\mod\Lambda$ such that $(\WW,M)$ is a wide $\tau$-tilting pair.
  \end{enumerate}
\end{proposition}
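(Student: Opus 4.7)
The plan is to unwind the definition of $\iota$, using the standard identification of the Serre subcategory $P^\perp$ with the module category of a quotient algebra. Commutativity of \eqref{eq:kakan} is immediate, since both composites $\ttiltp\Lambda \to \wttilt\Lambda$ return the underlying module $M$; the only content in commutativity is to confirm that $\iota$ actually lands in $\wttiltp\Lambda$. For this, take $(M,P) \in \ttiltp\Lambda$ and write $P = e\Lambda$ for an idempotent $e \in \Lambda$ (possible since $P$ is basic projective). Then $P^\perp = \{X \in \mod\Lambda \mid Xe = 0\}$ coincides with $\mod(\Lambda/\la e \ra)$, so it is a Serre subcategory of $\mod\Lambda$ that is equivalent to the module category of the artin algebra $\Lambda/\la e\ra$, and is therefore functorially finite wide by Lemma \ref{lem:fwide}. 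The characterization of $\tau$-tilting pairs in \cite[Proposition 2.3]{AIR} says that $(M,P) \in \ttiltp\Lambda$ if and only if $M$ is a $\tau$-tilting module over $\Lambda/\la e \ra$, which under the canonical equivalence $P^\perp \equi \mod(\Lambda/\la e \ra)$ amounts to $M$ being $\tau_{P^\perp}$-tilting. Hence $\iota(M,P) \in \wttiltp\Lambda$.

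Injectivity of $\iota$ is then easy: for a basic projective $P \in \proj\Lambda$, the indecomposable summands of $P$ correspond to the simples $S$ with $\Hom_\Lambda(P,S) \neq 0$, and these are precisely the simples \emph{not} lying in $P^\perp$. Thus a basic projective $P$ is recoverable (up to isomorphism) from $P^\perp$, and $\iota(M_1,P_1) = \iota(M_2,P_2)$ forces $P_1 \iso P_2$ and $M_1 \iso M_2$. For the image description, the inclusion $\Image \iota \subseteq \{(\WW,M) \mid \WW \text{ Serre}\}$ is clear. Conversely, let $(\WW,M) \in \wttiltp\Lambda$ with $\WW$ Serre. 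Every Serre subcategory of $\mod\Lambda$ has the form $(e\Lambda)^\perp = \mod(\Lambda/\la e\ra)$ for a unique (up to isomorphism) basic idempotent $e$, namely the one whose primitive summands correspond to the simples not lying in $\WW$; setting $P := e\Lambda$ and running the equivalence $P^\perp \equi \mod(\Lambda/\la e\ra)$ in reverse, the fact that $M$ is $\tau_\WW$-tilting translates via \cite[Proposition 2.3]{AIR} to $(M,P) \in \ttiltp\Lambda$, with $\iota(M,P) = (\WW,M)$. The final equivalence for $M \in \mod\Lambda$ is then immediate by combining the horizontal bijection $\ttiltp\Lambda \xrightarrow{\sim} \sttilt\Lambda$ with this description of $\Image\iota$.

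The only genuinely non-formal step is the identification of $\tau_\WW$-tilting modules (defined via an abstract equivalence $F \colon \WW \equi \mod\Gamma$) with $\tau$-tilting modules over the concrete quotient $\Lambda/\la e\ra$. This rests on the well-definedness of $\tau_\WW$-tilting under change of $F$ together with the compatibility of Auslander--Reiten translates under the canonical equivalence $P^\perp \equi \mod(\Lambda/\la e\ra)$, both of which are standard and are precisely the content of \cite[Proposition 2.3]{AIR}; once these are granted, the rest of the proof is bookkeeping.
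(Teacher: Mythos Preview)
Your proof is correct and follows essentially the same route as the paper: identify $P^\perp$ with $\mod(\Lambda/\la e\ra)$ for an idempotent $e$ with $P\iso e\Lambda$, invoke \cite[Proposition 2.3]{AIR} to translate between $\tau$-tilting pairs and $\tau$-tilting modules over the quotient, and use the classification of Serre subcategories by idempotents for the image description. The only cosmetic difference is injectivity of $\iota$: the paper deduces it from commutativity of \eqref{eq:kakan} together with bijectivity of the bottom map $(M,P)\mapsto M$, whereas you recover $P$ directly from $P^\perp$ via the simples it misses; both arguments are short and valid.
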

\begin{proof}
  Note that the bottom map $\ttiltp\Lambda \to \sttilt\Lambda$ in (\ref{eq:kakan}) is a bijection by Theorem \ref{thm:ttt} (3).
  For an idempotent $e$ of $\Lambda$, we identify $\mod (\Lambda / \la e \ra)$ with the subcategory of $\mod\Lambda$ by the natural functor $\mod (\Lambda/\la e \ra) \hookrightarrow \mod\Lambda$.
  Then a subcategory $\WW$ of $\mod\Lambda$ is a Serre subcategory if and only if it arises in this way.

  First, we show that $\iota$ is well-defined.
  Let $(M,P)$ be a $\tau$-tilting pair. Then $P^\perp$ is a Serre subcategory of $\mod\Lambda$ since $P$ is projective, thus it is a wide subcategory. Moreover, there is an idempotent $e$ of $\Lambda$ satisfying $P^\perp = \mod (\Lambda/\la e \ra)$.
  Then $M$ is a $\tau$-tilting $(\Lambda/\la e \ra)$-module by \cite[Proposition 2.3]{AIR}, thus $(P^\perp,M)$ is a wide $\tau$-tilting pair. Thus $\iota$ gives a map $\ttiltp\Lambda \to \wttiltp\Lambda$.
  Since the bottom map of (\ref{eq:kakan}) is surjective, clearly every support $\tau$-tilting module is wide $\tau$-tilting, and we obtain the commutative diagram (\ref{eq:kakan}). Moreover, since the bottom map is bijective, $\iota$ is an injection.

  Next, we discuss the image of $\iota$. By the above argument, if $(\WW,M) \in \im \iota$, then $\WW$ is a Serre subcategory.
  Conversely, suppose that $(\WW,M)$ is a wide $\tau$-tilting pair and $\WW$ is a Serre subcategory. Then there is an idempotent $e$ of $\Lambda$ satisfying $\WW = \mod(\Lambda/\la e \ra)$, and it is easy to check that $(M,e\Lambda)$ is a $\tau$-tilting pair satisfying $\iota(M,e\Lambda) = (\WW,M)$.

  Finally, the equivalence of (1) and (2) is clear from the above discussion.
\end{proof}
We will see later in Proposition \ref{prop:wtiltpbij} that the map $\wttiltp\Lambda \to \wttilt\Lambda$ in (\ref{eq:kakan}) is a bijection.
Now we are ready to establish a bijection between wide $\tau$-tilting modules and doubly functorially finite ICE-closed subcategories.
\begin{theorem}\label{thm:wttiltbij}
  Let $\Lambda$ be an artin algebra. Then every doubly functorially finite ICE-closed subcategories has an $\Ext$-progenerator, and we have mutually inverse bijections
  \[
  \begin{tikzcd}[row sep = 0]
    \wttilt\Lambda \rar[shift left, "\ccok"] & \dfice\Lambda \lar[shift left, "P(-)"] \\
  \end{tikzcd}
  \]
  between wide $\tau$-tilting modules and doubly functorially finite ICE-closed subcategories of $\mod\Lambda$.
\end{theorem}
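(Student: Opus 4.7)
The plan is to relativize the Adachi--Iyama--Reiten bijection (Theorem~\ref{thm:ttt}~(3)) to the ambient wide subcategory $\WW := \la\CC\ra_{\wide} = \WR(\FFF(\CC))$ supplied by Theorem~\ref{thm:classifyice}. The sincereness of $\CC$ as a torsion class in $\WW$ is precisely what will upgrade the output of that bijection from a support $\tau$-tilting module to an honest $\tau$-tilting module over an artin algebra $\Gamma$ with $\WW \simeq \mod\Gamma$, which matches the definition of $\tau_\WW$-tilting.

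For the direction $\CC \mapsto P(\CC)$ I would start from $\CC \in \dfice\Lambda$, set $\WW := \la\CC\ra_{\wide}$, and observe via Theorem~\ref{thm:classifyice} that $\CC$ is a sincere torsion class in $\WW$ and via Proposition~\ref{prop:dfice} that $\WW$ is functorially finite in $\mod\Lambda$. Because any right or left $\CC$-approximation in $\mod\Lambda$ of an object of $\WW$ has codomain already in $\CC \subseteq \WW$ and thus serves as an approximation inside $\WW$, functorial finiteness of $\CC$ in $\mod\Lambda$ transfers to functorial finiteness in $\WW$. Fixing an equivalence $F \colon \WW \simeq \mod\Gamma$ via Lemma~\ref{lem:fwide}, Theorem~\ref{thm:ttt} then furnishes an $\Ext$-progenerator of $F(\CC)$ in $\mod\Gamma$, which by sincereness must be a genuine $\tau$-tilting $\Gamma$-module rather than merely support $\tau$-tilting; hence $P(\CC)$ is $\tau_\WW$-tilting. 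Note that the computation of $\Ext$-progenerators inside $\WW$ agrees with that inside $\mod\Lambda$, because short exact sequences in $\mod\Lambda$ with endpoints in $\WW$ lie entirely in $\WW$, so $\Ext^1_\WW$ and $\Ext^1_\Lambda$ coincide on $\WW$.

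For the direction $M \mapsto \ccok M$ I would take a wide $\tau$-tilting pair $(\WW, M)$ and work through $F \colon \WW \simeq \mod\Gamma$: then $F(M)$ is $\tau$-tilting in $\mod\Gamma$, and Theorem~\ref{thm:ttt}~(2)--(3) renders $\Fac_\WW M$ a sincere functorially finite torsion class in $\WW$ with $\Ext$-progenerator $M$. To identify $\Fac_\WW M$ with $\ccok M$, I would iterate the defining short exact sequence $0 \to X' \to M^a \to X \to 0$, with $X' \in \Fac_\WW M$, once more to obtain an $\add M$-presentation $M^b \to M^a \to X \to 0$, placing $X$ in $\ccok M$; the opposite inclusion is immediate, and cokernels computed in $\mod\Lambda$ coincide with cokernels computed in $\WW$ since $\WW$ is closed under cokernels. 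Lemma~\ref{lem:torsice} then puts $\ccok M$ in $\ice\Lambda$, and $\WW$ itself witnesses double functorial finiteness.

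The two constructions are mutually inverse by Lemma~\ref{lem:icecok} (giving $\ccok P(\CC) = \CC$) together with the identification $\ccok M = \Fac_\WW M$ from the previous paragraph, whose $\Ext$-progenerator is $M$ (giving $P(\ccok M) = M$). The main obstacle I anticipate is the bookkeeping around what ``sincere'' and ``functorially finite'' mean inside $\WW$ versus inside $\mod\Lambda$, together with the identification $\Fac_\WW M = \ccok M$; once the compatibilities of $\Ext^1$ and of cokernels between $\WW$ and $\mod\Lambda$ are recorded, the theorem reduces to a faithful transfer of Theorem~\ref{thm:ttt} along the equivalence $F$.
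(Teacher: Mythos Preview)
Your proposal is correct and follows essentially the same route as the paper's own proof: both directions are obtained by transporting the Adachi--Iyama--Reiten bijection (Theorem~\ref{thm:ttt}(3)) along an equivalence $\WW \simeq \mod\Gamma$ furnished by Lemma~\ref{lem:fwide}, with sincereness upgrading support $\tau$-tilting to $\tau$-tilting, and Lemma~\ref{lem:icecok} identifying $\Fac_\WW M$ with $\ccok M$. The only cosmetic difference is that the paper invokes Proposition~\ref{prop:dfice}(2) directly to obtain a functorially finite wide $\WW$ in which $\CC$ is a sincere functorially finite torsion class, whereas you reach the same conclusion via Theorem~\ref{thm:classifyice} and Proposition~\ref{prop:dfice}(3) and then spell out the transfer of functorial finiteness, the $\Ext^1$ compatibility, and the $\Fac_\WW M = \ccok M$ identification by hand; these are exactly the details the paper leaves implicit.
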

\begin{proof}
  Let $M$ be a wide $\tau$-tilting module. Then we have a functorially finite wide subcategory $\WW$ such that $M$ is $\tau_\WW$-tilting.
  Since $\WW$ is equivalent to the module category of an artin algebra and $M$ is $\tau$-tilting in it, $\Fac_\WW M$ is a sincere functorially finite torsion class in $\WW$ and has an $\Ext$-progenerator $M$ by Theorem \ref{thm:ttt} (3) (here $\Fac_\WW$ means we consider $\Fac$ inside the abelian category $\WW$).
  Then $\Fac_\WW M$ is ICE-closed in $\mod\Lambda$ by Lemma \ref{lem:torsice}. Moreover, it is doubly functorially finite by definition, and $\Fac_\WW M = \ccok M$ holds by Lemma \ref{lem:icecok}.
  Therefore, we obtain a map $\ccok \colon \wttilt\Lambda \to \dfice\Lambda$.

  Conversely, let $\CC$ be a doubly functorially finite ICE-closed subcategory. Then by Proposition \ref{prop:dfice}, there is some functorially finite wide subcategory $\WW$ such that $\CC$ is a sincere torsion class in $\WW$.
  Then we obtain an equivalence $F \colon \WW \equi \mod\Gamma$ for some artin algebra $\Gamma$ by Lemma \ref{lem:fwide}, and $F(\CC)$ is a sincere functorially finite torsion class in $\mod\Gamma$. Therefore, $F(\CC)$ has an $\Ext$-progenerator, which is a $\tau$-tilting $\Gamma$-module by Theorem \ref{thm:ttt} (3).
  Thus $\CC$ has an $\Ext$-progenerator $P(\CC)$ which is $\tau_\WW$-tilting. Hence $P(\CC)$ is wide $\tau$-tilting, and we obtain a map $P(-) \colon \dfice\Lambda \to \wttilt\Lambda$.

  It is straightforward to check that these maps are inverse to each other, so we omit it.
\end{proof}

We now take a closer look at wide $\tau$-tilting pairs and wide $\tau$-tilting modules. Each wide $\tau$-tilting module uniquely determines its defining wide subcategory, as the following claims.
\begin{proposition}\label{prop:wtiltpbij}
  Let $\Lambda$ be an artin algebra. Then the map $\wttiltp\Lambda \to \wttilt\Lambda$ given by $(\WW,M) \mapsto M$ is a bijection, and its inverse is given by $M \mapsto (\la M \ra_{\wide},M)$, and we have $\la M \ra_{\wide} = \WR(\FFF(\ccok M)) = \Filt(\kker (\ccok M) )$.
\end{proposition}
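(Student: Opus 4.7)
The plan is to reduce the proposition to Theorem \ref{thm:classifyice}, which already supplies uniqueness of the ambient wide subcategory and the identification $\la \CC \ra_{\wide} = \Filt(\kker \CC) = \WR(\FFF(\CC))$ for any ICE-closed subcategory $\CC$ presented as a sincere torsion class in a wide subcategory. The key input is that, for a wide $\tau$-tilting pair $(\WW, M)$, the subcategory $\ccok M$ is precisely such a sincere torsion class in $\WW$.

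First, surjectivity of $(\WW, M) \mapsto M$ is immediate from the definition of a wide $\tau$-tilting module. For the remaining content, I would take $(\WW, M) \in \wttiltp\Lambda$ and, transporting along an equivalence $F \colon \WW \simeq \mod\Gamma$ from Lemma \ref{lem:fwide}, invoke Theorem \ref{thm:ttt}(3) to see that $\Fac_\WW M$ is a sincere functorially finite torsion class in $\WW$ with $\Ext$-progenerator $M$. Since any torsion class in a wide subcategory is ICE-closed in $\mod\Lambda$ (Lemma \ref{lem:torsice}) and extensions inside $\WW$ agree with extensions in $\mod\Lambda$, Lemma \ref{lem:icecok} gives $\Fac_\WW M = \ccok M$. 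Then Theorem \ref{thm:classifyice}, applied to the sincere torsion class $\ccok M$ in $\WW$, forces
\[
\WW = \la \ccok M \ra_{\wide} = \Filt(\kker(\ccok M)) = \WR(\FFF(\ccok M)).
\]

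The final step is to replace $\la \ccok M \ra_{\wide}$ by $\la M \ra_{\wide}$. Since $M \in \add M \subseteq \ccok M$ (take cokernels of $0 \to X$ for $X \in \add M$), any wide subcategory containing $\ccok M$ contains $M$, giving $\la M \ra_{\wide} \subseteq \la \ccok M \ra_{\wide}$. Conversely, every wide subcategory containing $M$ contains $\add M$ and is closed under cokernels, hence contains $\ccok M$, giving the reverse inclusion. Thus $\WW = \la M \ra_{\wide}$, which simultaneously proves injectivity of $(\WW, M) \mapsto M$, shows that $M \mapsto (\la M \ra_{\wide}, M)$ is the inverse, and delivers the claimed equalities. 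I do not anticipate a genuine obstacle: the proposition is essentially a bookkeeping corollary of Theorem \ref{thm:classifyice}, with the only subtlety being the identification $\ccok M = \Fac_\WW M$, which is already handled by Lemma \ref{lem:icecok}.
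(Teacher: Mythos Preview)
Your proposal is correct and follows essentially the same route as the paper's proof. The paper's argument is identical but terser: it cites ``the proof of Theorem~\ref{thm:wttiltbij}'' for the fact that $\ccok M$ is a sincere torsion class in $\WW$ (exactly the content of your steps invoking Lemma~\ref{lem:fwide}, Theorem~\ref{thm:ttt}(3), Lemma~\ref{lem:torsice}, and Lemma~\ref{lem:icecok}), then applies Theorem~\ref{thm:classifyice} just as you do, and finally asserts $\la \ccok M \ra_{\wide} = \la M \ra_{\wide}$ as ``clear'' where you spell out both inclusions.
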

\begin{proof}
  This map is surjective by definition. On the other hand, for $(\WW,M) \in \wttiltp\Lambda$, the proof of Theorem \ref{thm:wttiltbij} implies that $\ccok M$ is a sincere torsion class in $\WW$.
  Then Theorem \ref{thm:classifyice} implies $\WW = \WR(\FFF(\ccok M)) = \la \ccok M \ra_{\wide}$, hence this map is injective. Moreover, clearly we have $\la \ccok M \ra_{\wide} = \la M \ra_{\wide}$.
\end{proof}

Summarizing these results, we immediately obtain the following.
\begin{corollary}\label{cor:compati}
  Let $\Lambda$ be an artin algebra. Then we have the following commutative diagram, and all the horizontal maps are bijective.
  \[
  \begin{tikzcd}
    \wttiltp\Lambda \rar["{(\WW,M)\mapsto M}"] & [2em] \wttilt\Lambda \rar[shift left, "\ccok"] & \dfice\Lambda \lar[shift left, "P(-)"] \\
    \ttiltp\Lambda \uar["{(M,P) \mapsto (P^\perp,M)}", hookrightarrow] \rar["{(M,P) \mapsto M}"] & \sttilt\Lambda \rar[shift left, "\Fac"] \uar[symbol=\subseteq] & \ftors\Lambda \uar[symbol=\subseteq] \lar[shift left, "P(-)"]
  \end{tikzcd}
  \]
\end{corollary}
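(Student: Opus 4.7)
The claim is essentially a bookkeeping result that assembles bijections already established in the paper, so my plan is to check that (a) all the horizontal maps have already been shown to be bijective, (b) the vertical arrows are well-defined inclusions (or injections), and (c) each of the two squares commutes on the nose.

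For the horizontal bijectivity: the leftmost top arrow is bijective by Proposition \ref{prop:wtiltpbij}, the right-hand top pair $\wttilt\Lambda \rightleftarrows \dfice\Lambda$ is bijective by Theorem \ref{thm:wttiltbij}, the leftmost bottom arrow is bijective by Theorem \ref{thm:ttt}(3) (which identifies $\tau$-tilting pairs with support $\tau$-tilting modules via $(M,P) \mapsto M$), and the right-hand bottom pair $\sttilt\Lambda \rightleftarrows \ftors\Lambda$ is also bijective by Theorem \ref{thm:ttt}(3). So I need only argue commutativity and well-definedness of the verticals.

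For the verticals, I would first recall that the map $\iota \colon \ttiltp\Lambda \to \wttiltp\Lambda$, $(M,P) \mapsto (P^\perp,M)$, is well-defined and injective by Proposition \ref{prop:serrettilt}. The inclusion $\sttilt\Lambda \subseteq \wttilt\Lambda$ is the content of the last assertion of the same proposition. Finally, $\ftors\Lambda \subseteq \dfice\Lambda$ because any functorially finite torsion class $\TT$ in $\mod\Lambda$ is a torsion class in the (trivially functorially finite) wide subcategory $\WW = \mod\Lambda$, hence doubly functorially finite by Definition \ref{def:dfice}.

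For commutativity of the left square: both paths $\ttiltp\Lambda \to \wttilt\Lambda$ send $(M,P)$ to $M$, so this is immediate. For the right square, I need to verify that for a support $\tau$-tilting module $M$ we have $\Fac M = \ccok M$. The inclusion $\ccok M \subseteq \Fac M$ is clear. Conversely, since $M$ is the $\Ext$-progenerator of the functorially finite torsion class $\Fac M$ (Theorem \ref{thm:ttt}(3)), $\Fac M$ is ICE-closed and has $\Ext$-progenerator $M$, so Lemma \ref{lem:icecok} gives $\Fac M = \ccok M$. This is the only real (but routine) step in the argument; I do not anticipate any obstacle, since all the substantive work has been done in the preceding theorems and propositions.
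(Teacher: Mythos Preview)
Your proof is correct and follows essentially the same approach as the paper: both assemble the bijections from Propositions \ref{prop:serrettilt}, \ref{prop:wtiltpbij} and Theorems \ref{thm:ttt}(3), \ref{thm:wttiltbij}, with the only nontrivial commutativity check being $\Fac M = \ccok M$ for $M \in \sttilt\Lambda$, which you (like the paper) deduce from Lemma \ref{lem:icecok} using that $M$ is an $\Ext$-progenerator of $\Fac M$.
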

\begin{proof}
  This follows from Propositions \ref{prop:serrettilt} and \ref{prop:wtiltpbij} and Theorems \ref{thm:ttt} (3) and \ref{thm:wttiltbij}, except the fact that $\Fac M = \ccok M$ for $M \in \sttilt\Lambda$.
  This follows from Lemma \ref{lem:icecok} since $\Fac M$ is a torsion class with an $\Ext$-progenerator $M$ for $M \in \sttilt\Lambda$ by Theorem \ref{thm:ttt} (3).
\end{proof}

\subsection{Wide $\tau$-tilting modules from ICE intervals}
In the rest of this section, we study how to obtain wide $\tau$-tilting modules from support $\tau$-tilting modules.

When studying wide $\tau$-tilting modules, working on $\tors\Lambda$ is more natural than $\torf\Lambda$ since we can use $\tau$-tilting modules to describe $\Ext$-progenerators of hearts, see Corollary \ref{cor:fromttint}. Therefore, \emph{from now on, we mainly work on intervals of $\tors\Lambda$}.
Recall that the heart of an interval $[\UU,\TT]$ in $\tors\Lambda$ is defined by $\HH_{[\UU,\TT]}:= \TT \cap \UU^\perp$.
By Remark \ref{rem:heart}, considering hearts of intervals in $\tors\Lambda$ is essentially the same as in $\torf\Lambda$ via the anti-isomorphisms of posets $(-)^\perp \colon \tors\Lambda \rightleftarrows \torf\Lambda \colon {}^\perp (-)$.

First, we recall the \emph{torsion radical} and the \emph{torsion-free coradical} associated to torsion pairs. A pair $(\TT,\FF)$ of subcategories of $\mod\Lambda$ is a \emph{torsion pair} if $\TT$ is a torsion class, $\FF$ is a torsion-free class, $\TT = {}^\perp \FF$ and $\FF = \TT^\perp$.
\begin{definition}
  Let $(\TT,\FF)$ be a torsion pair in $\mod\Lambda$. Since $\mod\Lambda = \TT * \FF$ holds by Lemma \ref{prop:torsbasic}, for $M \in \mod\Lambda$, we have a short exact sequence
  \[
  \begin{tikzcd}
    0 \rar & tM \rar & M \rar & fM \rar & 0
  \end{tikzcd}
  \]
  with $tM \in \TT$ and $fM \in \FF$. It is easily checked that this sequence is uniquely determined by $M$ up to isomorphism, and the assignments $M \mapsto tM$ and $M \mapsto fM$ define functors $t \colon \mod\Lambda \to \TT$ and $f\colon \mod\Lambda \to \FF$.
  We call $t$ the \emph{torsion radical of $\TT$} and $f$ the \emph{torsion-free coradical of $\FF$}.
\end{definition}

Now we describe the $\Ext$-progenerator of the heart of an interval $[\UU,\TT]$ of $\tors\Lambda$ if the $\Ext$-progenerator of $\TT$ is given.
\begin{lemma}\label{lem:heartprogen}
  Let $[\UU,\TT]$ be an interval in $\tors\Lambda$, and suppose that $\TT$ has an $\Ext$-progenerator $T$. Let $g \colon \mod\Lambda \to \UU^\perp$ be the torsion-free coradical of $\UU^\perp$.
  Then $g$ induces a functor $\TT \to \HH_{[\UU,\TT]}$, and $\HH_{[\UU,\TT]}$ has an $\Ext$-progenerator $gT$.
\end{lemma}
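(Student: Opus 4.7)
The plan is to verify the claim in four steps. Write $u\colon \mod\Lambda \to \UU$ for the torsion radical of $\UU$, so the defining sequence of the torsion pair $(\UU,\UU^\perp)$ reads $0\to uM \to M \to gM \to 0$ with $uM\in\UU$ and $gM\in\UU^\perp$.

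First, I would check that $g$ restricts to a functor $\TT\to\HH_{[\UU,\TT]}$. For $M\in\TT$, since $\UU\subseteq\TT$ we have $uM\in\TT$, hence $gM\in\TT$ because $\TT$ is closed under quotients; combined with $gM\in\UU^\perp$ by construction, this gives $gM\in\TT\cap\UU^\perp=\HH_{[\UU,\TT]}$. Second, to see that $gT$ is $\Ext$-projective in $\HH_{[\UU,\TT]}$, I would apply $\Hom_\Lambda(-,H)$ for $H\in\HH_{[\UU,\TT]}$ to $0\to uT\to T\to gT\to 0$ and read off the piece
\[
\Hom_\Lambda(uT,H)\to \Ext_\Lambda^1(gT,H)\to \Ext_\Lambda^1(T,H).
\]
The left-hand term vanishes since $uT\in\UU$ and $H\in\UU^\perp$, while the right-hand term vanishes since $H\in\TT$ and $T$ is $\Ext$-projective in $\TT$. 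Hence $\Ext_\Lambda^1(gT,H)=0$.

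The main step is the third one: showing that every $H\in\HH_{[\UU,\TT]}$ admits an exact sequence $0\to H''\to gT'\to H\to 0$ with $T'\in\add T$ and $H''\in\HH_{[\UU,\TT]}$. Since $T$ is an $\Ext$-progenerator of $\TT$ and $H\in\TT$, there is a short exact sequence $0\to H'\to T'\to H\to 0$ with $T'\in\add T$ and $H'\in\TT$. The composition $uT'\hookrightarrow T'\twoheadrightarrow H$ is zero because $uT'\in\UU$ and $H\in\UU^\perp$, so $uT'\subseteq H'$ inside $T'$. Setting $H'':= H'/uT'$ and applying the $3\times 3$-lemma to
\[
\begin{tikzcd}[cramped,row sep=small,column sep=small]
& uT' \ar[r,equal]\ar[d,hookrightarrow] & uT' \ar[d,hookrightarrow] & \\
0\ar[r] & H' \ar[r]\ar[d,twoheadrightarrow] & T' \ar[r]\ar[d,twoheadrightarrow] & H \ar[r]\ar[d,equal] & 0 \\
0\ar[r] & H'' \ar[r] & gT' \ar[r] & H \ar[r] & 0
\end{tikzcd}
\]
yields the desired exact bottom row. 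Here $H''\in\TT$ as a quotient of $H'\in\TT$, and $H''\in\UU^\perp$ as a subobject of $gT'\in\UU^\perp$ (using that $\UU^\perp$ is a torsion-free class, hence closed under subobjects); so $H''\in\HH_{[\UU,\TT]}$.

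Finally, I would conclude $\PP(\HH_{[\UU,\TT]})=\add(gT)$. The inclusion $\add(gT)\subseteq\PP(\HH_{[\UU,\TT]})$ is step two. Conversely, for any $P\in\PP(\HH_{[\UU,\TT]})$, step three produces $0\to P''\to gT'\to P\to 0$ with $P''\in\HH_{[\UU,\TT]}$, and this sequence splits since $\Ext_\Lambda^1(P,P'')=0$; hence $P\in\add(gT')\subseteq\add(gT)$. Combined with the existence of such sequences for every object, this shows $gT$ is the $\Ext$-progenerator of $\HH_{[\UU,\TT]}$. The only real obstacle is the bookkeeping in step three: one must simultaneously verify that the middle term of the constructed cover lies in $\add(gT)$, that the cokernel is $H$ (not just a quotient), and that the kernel $H'/uT'$ stays in the heart — the last point crucially using both the quotient-closure of $\TT$ and the subobject-closure of $\UU^\perp$.
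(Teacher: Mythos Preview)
Your proof is correct and follows essentially the same route as the paper's: restrict $g$ to $\TT$, show $gT$ is $\Ext$-projective, then for arbitrary $H\in\HH_{[\UU,\TT]}$ lift an $\add T$-presentation through $g$ by factoring $uT'\hookrightarrow T'$ through the kernel, and read off $H''\in\HH_{[\UU,\TT]}$ exactly as you do. The only cosmetic difference is in step two, where the paper argues element-wise that every extension $0\to H_1\to H_2\to gT\to 0$ splits (lifting $T\to gT$ to $T\to H_2$ via $\Ext_\Lambda^1(T,H_1)=0$ and noting the lift kills $uT$), whereas you invoke the long exact sequence directly; your explicit verification in step four that $\PP(\HH_{[\UU,\TT]})=\add(gT)$ is actually a small improvement, since the paper leaves this implicit.
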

\begin{proof}
  This is a special case of \cite[Proposition 5 (c)]{kalck}, but the proof was omitted there. Thus we give a proof for the convenience of the reader.

  Put $\HH = \HH_{[\UU,\TT]}$ for simplicity. For any $M \in \TT$, we have a surjection $M \defl gM$, thus $gM \in \TT$ holds. Therefore $g \colon \mod\Lambda \to \UU^\perp$ induces a functor $\TT \to \TT \cap \UU^\perp = \HH$.

  We first show that $gT$ is $\Ext$-projective in $\HH$.
  Denote by $u \colon \mod\Lambda \to \UU$ the torsion radical of $\UU$.
  Take any short exact sequence in $\mod\Lambda$
  \[
  \begin{tikzcd}
    0 \rar & H_{1} \rar & H_{2} \rar["p"] & gT \rar & 0
  \end{tikzcd}
  \]
  with $H_1 \in \HH$. Then $H_2 \in \HH$ since $\HH$ is extension-closed. Consider the following diagram.
  \[
  \begin{tikzcd}
    0 \rar & uT \rar["\iota"] & T \rar["\pi"] \ar[d, dashed, "\varphi"'] & gT \rar \dar[equal]   & 0 \\
    0 \rar & H_{1} \rar & H_{2} \rar["p"'] & gT \rar & 0
  \end{tikzcd}
  \]
  Since $T$ is $\Ext$-projective in $\TT$, we have $\Ext_\Lambda^1(T,H_1) = 0$, thus obtain a morphism $\varphi$ satisfying $p\varphi = \pi$. Then $\varphi\iota = 0$ since $uT \in \UU$ and $H_2 \in \HH \subseteq \UU^\perp$.
  Thus $\varphi$ factors through $\pi$. Now it is easy to see that $p$ is a retraction, hence $gT$ is $\Ext$-projective in $\HH$.

  Let $H$ be an object in $\HH$. Since $\TT$ has an $\Ext$-progenerator $T$, there is a short exact sequence
  \[
  \begin{tikzcd}[row sep=0]
    0 \rar & L \rar["\al"] & T_0 \rar["\be"] & H \rar & 0
  \end{tikzcd}
  \]
  satisfying $T_0 \in \add T$ and $L \in \TT$. We have the following short exact sequence by $\mod\Lambda = \UU * \UU^\perp$.
  \[
  \begin{tikzcd}[row sep=0]
    0 \rar & uT_0 \rar["\ga"] & T_0 \rar & gT_0 \rar & 0
  \end{tikzcd}
  \]
  We clearly have $gT_0 \in \add gT$ since $g$ is an additive functor.
  Now $\be\ga = 0$ by $uT_0 \in \UU$ and $H \in \HH \subseteq \UU^\perp$. Thus $\ga$ factors through $\al$, and we obtain the following exact commutative diagram.
  \[
  \begin{tikzcd}
    & 0 \dar & 0 \dar \\
    & uT_0 \rar[equal] \dar ["\ov{\ga}"] & uT_0 \dar["\ga"] \\
    0 \rar & L \rar["\al"]\dar & T_0 \rar["\be"] \dar & H \dar[equal]\rar & 0 \\
    0 \rar & \coker \ov{\ga} \dar \rar & gT_0 \dar \rar & H \rar & 0 \\
    & 0 & 0
  \end{tikzcd}
  \]
  Then $\coker\ov{\ga}$ is in $\HH = \TT \cap \UU^\perp$ because it is a quotient of $L  \in\TT$ and a subobject of $gT_0 \in \UU^\perp$. Thus the bottom exact sequence implies that $\HH$ has an $\Ext$-progenerator $gT$.
\end{proof}

Using this, we obtain the following useful criterion on the functorial finiteness.
\begin{lemma}\label{lem:2-3}
  Let $[\UU,\TT]$ be an interval in $\tors\Lambda$ with its heart $\HH:= \HH_{[\UU,\TT]}$. If two of $\UU$, $\TT$ and $\HH$ are functorially finite, then so is the third. In addition, assume that $[\UU,\TT]$ is a wide interval. Then $\UU$, $\TT$ and $\HH$ are all functorially finite if either $\UU$ or $\TT$ is functorially finite.
\end{lemma}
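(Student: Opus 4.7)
My plan is to establish the three pairwise implications among $\UU$, $\TT$, and $\HH$ being functorially finite, then handle the wide interval case separately. The key structural identities I would use are the torsion analogues of Lemma \ref{lem:interval}, namely $\TT = \UU * \HH$ and $\UU = \TT \cap {}^\perp\HH$ inside $\tors\Lambda$ (proved by the same argument as Lemma \ref{lem:interval}, applied to the canonical sequence from the torsion pair $(\UU, \UU^\perp)$), together with its torsion-free counterpart $\UU^\perp = \HH * \TT^\perp$ inside $\torf\Lambda$, obtained by applying Lemma \ref{lem:interval}(1) to the interval $[\TT^\perp, \UU^\perp]$.

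Assuming $\UU$ and $\HH$ are both functorially finite, the identity $\TT = \UU * \HH$ together with the standard approximation-stacking argument delivers functorial finiteness of $\TT$: given $M$, I assemble a right $\HH$-approximation of $M$ and a right $\UU$-approximation of its kernel via the pullback of a short exact sequence $0 \to U \to E \to H \to 0$ into a right $\TT$-approximation $E \to M$ with $E \in \UU * \HH = \TT$; left approximations are dual. Assuming instead that $\TT$ and $\HH$ are functorially finite, the same argument on the torsion-free side, via Theorem \ref{thm:ttt}(1) and the identity $\UU^\perp = \HH * \TT^\perp$, yields functorial finiteness of $\UU^\perp$, hence of $\UU$.

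The remaining implication, from $\UU$ and $\TT$ both functorially finite to $\HH$ functorially finite, begins with Lemma \ref{lem:heartprogen}: the $\Ext$-progenerator $T$ of $\TT$ (supplied by Theorem \ref{thm:ttt}(3)) yields an $\Ext$-progenerator $gT$ of $\HH$. To upgrade this to genuine functorial finiteness, I would construct the universal map $\alpha \colon (gT)^n \to M$ from a generating set of the finitely generated $\End_\Lambda(gT)^{\op}$-module $\Hom_\Lambda(gT, M)$ and verify the approximation property by an induction along the $\Ext$-projective resolution of an arbitrary $H \in \HH$ by copies of $gT$, controlling the $\Ext^1$-obstructions arising from $u(M) \in \UU$ via the functorial finiteness of $\UU$; left approximations are dual.

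For the wide interval case, the additional hypothesis that $\HH$ is a wide subcategory lets Lemma \ref{lem:fwide} take over: a wide subcategory is functorially finite iff it admits an $\Ext$-progenerator. Hence $\TT$ functorially finite combined with Lemma \ref{lem:heartprogen} gives $\HH$ functorially finite immediately, and the first part of the lemma then yields $\UU$ functorially finite; the case of $\UU$ functorially finite is handled by the standard duality $D \colon \mod\Lambda \to \mod\Lambda^{\op}$, which exchanges $\tors$ and $\torf$ while preserving wide subcategories and functorial finiteness. The main obstacle I anticipate is the third pairwise implication in the general (non-wide) setting: because $\HH$ is only image-closed rather than wide or torsion, I cannot appeal directly to a module-category equivalence, and the approximation construction must genuinely mix the $\UU$- and $\TT$-approximation data in a way that both hypotheses contribute essentially.
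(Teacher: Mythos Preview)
Your handling of the implications $(\UU,\HH \Rightarrow \TT)$ and $(\TT,\HH \Rightarrow \UU)$, and of the wide interval case, matches the paper's proof essentially verbatim (the paper cites \cite{GT,SS} for the extension-of-approximations fact rather than spelling it out, and likewise uses Lemma~\ref{lem:heartprogen} plus Lemma~\ref{lem:fwide} and Matlis duality for the wide case).

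The genuine gap is in $(\UU,\TT \Rightarrow \HH)$. Having an $\Ext$-progenerator $gT$ of $\HH$ does \emph{not} by itself yield contravariant finiteness of $\HH$ when $\HH$ is merely extension-closed: the universal map $(gT)^n \to M$ is only a right $\add(gT)$-approximation, and your proposed inductive lift along $0 \to H' \to (gT)^m \to H \to 0$ runs into an obstruction in $\Ext^1(H,\ker\alpha)$ that you have not explained how to kill. Your remark about ``controlling the $\Ext^1$-obstructions via the functorial finiteness of $\UU$'' is not a mechanism---functorial finiteness of $\UU$ gives approximations, not $\Ext$-vanishing against an arbitrary kernel. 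You correctly identify this as the obstacle but do not resolve it.

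The paper bypasses this entirely with a short direct construction that does not use Lemma~\ref{lem:heartprogen} at all. Writing $\GG = \UU^\perp$ (functorially finite by Theorem~\ref{thm:ttt}(1)), one composes approximations: take a right $\GG$-approximation $G \to X$, then a right $\TT$-approximation $T_G \hookrightarrow G$; the latter is injective because $\TT$ is a torsion class, so $T_G \in \GG$ since $\GG$ is closed under subobjects, hence $T_G \in \TT \cap \GG = \HH$ and the composite is a right $\HH$-approximation. Left approximations are dual. The point you are missing is that the torsion/torsion-free structure of $\TT$ and $\UU^\perp$ makes the intersection $\HH = \TT \cap \UU^\perp$ accessible by simply stacking the two approximations, with no $\Ext$-projectivity needed.
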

\begin{proof}
  Put $\TT^\perp = \FF$ and $\UU^\perp = \GG$ so that $(\TT,\FF)$ and $(\UU,\GG)$ are torsion pairs in $\mod\Lambda$.

  ($\UU,\TT \Rightarrow \HH$)
  Suppose that $\UU$ and $\TT$ are functorially finite. Then $\GG$ is also functorially finite by Theorem \ref{thm:ttt} (1). We claim that $\HH = \TT \cap \GG$ is functorially finite.

  Let $X \in \mod\Lambda$. Take a right $\GG$-approximation $G \to X$ of $X$. Then take a right $\TT$-approximation $T_G \hookrightarrow G$ of $G$, which is injective since $\TT$ is a torsion class. Then $T_G$ belongs to $\GG$ since $\GG$ is closed under submodules. Thus $T_G \in \TT \cap \GG = \HH$ holds.
  Now it is straightforward to see that the composition $T_G \hookrightarrow G \to X$ is a right $\HH$-approximation of $X$.

  Similarly, take a left $\TT$-approximation $X \to T^X$ of $X$, and then take a left $\GG$-approximation $T^X \defl G'$ of $T^X$, which is a surjection since $\GG$ is a torsion-free class. Then $G' \in \HH$ holds, and the composition $X \to T^X \defl G'$ is a left $\HH$-approximation of $X$. Thus $\HH$ is functorially finite.

  ($\UU,\HH \Rightarrow \TT$)
  Suppose that $\UU$ and $\HH$ are functorially finite. We have $\TT = \UU * \HH$ by the dual of Lemma \ref{lem:interval}. Now the assertion follows from the fact that an ``extension" of two functorially finite subcategories in $\mod\Lambda$ is functorially finite, see \cite[Theorem 1.1]{GT} or \cite{SS}.

  ($\TT,\HH \Rightarrow \UU$)
  Suppose that $\TT$ and $\HH$ are functorially finite. Then $\FF$ is functorially finite by Theorem \ref{thm:ttt} (1), and we have $\GG = \HH * \FF$ by Lemma \ref{lem:interval}. Thus the same argument as in ($\UU,\HH \Rightarrow \TT$) implies that $\GG$ is functorially finite. Thus Theorem \ref{thm:ttt} (1) shows that $\UU$ is functorially finite.

  Finally, assume that $\HH$ is a wide subcategory. If $\TT$ is functorially finite, then $\TT$ has an $\Ext$-progenerator by Theorem \ref{thm:ttt} (3), thus so does $\HH = \HH_{[\UU,\TT]}$ by Lemma \ref{lem:heartprogen}.
  Thus $\HH$ is functorially finite by Lemma \ref{lem:fwide}. This implies that $\UU$ is functorially finite by ($\TT,\HH \Rightarrow \UU$).
  Similarly, suppose that $\UU$ is functorially finite. Then $\UU^\perp$ is also functorially finite by Theorem \ref{thm:ttt} (1). Let $D\colon \mod\Lambda \leftrightarrow \mod \Lambda^{\op}$ denote the standard Matlis duality. Then $[D(\TT^\perp), D(\UU^\perp)]$ is a wide interval in $\tors\Lambda^{\op}$ with its heart $D\HH$, and $D(\UU^\perp)$ is functorially finite. Thus the above discussion shows that $D(\TT^\perp)$ and $D\HH$ are functorially finite in $\mod\Lambda^{\op}$. Therefore, $\TT^\perp$ and $\HH$ are functorially finite in $\mod\Lambda$, and $\TT$ is also functorially finite by Theorem \ref{thm:ttt} (1).
\end{proof}

As an immediate corollary, we obtain a description of an $\Ext$-progenerator of $\HH_{[\Fac U,\Fac T]}$ for $U,T \in \sttilt\Lambda$. Recall that the \emph{trace} $\tr_U(M)$ for $M \in \mod\Lambda$ is a submodule of $M$ defined by
\[
  \tr_U(M) := \sum \{ \im \varphi  \, | \, \varphi \colon U \to M \}.
\]
\begin{corollary}\label{cor:fromttint}
  Let $[\Fac U,\Fac T]$ be an interval in $\tors\Lambda$ with $U, T\in\sttilt\Lambda$. Then $\HH_{[\Fac U,\Fac T]}$ has an $\Ext$-progenerator $T/\tr_U(T)$.
  In particular, if $[\Fac U, \Fac T]$ is ICE interval, then $T/\tr_U(T)$ is a wide $\tau$-tilting module satisfying $\ccok (T/\tr_U(T)) = \HH_{[\Fac U,\Fac T]}$.
\end{corollary}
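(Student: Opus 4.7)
The plan is to apply Lemma \ref{lem:heartprogen} to the interval $[\Fac U, \Fac T]$ and then identify the torsion-free coradical of $(\Fac U)^\perp$ applied to $T$ with $T/\tr_U(T)$. By Theorem \ref{thm:ttt}(3), $T$ is an $\Ext$-progenerator of $\Fac T$, so Lemma \ref{lem:heartprogen} immediately yields that $\HH := \HH_{[\Fac U, \Fac T]}$ has an $\Ext$-progenerator $gT$, where $g \colon \mod\Lambda \to (\Fac U)^\perp$ is the torsion-free coradical of $(\Fac U)^\perp$.

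The key identification $gT = T/\tr_U(T)$ rests on showing $uT = \tr_U(T)$ for the torsion radical $u$ of $\Fac U$. I would exhibit the short exact sequence $0 \to \tr_U(T) \to T \to T/\tr_U(T) \to 0$ and verify that $\tr_U(T) \in \Fac U$ and $T/\tr_U(T) \in (\Fac U)^\perp$. The first assertion is immediate, since by construction $\tr_U(T)$ is a quotient of $U^{\oplus n}$ for some $n$. For the second, observe that $(\Fac U)^\perp = U^\perp$ (any map out of a quotient of $U^n$ into a module orthogonal to $U$ vanishes after pulling back along the surjection $U^n \defl U'$), and that by the very definition of the trace every morphism $U \to T$ factors through $\tr_U(T) \hookrightarrow T$, so the composition $U \to T \defl T/\tr_U(T)$ is zero. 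Uniqueness of the torsion decomposition for the torsion pair $(\Fac U, (\Fac U)^\perp)$ then forces $uT = \tr_U(T)$, whence $gT = T/\tr_U(T)$.

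For the ``in particular'' clause, suppose $[\Fac U, \Fac T]$ is an ICE interval, so $\HH$ is ICE-closed. Since $\Fac U$ and $\Fac T$ are functorially finite by Theorem \ref{thm:ttt}(3), Lemma \ref{lem:2-3} shows that $\HH$ is functorially finite in $\mod\Lambda$. To upgrade this to double functorial finiteness, I would invoke the torsion-class form of Theorem \ref{thm:iceintchar} (dual via Remark \ref{rem:heart}) to obtain $\TT' \in \tors\Lambda$ with $\Fac T \subseteq \TT'$ such that $[\Fac U, \TT']$ is a wide interval; its heart $\WW := \HH_{[\Fac U, \TT']}$ is then a wide subcategory of $\mod\Lambda$ in which $\HH$ sits as a torsion class. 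The ``in addition'' clause of Lemma \ref{lem:2-3}, applied to the wide interval $[\Fac U, \TT']$ whose lower endpoint is functorially finite, guarantees that $\WW$ is functorially finite. Since $\HH$ is functorially finite in $\mod\Lambda$ and contained in $\WW$, it is a fortiori functorially finite in $\WW$, so $\HH \in \dfice\Lambda$. The bijection of Theorem \ref{thm:wttiltbij} then yields that $T/\tr_U(T) = P(\HH)$ is wide $\tau$-tilting, and Lemma \ref{lem:icecok} gives $\ccok(T/\tr_U(T)) = \HH$.

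The main obstacle I anticipate is the promotion from functorial finiteness to double functorial finiteness of $\HH$; this is precisely where the ICE hypothesis is essential, since only under that hypothesis does Theorem \ref{thm:iceintchar} supply a functorially finite wide envelope of $\HH$.
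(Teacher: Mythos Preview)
Your proof is correct and follows essentially the same route as the paper: apply Lemma~\ref{lem:heartprogen} after identifying the torsion radical of $\Fac U$ with $\tr_U(-)$, then for the second part invoke the dual of Theorem~\ref{thm:iceintchar} to produce a wide interval $[\Fac U,\TT']$ and use Lemma~\ref{lem:2-3} to make its heart $\WW$ functorially finite. The only minor difference is in how you verify that $\HH$ is functorially finite in $\WW$: you argue that $\HH$ is functorially finite in $\mod\Lambda$ (via Lemma~\ref{lem:2-3} applied to $[\Fac U,\Fac T]$) and then restrict to $\WW$, whereas the paper instead uses that $\HH$ already has an $\Ext$-progenerator and applies Theorem~\ref{thm:ttt}(3) inside $\WW\simeq\mod\Gamma$; both are valid.
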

\begin{proof}
  Put $\CC:= \HH_{[\Fac U,\Fac T]}$ for simplicity.
  Since $\Fac U$ is a torsion class, $\tr_U(-)$ is the torsion radical of $\Fac U$, thus $M \mapsto M/\tr_U(M)$ is the torsion-free coradical of $U^\perp = (\Fac U)^\perp$.
  Thus $\CC$ has an $\Ext$-progenerator $T/\tr_U(T)$ by Lemma \ref{lem:heartprogen}.

  Suppose in addition that $\CC$ is ICE-closed. It suffices to show that $\CC$ is doubly functorially finite by Theorem \ref{thm:wttiltbij}.
  By the dual of Theorem \ref{thm:iceintchar}, there is a torsion class $\TT'$ in $\mod\Lambda$ such that $[\Fac U, \TT']$ is a wide interval and $\CC$ is a torsion class in a wide subcategory $\WW:=\HH_{[\Fac U,\TT']}$.
  Then Lemma \ref{lem:2-3} implies that $\WW$ is a functorially finite wide subcategory. Thus it is equivalent to $\mod\Gamma$ for some artin algebra $\Gamma$ by Lemma \ref{lem:fwide}. Since $\CC$ has an $\Ext$-progenerator, Theorem \ref{thm:ttt} (3) (applied to $\mod\Gamma$) implies that $\CC$ is functorially finite in $\WW$.
\end{proof}
As for the $\tau$-tilting finiteness, we prove the following observation. We remark that the equivalence of (1) and (2) below was essentially shown in \cite[Theorem 5.5]{enomono}.
\begin{proposition}\label{prop:ttfice}
  Let $\Lambda$ be an artin algebra. Then the following are equivalent.
  \begin{enumerate}
    \item $\Lambda$ is $\tau$-tilting finite.
    \item $\ice\Lambda$ is a finite set.
    \item Every ICE-closed subcategory of $\mod\Lambda$ is functorially finite.
    \item Every ICE-closed subcategory of $\mod\Lambda$ is doubly functorially finite.
    \item Every ICE-closed subcategory of $\mod\Lambda$ has an $\Ext$-progenerator.
  \end{enumerate}
\end{proposition}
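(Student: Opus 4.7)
The plan is to prove the equivalences via the cycle $(1) \Rightarrow (2) \Rightarrow (1)$ together with $(1) \Rightarrow (3) \Rightarrow (4) \Rightarrow (5) \Rightarrow (1)$, assembling the ingredients we have already collected: Theorem \ref{thm:ttt} on $\tau$-tilting finiteness, Corollary \ref{cor:ice} (realizing every ICE-closed subcategory as a heart of an interval in $\tors\Lambda$), Lemma \ref{lem:2-3} (functorial finiteness of hearts), Proposition \ref{prop:dfice} (criterion for double functorial finiteness), and Theorem \ref{thm:wttiltbij} (bijection $\dfice\Lambda \leftrightarrow \wttilt\Lambda$).

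First I would handle $(1) \Rightarrow (2)$ and $(1) \Rightarrow (3)$ simultaneously. Under $(1)$, Theorem \ref{thm:ttt}(4) gives that $\tors\Lambda$ is finite and equal to $\ftors\Lambda$. Any $\CC \in \ice\Lambda$ arises, by the dual half of Corollary \ref{cor:ice}, as a heart $\HH_{[\UU,\TT]}$ for some $\UU \subseteq \TT$ in $\tors\Lambda$. Finiteness of $\tors\Lambda$ forces only finitely many such intervals (hence finitely many hearts), giving $(2)$. Simultaneously, both endpoints $\UU$ and $\TT$ are functorially finite, so Lemma \ref{lem:2-3} (applied to the pair $\UU, \TT$) yields that $\HH_{[\UU,\TT]} = \CC$ is functorially finite, giving $(3)$.

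For $(3) \Rightarrow (4)$, let $\CC \in \ice\Lambda$. Since torsion-free classes are ICE-closed, the torsion-free class $\FFF(\CC)$ is itself functorially finite by $(3)$; similarly $\CC$ is functorially finite by $(3)$. The last assertion of Proposition \ref{prop:dfice} then guarantees that $\CC$ is doubly functorially finite. For $(4) \Rightarrow (5)$, apply Theorem \ref{thm:wttiltbij}: every $\CC \in \dfice\Lambda$ has the $\Ext$-progenerator $P(\CC)$. Finally, for $(5) \Rightarrow (1)$ and $(2) \Rightarrow (1)$, use $\tors\Lambda \subseteq \ice\Lambda$: under $(5)$, every torsion class has an $\Ext$-progenerator, hence is functorially finite by Theorem \ref{thm:ttt}(3), so $\tors\Lambda = \ftors\Lambda$ and thus $\Lambda$ is $\tau$-tilting finite by Theorem \ref{thm:ttt}(4); under $(2)$, $\tors\Lambda$ is itself a finite set, which by the same result gives $(1)$.

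None of the steps should present a real obstacle, since essentially all the hard work is packaged into the results listed above. The only place requiring slight care is $(3) \Rightarrow (4)$, where one has to notice that $(3)$ is strong enough to apply not only to $\CC$ itself but also to the auxiliary torsion-free class $\FFF(\CC)$, so that the hypothesis of the ``consequently'' part of Proposition \ref{prop:dfice} is met; this avoids having to directly argue the functorial finiteness of the wide subcategory $\WR(\FFF(\CC))$.
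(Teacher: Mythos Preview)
Your argument for $(3) \Rightarrow (4)$ contains a genuine error. You claim that ``torsion-free classes are ICE-closed'' in order to apply $(3)$ to $\FFF(\CC)$, but this is false: a torsion-free class is closed under subobjects and extensions, hence under images, but \emph{not} in general under cokernels. For instance, in $\mod k[1\leftarrow 2]$ the torsion-free class $\Sub\bigl(\sst{2\\1}\bigr)=\add\{\sst{1},\sst{2\\1}\}$ contains the inclusion $\sst{1}\hookrightarrow\sst{2\\1}$, whose cokernel $\sst{2}$ lies outside. Thus $(3)$ gives you no information about the functorial finiteness of $\FFF(\CC)$, and the ``consequently'' part of Proposition~\ref{prop:dfice} does not apply.

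The fix is precisely what you tried to avoid: apply $(3)$ to a \emph{wide} subcategory instead. By Corollary~\ref{cor:widetors}, $\CC$ is a torsion class in some wide subcategory $\WW$; since wide subcategories are ICE-closed, $(3)$ makes $\WW$ functorially finite, and $(3)$ also makes $\CC$ functorially finite in $\mod\Lambda$, hence in $\WW$. Thus $\CC$ is doubly functorially finite directly from the definition. (Equivalently, apply $(3)$ to $\WR(\FFF(\CC))$ and use condition~(3) of Proposition~\ref{prop:dfice}.) This is exactly the route the paper takes. The rest of your argument --- $(1)\Rightarrow(2)$, $(1)\Rightarrow(3)$, $(4)\Rightarrow(5)$, and the implications back to $(1)$ via $\tors\Lambda\subseteq\ice\Lambda$ --- is correct and matches the paper.
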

\begin{proof}
  Clearly each of (2)-(5) implies (1) by Theorem \ref{thm:ttt} (4) since every torsion class is an ICE-closed subcategory. We will prove (1) $\Rightarrow$ (2) and (1) $\Rightarrow$ (3) $\Rightarrow$ (4) $\Rightarrow$ (5).

  (1) $\Rightarrow$ (2):
  Suppose that $\Lambda$ is $\tau$-tilting finite. Then $\tors\Lambda$ is a finite set by Theorem \ref{thm:ttt} (4). Therefore, there are only finitely many intervals in $\tors\Lambda$, which implies that $\ice\Lambda$ is a finite set by Proposition \ref{prop:arashi}.

  (1) $\Rightarrow$ (3):
  Let $\CC$ be an ICE-closed subcategory of $\mod\Lambda$. Then $\CC = \HH_{[\UU,\TT]}$ holds for some interval $[\UU,\TT]$ in $\tors\Lambda$ by Proposition \ref{prop:arashi}.
  Since $\Lambda$ is $\tau$-tilting finite, both $\UU$ and $\TT$ are functorially finite by Theorem \ref{thm:ttt}. Thus so is $\CC$ by Lemma \ref{lem:2-3}.

  (3) $\Rightarrow$ (4):
  Let $\CC$ be an ICE-closed subcategory of $\mod\Lambda$. Then $\CC$ is a torsion class in some wide subcategory $\WW$ by Corollary \ref{cor:widetors}. By (3), both of $\CC$ and $\WW$ are functorially finite (since $\WW$ is also ICE-closed), thus $\CC$ is doubly functorially finite.

  (4) $\Rightarrow$ (5):
  We have $\ice\Lambda = \dfice\Lambda$ by (4). By Theorem \ref{thm:wttiltbij}, this implies that every ICE-closed subcategory has an $\Ext$-progenerator.
\end{proof}

As a corollary, we obtain an algorithm to calculate all wide $\tau$-tilting modules for a $\tau$-tilting finite algebra.
We define the poset structure on $\sttilt\Lambda$ by $U \leq T$ if $\Fac U \subseteq \Fac T$.
\begin{definition-proposition}\label{def-prop:u+}
  Let $U$ be a support $\tau$-tilting module. Then the following join exists in the poset $\sttilt\Lambda$
  \[
  U^+ := U \vee \bigvee \{ U' \in \sttilt\Lambda \, | \, \text{there is an arrow $U \ot U'$ in $\Hasse(\sttilt\Lambda)$} \},
  \]
  and it satisfies the following:
  \[
  \Fac U^+ = \UU^+:= \UU \vee \bigvee \{ \UU' \in \tors\Lambda \, | \, \text{there is an arrow $\Fac U \ot \UU'$ in $\Hasse(\tors\Lambda)$} \}
  \]
\end{definition-proposition}
\begin{proof}
  By \cite[Theorem 3.1]{DIJ}, $\Hasse(\ftors\Lambda)$, which is isomorphic to $\Hasse(\sttilt\Lambda)$, is as a full subquiver of $\Hasse(\tors\Lambda)$, and if there is an arrow $\Fac U \ot \UU'$ in $\Hasse(\tors\Lambda)$, then $\UU'$ is functorially finite.
  Therefore, to show the claim, it suffices to prove that $\UU^+$ is functorially finite.
  By the dual of Proposition \ref{prop:torfwide}, we have that $[\Fac U, \UU^+]$ is a wide interval.
  Then $\UU^+$ is functorially finite by Lemma \ref{lem:2-3}.
\end{proof}
By using this $U^+$  and Corollaries \ref{cor:ice} and \ref{cor:fromttint}, we immediately obtain the following method to calculate $\wttilt\Lambda$ for the $\tau$-tilting finite case.
\begin{corollary}\label{cor:wttiltfromint}
  Let $\Lambda$ be a $\tau$-tilting finite artin algebra. Then all the wide $\tau$-tilting modules and ICE-closed subcategories of $\mod\Lambda$ can be obtained as follows.
  \begin{enumerate}
    \item Choose $U \in \sttilt\Lambda$.
    \item Compute $U^+ := U \vee \bigvee \{ U' \, | \, \text{there is an arrow $U \ot U'$ in $\Hasse(\sttilt\Lambda)$} \}$.
    \item For each $T \in \sttilt\Lambda$ with $U \leq T \leq U^+$, we have that $T/\tr_U(T)$ is a wide $\tau$-tilting module and $\ccok(T/\tr_U(T))$ is an ICE-closed subcategory of $\mod\Lambda$.
  \end{enumerate}
\end{corollary}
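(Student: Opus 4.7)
The plan is to reduce the statement to the torsion-class version in Corollary \ref{cor:ice} (dual form) together with the progenerator description in Corollary \ref{cor:fromttint}, using the $\tau$-tilting finite hypothesis to translate from torsion classes to support $\tau$-tilting modules. First, since $\Lambda$ is $\tau$-tilting finite, Theorem \ref{thm:ttt} (3) and (4) give a poset isomorphism $\sttilt\Lambda \xrightarrow{\sim} \tors\Lambda$ via $U \mapsto \Fac U$, so every interval in $\tors\Lambda$ is of the form $[\Fac U, \Fac T]$ for some $U \leq T$ in $\sttilt\Lambda$. Combined with Proposition \ref{prop:arashi} (or directly Corollary \ref{cor:ice}), every ICE-closed subcategory arises as $\HH_{[\Fac U, \Fac T]}$ for some pair with $\Fac U \subseteq \Fac T \subseteq (\Fac U)^+$.

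Next, I would use Definition-Proposition \ref{def-prop:u+}, which states exactly that $(\Fac U)^+ = \Fac(U^+)$, to rephrase the condition $\Fac T \subseteq (\Fac U)^+$ as $T \leq U^+$ in $\sttilt\Lambda$. Hence the set of $T$'s producing ICE hearts above $\Fac U$ is precisely the interval $[U, U^+] \subseteq \sttilt\Lambda$, matching step (3) of the statement.

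Then I would invoke Corollary \ref{cor:fromttint}: for each such $T$, the heart $\HH_{[\Fac U, \Fac T]}$ has $\Ext$-progenerator $T/\tr_U(T)$, and because this heart is ICE-closed, $T/\tr_U(T)$ is a wide $\tau$-tilting module with $\ccok(T/\tr_U(T)) = \HH_{[\Fac U, \Fac T]}$. Finally, by Proposition \ref{prop:ttfice} the equality $\ice\Lambda = \dfice\Lambda$ holds in the $\tau$-tilting finite setting, so Theorem \ref{thm:wttiltbij} yields a bijection $\wttilt\Lambda \xrightarrow{\sim} \ice\Lambda$ via $M \mapsto \ccok M$; together with the previous step this shows that every ICE-closed subcategory, and hence every wide $\tau$-tilting module, is obtained by the described procedure.

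There is no real obstacle here: the statement is essentially a dictionary between Corollary \ref{cor:ice} and $\tau$-tilting data, and the only point requiring care is verifying that $(\Fac U)^+ = \Fac(U^+)$, which is precisely the content of Definition-Proposition \ref{def-prop:u+} and depends on the fact that arrows ending at a functorially finite torsion class in $\Hasse(\tors\Lambda)$ start from functorially finite ones (as used in that proof). Once this is in hand, the three steps of the corollary follow directly by chaining the bijections.
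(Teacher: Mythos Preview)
Your proposal is correct and follows essentially the same approach as the paper: the paper's proof is a one-line reference to Corollaries \ref{cor:ice} and \ref{cor:fromttint} together with Definition-Proposition \ref{def-prop:u+}, and you have spelled out exactly this chain. Your additional invocation of Proposition \ref{prop:ttfice} and Theorem \ref{thm:wttiltbij} to justify that \emph{all} wide $\tau$-tilting modules are reached is a reasonable elaboration that the paper leaves implicit.
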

We refer the reader to Section \ref{sec:ex} for examples.

\subsection{Bijections between wide $\tau$-tilting modules and sincere intervals}
The method we gave in Corollary \ref{cor:wttiltfromint} is suitable for the actual computation, but the same wide $\tau$-tilting modules (thus the same ICE-closed subcategories) may appear several times.
Thus, from the theoretical viewpoint, it is natural to modify it to obtain some bijection, which is the aim of this subsection.
\emph{In what follows, for an interval $[U,T]$ in $\sttilt\Lambda$, we write $\HH_{[U,T]} := \HH_{[\Fac U, \Fac T]} = \Fac T \cap U^\perp$.}

By Proposition \ref{prop:wtiltpbij}, we only have to study $\wttiltp\Lambda$, that is, study functorially finite wide subcategories $\WW$ and $\tau_\WW$-tilting modules for each $\WW$, or equivalently, such $\WW$ and functorially finite sincere torsion class in $\WW$.
For the $\tau$-tilting finite case, we can use the following classification of wide subcategories.
\begin{proposition}\label{prop:wideclassify}
  Let $U \in \sttilt\Lambda$.
  Then $[\Fac U,\Fac U^+]$ is a wide interval, and we have a map
  \[
  \begin{tikzcd}[row sep=0]
    \sttilt\Lambda \rar & \wide\Lambda\\
    U \rar[mapsto] & \WW_U:=\HH_{[\Fac U,\Fac U^+]}.
  \end{tikzcd}
  \]
  Moreover, if $\Lambda$ is $\tau$-tilting finite, then this map is bijective.
\end{proposition}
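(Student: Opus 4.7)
My plan has three parts. For the first assertion, setting $\UU := \Fac U$, Definition-Proposition \ref{def-prop:u+} already establishes $\Fac U^+ = \UU^+$ and shows that $[\UU, \UU^+]$ is a wide interval (via the dual of Proposition \ref{prop:torfwide}(1)). Hence $\WW_U = \HH_{[\UU, \UU^+]}$ is a wide subcategory and $U \mapsto \WW_U$ is a well-defined map.

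For the bijectivity under $\tau$-tilting finiteness, my approach is to construct an explicit two-sided inverse
\[
\Psi \colon \wide\Lambda \to \sttilt\Lambda, \qquad \Psi(\WW) := P({}^\perp\WW),
\]
which is well-defined because ${}^\perp\WW$ is a torsion class (Proposition \ref{prop:torsbasic}), functorially finite by Theorem \ref{thm:ttt}(4), and thus admits an $\Ext$-progenerator by Theorem \ref{thm:ttt}(3). To verify $\WW_{\Psi(\WW)} = \WW$, I would set $U := \Psi(\WW)$ and $\FF := \FFF(\WW) = ({}^\perp\WW)^\perp$, which gives $\Fac U = {}^\perp\WW$ (by Theorem \ref{thm:ttt}(3)) and hence $(\Fac U)^\perp = \FF$. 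Translating Proposition \ref{prop:torfwide}(1) across the anti-isomorphism of Proposition \ref{prop:torsbasic}(3) and using Remark \ref{rem:heart} to match hearts gives $\HH_{[\Fac U, \Fac U^+]} = \WR(\FF)$. Since $\WW$ is wide, Theorem \ref{thm:classifyice} supplies $\WR(\FFF(\WW)) = \la \WW \ra_\wide = \WW$, yielding $\WW_U = \WW$.

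The remaining step $\Psi(\WW_U) = U$ amounts to proving ${}^\perp\WW_U = \Fac U$, and this is the main obstacle. The trivial inclusion is $\Fac U \subseteq {}^\perp\WW_U$, so the substance lies in the reverse inclusion. By Proposition \ref{prop:torsbasic}(2), this is equivalent to $\FFF(\WW_U) = (\Fac U)^\perp$, whose nontrivial containment $(\Fac U)^\perp \subseteq \FFF(\WW_U)$ I would attack by iterating the dual of Lemma \ref{lem:interval} on $\FF_0 := (\Fac U)^\perp$ to produce a chain $\FF_0 \supseteq \FF_0^- \supseteq \FF_0^{--} \supseteq \cdots$ (of finite length by $\tau$-tilting finiteness) together with filtration identities $\FF_0 = \WW_U * \WR(\FF_0^-) * \WR(\FF_0^{--}) * \cdots$, and then inducting down this chain to show that every wide subcategory appearing in the filtration lies inside $\FFF(\WW_U)$. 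Alternatively, since $\WW_{\Psi(\WW)} = \WW$ already forces $\Psi$ to be injective, bijectivity of $U \mapsto \WW_U$ would follow from the cardinality equality $|\wide\Lambda| = |\sttilt\Lambda|$, a known consequence of the correspondence between support $\tau$-tilting modules and functorially finite wide subcategories in the $\tau$-tilting finite setting.
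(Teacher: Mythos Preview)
Your first paragraph matches the paper: the wideness of $[\Fac U, \Fac U^+]$ is the dual of Proposition~\ref{prop:torfwide}, exactly as the paper argues.

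For bijectivity, the paper takes a much shorter route than your explicit-inverse construction. It simply cites \cite[Corollary~3.11]{MS}: when $\Lambda$ is $\tau$-tilting finite, $\WR \colon \torf\Lambda \to \wide\Lambda$ is a bijection. Since $\WW_U = \WR\bigl((\Fac U)^\perp\bigr)$ by Remark~\ref{rem:heart} and Proposition~\ref{prop:torfwide}, the map $U \mapsto \WW_U$ is the composite of known bijections
\[
\sttilt\Lambda \xrightarrow{\ \Fac\ } \tors\Lambda \xrightarrow{\ (-)^\perp\ } \torf\Lambda \xrightarrow{\ \WR\ } \wide\Lambda,
\]
and bijectivity is immediate. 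Your verification that $\WW_{\Psi(\WW)} = \WW$ is correct and yields surjectivity of $U \mapsto \WW_U$; your option~(b) then closes the argument, but the cardinality equality $|\wide\Lambda| = |\sttilt\Lambda|$ you invoke \emph{is} the content of \cite{MS} that the paper cites directly --- so route~(b) is the paper's proof with an added detour through $\Psi$.

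Your option~(a), by contrast, has a genuine gap. Iterating $(-)^-$ on $\FF_0 := (\Fac U)^\perp$ and applying Lemma~\ref{lem:interval} gives $\FF_0 = \WW_U * \FF_0^-$, then $\FF_0^- = \WR(\FF_0^-) * \FF_0^{--}$, and so on. To conclude $\FF_0 \subseteq \FFF(\WW_U)$ you would need each successive factor $\WR(\FF_0^-), \WR(\FF_0^{--}), \ldots$ to lie inside $\FFF(\WW_U)$, and nothing in your sketch justifies this: these are wide subcategories attached to strictly smaller torsion-free classes, with no a~priori containment in $\FFF(\WW_U)$. This is precisely where the substance of \cite{MS} lies, and I do not see how to fill this step without essentially reproving that result.
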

\begin{proof}
  The first statement follows from the dual of Proposition \ref{prop:torfwide}.
  Suppose that $\Lambda$ is $\tau$-tilting finite. Then \cite[Corollary 3.11]{MS} shows that $\FF \mapsto \WR(\FF)$ is a bijection between $\torf\Lambda$ and $\wide\Lambda$ (see \cite[Corollary 5.4]{enomono} for an alternative proof). Thus the assertion follows via the anti-isomorphisms $(-)^\perp \colon \tors\Lambda \rightleftarrows \torf\Lambda \colon {}^\perp(-)$.
\end{proof}

To deal with $\tau_\WW$-tilting modules and sincere torsion classes in $\WW$, we introduce the following special intervals in $\sttilt\Lambda$.

\begin{definition}\label{def:sincere-int}
  Let $[U,T]$ be an interval in $\sttilt\Lambda$.
  We say that $[U,T]$ is a \emph{sincere interval} if $U \leq T \leq U^+$ holds and $\HH_{[U, T]}$ is a sincere subcategory of the wide subcategory $\WW_U := \HH_{[U, U^+]}$.
  We denote by $\sint\Lambda$ the set of sincere intervals in $\sttilt\Lambda$.
\end{definition}
These intervals are in bijection with wide $\tau$-tilting modules and ICE-closed subcategories if $\Lambda$ is $\tau$-tilting finite.
\begin{proposition}\label{prop:sint}
  Let $\Lambda$ be a $\tau$-tilting finite artin algebra. Then we have the following commutative diagram consisting of bijections,
  \[
  \begin{tikzcd}[column sep = large]
    \sint\Lambda \rar["\Phi"] \ar[rd,"\HH_{(-)}"'] & \wttiltp\Lambda \rar["{(\WW,M) \mapsto M}"] & \wttilt\Lambda \ar[ld, "\ccok"] \\
    & \ice\Lambda = \dfice\Lambda
  \end{tikzcd}
  \]
  where $\Phi[U,T] := (\WW_U, T/\tr_U(T))$.
\end{proposition}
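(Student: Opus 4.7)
Since the right-hand maps $\wttiltp\Lambda \to \wttilt\Lambda \to \ice\Lambda$ are already known to be bijections by Proposition \ref{prop:wtiltpbij}, Theorem \ref{thm:wttiltbij}, and the identification $\ice\Lambda = \dfice\Lambda$ from Proposition \ref{prop:ttfice}, the plan is to reduce the statement to three things: that $\Phi$ is well-defined, that the triangle commutes, and that $\HH_{(-)} \colon \sint\Lambda \to \ice\Lambda$ is bijective. Commutativity and well-definedness should follow almost immediately from Corollary \ref{cor:fromttint}, while the bijectivity of $\HH_{(-)}$ will require constructing an explicit inverse using Theorem \ref{thm:classifyice} together with Proposition \ref{prop:wideclassify}.

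For well-definedness, I would fix $[U,T] \in \sint\Lambda$. Proposition \ref{prop:wideclassify} gives $\WW_U \in \wide\Lambda$, and $\tau$-tilting finiteness via Proposition \ref{prop:ttfice} upgrades this to $\WW_U \in \fwide\Lambda$. The torsion-class version of the wide interval isomorphism in Proposition \ref{prop:wideint}(2), combined with (the dual of) Theorem \ref{thm:iceintchar}, shows that $\HH_{[U,T]}$ is a torsion class in $\WW_U$, sincere by hypothesis. Corollary \ref{cor:fromttint} provides $T/\tr_U(T)$ as an $\Ext$-progenerator of $\HH_{[U,T]}$, and transporting along an equivalence $\WW_U \equi \mod\Gamma$ from Lemma \ref{lem:fwide} and applying Theorem \ref{thm:ttt}(3) shows that $T/\tr_U(T)$ is $\tau_{\WW_U}$-tilting; hence $\Phi[U,T] \in \wttiltp\Lambda$. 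Commutativity of the triangle then follows from the identity $\ccok(T/\tr_U(T)) = \HH_{[U,T]}$, again via Corollary \ref{cor:fromttint}.

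To construct the inverse of $\HH_{(-)}$, I would start with $\CC \in \ice\Lambda$. Theorem \ref{thm:classifyice} identifies $\WW := \la\CC\ra_{\wide}$ as the unique wide subcategory in which $\CC$ is a sincere torsion class, and by the bijectivity in Proposition \ref{prop:wideclassify} there is a unique $U \in \sttilt\Lambda$ with $\WW_U = \WW$. Under $\tau$-tilting finiteness every torsion class is functorially finite, so the lattice isomorphism $[\Fac U, \Fac U^+] \cong \tors\WW_U$ dual to Proposition \ref{prop:wideint}(2) sends $\CC$ to some $\TT \in [\Fac U, \Fac U^+]$ of the form $\Fac T$ for a unique $T \in \sttilt\Lambda$. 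The resulting interval $[U,T]$ is sincere by construction, satisfies $\HH_{[U,T]} = \CC$, and yields the two-sided inverse to $\HH_{(-)}$.

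The main obstacle I anticipate is the identification of $\WW_U$ (defined lattice-theoretically as the heart of $[\Fac U, \Fac U^+]$) with $\la\CC\ra_{\wide}$ (defined categorically as the smallest wide subcategory containing $\CC$), and then the matching of $\CC$ with $\Fac T$ under the wide interval isomorphism. Once these compatibilities are verified, the injectivity and surjectivity of $\HH_{(-)}$ reduce to uniqueness statements already in hand, but the bookkeeping between intervals in $\sttilt\Lambda$, their corresponding torsion classes in $\WW_U$, and the associated wide $\tau$-tilting pairs is where care is required.
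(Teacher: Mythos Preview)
Your proposal is correct and uses essentially the same ingredients as the paper: well-definedness via Corollary \ref{cor:fromttint} and Theorem \ref{thm:ttt}(3), commutativity via $\ccok(T/\tr_U(T)) = \HH_{[U,T]}$, and the bijectivity via Proposition \ref{prop:wideclassify} together with the lattice isomorphism $[U,U^+] \cong \tors\WW_U$. The only organizational difference is that the paper proves $\Phi$ injective and $\HH_{(-)}$ surjective separately (reading off $\WW_{U_1} = \WW_{U_2}$ directly from the first component of $\Phi$), whereas you build an explicit two-sided inverse to $\HH_{(-)}$; your anticipated obstacle $\WW_U = \la\HH_{[U,T]}\ra_{\wide}$ is immediate from Theorem \ref{thm:classifyice} since $\HH_{[U,T]}$ is by hypothesis a sincere torsion class in $\WW_U$.
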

\begin{proof}
  Since $\Lambda$ is $\tau$-tilting finite, $\dfice\Lambda = \ice\Lambda$ holds by Proposition \ref{prop:ttfice}.
  We shall show that $\Phi$ and $\HH_{(-)}$ are well-defined. Let $[U,T] \in \sint\Lambda$. Since $[\Fac U, \Fac U^+]$ is a wide interval with its heart $\WW_U$ and $\Fac U \subseteq \Fac T \subseteq \Fac U^+$, the interval $[\Fac U, \Fac T]$ is an ICE interval by Corollary \ref{cor:ice}, hence $\HH_{(-)}$ is well-defined.
  Moreover, $\HH_{[U,T]}$ is a sincere torsion class in $\WW_U$ by the dual of Proposition \ref{prop:wideint} (2) and the definition of sincere intervals, and has an $\Ext$-progenerator $T/\tr_U(T)$ by Corollary \ref{cor:fromttint}.
  Therefore, $T/\tr_U(T)$ is $\tau_{\WW_U}$-tilting by Theorem \ref{thm:ttt} (3). Hence $(\WW_U, T/\tr_U(T))$ is a wide $\tau$-tilting pair.

  It is clear that the above diagram commutes, and the maps between $\wttiltp\Lambda$, $\wttilt\Lambda$ and $\dfice\Lambda$ are bijective by Corollary \ref{cor:compati}.
  Therefore, it suffices to show that $\Phi$ is injective and $\HH_{(-)}$ is surjective.

  To show that $\Phi$ is injective, assume $\Phi[U_1,T_1] = \Phi[U_2,T_2]$ for $[U_1,T_1],[U_2,T_2] \in \sint\Lambda$.
  In particular, $\WW_{U_1} = \WW_{U_2}$ holds, which implies $U_1 = U_2$ by Proposition \ref{prop:wideclassify}. Put $U:= U_1 = U_2$.
  Since $\Lambda$ is $\tau$-tilting finite, $\Fac \colon \sttilt\Lambda \to \tors\Lambda$ is a bijection. Thus, the dual of Proposition \ref{prop:wideint} (2) implies the following bijection:
  \begin{equation}\label{eq}
  \begin{tikzcd}[row sep=0]
    [U, U^+] \rar["\sim"] & \tors \WW_U, \\
    T \rar[mapsto] & \HH_{[U,T]}.
  \end{tikzcd}
  \end{equation}
  Since $\Phi[U,T_1] = \Phi[U,T_2]$, we have $\HH_{[U,T_1]} = \HH_{[U,T_2]}$, which implies $T_1 = T_2$ by the bijection (\ref{eq}).

  Finally, we will show that $\HH_{(-)}$ is surjective. Let $\CC$ be an ICE-closed subcategory of $\mod\Lambda$. Theorem \ref{thm:classifyice} implies that there is a wide subcategory $\WW$ of $\mod\Lambda$ such that $\CC$ is a sincere torsion class in $\WW$. Since $\Lambda$ is $\tau$-tilting finite, there is some $U \in \sttilt\Lambda$ satisfying $\WW = \WW_U$ by Proposition \ref{prop:wideclassify}. By the bijection (\ref{eq}), there is some $T \in \sttilt\Lambda$ satisfying $U \leq T \leq U^+$ and $\HH_{[U,T]} = \CC$.
  Since $\CC$ is a sincere subcategory of $\WW_U$, we have $[U,T] \in \sint\Lambda$.
\end{proof}

In the rest of this subsection, we give two methods to obtain $\sint\Lambda$ for a $\tau$-tilting finite algebra $\Lambda$.
The first one (Corollary \ref{cor:howtowttilt}) is a module-theoretic method where we count certain indecomposable modules, and the second one (Corollary \ref{cor:posetstr}) is a combinatorial method where we only uses the poset structure of $\sttilt\Lambda$.
Once we obtain $\sint\Lambda$ in either method, we obtain all wide $\tau$-tilting modules and ICE-closed subcategories without duplication using Proposition \ref{prop:sint}.

The first method relies on the following observation on the number of indecomposable $\Ext$-projectives.
\begin{lemma}\label{lem:sincere-number}
  Let $[U,T]$ be an interval in $\sttilt\Lambda$ satisfying $U \leq T \leq U^+$. Then $[U,T]$ is a sincere interval if and only if $|P(\WW_U)| = |P(\HH_{[U,T]})|$ holds.
\end{lemma}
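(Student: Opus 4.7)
The plan is to transport the statement across an equivalence $F \colon \WW_U \equi \mod\Gamma$ and then invoke Theorem \ref{thm:ttt} (3) for $\mod\Gamma$. By Proposition \ref{prop:wideclassify}, $[\Fac U, \Fac U^+]$ is a wide interval and its heart $\WW_U$ is a wide subcategory of $\mod\Lambda$. Since $\Fac U \subseteq \Fac T \subseteq \Fac U^+$, Corollary \ref{cor:fromttint} guarantees that $\HH_{[U,T]}$ is an ICE-closed subcategory with $\Ext$-progenerator $T/\tr_U(T)$; moreover, by Proposition \ref{prop:wideint} (2) (used in its torsion-class version, cf.\ Remark \ref{rem:heart}), $\HH_{[U,T]}$ is in fact a torsion class in $\WW_U$. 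In particular, $\WW_U$ is functorially finite (Lemma \ref{lem:2-3}), so by Lemma \ref{lem:fwide} we get the desired equivalence $F$, under which $P(\WW_U)$ goes to $\Gamma$; thus $|P(\WW_U)| = |\Gamma|$, the number of isomorphism classes of simples in $\WW_U$.

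The next step is to read off the two sides of the claimed equivalence inside $\mod\Gamma$. The functorially finite torsion class $F(\HH_{[U,T]}) \in \ftors\Gamma$ has $\Ext$-progenerator $F(T/\tr_U(T))$, hence corresponds, under Theorem \ref{thm:ttt} (3), to a basic support $\tau$-tilting $\Gamma$-module whose number of indecomposable summands equals $|P(\HH_{[U,T]})|$. Theorem \ref{thm:ttt} (3) then says that this module is $\tau$-tilting (equivalently, $|P(\HH_{[U,T]})| = |\Gamma|$) if and only if the corresponding torsion class is sincere in $\mod\Gamma$.

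Finally, I would note that since $F$ is an equivalence of abelian categories, it preserves simple objects and composition factors, so $F(\HH_{[U,T]})$ is sincere in $\mod\Gamma$ if and only if $\HH_{[U,T]}$ is sincere in $\WW_U$, which is the defining condition for $[U,T]$ to be a sincere interval. Chaining the equivalences yields exactly $|P(\HH_{[U,T]})| = |P(\WW_U)| \iff [U,T] \in \sint\Lambda$.

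The only potential snag is making sure the $\Ext$-progenerator behaves as expected under $F$ (both for $\WW_U$ itself, giving $\Gamma$, and for the torsion class $\HH_{[U,T]}$, giving $F(T/\tr_U(T))$); this is immediate from the fact that $F$ is an exact equivalence and thus preserves $\Ext$-projectivity and isomorphism classes of indecomposables. Once this bookkeeping is set, the lemma reduces to a direct citation of Theorem \ref{thm:ttt} (3).
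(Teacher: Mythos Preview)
Your proposal is correct and follows essentially the same approach as the paper: transport $\WW_U$ and $\HH_{[U,T]}$ across an equivalence $F\colon \WW_U \equi \mod\Gamma$ using Lemmas \ref{lem:2-3} and \ref{lem:fwide}, identify $|P(\WW_U)| = |\Gamma|$ and $|P(\HH_{[U,T]})| = |F(T/\tr_U(T))|$ via Corollary \ref{cor:fromttint}, and then invoke Theorem \ref{thm:ttt} (3) to equate sincerity of the torsion class with the $\tau$-tilting condition $|FM| = |\Gamma|$. The paper's proof is the same argument, only slightly terser on the bookkeeping you spell out.
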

\begin{proof}
  Corollary \ref{cor:fromttint} shows that $\HH_{[U,T]}$ has an $\Ext$-progenerator $M:= P(\HH_{[U,T]}) = T/\tr_U(T)$.
  On the other hand, $\WW_U$ is a functorially finite wide subcategory by Lemma \ref{lem:2-3}, thus there are some artin algebra $\Gamma$ and an equivalence $F \colon \WW_U \equi \mod\Gamma$ by Lemma \ref{lem:fwide}.
  This equivalence implies that $|P(\WW_U)| = |\Gamma|$.
  Then $F(\HH_{[U,T]})$ is a torsion class in $\mod\Gamma$ with its $\Ext$-progenerator $FM$, which is a support $\tau$-tilting $\Gamma$-module.
  Theorem \ref{thm:ttt} (3) implies that $F(\HH_{[U,T]})$ is sincere in $\mod\Gamma$ if and only if $FM$ is a $\tau$-tilting $\Gamma$-module, that is, $|FM| = |\Gamma|$.
  Therefore, $\HH_{[U,T]}$ is sincere in $\WW_U$ if and only if $|M| = |FM| = |\Gamma| = |P(\WW_U)|$.
\end{proof}

 To use Lemma \ref{lem:sincere-number}, we will discuss the number of indecomposable $\Ext$-projectives of the heart of a given interval.
For an additive category $\CC$ and a subcategory $\DD$ of $\CC$, we denote by $[\DD]$ the two-sided ideal of $\CC$ consisting of morphisms which factor through objects in $\DD$. Then one can consider the ideal quotient $\CC/[\DD]$.
By abuse of notation, for a subcategory $\CC'$ of $\CC$ (which does not necessarily contain $\DD$), we denote by $\CC'/[\DD]$ the image of the composition of the natural functors $\CC' \hookrightarrow \CC \defl \CC/[\DD]$.
Then the following is a key lemma to compute the number of indecomposable $\Ext$-projectives.
\begin{lemma}\label{lem:number}
  Let $[\UU,\TT]$ be an interval in $\tors\Lambda$ and $\HH$ its heart. Then the following hold.
  \begin{enumerate}
    \item The torsion-free coradical $g$ of $\UU^\perp$ induces a faithful functor $\ov{g} \colon \TT/[\UU] \to \HH$.
    \item If $\TT$ has an $\Ext$-progenerator $T$, then $\ov{g}$ is fully faithful on $(\add T)/[\UU]$, thus induces an equivalence $(\add T)/[\UU] \equi \add (gT)$.
  \end{enumerate}
\end{lemma}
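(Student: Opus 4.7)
The plan is to establish (1) by unpacking the torsion pair $(\UU,\UU^\perp)$, and then to deduce (2) by a standard lifting argument that uses the $\Ext$-projectivity of $T$ in $\TT$. For (1), Lemma \ref{lem:heartprogen} already supplies an additive functor $g \colon \TT \to \HH$, so I would need to verify two things: that $g$ annihilates the ideal $[\UU]$, and that the induced functor $\ov{g}$ is faithful. For the first, I would observe that $gU = 0$ for every $U \in \UU$, because the defining short exact sequence $0 \to uU \to U \to gU \to 0$ forces $uU = U$; hence any morphism factoring through an object of $\UU$ is sent to $0$ by $g$. For the second, given $\varphi \colon M \to N$ in $\TT$ with $g\varphi = 0$, I would use the naturality square
\[
\begin{tikzcd}
M \ar[r,"\pi_M"] \ar[d,"\varphi"'] & gM \ar[d,"g\varphi"] \\
N \ar[r,"\pi_N"'] & gN
\end{tikzcd}
\]
to conclude that $\pi_N \varphi = 0$, so $\varphi$ factors through the inclusion $uN \hookrightarrow N$ with $uN \in \UU$, and therefore $\varphi \in [\UU]$.

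For (2), faithfulness on $(\add T)/[\UU]$ already follows from (1), and essential surjectivity onto $\add(gT)$ is immediate from the additivity of $g$, so the content of the statement is fullness. Given $\psi \colon gT_1 \to gT_2$ with $T_1,T_2 \in \add T$, I would try to lift $\psi\pi_{T_1} \colon T_1 \to gT_2$ along $\pi_{T_2} \colon T_2 \twoheadrightarrow gT_2$. Applying $\Hom_\Lambda(T_1,-)$ to $0 \to uT_2 \to T_2 \to gT_2 \to 0$ places the obstruction in $\Ext^1_\Lambda(T_1,uT_2)$, and this vanishes because $T_1 \in \add T$, $uT_2 \in \UU \subseteq \TT$, and $T$ is $\Ext$-projective in $\TT$. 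Choosing such a lift $\varphi \colon T_1 \to T_2$, the relation $\pi_{T_2}\varphi = \psi\pi_{T_1}$ combined with the fact that $\pi_{T_1}$ is an epimorphism forces $g\varphi = \psi$. The resulting fully faithful functor $\ov{g} \colon (\add T)/[\UU] \to \HH$ lands essentially surjectively in $\add(gT)$, giving the claimed equivalence.

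The main obstacle is the fullness step in (2): part (1) and essential surjectivity are purely formal, but fullness genuinely rests on the vanishing of $\Ext^1_\Lambda(T,uT_2)$, which in turn depends crucially on $uT_2$ lying in $\UU \subseteq \TT$ and on $T$ being the $\Ext$-progenerator of $\TT$ (not merely of $\HH$). This is also why one cannot naively extend fullness from $\add T$ to all of $\TT$: for a general object $M \in \TT$ the relevant $\Ext^1$ need not vanish.
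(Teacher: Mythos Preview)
Your proposal is correct and follows essentially the same argument as the paper's proof: both establish (1) by noting that $g$ kills $\UU$ and that $g\varphi = 0$ forces $\varphi$ to factor through $uN \in \UU$, and both establish (2) by lifting $\psi$ along the sequence $0 \to uT_2 \to T_2 \to gT_2 \to 0$ using the vanishing of $\Ext^1_\Lambda(T_1,uT_2)$ coming from the $\Ext$-projectivity of $T$ in $\TT$. Your closing remark about why fullness cannot be extended from $\add T$ to all of $\TT$ is a correct and useful observation that the paper does not make explicit.
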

\begin{proof}
  (1)
  We have already shown in Lemma \ref{lem:heartprogen} that $g$ defines a functor $\TT \to \HH$. Let $\varphi \colon X \to Y$ be a morphism with $X,Y \in \TT$.
  We have the following commutative diagram.
  \[
  \begin{tikzcd}
    0 \rar & uX \rar & X \rar\dar["\varphi"] & gX \rar\dar["g(\varphi)"] & 0 \\
    0 \rar & uY \rar["\iota"] & Y \rar & gY \rar & 0
  \end{tikzcd}
  \]
  Since $\iota$ is a right $\UU$-approximation of $Y$, to show that $g$ induces a faithful functor $\ov{g} \colon \TT/[\UU] \to \HH$, it suffices to show that $g(\varphi) = 0$ if and only if $\varphi$ factors through $\iota$. This can be checked straightforwardly.

  (2)
  Let $T_1$ and $T_2$ be in $\add T$. By (1), it suffices to show that $g|_{\add T} \colon \add T \to \HH$ is full.
  Take any map $\psi \colon gT_1 \to gT_2$, and consider the following diagram.
  \[
  \begin{tikzcd}
    0 \rar & uT_1 \rar & T_1 \rar\dar[dashed, "\varphi"] & gT_1 \rar\dar["\psi"] & 0 \\
    0 \rar & uT_2 \rar["\iota"] & T_2 \rar & gT_2 \rar & 0
  \end{tikzcd}
  \]
  Since $uT_2 \in \UU \subseteq\TT$ and $T_1$ is $\Ext$-projective in $\TT$, we obtain a map $\varphi$ which makes the above diagram commute. This shows $g(\varphi) = \psi$, which completes the proof.
\end{proof}
As a corollary, we obtain the following formula for the number of indecomposable $\Ext$-projectives of a heart. For a Krull-Schmidt category $\CC$, we denote by $\ind\CC$ the set of isomorphism classes of indecomposable objects in $\CC$.
\begin{corollary}\label{cor:numproj}
  Let $[\UU,\Fac T]$ be an interval in $\tors\Lambda$ with $T \in \sttilt\Lambda$, and let $\HH$ be its heart with its $\Ext$-progenerator $P(\HH)$ (which exists by Lemma \ref{lem:heartprogen}).
  Then $|P(\HH)| = \# (\ind (\add T) \setminus \ind\UU)$ holds, that is, $|P(\HH)|$ is equal to the number of indecomposable direct summands of $T$ which do not belong to $\UU$.
\end{corollary}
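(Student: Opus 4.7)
The plan is to combine Lemma~\ref{lem:heartprogen} with Lemma~\ref{lem:number}(2). By Lemma~\ref{lem:heartprogen}, the torsion-free coradical $g$ of $\UU^\perp$ yields an $\Ext$-progenerator $gT$ of $\HH$, so $\add P(\HH) = \add(gT)$ and hence $|P(\HH)| = \#\ind(\add(gT))$. By Lemma~\ref{lem:number}(2), $\ov{g}$ induces an equivalence of additive categories
\[
(\add T)/[\UU] \xrightarrow{\sim} \add(gT),
\]
and equivalences preserve and reflect both indecomposability and isomorphism. Hence it suffices to count the isomorphism classes of nonzero indecomposable objects in the ideal quotient $(\add T)/[\UU]$.

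Write $T = T_1 \oplus \cdots \oplus T_n$ with $T_1,\dots,T_n$ pairwise non-isomorphic indecomposables (using that $T$ is basic). Every indecomposable object of $(\add T)/[\UU]$ is the image of some $T_i$, since each endomorphism ring $\End_{\add T}(T_i)/[\UU](T_i,T_i)$ is either zero or local (being a quotient of the local ring $\End_{\add T}(T_i)$). So I need two claims:
\begin{enumerate}
  \item[(a)] $T_i$ becomes zero in $(\add T)/[\UU]$ if and only if $T_i \in \UU$.
  \item[(b)] If $T_i, T_j \notin \UU$ and $i \neq j$, then $T_i \not\iso T_j$ in $(\add T)/[\UU]$.
\end{enumerate}

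For (a), if $T_i \in \UU$ then obviously $\id_{T_i}$ factors through $\UU$; conversely, if $\id_{T_i} = \beta\alpha$ with $\alpha \colon T_i \to U$, $U \in \UU$, then $\alpha$ is a split monomorphism, making $T_i$ a direct summand of $U$, so $T_i \in \UU$. For (b), the same argument shows that when $T_i \notin \UU$ the two-sided ideal $[\UU](T_i, T_i) \subseteq \End(T_i)$ is proper, hence contained in the unique maximal ideal $\rad\End(T_i)$. Thus any supposed mutually inverse maps $f\colon T_i \to T_j$ and $h \colon T_j \to T_i$ in the quotient satisfy $hf \equiv \id_{T_i} \pmod{[\UU]}$, so $hf$ is a unit in $\End(T_i)$; then $f$ is a split monomorphism in $\add T$, and by indecomposability $T_i \iso T_j$ in $\add T$, forcing $i = j$. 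Combining (a) and (b), the isomorphism classes of nonzero indecomposables in $(\add T)/[\UU]$ are in bijection with $\ind(\add T)\setminus \ind\UU$, which is the desired count.

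The main (though mild) obstacle is claim (b): one must rule out that distinct indecomposable summands of $T$ become isomorphic after killing factorizations through $\UU$. This is exactly where the local structure of $\End(T_i)$, together with the observation that $[\UU](T_i, T_i)$ is a proper ideal for $T_i \notin \UU$, is decisive.
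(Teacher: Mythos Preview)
Your proof is correct and follows essentially the same approach as the paper: both use Lemma~\ref{lem:heartprogen} and Lemma~\ref{lem:number}(2) to reduce to counting indecomposables in $(\add T)/[\UU]$, and then identify these with $\ind(\add T)\setminus\ind\UU$. The only difference is cosmetic: the paper invokes the ``well-known'' fact that for a Krull--Schmidt category $\TT$ and a summand-closed subcategory $\UU$, the projection induces a bijection $\ind\TT\setminus\ind\UU\xrightarrow{\sim}\ind(\TT/[\UU])$, whereas you spell out this fact directly via your claims (a) and (b) using the local endomorphism rings.
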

\begin{proof}
  Let $g \colon \mod\Lambda \to \UU^\perp$ be the torsion-free coradical of $\UU^\perp$. By Lemma \ref{lem:heartprogen}, we have that $gT$ is an $\Ext$-progenerator of $\HH$, thus $|P(\HH)| = \#\ind (\add (gT))$ holds.
  Lemma \ref{lem:number} implies that this number is equal to $\#\ind ((\add T)/[\UU])$.

  Set $\TT=\Fac T$ and let $\pi \colon \TT \to \TT/[\UU]$ be the natural projection functor. Since $\UU$ is a subcategory of $\TT$ closed under direct summands and $\TT$ is Krull-Schmidt, it is well-known that $\pi$ induces a bijection
  \[
  \ind\TT \setminus \ind \UU \xrightarrow{\sim} \ind (\TT/[\UU]).
  \]
  Thus by restricting this bijection, we obtain a bijection $\ind(\add T) \setminus \ind\UU \xrightarrow{\sim} \ind ((\add T)/[\UU])$.
\end{proof}

Now we obtain the following first description of $\sint\Lambda$.

\begin{corollary}\label{cor:howtowttilt}
  Let $\Lambda$ be a $\tau$-tilting finite artin algebra. Then $\sint\Lambda$ coincides with the set of intervals $[U,T]$ in $\sttilt\Lambda$ satisfying the following two conditions:
  \begin{itemize}
    \item $U \leq T \leq U^+$ holds.
    \item The number of indecomposable direct summands of $T$ which do not belong to $\Fac U$ is equal to the number of arrows in $\Hasse(\sttilt\Lambda)$ ending at $U$.
  \end{itemize}
\end{corollary}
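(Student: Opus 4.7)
The plan is to reduce the claim to three ingredients already provided in the excerpt. By Lemma \ref{lem:sincere-number}, for any $[U,T]$ with $U\leq T\leq U^+$ the interval is sincere exactly when $|P(\WW_U)|=|P(\HH_{[U,T]})|$. Corollary \ref{cor:numproj}, applied to $[\Fac U,\Fac T]$ with the support $\tau$-tilting module $T$, rewrites $|P(\HH_{[U,T]})|$ as the number of indecomposable summands of $T$ that do not belong to $\Fac U$. Hence the statement collapses to the single equality
\[
  |P(\WW_U)|\;=\;\#\{\text{arrows ending at $U$ in }\Hasse(\sttilt\Lambda)\},
\]
which depends only on $U$.

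To prove this remaining equality I would first observe that $\WW_U=\HH_{[\Fac U,\Fac U^+]}$ is functorially finite by Lemma \ref{lem:2-3} applied to the wide interval $[\Fac U,\Fac U^+]$, hence equivalent to $\mod\Gamma$ for some artin algebra $\Gamma$ by Lemma \ref{lem:fwide}; in particular $|P(\WW_U)|=|\Gamma|$ is the number of simple $\Gamma$-modules, equivalently the number of atoms of $\tors\Gamma$. The torsion-class version of Proposition \ref{prop:wideint}(2) then gives a lattice isomorphism $[\Fac U,\Fac U^+]\simeq\tors\WW_U\simeq\tors\Gamma$ sending $\Fac U$ to $0$, so covers of $\Fac U$ within $[\Fac U,\Fac U^+]$ are in natural bijection with atoms of $\tors\Gamma$, giving exactly $|P(\WW_U)|$ such covers.

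The final step is to identify these interval-covers with arrows ending at $U$ in $\Hasse(\sttilt\Lambda)$. By Definition-Proposition \ref{def-prop:u+}, $\Fac U^+$ is the join of all torsion classes that cover $\Fac U$ in $\tors\Lambda$, so every such cover automatically lies in $[\Fac U,\Fac U^+]$, and conversely covers inside the interval remain covers in $\tors\Lambda$. Since $\Lambda$ is $\tau$-tilting finite, Theorem \ref{thm:ttt}(3)(4) makes $\Fac\colon\sttilt\Lambda\to\tors\Lambda$ a poset isomorphism, so arrows $U'\to U$ in $\Hasse(\sttilt\Lambda)$ correspond bijectively to covers of $\Fac U$ in $\tors\Lambda$. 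Concatenating the three bijections above yields the required equality.

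I do not anticipate a genuine obstacle: the only bookkeeping point that needs care is verifying that the isomorphism of Proposition \ref{prop:wideint}(2) sends atoms to atoms and maps $\Fac U$ to the zero torsion class of $\tors\WW_U$, both of which are immediate from the explicit formula $(-)\cap{}^\perp(\Fac U)$.
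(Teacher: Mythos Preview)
Your proposal is correct and follows essentially the same approach as the paper: both reduce via Lemma~\ref{lem:sincere-number} to the equality $|P(\WW_U)|=|P(\HH_{[U,T]})|$, then interpret the right-hand side through Corollary~\ref{cor:numproj}. The only difference is that the paper handles the remaining identity $|P(\WW_U)|=\#\{\text{arrows ending at }U\}$ by citing \cite[Theorem 4.2]{AP} directly, whereas you supply a self-contained argument via the lattice isomorphism $[\Fac U,\Fac U^+]\simeq\tors\WW_U$ from Proposition~\ref{prop:wideint}(2) together with the observation that atoms of $\tors\Gamma$ are the $\Filt S$ for $S$ simple; your version is slightly longer but avoids the external reference and works just as well.
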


\begin{proof}
  By Lemma \ref{lem:sincere-number}, an interval $[U,T]$ satisfying $U \leq T \leq U^+$ is a sincere interval if and only if $|P(\WW_U)| = |P(\HH_{[U,T]})|$. The left hand side is equal to the number of arrows in $\Hasse(\sttilt\Lambda)$ ending at $U$ by \cite[Theorem 4.2]{AP}, and the right hand side is equal to the number of indecomposable direct summands of $T$ which do not belong to $\Fac U$ by Corollary \ref{cor:numproj}. Thus the assertion holds.
\end{proof}

Next, we shall give the second method to obtain $\sint\Lambda$, which only uses the poset-theoretic information on $\sttilt\Lambda$. This is based on the following characterization of sincere torsion classes.
For an element $p$ in a poset $P$, we define
\[
^\to p := \{ q \in P \, | \, \text{there is an arrow $p \ot q$ in $\Hasse P$} \}.
\]
\begin{lemma}\label{lem:sincerecri}
  Let $\AA$ be an abelian length category and $\TT$ a torsion class in $\AA$. Then $\TT$ is a sincere torsion class if and only if the following condition is satisfied:
  for any proper subset $X$ of $^\to 0$ in $\tors\AA$, we have
  $\TT \not\subseteq \bigvee X$.
\end{lemma}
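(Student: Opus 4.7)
The plan is to identify $^\to 0$ explicitly with $\simp \AA$, compute $\bigvee X$ for subsets $X \subseteq {}^\to 0$, and then translate the containment $\TT \subseteq \bigvee X$ into a condition on $\supp \TT$.

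First I would show that $^\to 0 = \{ \Filt S \mid S \in \simp\AA \}$, and that this description is a bijection. For any simple $S$, the subcategory $\Filt S$ is closed under extensions by definition and closed under quotients because every quotient of an object with all composition factors isomorphic to $S$ has the same property; hence $\Filt S \in \tors \AA$. It covers $0$ since any non-zero torsion class $\UU \subseteq \Filt S$ contains a non-zero object, which has $S$ as a simple quotient (hence in $\UU$), forcing $\Filt S \subseteq \UU$. Conversely, if $\UU$ covers $0$, then picking any non-zero $X \in \UU$ and a simple quotient $S$ of $X$ gives $S \in \UU$, so $\Filt S \subseteq \UU$ and minimality yields $\UU = \Filt S$.

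Next, for any subset $X \subseteq {}^\to 0$, corresponding to some $\Sigma \subseteq \simp \AA$ via the above bijection, I would compute
\[
\bigvee X \;=\; \TTT(\Sigma) \;=\; \Filt(\Fac \Sigma) \;=\; \Filt \Sigma,
\]
using Proposition \ref{prop:torsbasic}(2) and the fact that $\Fac \Sigma = \add \Sigma$ because any quotient of a semisimple object with summands in $\Sigma$ is again such an object. Since $M \in \Filt \Sigma$ if and only if every composition factor of $M$ lies in $\Sigma$, we get
\[
\TT \subseteq \bigvee X \iff \supp \TT \subseteq \Sigma.
\]

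Finally, the lemma follows by an unpacking: the stated condition says that $\supp \TT \not\subseteq \Sigma$ for every proper subset $\Sigma \subsetneq \simp \AA$, which is equivalent to $\supp \TT = \simp \AA$, i.e., $\TT$ is sincere. (The edge case $\AA = 0$ is trivial: both sides hold vacuously.) No step presents a genuine obstacle; the only mild subtlety is the identification of atoms in $\tors \AA$ with simples, and this is a standard consequence of closedness under quotients together with the fact that $\AA$ is a length category.
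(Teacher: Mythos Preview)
Your proof is correct and follows essentially the same route as the paper: identify the atoms ${}^\to 0$ of $\tors\AA$ with $\{\Filt S \mid S \in \simp\AA\}$, compute the join over a subset $\Sigma \subseteq \simp\AA$ as $\Filt\Sigma$, and translate the containment condition into $\supp\TT \subseteq \Sigma$. The only cosmetic difference is that you compute the join via the formula $\TTT(\Sigma) = \Filt(\Fac\Sigma)$ from Proposition~\ref{prop:torsbasic}(2), whereas the paper observes directly that $\Filt\Sigma$ is a Serre subcategory (hence a torsion class) and so equals the join; both arguments are equally short.
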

\begin{proof}
  Since $\AA$ is length, we can easily check
  \[
  ^\to 0 = \{ \Filt S \, | \, S \in \simp\AA \}.
  \]
  Moreover, for a subset $\SS$ of $\simp\AA$, we have that $\Filt\SS$ is a Serre subcategory of $\AA$ consisting of objects whose all composition factors are in $\SS$. In particular, $\Filt\SS$ is a torsion class, thus $\Filt\SS = \bigvee \{ \Filt S \, | \, S \in \SS \}$.
  Therefore, the described condition is equivalent to that, for any proper subset $\SS \subsetneq \simp\AA$, we have $\TT \not\subseteq \Filt\SS$. This is clearly equivalent to that $\TT$ is sincere.
\end{proof}

Now we obtain the following second description of $\sint\Lambda$.

\begin{corollary}\label{cor:posetstr}
  Let $\Lambda$ be a $\tau$-tilting finite artin algebra. Then $\sint\Lambda$ coincides with the set of intervals $[U,T]$ in $\sttilt\Lambda$ satisfying the two following conditions:
  \begin{itemize}
    \item $U \leq T \leq U^+$ holds.
    \item For any proper subset $X$ of $^\to U$ in $\sttilt\Lambda$, we have $T \not\leq \bigvee X$ holds.
  \end{itemize}
\end{corollary}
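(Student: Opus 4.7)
The plan is to apply Lemma \ref{lem:sincerecri} to the torsion class $\HH_{[U,T]}$ inside the wide subcategory $\WW_U$, and then translate the resulting condition back into the stated condition on the poset $\sttilt\Lambda$.

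First I would fix $U \in \sttilt\Lambda$ with $U \leq T \leq U^+$ and recall the lattice isomorphism
\[
\Psi \colon [U,U^+] \xrightarrow{\sim} \tors \WW_U, \qquad T \longmapsto \HH_{[U,T]},
\]
established in the proof of Proposition \ref{prop:sint} by composing $\Fac \colon [U,U^+] \xrightarrow{\sim} [\Fac U, \Fac U^+]$ (a lattice isomorphism by Theorem \ref{thm:ttt} and the $\tau$-tilting finiteness of $\Lambda$) with the lattice isomorphism $(-) \cap (\Fac U)^\perp$ coming from the dual of Proposition \ref{prop:wideint} applied to the wide interval $[\Fac U, \Fac U^+]$. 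By construction, $\Psi(U) = 0 \in \tors\WW_U$.

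Next I would match up Hasse neighbourhoods and joins under $\Psi$. Since $\Psi$ is a lattice isomorphism, it preserves the covering relation, so its restriction to Hasse-predecessors of $U$ inside $[U,U^+]$ gives a bijection onto $^\to 0$ in $\tors\WW_U$. Moreover, every $U' \in \sttilt\Lambda$ with a Hasse arrow $U \ot U'$ lies in $[U,U^+]$ by the very definition of $U^+$, so the set $^\to U$ computed in $\sttilt\Lambda$ coincides with the set of Hasse-predecessors of $U$ inside $[U,U^+]$; equivalently, $\Psi$ carries $^\to U$ bijectively onto $^\to 0$ in $\tors\WW_U$. Joins in the interval $[U,U^+]$ agree with joins in $\sttilt\Lambda$ for elements of $[U,U^+]$, and $\Psi$ sends these to the corresponding joins in $\tors\WW_U$. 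Hence, for any subset $X \subseteq {}^\to U$, the condition $T \not\leq \bigvee X$ in $\sttilt\Lambda$ is equivalent to $\HH_{[U,T]} \not\subseteq \bigvee \Psi(X)$ in $\tors\WW_U$.

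Finally, applying Lemma \ref{lem:sincerecri} to the abelian length category $\AA = \WW_U$ and the torsion class $\HH_{[U,T]} \in \tors\WW_U$, the subcategory $\HH_{[U,T]}$ is sincere in $\WW_U$ if and only if $\HH_{[U,T]} \not\subseteq \bigvee Y$ for every proper subset $Y$ of $^\to 0$ in $\tors\WW_U$. Translating via $\Psi$ as above, this is precisely the stated condition on $X \subsetneq {}^\to U$. Combined with Definition \ref{def:sincere-int}, this proves $[U,T] \in \sint\Lambda$ if and only if the two bulleted conditions hold. There is no real obstacle here beyond the bookkeeping of the correspondences, which has essentially already been done in the proofs of Proposition \ref{prop:sint} and Definition-Proposition \ref{def-prop:u+}.
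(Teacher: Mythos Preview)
Your proposal is correct and follows exactly the approach the paper intends: the paper states this corollary immediately after Lemma~\ref{lem:sincerecri} without proof, relying on the reader to transport that lemma through the lattice isomorphism $[U,U^+]\xrightarrow{\sim}\tors\WW_U$ from the proof of Proposition~\ref{prop:sint}, which is precisely what you do. Your bookkeeping on why $^\to U$ in $\sttilt\Lambda$ agrees with $^\to U$ in the interval $[U,U^+]$ and why joins are preserved is the only content needed to make the implicit proof explicit.
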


\section{The hereditary case}\label{sec:5}

Throughout this section, {\it we denote by} $\Lambda$ {\it a hereditary artin algebra.}  In this section, we investigate ICE-closed subcategories when $\Lambda$ is hereditary. The first author proved in \cite{rigid} that there is a bijection between rigid $\Lambda$-modules and ICE-closed subcategories of $\mod\Lambda$ with enough $\Ext$-projectives. First, we give another proof of this bijection by using wide $\tau$-tilting modules. Next, we study the Hasse quiver of ICE-closed subcategories of $\mod\Lambda$ by introducing a mutation of rigid modules.

\subsection{Rigid modules and ICE-closed subcategories}
We start with introducing the following notation.
\begin{definition}
  Let $\Lambda$ be a hereditary artin algebra.
  \begin{enumerate}
      \item A $\Lambda$-module $T$ is \emph{rigid} if $\Ext_{\Lambda}^{1}(T, T)=0$ holds. We denote by $\rigid\Lambda$ the set of isomorphism classes of basic rigid $\Lambda$-modules.
      \item We denote by $\icep\Lambda$ the set of ICE-closed subcategories of $\mod\Lambda$ with enough $\Ext$-projectives.
  \end{enumerate}
\end{definition}
The aim of this subsection is to give a proof of the following theorem:
\begin{theorem}\label{thm:rigid}\cite[Theorem 2.3]{rigid}
Let $\Lambda$ be a hereditary artin algebra. Then we have mutually inverse bijections
\[
  \begin{tikzcd}
   \rigid\Lambda \rar["\ccok" , shift left] & \icep\Lambda \lar["P" , shift left].
  \end{tikzcd}
  \]
\end{theorem}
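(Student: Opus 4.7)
The plan is to deduce Theorem \ref{thm:rigid} from Theorem \ref{thm:wttiltbij} by establishing two hereditary identifications: $\wttilt\Lambda = \rigid\Lambda$ on the module side, and $\icep\Lambda = \dfice\Lambda$ on the subcategory side.

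First I would prove $\wttilt\Lambda = \rigid\Lambda$, which corresponds to Proposition \ref{prop:heredwttilt} announced in the introduction. For the inclusion $\wttilt\Lambda \subseteq \rigid\Lambda$, let $M$ be $\tau_\WW$-tilting for some $\WW \in \fwide\Lambda$. Since $\Lambda$ is hereditary and $\WW$ is closed under kernels, cokernels, and extensions, $\WW$ is itself a hereditary abelian length category and $\Ext^1_\WW(-,-)$ agrees with $\Ext^1_\Lambda(-,-)$. Via an equivalence $\WW \equi \mod\Gamma$ supplied by Lemma \ref{lem:fwide}, the algebra $\Gamma$ is hereditary, and over a hereditary algebra a $\tau$-tilting module is precisely a rigid module of maximal rank. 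Hence $\Ext^1_\Lambda(M,M) = 0$. For the reverse inclusion, given a rigid $M$, the candidate wide subcategory is $\WW := \la M \ra_{\wide} = \Filt(\kker(\ccok M))$ (cf.\ Theorem \ref{thm:classifyice}), and the task is to show it is functorially finite with $|\Gamma| = |M|$ under an equivalence $\WW \equi \mod\Gamma$, so that $M$ is automatically $\tau_\WW$-tilting. This is obtained via Schofield's perpendicular-category construction, or equivalently exceptional sequences for hereditary algebras, which allows one to complete the rigid module $M$ to a tilting object inside a suitable functorially finite wide subcategory with exactly the right number of simples.

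With this in hand, the categorical identification $\icep\Lambda = \dfice\Lambda$ is a short argument. Theorem \ref{thm:wttiltbij} already gives $\dfice\Lambda \subseteq \icep\Lambda$, since doubly functorially finite ICE-closed subcategories have an $\Ext$-progenerator. Conversely, given $\CC \in \icep\Lambda$ with $\Ext$-progenerator $M$, the relation $M \in \CC$ forces $\Ext^1_\Lambda(M,M) = 0$, so $M$ is rigid and hence wide $\tau$-tilting by the previous step. Theorem \ref{thm:wttiltbij} then places $\ccok M$ in $\dfice\Lambda$, and Lemma \ref{lem:icecok} gives $\CC = \ccok M$, so $\CC \in \dfice\Lambda$. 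Restricting the bijection $\ccok \colon \wttilt\Lambda \rightleftarrows \dfice\Lambda \colon P(-)$ along $\rigid\Lambda = \wttilt\Lambda$ and $\icep\Lambda = \dfice\Lambda$ yields the desired mutually inverse bijections.

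The main obstacle is the Bongartz-type completion step inside the $\supseteq$ direction of $\wttilt\Lambda = \rigid\Lambda$: starting from a rigid $M$, one must produce a functorially finite wide subcategory whose number of simples equals $|M|$. This is precisely where the hereditary hypothesis is essentially used, and is handled via Schofield's perpendicular categories or exceptional sequences; everything else is a clean assembly of Theorems \ref{thm:classifyice} and \ref{thm:wttiltbij} together with the bijection package already developed in Sections \ref{sec:3} and \ref{sec:4}.
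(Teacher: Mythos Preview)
Your approach is essentially the same as the paper's: reduce to Theorem \ref{thm:wttiltbij} via the two identifications $\wttilt\Lambda = \rigid\Lambda$ (Proposition \ref{prop:heredwttilt}) and $\dfice\Lambda = \icep\Lambda$ (Proposition \ref{prop:heredice}). The paper handles the rigid $\Rightarrow$ wide $\tau$-tilting step by citing Proposition \ref{prop:rigidwide} from \cite{rigid}, which is exactly the Schofield-type statement you describe: $\la T\ra_{\wide}\equi\mod\Gamma$ with $\Gamma$ hereditary and $T$ a tilting $\Gamma$-module.

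There is one genuine gap. By definition, $\icep\Lambda$ consists of ICE-closed subcategories with \emph{enough $\Ext$-projectives}, not those admitting an $\Ext$-\emph{progenerator}. Your argument for $\icep\Lambda \subseteq \dfice\Lambda$ starts from an $\Ext$-progenerator $M$, so you are silently assuming the implication (enough $\Ext$-projectives) $\Rightarrow$ ($\Ext$-progenerator). The paper fills this as the step (4) $\Rightarrow$ (3) of Proposition \ref{prop:heredice}: any $\Ext$-projective $P$ in $\CC$ is a partial tilting $\Lambda$-module, hence $|P|\le|\Lambda|$, so there are only finitely many indecomposable $\Ext$-projectives and their direct sum is a progenerator. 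Once you insert this one-line argument, your proof goes through and matches the paper's.
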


We deduce the above theorem from Theorem \ref{thm:wttiltbij}. To this end, we study wide $\tau$-tilting modules and doubly functorially finite ICE-closed subcategories in detail for the hereditary case.

In this section, we will frequently use the following proposition.

\begin{proposition}\label{prop:rigidwide}\cite[Proposition 5.1]{rigid}
  Let $\Lambda$ be a hereditary artin algebra and $T$ a rigid $\Lambda$-module. Then $\la T \ra_{\wide}$ is equivalent to $\mod\Gamma$ for some hereditary artin algebra $\Gamma$, and $T$ is a tilting $\Gamma$-module under the equivalence $\la T\ra_{\wide}\equi\mod\Gamma$.
\end{proposition}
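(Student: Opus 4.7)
The plan is to deduce this from the wide $\tau$-tilting framework developed in Section \ref{sec:4}, specifically Theorem \ref{thm:wttiltbij} together with Proposition \ref{prop:wtiltpbij}, by showing that the rigid module $T$ gives rise to a wide $\tau$-tilting pair whose canonical wide subcategory is $\la T\ra_{\wide}$.

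First I would show that $T$ is $\tau$-rigid: the Auslander--Reiten formula for the hereditary $\Lambda$ gives $\Hom_\Lambda(T,\tau T) \cong D\Ext^1_\Lambda(T,T) = 0$. Next I would verify that $\ccok T$ is a doubly functorially finite ICE-closed subcategory with $T$ as $\Ext$-progenerator. Heredity and rigidity combine to force $\Ext^1_\Lambda(T,\ccok T) = 0$: for $C = \coker(T_1 \to T_0)$ with $T_0,T_1 \in \add T$ and image $I \subseteq T_0$, the long exact sequence arising from $0 \to I \to T_0 \to C \to 0$ together with $\Ext^1_\Lambda(T,T_0) = 0$ and $\Ext^2_\Lambda(T,I) = 0$ yields $\Ext^1_\Lambda(T,C) = 0$. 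Using this vanishing, a standard pullback argument shows that $\ccok T$ is extension-closed, and closure under images and cokernels is routine, so $\ccok T$ is ICE-closed. The same vanishing makes $T$ Ext-projective, and since each object of $\ccok T$ is covered by $\add T$ by definition, $T$ is an $\Ext$-progenerator. Functorial finiteness of $\ccok T$ in $\mod\Lambda$ follows from the standard right/left approximation construction using $\add T$; then Proposition \ref{prop:dfice} (combined with the functorial finiteness of $\FFF(\ccok T)$, which I would derive similarly) upgrades this to double functorial finiteness.

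By Theorem \ref{thm:wttiltbij}, $T = P(\ccok T)$ is a wide $\tau$-tilting module, and Proposition \ref{prop:wtiltpbij} identifies its canonical wide subcategory as $\la T\ra_{\wide}$. Lemma \ref{lem:fwide} then provides an artin algebra $\Gamma$ together with an equivalence $\la T\ra_{\wide} \equi \mod\Gamma$ under which $T$ corresponds to a $\tau$-tilting $\Gamma$-module. To finish, I would prove a general lemma: any wide subcategory $\WW$ of a hereditary abelian length category $\AA$ is itself hereditary. For this, $\Ext^1_\WW$ agrees with $\Ext^1_\AA$ by extension-closedness, and any $2$-extension in $\WW$ is trivial in $\Ext^2_\AA = 0$; the lifting object witnessing the splitting is an extension of objects of $\WW$, hence lies in $\WW$, so $\Ext^2_\WW = 0$. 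With $\Gamma$ hereditary, the Auslander--Reiten formula over $\Gamma$ converts the $\tau$-tilting condition into $\Ext^1_\Gamma(T,T) = 0$, and together with $|T| = |\Gamma|$ this makes $T$ a tilting $\Gamma$-module.

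The main obstacle is the technical verification that $\ccok T$ lies in $\dfice\Lambda$ with $T$ as an $\Ext$-progenerator; this is where rigidity and heredity must be exploited carefully to lift maps through short exact sequences and to build functorial approximations. By comparison, the hereditariness of the wide subcategory $\la T\ra_{\wide}$ and the collapse of $\tau$-tilting to tilting over a hereditary algebra are short, general statements that slot cleanly into the end of the argument.
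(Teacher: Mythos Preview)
The paper does not give its own proof of this proposition: it is quoted verbatim from \cite[Proposition~5.1]{rigid} and then used as an input (for instance, in the proof of Proposition~\ref{prop:heredwttilt}). So there is no in-paper argument to compare against; your proposal is an attempt to \emph{derive} the cited result from the machinery of Section~\ref{sec:4}, which is the reverse of the paper's logical flow.

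Your strategy is sound in outline and not circular (nothing in Section~\ref{sec:4} depends on Section~\ref{sec:5}), but the step you flag as the ``main obstacle'' really is one, and your sketch understates it. Two points in particular are not routine:
\begin{itemize}
  \item \textbf{$T$ is an $\Ext$-progenerator of $\ccok T$.} From $T_1 \to T_0 \twoheadrightarrow C$ you obtain $0 \to I \to T_0 \to C \to 0$ with $I = \im(T_1 \to T_0)$. Being ``covered by $\add T$'' is not enough; you must show $I \in \ccok T$. This needs the fact that $\Fac T \cap \Sub T = \add T$ for rigid $T$ over a hereditary algebra (Lemma~\ref{lem:imclosed}, itself imported from \cite{rigid}), which forces $I \in \add T$.
  \item \textbf{$\ccok T$ is closed under images.} Calling this ``routine'' hides the same issue: for $f\colon C \to C'$ in $\ccok T$, the image factors through some $J = \im(T_0 \to T_0')$ after lifting along $T_0' \twoheadrightarrow C'$, and one again needs $\Fac T \cap \Sub T = \add T$ to control $J$ and hence $\im f$. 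Extension- and cokernel-closure go through with the pullback/lifting arguments you indicate, using $\Ext^1_\Lambda(T,\ccok T)=0$, but image-closure does not fall out for free.
\end{itemize}
Once these are patched (e.g.\ by invoking Lemma~\ref{lem:imclosed} explicitly), the rest of your argument---passing through Theorem~\ref{thm:wttiltbij} and Proposition~\ref{prop:wtiltpbij} to identify $\la T\ra_{\wide}$, then the short lemma that wide subcategories of a hereditary length category are hereditary, then the collapse of $\tau$-tilting to tilting---is correct and clean. It is worth noting, however, that the proof in \cite{rigid} is more direct (constructing the equivalence via perpendicular/Bongartz-type arguments rather than routing through the $\tau$-tilting bijection), and in this paper Proposition~\ref{prop:rigidwide} is used precisely to \emph{obtain} the fact that rigid modules are wide $\tau$-tilting; your route reproves that implication by hand before applying the general theory.
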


First, we show that $\wttilt\Lambda$ coincides with $\rigid\Lambda$.

\begin{proposition}\label{prop:heredwttilt}
Let $\Lambda$ be a hereditary artin algebra and $T\in\mod\Lambda$. Then the following are equivalent.
\begin{enumerate}
    \item $T$ is a wide $\tau$-tilting module.
    \item $T$ is a rigid $\Lambda$-module.
\end{enumerate}
\end{proposition}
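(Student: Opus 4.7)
The plan is to prove the two implications by a straightforward application of earlier results, using Proposition \ref{prop:rigidwide} for the ``rigid $\Rightarrow$ wide $\tau$-tilting'' direction and Auslander--Reiten duality together with the identification of $\Ext^1$ groups for the reverse.

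For the direction (2) $\Rightarrow$ (1), assume $T$ is rigid. By Proposition \ref{prop:rigidwide}, $\WW := \la T \ra_{\wide}$ is equivalent to $\mod\Gamma$ for some hereditary artin algebra $\Gamma$, and $T$ corresponds to a tilting $\Gamma$-module under this equivalence. In particular, $\WW$ is equivalent to a module category of an artin algebra, so by Lemma \ref{lem:fwide}, $\WW$ is a functorially finite wide subcategory of $\mod\Lambda$. Since every tilting module is $\tau$-tilting, $T$ is $\tau_\WW$-tilting, so by definition $T$ is wide $\tau$-tilting.

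For the direction (1) $\Rightarrow$ (2), suppose $T$ is wide $\tau$-tilting, so there exist a functorially finite wide subcategory $\WW$ of $\mod\Lambda$ and an equivalence $F\colon \WW \equi \mod\Gamma$ (for some artin algebra $\Gamma$) such that $F(T)$ is a $\tau$-tilting $\Gamma$-module. Then $F(T)$ is in particular $\tau$-rigid, and the Auslander--Reiten formula $D\Ext_\Gamma^1(X,Y) \cong \un{\Hom}_\Gamma(Y,\tau X)$ forces $\Ext_\Gamma^1(F(T),F(T)) = 0$, i.e.\ $F(T)$ is rigid in $\mod\Gamma$. The plan is then to transport this vanishing back to $\mod\Lambda$: via the equivalence $F$, $\Ext_\WW^1(T,T) = 0$, and since $\WW$ is closed under kernels, cokernels, and extensions in $\mod\Lambda$, every short exact sequence in $\WW$ is a short exact sequence in $\mod\Lambda$, which gives $\Ext_\WW^1(T,T) = \Ext_\Lambda^1(T,T)$. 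Hence $T$ is rigid.

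The only subtle point is the last identification $\Ext_\WW^1 = \Ext_\Lambda^1$; this is essentially automatic because $\WW$ is exact abelian in $\mod\Lambda$, but one should remark that it uses that $\WW$ is closed under extensions (so Yoneda extensions in $\WW$ are honest extensions in $\mod\Lambda$) together with the fact that a short exact sequence in $\WW$ is the same as one in $\mod\Lambda$ whose middle term is in $\WW$. Everything else reduces to invoking previously established results, so I do not anticipate any real obstacle beyond being careful about where $\Ext^1$ is computed.
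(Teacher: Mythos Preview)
Your proof is correct. The direction (2) $\Rightarrow$ (1) matches the paper exactly. For (1) $\Rightarrow$ (2), the paper takes a slightly different route: it invokes Theorem \ref{thm:wttiltbij} to realize the wide $\tau$-tilting module $T$ as the $\Ext$-progenerator of some ICE-closed subcategory $\CC$, whence $\Ext_\Lambda^1(T,T)=0$ is immediate from the definition of $\Ext$-projectivity. Your argument instead stays inside the wide subcategory $\WW$, uses the Auslander--Reiten formula to pass from $\tau$-rigidity to rigidity in $\mod\Gamma$, and then identifies $\Ext_\WW^1$ with $\Ext_\Lambda^1$ via extension-closedness. Both arguments work for any artin algebra (hereditariness is only needed for (2) $\Rightarrow$ (1)); the paper's is a one-liner given the machinery already in place, while yours is more self-contained and avoids appealing to the full bijection of Theorem \ref{thm:wttiltbij}.
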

\begin{proof}
  (1) $\Rightarrow$ (2): By Theorem \ref{thm:wttiltbij}, there is an ICE-closed subcategory $\CC$ which has an $\Ext$-progenerator $T$. Then $\Ext_{\Lambda}^{1}(T, T)=0$ holds, that is, $T$ is rigid.

  (2) $\Rightarrow$ (1): This follows from Proposition \ref{prop:rigidwide} since $T$ is tilting (hence $\tau_\WW$-tilting) in $\WW := \la T \ra_{\wide}$.
\end{proof}

Next, we investigate the doubly functorial finiteness of ICE-closed subcategories. For this purpose, the following criterion of the functorial finiteness due to Auslander and Smal\o \ is useful.

\begin{lemma}\cite[Theorem 4.5]{AS}\label{lem:cocover}
  Let $\Lambda$ be an artin algebra and $\CC$ a subcategory of $\mod\Lambda$ which is closed under images. Then the following conditions are equivalent.
  \begin{enumerate}
      \item $\CC$ is contravariantly finite.
      \item There is some $M \in \CC$ satisfying $\CC\subseteq\Sub M$.
  \end{enumerate}
\end{lemma}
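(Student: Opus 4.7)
The plan is to prove the two implications separately, noting that this is a classical result of Auslander--Smal\o\ and the whole argument relies essentially on the artinian hypothesis (so every module has finite length and $\Hom$-spaces are finitely generated).

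For (1) $\Rightarrow$ (2), the idea is to use an injective cogenerator. Since $\Lambda$ is an artin algebra, there is an injective cogenerator $E$ of $\mod\Lambda$ (the direct sum of the finitely many indecomposable injectives). I would let $\pi\colon M \to E$ be a right $\CC$-approximation, which exists by hypothesis, and show that this $M$ works. Given any $C \in \CC$, the cogenerator property supplies an embedding $\iota\colon C \hookrightarrow E^n$ for some $n$. Each component $\iota_j \colon C \to E$ factors as $\iota_j = \pi \circ \varphi_j$ for some $\varphi_j\colon C \to M$ because $\pi$ is a right $\CC$-approximation, and assembling these gives $\varphi\colon C \to M^n$ with $\pi^n \varphi = \iota$. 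Since $\iota$ is injective, so is $\varphi$, proving $C \in \Sub M$. This direction I expect to be routine.

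For (2) $\Rightarrow$ (1), fix $M\in\CC$ with $\CC\subseteq\Sub M$ and a test object $X \in \mod\Lambda$. The natural candidate right $\CC$-approximation is the $\CC$-trace
\[
t_\CC(X) := \sum_{C\in\CC,\, g\colon C\to X} \im g \subseteq X.
\]
Because $X$ has finite length, this sum is attained by finitely many maps $g_1,\dots,g_r$ with $g_i\colon C_i\to X$ and $C_i\in\CC$, so $t_\CC(X)=\sum_{i=1}^r \im g_i$. The inclusion $t_\CC(X)\hookrightarrow X$ is then tautologically a right $\CC$-approximation provided $t_\CC(X)\in\CC$, so the whole content is proving that membership.

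The main obstacle is exactly showing $t_\CC(X)\in\CC$ using only that $\CC$ is image-closed and embeds into $\Sub M$. My plan is to express $t_\CC(X)$ as the image of a single morphism between objects of $\CC$ and then invoke closure under images. Using $\CC \subseteq \Sub M$, I would fix embeddings $C_i \hookrightarrow M^{k_i}$, form the morphism $g:=(g_1,\dots,g_r)\colon \bigoplus C_i \to X$ whose image is $t_\CC(X)$, and then replace the source by $M^N$ for $N=\sum k_i$ via the sub-object embeddings: the composite $\bigoplus C_i \hookrightarrow M^N$ is mono, and the cokernel $Q$ fits into a short exact sequence whose connecting map in $\Ext^1(Q,X)$ may obstruct extending $g$ to $M^N$. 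To handle this, I would replace $X$ by the pushout $X'$ of $g$ along $\bigoplus C_i \hookrightarrow M^N$; this gives an inclusion $X\hookrightarrow X'$ and an induced map $M^N\to X'$ whose image in $X'$ restricted to $X$ realizes $t_\CC(X)$ as the image of a morphism $M^N \to X$ composed appropriately, allowing the image-closure of $\CC$ (applied to a morphism $M^N \to M^N$ or to a suitable truncation inside $\CC$) to conclude $t_\CC(X)\in\CC$. Because the paper's ICE-closed subcategories are implicitly additive (closed under finite direct sums, as is standard and as is also witnessed throughout Sections \ref{sec:2}--\ref{sec:4}), the factors $M^N \in \CC$ are available, and this is where the technical work lies; this pushout-and-pullback juggling is the step I expect to require the most care.
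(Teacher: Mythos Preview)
The paper does not give its own proof of this lemma; it is quoted from Auslander--Smal\o\ \cite[Theorem~4.5]{AS} and used as a black box, so there is nothing in the paper to compare against. That said, your argument for (1)~$\Rightarrow$~(2) via an injective cogenerator is correct and entirely standard.

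Your sketch of (2)~$\Rightarrow$~(1) contains a genuine gap. Reducing to the claim $t_\CC(X) \in \CC$ is fine, but your pushout argument does not establish it. The pushout of $g\colon \bigoplus C_i \to X$ along $\bigoplus C_i \hookrightarrow M^N$ gives $X \hookrightarrow X'$ and a map $M^N \to X'$, and indeed $\im(M^N \to X') \cap X = t_\CC(X)$ inside $X'$; but this does \emph{not} exhibit $t_\CC(X)$ as the image of a morphism between two objects of $\CC$ --- neither $X$ nor $X'$ lies in $\CC$, there is no map $M^N \to X$ in sight, and your phrase ``image-closure applied to a morphism $M^N \to M^N$'' has no evident content here. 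Image-closure only bites when \emph{both} endpoints lie in $\CC$, and your construction never arranges that. The missing idea is a two-step reduction: first prove the special case $\CC = \Sub M$ by taking, for arbitrary $X$, the pullback $P$ of a right $\add M$-approximation $M^n \to I(X)$ along the injective envelope $X \hookrightarrow I(X)$ (this $P$ embeds in $M^n$, hence lies in $\Sub M$, and injectivity of $I(X)$ supplies the factorisation property); then for general image-closed $\CC \subseteq \Sub M$, compose with the right $\Sub M$-approximation $S \to X$, note $S \hookrightarrow M^k$, and observe that now every $\im(C \to S)$ equals $\im(C \to M^k)$ with both ends in $\CC$, so image-closure finally applies and gives $t_\CC(S) \in \CC$, whence $t_\CC(S)\hookrightarrow S \to X$ is the desired approximation.
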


\begin{proposition}\label{prop:heredice}
Let $\Lambda$ be a hereditary artin algebra and $\CC$ an ICE-closed subcategory of $\mod\Lambda$. Then the following are equivalent.
\begin{enumerate}
    \item $\CC$ is doubly functorially finite.
    \item $\CC$ is functorially finite.
    \item $\CC$ has an $\Ext$-progenerator.
    \item $\CC$ has enough $\Ext$-projectives.
\end{enumerate}
\end{proposition}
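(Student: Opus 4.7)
The plan is to establish the cycle $(1) \Rightarrow (2) \Rightarrow (4) \Rightarrow (3) \Rightarrow (1)$, together with the tautological implication $(3) \Rightarrow (4)$ from Definition \ref{def:dfice}. The implication $(1) \Rightarrow (2)$ is immediate: a doubly functorially finite ICE-closed subcategory is, in particular, functorially finite in $\mod\Lambda$.

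For $(2) \Rightarrow (4)$, since $\CC$ is ICE-closed it is extension-closed, so I would invoke the Auslander--Smal\o{} theorem (the companion result to Lemma \ref{lem:cocover}), which guarantees that every functorially finite extension-closed subcategory of $\mod\Lambda$ has enough $\Ext$-projectives. This step does not actually need the hereditary hypothesis. For $(4) \Rightarrow (3)$, which is the crucial step and the one where hereditarity enters, I would argue as follows. For any $P, P' \in \PP(\CC)$ the Ext-projectivity of $P$ together with $P' \in \CC$ forces $\Ext_\Lambda^1(P,P') = 0$. Hence any finite direct sum of pairwise non-isomorphic indecomposables in $\PP(\CC)$ is a rigid $\Lambda$-module. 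Since $\Lambda$ is hereditary, a classical bound of Bongartz ensures that every basic rigid $\Lambda$-module has at most $|\Lambda|$ non-isomorphic indecomposable summands. Consequently $\PP(\CC)$ has only finitely many indecomposables, and their direct sum $P$ satisfies $\PP(\CC) = \add P$, so $P$ is an $\Ext$-progenerator of $\CC$.

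Finally, $(3) \Rightarrow (1)$ is assembled from earlier results. If $\CC$ has an $\Ext$-progenerator $M := P(\CC)$, then Lemma \ref{lem:icecok} gives $\CC = \ccok M$, and Ext-projectivity yields $\Ext_\Lambda^1(M,M) = 0$, so $M$ is rigid. By Proposition \ref{prop:heredwttilt}, $M$ is then a wide $\tau$-tilting module, and Theorem \ref{thm:wttiltbij} implies $\CC = \ccok M \in \dfice\Lambda$. The main potential obstacle is $(4) \Rightarrow (3)$: the entire argument hinges on the Bongartz-type finiteness bound for rigid modules over a hereditary artin algebra, without which $\PP(\CC)$ could conceivably have infinitely many indecomposables even when it is rigid. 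If one prefers to avoid citing this bound directly, an alternative is to apply Proposition \ref{prop:rigidwide} to any finite rigid sub-direct-sum in $\PP(\CC)$ and pass to a maximal such summand inside the functorially finite wide subcategory it generates.
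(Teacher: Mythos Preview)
Your implications $(4) \Rightarrow (3)$ and $(3) \Rightarrow (1)$ are correct and essentially coincide with the paper's arguments. The gap is in $(2) \Rightarrow (4)$: there is no Auslander--Smal\o{} theorem asserting that an arbitrary functorially finite extension-closed subcategory of $\mod\Lambda$ has enough $\Ext$-projectives, and Lemma~\ref{lem:cocover} has no such ``companion''. That lemma and its dual characterize contravariant (resp.\ covariant) finiteness of an image-closed subcategory via containment in some $\Sub M$ (resp.\ $\Fac M$) with $M$ in the subcategory; neither statement manufactures $\Ext$-projective covers. The familiar results that \emph{do} extract an $\Ext$-progenerator from functorial finiteness, such as Theorem~\ref{thm:ttt}(3) for torsion classes, depend on closure under quotients, which an ICE-closed subcategory need not enjoy. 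So your cycle breaks at precisely this link, and the remark that ``this step does not actually need the hereditary hypothesis'' is unsupported.

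The paper closes the loop differently, via $(2) \Rightarrow (1)$, and heredity is used exactly there: for hereditary $\Lambda$ one has $\FFF(\CC) = \Sub\CC$ by the dual of \cite[Proposition~4.3]{rigid}, and Lemma~\ref{lem:cocover} applied to the contravariantly finite image-closed subcategory $\CC$ yields $M \in \CC$ with $\CC \subseteq \Sub M$, hence $\Sub\CC \subseteq \Sub M$; the same lemma then shows $\Sub\CC$ is contravariantly finite, so $\FFF(\CC)$ is functorially finite and Proposition~\ref{prop:dfice} gives $(1)$. After that the paper gets $(3)$ from Theorem~\ref{thm:wttiltbij} and $(4)$ trivially. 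If you want to keep your cycle shape, replace your $(2) \Rightarrow (4)$ by this detour $(2) \Rightarrow (1) \Rightarrow (3) \Rightarrow (4)$; a direct passage from functorial finiteness of $\CC$ to enough $\Ext$-projectives that bypasses the ambient wide subcategory $\WR(\FFF(\CC))$ does not appear to be available.
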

\begin{proof}
  (1) $\Rightarrow$ (2): This follows from Proposition \ref{prop:dfice}.

  (2) $\Rightarrow$ (1): By the dual of \cite[Proposition 4.3]{rigid}, we have $\FFF(\CC)=\Sub\CC$. By Lemma \ref{lem:cocover}, there is some $M\in\CC$ satisfying $\CC\subseteq\Sub M$. Then $\Sub\CC\subseteq\Sub M$ holds, and $\Sub\CC$ is contravariantly finite by the same lemma. Therefore $\FFF(\CC)=\Sub\CC$ is functorially finite. By Proposition \ref{prop:dfice}, we have that $\CC$ is doubly functorially finite.

  (1) $\Rightarrow$ (3): This follows from Theorem \ref{thm:wttiltbij}.

  (3) $\Rightarrow$ (1): It follows from Theorem \ref{thm:classifyice} that $\CC$ is a torsion class in $\la\CC\ra_{\wide}$, the smallest wide subcategory containing $\CC$. Let $P$ be an $\Ext$-progenerator of $\CC$. Then $\ccok P=\CC$ holds by Lemma \ref{lem:icecok}. By Proposition \ref{prop:rigidwide}, we have that $\la P\ra_{{\wide}}$ is a wide subcategory of $\mod\Lambda$ which is equivalent to $\mod\Gamma$ for some artin algebra $\Gamma$, and $\la P\ra_{{\wide}}$ is functorially finite in $\mod\Lambda$ by Lemma \ref{lem:fwide}. Clearly $\la P\ra_{{\wide}}=\la\CC\ra_{\wide}$ holds by $\ccok P=\CC$. By Theorem \ref{thm:ttt} (3) (applied to $\mod\Gamma$), we conclude that $\CC$ is functorially finite in $\la P\ra_{{\wide}}$.

  (3) $\Rightarrow$ (4): Trivial.

  (4) $\Rightarrow$ (3): For any $\Ext$-projective object $P$ in $\CC$, we have $|P|\leq|\Lambda|$ since $P$ is a partial tilting $\Lambda$-module (see e.g. \cite[VI.2]{ASS}). Thus there are at most $|\Lambda|$ indecomposable $\Ext$-projective objects in $\CC$ up to isomorphism. Thus a direct sum of all non-isomorphic indecomposable $\Ext$-projective objects in $\CC$ is an $\Ext$-progenerator of $\CC$.
\end{proof}

\begin{proof}[Proof of Theorem \ref{thm:rigid}]
  This immediately follows from Theorem \ref{thm:wttiltbij}, Propositions \ref{prop:heredwttilt} and \ref{prop:heredice}.
\end{proof}

\subsection{Mutations of rigid modules}
In this subsection, we investigate the Hasse quiver of $\dfice\Lambda$. The bijection in Theorem \ref{thm:rigid} induces a partial order on $\rigid\Lambda$, that is, for $U$ and $T$ in $\rigid\Lambda$, we define $U\leq T$ if and only if $\ccok U\subseteq\ccok T$. Then we want to study when there is an arrow $T\to U$ in $\Hasse(\rigid\Lambda)$.

We start with defining an operation $\mu$ which is an analog of $\tau$-tilting mutation introduced in \cite{AIR}.
\begin{definition}\label{def:mu}
  Let $T$ be a basic rigid $\Lambda$-module. For an indecomposable direct summand $X$ of $T$, we define a module $\mu_{X}(T)$ as follows. Let $T=X\oplus U$, and take an exact sequence
  \[
  \begin{tikzcd}[row sep=0]
     X \rar["f"] & U' \rar & Y \rar & 0,
  \end{tikzcd}
  \]
  where $f$ is a minimal left $\add U$-approximation.
  Define $\mu_{X}(T)$ as the unique basic module satisfying $\add \mu_X(T) = \add(Y \oplus U)$.
\end{definition}

Thanks to the hereditary property, we have the following useful results.

\begin{lemma}\cite[Lemma 3.2.]{rigid}\label{lem:imclosed}
 Let $\Lambda$ be a hereditary artin algebra and $U$ a rigid $\Lambda$-module. Then $\Fac U\cap \Sub U=\add U$ holds.
\end{lemma}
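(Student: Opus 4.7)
My plan is to reduce to classical tilting theory by working inside the wide subcategory $\WW:=\la U\ra_{\wide}$. By Proposition \ref{prop:rigidwide}, $\WW$ is equivalent to $\mod\Gamma$ for some hereditary artin algebra $\Gamma$, and under this equivalence $U$ corresponds to a tilting $\Gamma$-module. So once I can transport $M$ into $\WW$, the problem becomes one about a tilting module over a hereditary algebra, which I expect to handle cleanly.

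First I would verify that $M\in\WW$ for any $M\in\Fac U\cap\Sub U$. Pick a surjection $\pi\colon U^n\twoheadrightarrow M$ witnessing $M\in\Fac U$ and an injection $\iota\colon M\hookrightarrow U^m$ witnessing $M\in\Sub U$. The composite $\iota\pi\colon U^n\to U^m$ is a morphism between objects of $\WW$, and since $\WW$ is closed under kernels and cokernels we obtain $\ker\pi=\ker(\iota\pi)\in\WW$ and $U^m/\iota(M)=\coker(\iota\pi)\in\WW$, whence $M=\ker(U^m\twoheadrightarrow U^m/\iota(M))\in\WW$ as well. The short exact sequence $0\to\ker\pi\to U^n\to M\to 0$ then lives entirely in $\WW$, and transporting along $\WW\simeq\mod\Gamma$ shows that $M\in\Fac U$ holds also as a $\Gamma$-module; the dual argument gives $M\in\Sub U$ as a $\Gamma$-module.

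It then suffices to prove that $\Fac U\cap\Sub U=\add U$ when $U$ is a tilting module over a hereditary algebra $\Gamma$. For $M$ in this intersection, applying $\Hom_\Gamma(-,U)$ to $0\to M\to U^m\to C\to 0$ gives an exact sequence whose flanking terms $\Ext^1_\Gamma(U^m,U)$ and $\Ext^2_\Gamma(C,U)$ vanish by rigidity of $U$ and the hereditary hypothesis respectively, so $\Ext^1_\Gamma(M,U)=0$. For any $N\in\Fac U$, fix a presentation $0\to K\to U^n\to N\to 0$ and apply $\Ext^1_\Gamma(M,-)$: the resulting sequence $\Ext^1_\Gamma(M,U^n)\to\Ext^1_\Gamma(M,N)\to\Ext^2_\Gamma(M,K)$ has both outer terms zero (by the previous step and by hereditariness), so $M$ is $\Ext$-projective in the torsion class $\Fac U$. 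Since $U$ is tilting, Theorem \ref{thm:ttt}(3) identifies $U$ as the $\Ext$-progenerator of $\Fac U$, so the $\Ext$-projectives of $\Fac U$ are exactly the objects of $\add U$, forcing $M\in\add U$.

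The main obstacle I foresee is the verification that $M\in\WW$: it genuinely requires using both $M\in\Fac U$ and $M\in\Sub U$ at once, in order to present $M$ as the image of a morphism $U^n\to U^m$ between objects of $\add U\subseteq\WW$. Everything after that step is a routine bookkeeping exercise combining rigidity, vanishing of $\Ext^2_\Gamma$, and the Adachi--Iyama--Reiten identification of the $\Ext$-progenerator of a sincere functorially finite torsion class.
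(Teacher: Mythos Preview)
The paper does not supply its own proof here; the lemma is simply cited from \cite[Lemma~3.2]{rigid}. Your argument is mathematically correct as written: realizing $M$ as the image of a map $U^n\to U^m$ lands it in $\WW=\la U\ra_{\wide}$, and once inside $\mod\Gamma$ with $U$ tilting, the chain $\Ext^1_\Gamma(M,U)=0\Rightarrow\Ext^1_\Gamma(M,\Fac U)=0\Rightarrow M\in\PP(\Fac U)=\add U$ goes through cleanly.

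One point deserves caution. You invoke Proposition~\ref{prop:rigidwide}, which is \cite[Proposition~5.1]{rigid}, to prove what is \cite[Lemma~3.2]{rigid}. In the source the lemma comes first, so depending on how \cite{rigid} establishes its Proposition~5.1 you may be importing a circularity. A direct argument staying in $\mod\Lambda$ avoids this and is shorter: choose a left $\add U$-approximation $i\colon M\to U'$ (injective because $M\in\Sub U$), set $C=\coker i$, and note that the approximation property together with rigidity of $U$ forces $\Ext^1_\Lambda(C,U)=0$. Now take any short exact sequence $0\to K\to U^n\to M\to 0$ witnessing $M\in\Fac U$ and apply $\Hom_\Lambda(C,-)$: the term $\Ext^1_\Lambda(C,M)$ is squeezed between $\Ext^1_\Lambda(C,U^n)=0$ and $\Ext^2_\Lambda(C,K)=0$ (hereditariness), so $i$ splits and $M\in\add U$. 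This uses only rigidity and $\gldim\Lambda\leq 1$, without passing through $\la U\ra_{\wide}$ or $\tau$-tilting theory.
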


This lemma implies that $\add U$ is closed under images.

\begin{lemma}\label{lem:surjorinj}
  In Definition \ref{def:mu}, we have that $f$ is either surjective or injective.
\end{lemma}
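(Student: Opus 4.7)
The plan is to assume that $f$ is not injective and deduce that it must be surjective. Let $K = \ker f$ and $I = \im f$, and factor $f = \iota g$ where $g \colon X \twoheadrightarrow I$ is surjective and $\iota \colon I \hookrightarrow U'$ is injective; assuming $K \neq 0$, the goal reduces to showing $\iota$ is an isomorphism, i.e.\ $Y = \coker f = 0$.

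The central observation will be that $I$ lies in $\add T$. Indeed, $I$ is a quotient of $X \in \add T$, so $I \in \Fac T$; and $I$ is a submodule of $U' \in \add T$, so $I \in \Sub T$. Since $T$ is rigid and $\Lambda$ is hereditary, Lemma \ref{lem:imclosed} applied to $T$ yields $\Fac T \cap \Sub T = \add T$, hence $I \in \add T$.

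Next I will rule out $X$ being a direct summand of $I$. If it were, then $X$ would embed into $I$ as a direct summand while also surjecting onto $I$ via $g$; in the abelian length category $\mod\Lambda$, comparing composition lengths forces $\ell(X) = \ell(I)$, and therefore $g$ is an isomorphism. This contradicts $K \neq 0$. Consequently every indecomposable summand of $I$ lies in $\add U$, so $I \in \add U$.

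Finally, I will use the approximation property of $f$ applied to the map $g \colon X \to I$, which is now legitimate because $I \in \add U$: there exists $h \colon U' \to I$ with $hf = g$, and since $g$ is an epimorphism, the equality $h\iota g = g = 1_I \circ g$ right-cancels to give $h\iota = 1_I$. Hence $\iota$ is a split monomorphism; writing $U' = I \oplus V$ accordingly, the idempotent $\iota h \in \End(U')$ satisfies $(\iota h)\,f = f$, and left-minimality of $f$ forces $\iota h$ to be an automorphism of $U'$. Being an idempotent automorphism, $\iota h = 1_{U'}$, so $\iota$ is also surjective and $Y = 0$. I do not expect a genuine obstacle here: the main conceptual step is to invoke Lemma \ref{lem:imclosed} with $T$ in place of $U$, after which the rest is routine bookkeeping with the approximation and the minimality.
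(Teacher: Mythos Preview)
Your proof is correct and follows essentially the same approach as the paper: assume $f$ is not injective, use Lemma \ref{lem:imclosed} (applied to $T$) to get $\im f \in \add T$, rule out $X$ as a summand by a length comparison to conclude $\im f \in \add U$, and then exploit the minimality of $f$ to deduce surjectivity. The only difference is cosmetic: the paper observes that the canonical surjection $X \to \im f$ is itself a minimal left $\add U$-approximation and invokes uniqueness, whereas you unwind this directly via the factorization $hf = g$ and the idempotent $\iota h$.
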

\begin{proof}
  Suppose that $f$ is not injective. By Lemma \ref{lem:imclosed}, we have $\im f\in\add T$. Since $\im f$ is a proper factor of $X$, it has no direct summands isomorphic to $X$ and thus belongs to $\add U$. Then the canonical map $X\to\im f$ is also a minimal left $\add U$-approximation and surjective. Then $f$ is surjective by the uniqueness of a minimal left approximation.
\end{proof}
The following proposition implies that $\mu$ defines an operation on $\rigid\Lambda$.
\begin{proposition}
  In Definition \ref{def:mu}, we have that $\mu_{X}(T)$ is a rigid $\Lambda$-module.
\end{proposition}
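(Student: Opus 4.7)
The plan is to show $\Ext^1_\Lambda(\mu_X(T), \mu_X(T)) = 0$; since rigidity depends only on the additive closure, it suffices to verify that $Y \oplus U$ is rigid, i.e.\ that the four pieces $\Ext^1(U,U)$, $\Ext^1(Y,U)$, $\Ext^1(U,Y)$, $\Ext^1(Y,Y)$ all vanish. The first is immediate because $U$ is a summand of $T$. By Lemma \ref{lem:surjorinj}, either $f$ is surjective---in which case $Y = 0$ and $\mu_X(T) = U$, so there is nothing left to prove---or $f$ is injective, yielding a short exact sequence $0 \to X \xrightarrow{f} U' \xrightarrow{p} Y \to 0$ that I will use throughout.

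For $\Ext^1(Y, U)$ I would apply $\Hom(-, U)$ to this sequence: the approximation property of $f$ makes $f^*\colon \Hom(U', U) \to \Hom(X, U)$ surjective, and $\Ext^1(U', U) = 0$ because $U$ is rigid and $U' \in \add U$, which squeezes $\Ext^1(Y, U)$ to zero. For $\Ext^1(U, Y)$ I would apply $\Hom(U, -)$: here I use the hereditary hypothesis to kill $\Ext^2(U, X)$, together with rigidity of $U$ to kill $\Ext^1(U, U')$, giving $\Ext^1(U, Y) = 0$.

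The last piece $\Ext^1(Y, Y)$ is the main obstacle. Applying $\Hom(-, Y)$ and using $\Ext^1(U', Y) = 0$ (a consequence of the previous paragraph applied to the summands of $U'$), the computation reduces to showing that $f^* \colon \Hom(U', Y) \to \Hom(X, Y)$ is surjective. Given $g \colon X \to Y$, I would form the pullback
\[
\begin{tikzcd}
Z \rar["\pi_2"] \dar["\pi_1"'] & U' \dar["p"] \\
X \rar["g"'] & Y
\end{tikzcd}
\]
so that $\pi_1$ is surjective with $\ker \pi_1 \cong X$ via $f$. The resulting short exact sequence $0 \to X \to Z \to X \to 0$ splits because $\Ext^1(X, X) = 0$, and a section $s \colon X \to Z$ of $\pi_1$ produces $h := \pi_2 \circ s \colon X \to U'$ satisfying $g = p \circ h$. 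Since $h$ lands in $U' \in \add U$ and $f$ is a left $\add U$-approximation, $h = h' \circ f$ for some $h' \colon U' \to U'$, whence $g = (p \circ h') \circ f \in \im f^*$.

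The genuinely nontrivial step is this last one: the first three vanishings are routine long exact sequence chases, but the surjectivity of $f^*$ on $\Hom(-, Y)$ requires combining the approximation property of $f$ with the rigidity of $X$, which is exactly what the pullback splitting achieves.
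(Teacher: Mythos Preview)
Your argument is correct, and for the vanishings of $\Ext^1(U,U)$, $\Ext^1(Y,U)$, and $\Ext^1(U,Y)$ it coincides with the paper's. The difference lies in how you handle $\Ext^1(Y,Y)$, which you flag as ``the main obstacle.'' In fact the paper dispatches this step in one line: having already shown $\Ext^1(Y,U')=0$, it simply applies $\Ext^1_\Lambda(Y,-)$ to the short exact sequence $0\to X\to U'\to Y\to 0$ and uses that, over a hereditary algebra, $\Ext^1(Y,-)$ is right exact (equivalently, $\Ext^2=0$), so the surjection $\Ext^1(Y,U')\twoheadrightarrow\Ext^1(Y,Y)$ forces $\Ext^1(Y,Y)=0$. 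Your route applies the contravariant $\Hom_\Lambda(-,Y)$ instead, which then requires the additional surjectivity of $f^*\colon\Hom(U',Y)\to\Hom(X,Y)$; your pullback-plus-splitting argument for this is valid, but it is doing extra work that the hereditary hypothesis already buys for free on the covariant side. The moral: when you have a short exact sequence and want $\Ext^1$ of the cokernel into itself, applying $\Ext^1(\text{cokernel},-)$ rather than $\Hom(-,\text{cokernel})$ exploits heredity directly and avoids the lifting problem altogether.
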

\begin{proof}
  If $f$ in Definition \ref{def:mu} is surjective, then $Y=0$ and $\mu_{X}(T)=U$ is obviously rigid. Assume that $f$ is not surjective. Then $f$ is injective by Lemma \ref{lem:surjorinj}, hence the short exact sequence in Definition \ref{def:mu} is as follows:
  \[
  \begin{tikzcd}[row sep=0]
     0 \rar & X \rar["f"] & U' \rar & Y \rar & 0
  \end{tikzcd}
  \]
  We will show that $\Ext_{\Lambda}^{1}(U, Y), \Ext_{\Lambda}^{1}(Y, U)$ and $\Ext_{\Lambda}^{1}(Y, Y)$ vanish. Since $\Lambda$ is hereditary, $\Ext_{\Lambda}^{1}(U, -)$ preserves surjections. Applying $\Ext_{\Lambda}^{1}(U, -)$ to the above sequence, we get $\Ext_{\Lambda}^{1}(U, Y)=0$ since $\Ext_{\Lambda}^{1}(U, U)=0$. Applying $\Hom_{\Lambda}(-, U)$ to the sequence, we obtain the following exact sequence:
  \[
  \begin{tikzcd}[row sep=0]
     \Hom_{\Lambda}(U', U) \rar["f^{*}"] & \Hom_{\Lambda}(X, U) \rar & \Ext_{\Lambda}^{1}(Y, U) \rar & \Ext_{\Lambda}^{1}(U', U)
  \end{tikzcd}
  \]
  Here $\Ext_{\Lambda}^{1}(U', U)=0$ by $U'\in\add U$. Since $f$ is a left $\add U$-approximation, we have that $f^{*}$ is surjective. Thus $\Ext_{\Lambda}^{1}(Y, U)=0$ holds. Then $\Ext_{\Lambda}^{1}(Y, U')=0$ holds. Hence $\Ext_{\Lambda}^{1}(Y, Y)=0$ holds because $\Ext_{\Lambda}^{1}(Y, -)$ preserves surjections.
\end{proof}

The following proposition is convenient for calculating $\mu$.

\begin{proposition}\label{prop:fac}
Let $T$ be a basic rigid $\Lambda$-module and $X$ an indecomposable direct summand of $T$. Set $T=X\oplus U$. If $X\in\Fac U$, then $\mu_{X}(T)=U$ holds.
\end{proposition}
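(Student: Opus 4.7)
The plan is to show that the minimal left $\add U$-approximation $f\colon X \to U'$ appearing in Definition \ref{def:mu} is surjective under the hypothesis $X \in \Fac U$; then the short exact sequence forces $Y = 0$, and consequently $\add \mu_X(T) = \add U$, yielding $\mu_X(T) = U$.

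To establish surjectivity of $f$, I would invoke Lemma \ref{lem:surjorinj}, which asserts that $f$ is either surjective or injective. It then suffices to rule out the injective case. So suppose for contradiction that $f$ is injective. Since $U' \in \add U$ by construction, this inclusion $X \hookrightarrow U'$ exhibits $X$ as a subobject of an object in $\add U$, giving $X \in \Sub U$. Combined with the hypothesis $X \in \Fac U$, Lemma \ref{lem:imclosed} (which uses that $U$ is rigid since it is a direct summand of the rigid module $T$) yields
\[
X \in \Fac U \cap \Sub U = \add U.
\]

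The final step is to derive a contradiction from $X \in \add U$. Since $T = X \oplus U$ is basic by assumption and $X$ is indecomposable, $X$ cannot occur as a direct summand of $U$; that is, $X \notin \add U$. This contradicts the previous display. Hence $f$ must be surjective, so $Y = 0$ and $\mu_X(T) = U$ as claimed. I do not anticipate any substantive obstacle: the proof is a direct three-step chain invoking Lemmas \ref{lem:surjorinj} and \ref{lem:imclosed} together with basicness of $T$, and the only thing to be careful about is correctly identifying $U' \in \add U$ so that injectivity of $f$ really does place $X$ in $\Sub U$.
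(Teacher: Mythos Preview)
Your proposal is correct and follows essentially the same argument as the paper: invoke Lemma \ref{lem:surjorinj}, rule out the injective case via Lemma \ref{lem:imclosed} combined with the hypothesis $X\in\Fac U$, and derive the contradiction $X\in\add U$ from basicness of $T$. The paper's proof is slightly terser but identical in substance.
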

\begin{proof}
  Take a map $f$ in Definition \ref{def:mu}. By Lemma \ref{lem:surjorinj}, $f$ is either surjective or injective. If $f$ is injective, then $X\in\Fac U\cap\Sub U$ holds. This implies $X\in\add U$ by Lemma \ref{lem:imclosed}, which is a contradiction. Thus $f$ is surjective and the assertion holds immediately.
\end{proof}

Our aim is to show that there is an arrow $T\to U$ in $\Hasse(\rigid\Lambda)$ if and only if $U\iso\mu_{X}(T)$ holds for some indecomposable direct summand $X$ of $T$. For this purpose, we need the following lemmas.
\begin{lemma}\label{lem:neq}
  Let $T$ be a basic rigid $\Lambda$-module and $X$ an indecomposable direct summand of $T$. Then $\ccok\mu_{X}(T)\subsetneq\ccok T$ holds.
\end{lemma}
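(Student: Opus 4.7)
The plan is to leverage the bijection $\ccok\colon\rigid\Lambda\xrightarrow{\sim}\icep\Lambda$ established in Theorem \ref{thm:rigid}: since this map is injective, it suffices for me to verify two facts, namely the inclusion $\ccok\mu_X(T)\subseteq\ccok T$ together with the non-equality $T\not\iso\mu_X(T)$ of basic rigid modules. Combined, these force the inclusion to be strict.

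For the inclusion, the strategy is to exploit that $\ccok T$ is ICE-closed (by Theorem \ref{thm:rigid}), so in particular closed under cokernels and direct summands, and contains $\add T$. Writing $T=X\oplus U$, this gives $U\in\ccok T$ immediately; moreover, the defining exact sequence $X\xrightarrow{f}U'\to Y\to 0$ has $X,U'\in\add T\subseteq\ccok T$, so closure under cokernels yields $Y\in\ccok T$. Thus $\mu_X(T)=Y\oplus U$ lies in $\ccok T$, and a second application of closure under cokernels promotes this to $\ccok\mu_X(T)\subseteq\ccok T$.

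The main obstacle is establishing the non-equality $T\not\iso\mu_X(T)$, which I plan to prove by showing that $X$ is not a direct summand of $\mu_X(T)$. Here I would split according to Lemma \ref{lem:surjorinj}: when $f$ is surjective, $Y=0$ and $\mu_X(T)=U$, which cannot contain $X$ as a summand since $T$ is basic. The subtler case is when $f$ is injective but not surjective, where I would argue by contradiction. If $X$ were a summand of $Y$, then composing a splitting $Y\twoheadrightarrow X$ with the surjection $U'\twoheadrightarrow Y$ produces a surjection $U'\twoheadrightarrow X$, so $X\in\Fac U$. Proposition \ref{prop:fac} would then force $\mu_X(T)=U$, i.e.\ $Y=0$, contradicting the case hypothesis. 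Hence $X$ is not a summand of $\mu_X(T)$ in either case, so $T\not\iso\mu_X(T)$ as basic modules, and the injectivity of $\ccok$ from Theorem \ref{thm:rigid} delivers $\ccok\mu_X(T)\subsetneq\ccok T$.
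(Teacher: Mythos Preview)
Your argument is correct and follows essentially the same route as the paper: both establish the inclusion via closure of $\ccok T$ under cokernels (Theorem~\ref{thm:rigid}), and both derive strictness by producing $X\in\Fac U$ and invoking Proposition~\ref{prop:fac} together with the bijectivity of $\ccok$ from Theorem~\ref{thm:rigid}. The only organizational difference is that the paper argues uniformly by assuming equality and observing $\mu_X(T)\in\Fac U$ forces $\ccok\mu_X(T)\subseteq\Fac U$ (hence $X\in\Fac U$), whereas you reach the same contradiction after a case split on Lemma~\ref{lem:surjorinj}. One small imprecision: in your injective case, ``$\mu_X(T)=U$, i.e.\ $Y=0$'' is not literally what Proposition~\ref{prop:fac} says (it only gives $\add(Y\oplus U)=\add U$); the genuine contradiction is that the proof of Proposition~\ref{prop:fac} shows $X\in\Fac U$ forces $f$ to be surjective, against your case hypothesis (equivalently, $X\in\Fac U\cap\Sub U=\add U$ by Lemma~\ref{lem:imclosed}, contradicting basicness).
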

\begin{proof}
  By the definition of $\mu$, we have $\mu_{X}(T)\in\ccok T$. Since $\ccok T$ is closed under cokernels by Theorem \ref{thm:rigid}, we have $\ccok\mu_{X}(T)\subseteq\ccok T$. Let $T=X\oplus U$, and assume that $\ccok\mu_{X}(T)=\ccok T$. By the definition of $\mu$, we have $\mu_{X}(T)\in\Fac U$. Therefore we have $\ccok\mu_{X}(T)\subseteq\Fac U$ since $\Fac U$ is closed under quotients. Hence $X$ is contained in $\ccok T=\ccok\mu_{X}(T)\subseteq\Fac U$. Then $\mu_{X}(T)=U$ holds by Proposition \ref{prop:fac}. On the other hand, we have $\mu_{X}(T)=T$ by $\ccok\mu_{X}(T)=\ccok T$ and Theorem \ref{thm:rigid}. Thus we have $U=\mu_{X}(T)=T$, which is a contradiction. Thus $\ccok\mu_{X}(T)\subsetneq\ccok T$ holds.
\end{proof}
\begin{lemma}\label{lem:rigid1}
  Let $T$ be a basic rigid $\Lambda$-module and $X$ and $Y$ indecomposable direct summands of $T$ which are not isomorphic to each other. Then $\ccok\mu_{X}(T)\nsubseteq\ccok\mu_{Y}(T)$ holds.
\end{lemma}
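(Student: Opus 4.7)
The plan is to exhibit $Y$ itself as an object distinguishing $\ccok\mu_{X}(T)$ from $\ccok\mu_{Y}(T)$. Write $T=X\oplus U_{X}=Y\oplus U_{Y}$ so that $Y\in\add U_{X}$. Then $Y\in\add U_{X}\subseteq\add\mu_{X}(T)\subseteq\ccok\mu_{X}(T)$ is trivial. The whole content of the lemma will therefore be the assertion $Y\notin\ccok\mu_{Y}(T)$, and I would argue this by contradiction.

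Assume $Y\in\ccok\mu_{Y}(T)$. Let the exchange sequence defining $\mu_{Y}(T)$ be $Y\xrightarrow{f_{Y}}V_{Y}\to C_{Y}\to 0$ with $V_{Y}\in\add U_{Y}$, so $\add\mu_{Y}(T)=\add(C_{Y}\oplus U_{Y})$. The first key computation is $\Ext^{1}_{\Lambda}(Y,\mu_{Y}(T))=0$: the summand $\Ext^{1}(Y,U_{Y})=0$ by rigidity of $T$, and for $\Ext^{1}(Y,C_{Y})$ we may assume $f_{Y}$ is injective (else $C_{Y}=0$ and there is nothing to check) by Lemma~\ref{lem:surjorinj}, then apply $\Hom_{\Lambda}(Y,-)$ to $0\to Y\to V_{Y}\to C_{Y}\to 0$ and use $\Ext^{1}(Y,V_{Y})=0$ together with $\Ext^{2}_{\Lambda}=0$ from heredity.

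Next I would upgrade this to full $\Ext$-projectivity of $Y$ inside $\ccok\mu_{Y}(T)$. For any $M\in\ccok\mu_{Y}(T)$ there is a short exact sequence $0\to K\to P\to M\to 0$ with $P\in\add\mu_{Y}(T)$; applying $\Hom_{\Lambda}(Y,-)$ and using once more that $\Ext^{2}_{\Lambda}=0$ gives $\Ext^{1}(Y,M)=0$. By Theorem~\ref{thm:rigid}, $\mu_{Y}(T)$ is the basic $\Ext$-progenerator of $\ccok\mu_{Y}(T)$, so the previous step forces $Y\in\add\mu_{Y}(T)=\add(C_{Y}\oplus U_{Y})$. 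Since $T$ is basic, $Y\notin\add U_{Y}$, hence $Y$ is a direct summand of $C_{Y}$. Composing the projection $C_{Y}\twoheadrightarrow Y$ with the surjection $V_{Y}\twoheadrightarrow C_{Y}$ yields a surjection $V_{Y}\twoheadrightarrow Y$, so $Y\in\Fac V_{Y}\subseteq\Fac U_{Y}$. Proposition~\ref{prop:fac} then gives $\mu_{Y}(T)=U_{Y}$, which in turn (by the proof of that proposition, where the approximation $f_{Y}$ must be surjective) forces $C_{Y}=0$, contradicting that $Y\neq 0$ is a summand of $C_{Y}$.

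The main subtlety I expect is the promotion from $\Ext^{1}(Y,\mu_{Y}(T))=0$ to the $\Ext$-projectivity of $Y$ in the whole subcategory $\ccok\mu_{Y}(T)$; this is exactly the place where heredity is used in an essential way, via $\Ext^{2}_{\Lambda}=0$ applied to a presentation in $\add\mu_{Y}(T)$. Everything else is bookkeeping with the exchange sequence and the fact that $Y$ cannot be a summand of $U_{Y}$ by basicness of $T$.
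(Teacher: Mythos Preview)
Your argument is correct, but it takes a considerably longer route than the paper's. Both proofs reduce to showing $Y\notin\ccok\mu_{Y}(T)$ (you state this explicitly; the paper gets $Y\in\ccok\mu_{Y}(T)$ from the assumed inclusion and then derives a contradiction). The difference lies in how the contradiction is reached. The paper simply observes that once $Y\in\ccok\mu_{Y}(T)$, then $T=Y\oplus U_{Y}\in\ccok\mu_{Y}(T)$ as well (since $U_{Y}\in\add\mu_{Y}(T)$), and hence $\ccok T\subseteq\ccok\mu_{Y}(T)$ because $\ccok\mu_{Y}(T)$ is closed under cokernels; this contradicts Lemma~\ref{lem:neq} immediately, with no $\Ext$ computations at all. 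Your approach instead shows that $Y$ is $\Ext$-projective in $\ccok\mu_{Y}(T)$ (via heredity), forces $Y\in\add\mu_{Y}(T)$, and then unwinds the exchange sequence through Proposition~\ref{prop:fac}. This is sound, and it has the minor virtue of not invoking Lemma~\ref{lem:neq}; but note that once you reach $Y\in\add\mu_{Y}(T)$ at your step~4 you could already finish in the paper's style, since then $\add T\subseteq\add\mu_{Y}(T)$ and hence $\ccok T\subseteq\ccok\mu_{Y}(T)$. Your steps~5--7 are therefore a detour, though a correct one.
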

\begin{proof}
  Set $T=Y\oplus U$, and assume $\ccok\mu_{X}(T)\subseteq\ccok\mu_{Y}(T)$. Then $Y\in\ccok\mu_{Y}(T)$ holds because $Y$ is a direct summand of $\mu_{X}(T)$. Since $Y$ and $U$ belong to $\ccok\mu_{Y}(T)$, so does $T$. Then $\ccok T\subseteq\ccok\mu_{Y}(T)$ holds because $\ccok\mu_{Y}(T)$ is closed under cokernels by Theorem \ref{thm:rigid}. This contradicts Lemma \ref{lem:neq}.
\end{proof}

\begin{lemma}\label{lem:rigid2}
  Let $T$ be a basic rigid $\Lambda$-module. For any rigid module $T'$ which satisfies $\ccok T'\subsetneq\ccok T$, there is an indecomposable direct summand $X$ of $T$ satisfying $\ccok T'\subseteq\ccok\mu_{X}(T)$.
\end{lemma}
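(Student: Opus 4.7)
First, I would choose an indecomposable direct summand $X$ of $T$ such that $X\notin\ccok T'$. Such an $X$ must exist: otherwise every indecomposable summand of $T$, and hence $T$ itself, would lie in the additive subcategory $\ccok T'$; but then closure of $\ccok T'$ under cokernels would give $\ccok T\subseteq\ccok T'$, contradicting $\ccok T'\subsetneq\ccok T$. Fix such an $X$ and write $T=X\oplus U$, $\mu_X(T)=Y\oplus U$ as in Definition \ref{def:mu}.

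Next, I would reduce the goal to showing that $T'\in\ccok\mu_X(T)$. Indeed, since $\ccok\mu_X(T)$ is ICE-closed (by Theorem \ref{thm:rigid}), once $T'\in\ccok\mu_X(T)$ we have $\add T'\subseteq\ccok\mu_X(T)$, and then any cokernel of a map in $\add T'$ remains in $\ccok\mu_X(T)$; this yields $\ccok T'\subseteq\ccok\mu_X(T)$.

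To prove $T'\in\ccok\mu_X(T)$, I would use the presentation of $T'$ by $T$. By Proposition \ref{prop:heredice}, $T$ is an $\Ext$-progenerator of $\ccok T$, so there is a short exact sequence $0\to T_1\to T_0\to T'\to 0$ with $T_0,T_1\in\add T$; split $T_i=X^{a_i}\oplus U_i$ with $U_i\in\add U$. By Lemma \ref{lem:surjorinj}, the defining approximation $f\colon X\to U'$ is either injective or surjective, and I would handle the two cases separately. In the injective case, the short exact sequence $0\to X\to U'\to Y\to 0$ realizes each copy of $X$ as a kernel of a map in $\add(Y\oplus U)$; splicing this with the above presentation inside the wide (hence kernel- and cokernel-closed) subcategory $\la T\ra_{\wide}$ and using that $\la T\ra_{\wide}=\la\mu_X(T)\ra_{\wide}$ in this case, one obtains a presentation of $T'$ by objects of $\add\mu_X(T)$, proving $T'\in\ccok\mu_X(T)$. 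In the surjective case, Proposition \ref{prop:fac} gives $\mu_X(T)=U$ and $X\in\Fac U$, so $\Fac T=\Fac U$; the condition $X\notin\ccok T'$, together with the fact that $\ccok T'$ is a torsion class in $\la T'\ra_{\wide}$ (Corollary \ref{cor:widetors}), forces $T'\in\la U\ra_{\wide}$, and then the presentation of $T'$ can be rewritten entirely within $\la U\ra_{\wide}$, giving $T'\in\Fac_{\la U\ra_{\wide}}U=\ccok U=\ccok\mu_X(T)$.

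The main obstacle is the substitution of $X$ by $\add(Y\oplus U)$ in the presentation of $T'$, especially in the surjective case where $\la\mu_X(T)\ra_{\wide}=\la U\ra_{\wide}$ is strictly smaller than $\la T\ra_{\wide}$; one must carefully leverage $X\notin\ccok T'$ to rule out any essential occurrence of $X$ in an $\add T$-presentation of $T'$ and thereby pass to a presentation inside $\la U\ra_{\wide}$. The hereditary hypothesis enters crucially here through Lemma \ref{lem:imclosed} and Proposition \ref{prop:rigidwide}, which allow us to treat $T$ (and $\mu_X(T)$) as a tilting object of its wide hull and thus to invoke the usual tilting-style presentation theory.
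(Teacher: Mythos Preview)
Your choice of $X$ is the problem: the criterion $X\notin\ccok T'$ does not guarantee $\ccok T'\subseteq\ccok\mu_X(T)$. Take $\Lambda=k[1\leftarrow 2]$, $T=P_1\oplus P_2$ (so $\ccok T=\mod\Lambda$), and $T'=S_2$ (so $\ccok T'=\add S_2$). Both $P_1$ and $P_2$ lie outside $\ccok T'$, so your rule allows $X=P_2$. But the minimal left $\add P_1$-approximation $f\colon P_2\to P_1$ is the projection, hence surjective, and $\mu_{P_2}(T)=P_1=S_1$; then $\ccok\mu_{P_2}(T)=\add S_1$ does not contain $S_2$. So this particular $X$ fails, and your argument gives no mechanism to rule it out. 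Note also that in this example $f$ is surjective while $X=P_2\notin\Fac U=\Fac P_1$: you have applied Proposition~\ref{prop:fac} in the wrong direction (it says $X\in\Fac U\Rightarrow f$ surjective, not the converse), so your surjective-case reasoning collapses at the first step.

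The paper avoids this by not choosing $X$ a priori. It passes to $\WW=\la T\ra_{\wide}\simeq\mod\Gamma$, where $T$ is tilting (Proposition~\ref{prop:rigidwide}) and $\ccok T=\Fac_{\WW}T$. If $\Fac_{\WW}T'=\Fac_{\WW}T$ then $T'$ is $\Ext$-projective in $\Fac_{\WW}T$, hence a summand of $T$, and any $X\in\add T\setminus\add T'$ works trivially. Otherwise $\Fac_{\WW}T'\subsetneq\Fac_{\WW}T$, and \cite[Theorems 2.30, 2.35]{AIR} supply an indecomposable summand $X$ of $T$ with $\Fac_{\WW}T'\subseteq\Fac_{\WW}\mu_X^{-}(T)$; one then checks that the $\tau$-tilting left mutation $\mu_X^{-}(T)$ computed in $\WW$ agrees with $\mu_X(T)$ from Definition~\ref{def:mu}, and that $\Fac_{\WW}\mu_X(T)=\ccok\mu_X(T)$. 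The point is that the correct $X$ is dictated by the position of $\Fac_{\WW}T'$ in $\Hasse(\tors\WW)$, not by the condition $X\notin\ccok T'$.
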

\begin{proof}
  It suffices to show that there is an indecomposable direct summand $X$ of $T$ satisfying $T'\in\ccok\mu_{X}(T)$ because $\ccok\mu_{X}(T)$ is closed under cokernels by Theorem \ref{thm:rigid}.

  Set $\WW=\la T\ra_{\wide}$, the smallest wide subcategory containing $T$. By Proposition \ref{prop:rigidwide}, there is a hereditary artin algebra $\Gamma$ such that $\WW$ is equivalent to $\mod\Gamma$, and $T$ is a tilting $\Gamma$-module under the equivalence.
  Then we have $T'\in\ccok T'\subsetneq\ccok T\subseteq\WW$ because $\WW$ is closed under cokernels. Since $T'$ is rigid and $\Gamma$ is hereditary, we have that $T'$ is a partial tilting $\Gamma$-module under the equivalence $\WW\iso\mod\Gamma$.
  Let $\Fac_{\WW}$ denote $\Fac$ considered in the abelian category $\WW$. Then $\Fac_{\WW}T'$ and $\Fac_{\WW}T$ are torsion classes in $\WW$. Since $T$ is a tilting $\Gamma$-module, we have that $T$ is an $\Ext$-progenerator of $\Fac_{\WW}T$ and $\ccok T=\Fac_{\WW}T$ holds. Therefore, we have $\Fac_{\WW}T'\subseteq\Fac_{\WW}T$ because we have $T'\in\ccok T=\Fac_{\WW}T$.

  Assume $\Fac_{\WW}T'=\Fac_{\WW}T$. Since $T'$ is rigid and $\Ext_{\Lambda}^{1}(T', -)$ preserves surjections, we have that $T'$ is $\Ext$-projective in $\Fac_{\WW}T'=\Fac_{\WW}T$. Hence $T'$ is a direct summand of $T$ because $T$ is an $\Ext$-progenerator of $\Fac_{\WW}T$. In this case, take an indecomposable direct summand $X$ of $T$ which does not belong to $\add T'$. Then we have $T'\in\ccok\mu_{X}(T)$, and $X$ is a desired direct summand of $T$.

  Assume $\Fac_{\WW}T'\subsetneq\Fac_{\WW}T$. By \cite[Theorems 2.30, 2.35]{AIR}, there exists an indecomposable direct summand $X$ of $T$ such that $\Fac_{\WW}T'\subseteq\Fac_{\WW}\mu_{X}^{-}(T)$ with $\mu^{-}$ in \cite[Definition-Proposition 2.28]{AIR}. Then $\mu_{X}^{-}(T)$ coincides with $\mu_{X}(T)$ in Definition \ref{def:mu} because $\add U$ is contained in $\WW$ and $\add U$-approximations in $\mod\Lambda$ and those in $\WW$ are the same. Now we have $\Fac_{\WW}\mu_{X}(T)=\ccok\mu_{X}(T)$ since $\mu_{X}(T)$ is an $\Ext$-progenerator of $\Fac_{\WW}\mu_{X}(T)$. Thus we have $T'\in\Fac_{\WW}T'\subseteq\Fac_{\WW}\mu_{X}(T)=\ccok\mu_{X}(T)$, and $X$ is a desired direct summand of $T$.
\end{proof}
Now we obtain the main result of this subsection.
\begin{theorem}\label{thm:mu}
  Let $T$ be a basic rigid $\Lambda$-module and $X$ an indecomposable direct summand of $T$. Then we have an arrow $T \to \mu_X(T)$ in $\Hasse(\rigid\Lambda)$.
\end{theorem}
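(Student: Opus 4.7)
The plan is to prove the theorem by combining the three preparatory lemmas (Lemmas~\ref{lem:neq}, \ref{lem:rigid1}, \ref{lem:rigid2}) that were just established. Recall that an arrow $T\to \mu_X(T)$ in $\Hasse(\rigid\Lambda)$ means two things: the strict inequality $\ccok \mu_X(T) \subsetneq \ccok T$, and the covering condition that no basic rigid module sits strictly between them.

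The strict inequality is immediate from Lemma~\ref{lem:neq}, so the real content is the covering condition. My proposal is to argue by contradiction: suppose there exists a basic rigid module $T'$ with
\[
\ccok \mu_X(T) \subsetneq \ccok T' \subsetneq \ccok T.
\]
Applying Lemma~\ref{lem:rigid2} to $T$ and $T'$ (which uses that $\ccok T' \subsetneq \ccok T$), I obtain an indecomposable direct summand $Y$ of $T$ such that $\ccok T' \subseteq \ccok \mu_Y(T)$.

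The case analysis on $Y$ then finishes the argument. If $Y \cong X$, then $\ccok T' \subseteq \ccok \mu_X(T)$, which directly contradicts the assumption $\ccok \mu_X(T) \subsetneq \ccok T'$. If $Y \not\cong X$, then combining $\ccok \mu_X(T) \subsetneq \ccok T' \subseteq \ccok \mu_Y(T)$ yields $\ccok \mu_X(T) \subseteq \ccok \mu_Y(T)$, which contradicts Lemma~\ref{lem:rigid1}. Either way we reach a contradiction, so no such intermediate $T'$ exists, and $T\to \mu_X(T)$ is indeed an arrow in the Hasse quiver.

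I do not anticipate any serious obstacle here: essentially all the heavy lifting was done in the three lemmas, and the theorem is a clean logical consequence of them. The only subtlety worth double-checking is that Lemma~\ref{lem:rigid2} is applied correctly — in particular, that its conclusion gives us a summand $Y$ whose associated mutation $\mu_Y(T)$ dominates $T'$, which is exactly the inclusion we need. The dichotomy $Y \cong X$ vs.\ $Y \not\cong X$ handles the two possible ways the contradiction can arise.
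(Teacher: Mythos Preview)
Your proposal is correct and matches the paper's own proof essentially verbatim: both use Lemma~\ref{lem:neq} for the strict inequality, assume an intermediate $T'$, invoke Lemma~\ref{lem:rigid2} to produce a summand $Y$ with $\ccok T'\subseteq\ccok\mu_Y(T)$, and derive a contradiction from Lemma~\ref{lem:rigid1}. The only cosmetic difference is that you make the case split $Y\cong X$ versus $Y\not\cong X$ explicit, whereas the paper absorbs the first case into the observation that $\ccok\mu_X(T)\subsetneq\ccok\mu_Y(T)$ already forces $Y\not\cong X$.
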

\begin{proof}
  Lemma \ref{lem:neq} shows $\ccok \mu_X(T) \subsetneq \ccok T$.
  Assume that there is a rigid $\Lambda$-module $T'$ satisfying $\ccok\mu_{X}(T)\subsetneq\ccok T'\subsetneq\ccok T$. By Lemma \ref{lem:rigid2}, there is an indecomposable direct summand $Y$ of $T$ satisfying $\ccok T'\subseteq\ccok\mu_{Y}(T)$. Then $\ccok\mu_{X}(T)\subsetneq\ccok\mu_{Y}(T)$ holds. This contradicts Lemma \ref{lem:rigid1}.
\end{proof}

As a result of this section, we obtain the following corollary.
\begin{corollary}\label{cor:mu}
  Let $T$ and $U$ be basic rigid $\Lambda$-modules. The following conditions are equivalent.
  \begin{enumerate}
      \item There is an arrow $T\to U$ in $\Hasse(\rigid\Lambda)$.
      \item $\mu_{X}(T)\iso U$ holds for some indecomposable direct summand $X$ of $T$.
  \end{enumerate}
\end{corollary}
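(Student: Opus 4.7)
The plan is to prove the two implications separately, leaning entirely on the machinery already assembled in this subsection.

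For (2) $\Rightarrow$ (1), there is nothing to do: this is exactly the content of Theorem \ref{thm:mu}, which asserts that $T \to \mu_X(T)$ is a Hasse arrow for every indecomposable summand $X$ of $T$.

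For (1) $\Rightarrow$ (2), I would argue as follows. Assume there is an arrow $T \to U$ in $\Hasse(\rigid \Lambda)$; by definition of the partial order this means $\ccok U \subsetneq \ccok T$ and there is no rigid module strictly between them. Apply Lemma \ref{lem:rigid2} to the rigid module $U$ with $\ccok U \subsetneq \ccok T$: this produces an indecomposable direct summand $X$ of $T$ such that $\ccok U \subseteq \ccok \mu_X(T)$. On the other hand, Lemma \ref{lem:neq} gives $\ccok \mu_X(T) \subsetneq \ccok T$. Combining these two inclusions yields $\ccok U \subseteq \ccok \mu_X(T) \subsetneq \ccok T$. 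Because $T \to U$ is a covering relation in the Hasse quiver, there can be no rigid module $T'$ with $\ccok U \subsetneq \ccok T' \subsetneq \ccok T$; applied to $T' = \mu_X(T)$ this forces $\ccok U = \ccok \mu_X(T)$. Finally, the bijection $\rigid \Lambda \xrightarrow{\sim} \icep \Lambda$ of Theorem \ref{thm:rigid} (sending a basic rigid module to its $\ccok$) is injective on basic modules, so $U \cong \mu_X(T)$.

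There is no real obstacle here, since all the heavy lifting has been done in Lemmas \ref{lem:neq} and \ref{lem:rigid2} together with Theorems \ref{thm:rigid} and \ref{thm:mu}; the only point requiring a moment of care is invoking the covering property of the Hasse arrow to collapse the chain $\ccok U \subseteq \ccok \mu_X(T) \subsetneq \ccok T$ into an equality, and then translating that equality back through the bijection of Theorem \ref{thm:rigid} to obtain an isomorphism of modules rather than just of subcategories.
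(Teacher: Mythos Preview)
Your proof is correct and follows essentially the same route as the paper's: both directions use exactly the same ingredients (Theorem~\ref{thm:mu} for (2)$\Rightarrow$(1); Lemmas~\ref{lem:neq} and~\ref{lem:rigid2} plus the covering property and Theorem~\ref{thm:rigid} for (1)$\Rightarrow$(2)).
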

\begin{proof}
  (1) $\Rightarrow$ (2): Since we have $\ccok U \subsetneq \ccok T$, Lemmas \ref{lem:neq} and \ref{lem:rigid2} imply that there is some indecomposable direct summand $X$ of $T$ satisfying $\ccok U \subseteq \ccok \mu_X(T) \subsetneq \ccok T$.
  Then $\ccok U = \ccok \mu_X(T)$ holds since we have a Hasse arrow $T \to U$. This implies $U = \mu_X(T)$ by Theorem \ref{thm:rigid}.

  (2) $\Rightarrow$ (1): This follows from Theorem \ref{thm:mu}.
\end{proof}

We obtain the following combinatorial consequence on $\Hasse(\rigid\Lambda) \iso \Hasse(\dfice\Lambda)$.
\begin{corollary}\label{cor:arrow}
  Let $T$ be a basic rigid $\Lambda$-module. Then there are exactly $|T|$ arrows starting at $T$ in $\Hasse(\rigid\Lambda)$.
\end{corollary}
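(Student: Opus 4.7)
The plan is to read this off directly from Corollary \ref{cor:mu} together with Lemma \ref{lem:rigid1}. By Corollary \ref{cor:mu}, the targets of Hasse arrows starting at $T$ are exactly the modules of the form $\mu_X(T)$, where $X$ runs over (isomorphism classes of) indecomposable direct summands of $T$. Since $T$ is basic, there are exactly $|T|$ such summands $X_1,\dots,X_{|T|}$, so the number of arrows out of $T$ equals the number of \emph{distinct} modules among $\mu_{X_1}(T),\dots,\mu_{X_{|T|}}(T)$.

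Thus the only thing to check is that these $|T|$ mutations are pairwise non-isomorphic. Given two distinct indecomposable summands $X \neq Y$ of $T$, Lemma \ref{lem:rigid1} asserts $\ccok \mu_X(T) \not\subseteq \ccok \mu_Y(T)$, and in particular $\ccok \mu_X(T) \neq \ccok \mu_Y(T)$. By the bijection $\rigid\Lambda \xrightarrow{\sim} \icep\Lambda$ of Theorem \ref{thm:rigid}, this forces $\mu_X(T) \not\cong \mu_Y(T)$. Hence the $|T|$ mutations are genuinely distinct, and the count of outgoing Hasse arrows is exactly $|T|$.

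There is essentially no obstacle here: the substantive work (existence of an arrow $T\to\mu_X(T)$, injectivity of $X\mapsto\ccok\mu_X(T)$, and the bijection $\ccok\leftrightarrow P(-)$) has already been done in Theorem \ref{thm:mu}, Lemma \ref{lem:rigid1}, and Theorem \ref{thm:rigid}. The corollary is just the combinatorial packaging of these three ingredients.
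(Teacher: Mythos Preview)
Your proof is correct and follows exactly the approach implicit in the paper, which states Corollary \ref{cor:arrow} without an explicit proof since it is an immediate consequence of Corollary \ref{cor:mu} together with Lemma \ref{lem:rigid1} and Theorem \ref{thm:rigid}. Your write-up spells out precisely these details.
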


We end this section by showing a typical example.

\begin{example}\label{ex:a3}
  Let $\Lambda=k[1\leftarrow2\leftarrow3]$. Since $\Lambda$ is a  representation-finite hereditary algebra, we have $\dfice\Lambda=\ice\Lambda = \icep\Lambda$ by Propositions \ref{prop:ttfice} and \ref{prop:heredice}, and we have a bijection $\ccok \colon \rigid\Lambda \to \ice\Lambda$ by Theorem \ref{thm:rigid}.
  The Hasse quiver of $\rigid\Lambda$ is depicted in Figure \ref{hasseA3}.
  The red vertices are wide $\tau$-tilting modules which are not support $\tau$-tilting modules.
  \begin{figure}[htp]
    \begin{tikzpicture}
      \node[block=3] (121321) at (0,10) {\nodepart{one}$\sst{1}$\nodepart{two}$\sst{2 \\ 1}$\nodepart{three}$\sst{3 \\ 2 \\ 1}$};
      \node[block=2] (121) at (-4,9) {\nodepart{one}$\sst{1}$\nodepart{two}$\sst{2 \\ 1}$};
      \node[block=3] (221321) at (0,9) {\nodepart{one}$\sst{2}$\nodepart{two}$\sst{2 \\ 1}$\nodepart{three}$\sst{3 \\ 2 \\ 1}$};
      \node[block=3] (13321) at (4,9) {\nodepart{one}$\sst{1}$\nodepart{two}$\sst{3}$\nodepart{three}$\sst{3 \\ 2 \\ 1}$};
      \node[block=2] (221) at (-3,8) {\nodepart{one}$\sst{2}$\nodepart{two}$\sst{2 \\ 1}$};
      \node[block=2,red] (21321) at (-1,8) {\nodepart{one}$\sst{2 \\ 1}$\nodepart{two}$\sst{3 \\ 2 \\ 1}$};
      \node[block=3] (232321) at (1,8) {\nodepart{one}$\sst{2}$\nodepart{two}$\sst{3 \\ 2}$\nodepart{three}$\sst{3 \\ 2 \\ 1}$};
      \node[rectangle,draw,red] (21) at (-2,7) {\nodepart{one}$\sst{2 \\ 1}$};
      \node[block=2,red] (2321) at (0,7) {\nodepart{one}$\sst{2}$\nodepart{two}$\sst{3 \\ 2 \\ 1}$};
      \node[block=2] (232) at (1.5,7) {\nodepart{one}$\sst{2}$\nodepart{two}$\sst{3 \\ 2}$};
      \node[block=3] (332321) at (3,7) {\nodepart{one}$\sst{3}$\nodepart{two}$\sst{3 \\ 2}$\nodepart{three}$\sst{3 \\ 2 \\ 1}$};
      \node[block=2,red] (1321) at (5,7) {\nodepart{one}$\sst{1}$\nodepart{two}$\sst{3 \\ 2 \\ 1}$};
      \node[block=2] (13) at (6,2) {\nodepart{one}$\sst{1}$\nodepart{two}$\sst{3}$};
      \node[rectangle,draw] (2) at (-3,5) {\nodepart{one}$\sst{2}$};
      \node[block=2,red] (3321) at (-1,5) {\nodepart{one}$\sst{3}$\nodepart{two}$\sst{3 \\ 2 \\ 1}$};
      \node[block=2] (332) at (2,5) {\nodepart{one}$\sst{3}$\nodepart{two}$\sst{3 \\ 2}$};
      \node[block=2,red] (32321) at (4,5) {\nodepart{one}$\sst{3 \\ 2}$\nodepart{two}$\sst{3 \\ 2 \\ 1}$};
      \node[rectangle,draw,red] (321) at (0,3) {\nodepart{one}$\sst{3 \\ 2 \\ 1}$};
      \node[rectangle,draw,red] (32) at (3,3) {\nodepart{one}$\sst{3 \\ 2}$};
      \node[rectangle,draw] (3) at (0,1) {\nodepart{one}$\sst{3}$};
      \node[rectangle,draw] (1) at (-4,0.5) {\nodepart{one}$\sst{1}$};
      \node (0) at (0,0) {$\sst{0}$};

      \draw[->] (121321) -- (121);
      \draw[->] (121321) -- (221321);
      \draw[->] (121321) -- (13321);
      \draw[->] (121) -- (221);
      \draw[->] (121) -- (1);
      \draw[->] (221321) -- (221);
      \draw[->] (221321) -- (21321);
      \draw[->] (221321) -- (232321);
      \draw[->] (13321) -- (332321);
      \draw[->] (13321) -- (1321);
      \draw[->] (13321) .. controls +(right:16mm) ..  (13);
      \draw[->] (221) -- (2);
      \draw[->] (221) -- (21);
      \draw[->] (21321) -- (21);
      \draw[->] (21321) -- (3321);
      \draw[->] (232321) -- (2321);
      \draw[->] (232321) -- (232);
      \draw[->] (232321) -- (332321);
      \draw[->] (2321) .. controls +(left:10mm) ..  (2);
      \draw[->] (2321) -- (321);
      \draw[->] (232) -- (332);
      \draw[->] (232) -- (2);
      \draw[->] (332321) -- (3321);
      \draw[->] (332321) -- (332);
      \draw[->] (332321) -- (32321);
      \draw[->] (1321) -- (32321);
      \draw[->] (1321) .. controls +(down:43mm) ..  (1);
      \draw[->] (13) .. controls +(down:10mm) .. (1);
      \draw[->] (13) -- (3);
      \draw[->] (3321) -- (321);
      \draw[->] (3321) -- (3);
      \draw[->] (332) -- (3);
      \draw[->] (332) -- (32);
      \draw[->] (32321) -- (321);
      \draw[->] (32321) -- (32);
      \draw[->] (1) .. controls +(down:5mm) ..  (0);
      \draw[->] (2) .. controls +(down:20mm) .. (0);
      \draw[->] (21) .. controls +(down:20mm) .. (0);
      \draw[->] (3) -- (0);
      \draw[->] (321) .. controls +(-45:10mm) and +(45:10mm) .. (0);
      \draw[->] (32) .. controls +(down:20mm) .. (0);
    \end{tikzpicture}
    \caption{$\Hasse(\rigid kQ)$ for $Q: 1 \ot 2 \ot 3$}
    \label{hasseA3}
  \end{figure}
\end{example}

\section{Hasse quivers of ICE-closed subcategories}\label{sec:6}
In this section, we prove some relations between the lattice structure of $\ice\AA$ and $\tors\AA$, and then give two examples of computations of $\ice\Lambda$ and $\Hasse(\ice\Lambda)$ via wide $\tau$-tilting modules.
At the end of this section, we raise two questions on $\Hasse(\ice\Lambda)$ which naturally arise from the computations of the examples.

\subsection{Some properties on the lattice of ICE-closed subcategories}
Let $\WW$ be a wide subcategory of $\AA$. Then we have $\tors\WW \subseteq \ice\AA$ by Lemma \ref{lem:torsice}, and clearly $\tors\WW$ is a full subposet of $\ice\AA$. Moreover, we can prove a stronger statement:
\begin{proposition}\label{prop:sublat}
  Let $\WW$ be a wide subcategory of $\AA$. Then $\tors\WW$ is a closed sublattice of $\ice\AA$, that is, for every non-empty subset $\{\TT_i\} _{i \in I}$ of $\tors\WW$, its join and meet in $\ice\AA$ are both in $\tors\WW$.
\end{proposition}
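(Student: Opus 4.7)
The plan is to handle meets and joins separately, with meets being immediate and joins requiring the extension-closedness of $\WW$ to identify filtrations in $\AA$ with filtrations in $\WW$.

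For meets: since both $\ice\AA$ and $\tors\WW$ are closed under arbitrary intersections, the meet of $\{\TT_i\}_{i \in I}$ in either lattice is simply $\bigcap_{i \in I} \TT_i$. An intersection of torsion classes in $\WW$ is a torsion class in $\WW$, so the meet in $\ice\AA$ lies in $\tors\WW$.

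For joins, let $\CC := \Filt_\AA(\bigcup_{i \in I} \TT_i)$, the smallest extension-closed subcategory of $\AA$ containing every $\TT_i$. The first step is to show that $\CC \subseteq \WW$: given $M \in \CC$ with an $\AA$-filtration whose composition factors lie in $\bigcup_i \TT_i \subseteq \WW$, induction on the filtration length using the fact that $\WW$ is extension-closed in $\AA$ shows $M \in \WW$. This means $\Filt_\AA(\bigcup_i \TT_i) = \Filt_\WW(\bigcup_i \TT_i)$. Next, since each $\TT_i$ is closed under quotients in $\WW$, the union $\bigcup_i \TT_i$ is closed under quotients in $\WW$, so by Proposition~\ref{prop:torsbasic}(2) applied inside $\WW$, the subcategory $\CC = \Filt_\WW(\bigcup_i \TT_i) = \Filt_\WW(\Fac_\WW(\bigcup_i \TT_i))$ is the smallest torsion class of $\WW$ containing each $\TT_i$, i.e.\ the join in $\tors\WW$. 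By Lemma~\ref{lem:torsice}, $\CC$ is ICE-closed in $\AA$.

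It remains to argue that $\CC$ is also the join in $\ice\AA$. Clearly $\CC$ is an upper bound in $\ice\AA$ for $\{\TT_i\}$. Conversely, any $\DD \in \ice\AA$ with $\TT_i \subseteq \DD$ for every $i$ contains $\bigcup_i \TT_i$, and since $\DD$ is extension-closed it contains $\Filt_\AA(\bigcup_i \TT_i) = \CC$. Hence $\CC$ is the join of $\{\TT_i\}$ in $\ice\AA$, and it lies in $\tors\WW$, completing the proof.

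The only subtle point is the identification $\Filt_\AA(\bigcup_i \TT_i) = \Filt_\WW(\bigcup_i \TT_i)$; once this is in hand, everything else is formal. No serious obstacle is expected since this identification is a direct consequence of $\WW$ being closed under extensions in $\AA$.
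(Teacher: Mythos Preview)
Your proof is correct and follows essentially the same route as the paper: handle meets as intersections, and for joins show that $\Filt(\bigcup_i \TT_i)$ is simultaneously the join in $\tors\WW$ (using that each $\TT_i$ is quotient-closed in $\WW$) and the join in $\ice\AA$ (since any extension-closed upper bound contains it). Your explicit verification of $\Filt_\AA(\bigcup_i \TT_i) = \Filt_\WW(\bigcup_i \TT_i)$ via extension-closedness of $\WW$ is a detail the paper leaves implicit, but otherwise the arguments coincide.
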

\begin{proof}
  Since the meet of $\{\TT_i\}_{i \in I}$ in $\ice\AA$ is just a set-theoretic intersection, it clearly belongs to $\tors\WW$.
  It suffices to show that its join in $\tors\WW$ is actually a join in $\ice\AA$. Indeed, one can compute the join in $\tors\WW$ as follows, where the subscript $_\WW$ means that we consider operations inside the abelian category $\WW$:
  \[
    \bigvee \{ \TT_i\}_{i \in I} = \TTT_\WW (\bigcup_{i \in I}\TT_i) = \Filt\Fac_\WW(\bigcup_{i\in I}\TT_i)
    =\Filt(\bigcup_{i\in I}\TT_i).
  \]
  The last equality follows from the fact that $\TT_i$ is closed under quotients in $\WW$.
  Let $\CC$ be an ICE-closed subcategory containing $\TT_i$ for every $i \in I$. Then clearly it contains $\bigcup_{i \in I} \TT_i$, thus contains $\Filt(\bigcup_{i \in I} \TT_i)$ since $\CC$ is extension-closed.
  Therefore, $\Filt(\bigcup_{i \in I} \TT_i)$ is a join of $\{ \TT_i \}_{i \in I}$ in $\ice\AA$.
\end{proof}

As an application, we deduce the following combinatorial property on $\Hasse(\ice\AA)$.
\begin{corollary}
  Let $\CC$ be an ICE-closed subcategory of $\AA$ which is \emph{not} a torsion class. Then in $\Hasse(\ice\AA)$, there is at most one arrow of the form $\CC \to \TT$ with $\TT \in \tors\AA$, and there is at most one arrow of the form $\TT \to \CC$ with $\TT \in \tors\AA$.
\end{corollary}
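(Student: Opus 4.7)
The plan is to exploit the fact that $\tors\AA$ sits inside $\ice\AA$ as a closed sublattice. This is exactly Proposition \ref{prop:sublat} applied with $\WW = \AA$, so every finite join and finite meet of torsion classes computed in $\ice\AA$ still lies in $\tors\AA$. Once this observation is in hand, the corollary reduces to a routine covering argument, so I expect essentially no obstacle; the only thing to verify carefully is that two distinct Hasse arrows sharing an endpoint force incomparability of the other two endpoints, which is immediate from the definition of the Hasse quiver.

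For the first claim, I would argue by contradiction. Suppose we had two distinct arrows $\CC \to \TT_1$ and $\CC \to \TT_2$ in $\Hasse(\ice\AA)$ with $\TT_1, \TT_2 \in \tors\AA$. Then $\TT_1, \TT_2 \subsetneq \CC$ and neither is contained in the other (otherwise one would lie strictly between the other and $\CC$, violating a covering relation). Form $\TT_1 \vee \TT_2$ in $\ice\AA$; by Proposition \ref{prop:sublat} this join equals the join in $\tors\AA$ and in particular lies in $\tors\AA$. Since $\TT_1 \not\subseteq \TT_2$, we have $\TT_1 \subsetneq \TT_1 \vee \TT_2 \subseteq \CC$, so the covering relation $\CC \to \TT_1$ forces $\TT_1 \vee \TT_2 = \CC$. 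But then $\CC$ is a torsion class, contradicting the hypothesis.

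For the second claim, I would run the dual argument using meets. Assume two distinct arrows $\TT_1 \to \CC$ and $\TT_2 \to \CC$ with $\TT_i \in \tors\AA$. Then $\CC \subsetneq \TT_1, \TT_2$, and again $\TT_1$ and $\TT_2$ must be incomparable. The intersection $\TT_1 \cap \TT_2$ is the meet in $\ice\AA$ (set-theoretic intersection), hence lies in $\tors\AA$. From incomparability, $\TT_1 \cap \TT_2 \subsetneq \TT_1$, while $\CC \subseteq \TT_1 \cap \TT_2$; the covering relation $\TT_1 \to \CC$ then gives $\CC = \TT_1 \cap \TT_2 \in \tors\AA$, contradicting the hypothesis.

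In summary, the proof is a two-line lattice-theoretic deduction from Proposition \ref{prop:sublat}: any two distinct Hasse arrows between $\CC$ and $\tors\AA$ would, via the join (respectively the meet) of the two torsion classes involved, collapse to a single arrow whose other endpoint must coincide with $\CC$, thereby placing $\CC$ in $\tors\AA$.
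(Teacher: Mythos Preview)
Your argument is correct and follows the same approach as the paper: both proofs use Proposition~\ref{prop:sublat} (with $\WW = \AA$) to show that if there were two distinct Hasse arrows between $\CC$ and torsion classes, then the join (respectively meet) of those torsion classes in $\ice\AA$ would equal $\CC$ and lie in $\tors\AA$. You spell out the incomparability and covering steps more explicitly, but the substance is identical.
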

\begin{proof}
  Assume that there are two arrows $\CC \to \TT_i$ with $\TT_i \in \tors\AA$ for $i=1,2$. Then we clearly have $\CC = \TT_1 \vee \TT_2$ in $\ice\AA$, which implies $\CC \in \tors\AA$ by Proposition \ref{prop:sublat}. The same proof applies to the latter assertion.
\end{proof}
In addition, we can prove the following result on the Hasse quivers.
\begin{proposition}\label{prop:asfullsub}
  Let $\WW$ be a wide subcategory of $\AA$. Then $\Hasse(\ice\AA)$ contains $\Hasse(\tors\WW)$ as a full subquiver.
\end{proposition}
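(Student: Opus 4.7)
The claim is that for $\TT_1, \TT_2 \in \tors\WW$, an arrow $\TT_1 \to \TT_2$ occurs in $\Hasse(\tors\WW)$ if and only if it occurs in $\Hasse(\ice\AA)$. The ``only if'' direction is immediate from Lemma \ref{lem:torsice}: the inclusion $\tors\WW \subseteq \ice\AA$ respects the partial order, so a Hasse arrow in the larger poset's quiver automatically has no intermediate element in the smaller. The substantive task is the converse: assuming a Hasse arrow $\TT_1 \to \TT_2$ in $\Hasse(\tors\WW)$, I must rule out the existence of $\CC \in \ice\AA$ with $\TT_2 \subsetneq \CC \subsetneq \TT_1$.

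My strategy is to assume such a $\CC$ exists and derive a contradiction by forcing $\CC$ to contain the brick that labels the arrow in $\tors\WW$. Since $\CC \subseteq \TT_1 \subseteq \WW$, Proposition \ref{prop:label} applied inside the abelian length category $\WW$ produces a brick $B \in \WW$ with $\HH_{[\TT_2,\TT_1]} = \Filt B$, and the dual of Lemma \ref{lem:interval} yields the key decomposition $\TT_1 = \TT_2 * \Filt B$. Once I establish $B \in \CC$, extension-closure of $\CC$ gives $\Filt B \subseteq \CC$, whence $\TT_1 = \TT_2 * \Filt B \subseteq \CC$ contradicts $\CC \subsetneq \TT_1$.

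To land a nonzero element of $\Filt B$ inside $\CC$, I plan to pick any $C \in \CC \setminus \TT_2$ and take its canonical exact sequence $0 \to tC \to C \to fC \to 0$ associated with the torsion pair $(\TT_2, \TT_2^\perp)$ in $\AA$. Because $\WW$ is wide, $fC \in \WW$; because $\TT_1$ is a torsion class in $\WW$ containing $C$, it contains the $\WW$-quotient $fC$; hence $fC \in \TT_1 \cap \TT_2^\perp = \Filt B$. Cokernel-closure of $\CC$ together with $tC \in \TT_2 \subseteq \CC$ and $C \in \CC$ then forces $fC \in \CC$, and $fC \neq 0$ because $C \notin \TT_2$.

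The final move---extracting $B \in \CC$ from a nonzero $H := fC \in \CC \cap \Filt B$---is the delicate point, since $\CC$ need not be closed under subobjects or quotients. My plan is to pick a filtration $0 = H_0 < H_1 < \cdots < H_n = H$ in $\WW$ with each $H_i/H_{i-1} \cong B$, fix an isomorphism $\sigma \colon H/H_{n-1} \xrightarrow{\sim} H_1$, and form the self-map
\[
  \varphi \colon H \twoheadrightarrow H/H_{n-1} \xrightarrow{\sigma} H_1 \hookrightarrow H,
\]
whose image is $H_1 \cong B$; image-closure of $\CC$ then delivers $B \in \CC$. I expect this construction of $\varphi$ to be the main conceptual obstacle; the surrounding steps are routine bookkeeping with torsion pairs, wideness of $\WW$, and extension-closure.
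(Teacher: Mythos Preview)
Your argument is correct and matches the paper's approach exactly: use the brick label $B$ of the Hasse arrow in $\tors\WW$, land a nonzero object of $\Filt B$ inside $\CC$ as a cokernel, then extract $B$ itself as the image of a suitable endomorphism. Two cosmetic slips worth fixing: your ``only if'' and ``if'' labels are swapped (the trivial direction is that a Hasse arrow in $\ice\AA$ restricts to one in $\tors\WW$), and the torsion pair $(\TT_2,\TT_2^{\perp})$ lives in $\WW$ rather than in $\AA$---but since $C\in\CC\subseteq\TT_1\subseteq\WW$, the decomposition you need exists inside $\WW$ and the rest goes through unchanged.
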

\begin{proof}
  Let $\UU \ot \TT$ be an arrow in $\Hasse(\tors\WW)$, and suppose that an ICE-closed subcategory $\CC$ of $\AA$ satisfies $\UU\subsetneq\CC\subseteq\TT$. Then it is enough to show $\CC = \TT$.

  First, we give a proof in the case of $\WW=\AA$.
  Since there is an arrow $\UU \ot \TT$ in $\Hasse(\tors\AA)$, there is a brick $B$ satisfying $\HH_{[\UU, \TT]}=\Filt B$ by Proposition \ref{prop:label}, and we have $\TT = \UU * (\Filt B)$ by Lemma \ref{lem:interval}.
  Since $\CC$ contains $\UU$ and is extension-closed, it suffices to show $B \in \CC$ to deduce $\CC = \TT$.
  Take an object $C$ in $\CC$ which is not contained in $\UU$. Since $\TT = \UU * (\Filt B)$, we have the following short exact sequence
  \[
  \begin{tikzcd}
    0 \rar & U \rar & C \rar & B' \rar & 0
  \end{tikzcd}
  \]
  with $U\in\UU$ and $0 \neq B'\in\Filt B$. Since $\CC$ is closed under cokernels, $B'$ is contained in $\CC$. Since $B$ is a brick, $\Filt B$ is an abelian length category with the unique simple object $B$ (see e.g. \cite[1.2]{ringel}), thus we can easily construct an endomorphism $\varphi \colon B' \to B'$ satisfying $\im\varphi \iso B$ by $B' \neq 0$.
  Since $\CC$ is closed under images, $B$ is contained in $\CC$.

  Now consider the general case. Recall that we have $\UU\subsetneq\CC\subseteq\TT$ with $\UU,\TT \in \tors\WW$ and $\CC \in \ice\AA$. Then $\CC$ is contained in $\WW$ by $\CC \subseteq \TT \subseteq \WW$, and is also an ICE-closed subcategory of $\WW$. Since $\WW$ is an abelian length category, the previous argument implies $\CC = \TT$.
\end{proof}
Therefore, roughly speaking, $\Hasse(\ice\AA)$ is obtained by gluing the Hasse quivers $\Hasse(\tors\WW)$ for all wide subcategories $\WW$ of $\AA$. However, we do not know a precise way to glue them.

\subsection{Examples of computations on ICE-closed subcategories}\label{sec:ex}
In this subsection, we give some examples of computations concerning wide $\tau$-tilting modules and ICE-closed subcategories for $\tau$-tilting finite algebras. Throughout this section, we denote by $k$ a field.

First of all, for a path algebra $k Q$ over a Dynkin quiver $Q$, we can completely determine $\wttilt kQ = \rigid kQ$ and $\Hasse(\rigid kQ) \iso \Hasse (\dfice kQ) = \Hasse(\ice kQ)$ by using iterated mutations of rigid modules (Section \ref{sec:5}) starting from $kQ$, see Examples \ref{ex:intro} and \ref{ex:a3} for example.

\begin{example}\label{ex:nak}
  Consider the following algebra
  \[
  \Lambda : = k [ 1 \xleftarrow{\be} 2 \xleftarrow{\al} 3]/(\al \be).
  \]
  The Auslander-Reiten quiver of $\mod \Lambda$ is as follows.
  \[
  \begin{tikzpicture}[scale=.6]
   \node (1) at (0,0) {$\sst{1}$};
   \node (21) at (1,1) {$\sst{2 \\ 1}$};
   \node (2) at (2,0) {$\sst{2}$};
   \node (32) at (3,1) {$\sst{3 \\ 2}$};
   \node (3) at (4,0) {$\sst{3}$};

   \draw[->] (1) -- (21);
   \draw[->] (21) -- (2);
   \draw[->] (2) -- (32);
   \draw[->] (32) -- (3);

   \draw[dashed] (1) -- (2);
   \draw[dashed] (2) -- (3);
  \end{tikzpicture}
  \]
  Since $\Lambda$ is representation-finite, it is $\tau$-tilting finite and we have a bijection $\sttilt\Lambda \rightleftarrows \tors\Lambda$. Figure \ref{fig:naktors} is $\Hasse(\tors\Lambda)$ together with the brick labeling (Proposition \ref{prop:label}), where each vertex represents the corresponding support $\tau$-tilting modules.
  \begin{figure}[htp]
    \begin{tikzpicture}[scale = .6]
      \node (0) at (0,0) {0};
      \node[rectangle,draw] (1) at (4,2) {$\sst{1}$};
      \node[rectangle,draw] (2) at (-4,2) {$\sst{2}$};
      \node[rectangle,draw] (3) at (0,2) {$\sst{3}$};
      \node[block=2] (323) at (-3,4) {\nodepart{one}$\sst{3 \\ 2}$\nodepart{two}$\sst{3}$};
      \node[block=2] (221) at (0,5) {\nodepart{one}$\sst{2}$\nodepart{two}$\sst{2 \\ 1}$};
      \node[block=2] (13) at (2,4) {\nodepart{one}$\sst{1}$\nodepart{two}$\sst{3}$};
      \node[block=2] (232) at (-6,6) {\nodepart{one}$\sst{2}$\nodepart{two}$\sst{3 \\ 2}$};
      \node[block=3] (22132) at (-4,8) {\nodepart{one}$\sst{2}$\nodepart{two}$\sst{2 \\ 1}$\nodepart{three}$\sst{3 \\ 2}$};
      \node[block=3] (1323) at (0,8) {\nodepart{one}$\sst{1}$\nodepart{two}$\sst{3 \\ 2}$\nodepart{three}$\sst{3}$};
      \node[block=2] (121) at (4,8) {\nodepart{one}$\sst{1}$\nodepart{two}$\sst{2 \\ 1}$};
      \node[block=3] (12132) at (0,10) {\nodepart{one}$\sst{1}$\nodepart{two}$\sst{2 \\ 1}$\nodepart{three}$\sst{3 \\ 2}$};

      \node[blabel] (1b0) at (2,1) {$\sst{1}$};
      \node[blabel] (2b0) at (-2,1) {$\sst{2}$};
      \node[blabel] (323b3) at (-1.5,3) {$\sst{3 \\ 2}$};
      \node[blabel] (13b3) at (1,3) {$\sst{1}$};
      \node[blabel] (13b1) at (3,3) {$\sst{3}$};
      \node[blabel] (3b0) at (0,1) {$\sst{3}$};
      \node[blabel] (221b2) at (-3,2.725) {$\sst{2 \\ 1}$};
      \node[blabel] (232b2) at (-5,4) {$\sst{3}$};
      \node[blabel] (232b323) at (-4.5,5) {$\sst{2}$};
      \node[blabel] (22132b232) at (-5,7) {$\sst{2 \\ 1}$};
      \node[blabel] (22132b221) at (-2,6.5) {$\sst{3}$};
      \node[blabel] (1323b323) at (-0.725,7) {$\sst{1}$};
      \node[blabel] (1323b13) at (1.5,5) {$\sst{3 \\ 2}$};
      \node[blabel] (121b221) at (2,6.5) {$\sst{1}$};
      \node[blabel] (121b1) at (4,5) {$\sst{2}$};
      \node[blabel] (12132b22132) at (-2,9) {$\sst{1}$};
      \node[blabel] (12132b1323) at (0,9) {$\sst{2}$};
      \node[blabel] (12132b121) at (2,9) {$\sst{3}$};

      \draw[-] (12132) -- (12132b22132);
      \draw[-] (12132) -- (12132b1323);
      \draw[-] (12132) -- (12132b121);
      \draw[-] (22132) -- (22132b232);
      \draw[-] (22132) -- (22132b221);
      \draw[-] (1323) -- (1323b323);
      \draw[-] (1323) -- (1323b13);
      \draw[-] (121) -- (121b221);
      \draw[-] (121) -- (121b1);
      \draw[-] (232) -- (232b323);
      \draw[-] (232) -- (232b2);
      \draw[-] (221) -- (221b2);
      \draw[-] (323) -- (323b3);
      \draw[-] (13) -- (13b1);
      \draw[-] (13) -- (13b3);
      \draw[-] (2) -- (2b0);
      \draw[-] (3) -- (3b0);
      \draw[-] (1) -- (1b0);

      \draw[->] (12132b22132) -- (22132);
      \draw[->] (12132b1323) -- (1323);
      \draw[->] (12132b121) -- (121);
      \draw[->] (22132b232) -- (232);
      \draw[->] (22132b221) -- (221);
      \draw[->] (1323b323) -- (323);
      \draw[->] (1323b13) -- (13);
      \draw[->] (121b221) -- (221);
      \draw[->] (121b1) -- (1);
      \draw[->] (232b323) -- (323);
      \draw[->] (232b2) -- (2);
      \draw[->] (221b2) -- (2);
      \draw[->] (323b3) -- (3);
      \draw[->] (13b1) -- (1);
      \draw[->] (13b3) -- (3);
      \draw[->] (2b0) -- (0);
      \draw[->] (3b0) -- (0);
      \draw[->] (1b0) -- (0);
    \end{tikzpicture}
    \caption{$\Hasse(\tors \Lambda)$ with the brick labeling, where the vertices are the corresponding support $\tau$-tilting modules}
    \label{fig:naktors}
  \end{figure}

  First, let us consider ICE intervals in $\tors\Lambda$ using Theorem \ref{thm:b}. For example, let $\UU:= \Fac U$ for $U= \sst{3\\2} \oplus \sst{3}$. Then we obtain $\UU^+ = \mod \Lambda$ from $\Hasse(\tors\Lambda)$, thus $[\UU,\TT]$ for each $\TT \in [\UU,\mod\Lambda]$ is an ICE interval.
  Now put $\TT:=\Fac T$ for $T= \sst{2} \oplus \sst{2\\1} \oplus \sst{3\\2}$.
  Then $\HH_{[\UU,\TT]}$ is an ICE-closed subcategory, and Proposition \ref{prop:brickseq} shows $\HH_{[\UU,\TT]} = \Filt \{\sst{2}, \sst{2\\1} \}$ by reading labels, which is equal to $\add \{\sst{2}, \sst{2\\1} \}$.

  Next, let us compute a wide $\tau$-tilting module using Corollary \ref{cor:fromttint}. For these $U$ and $T$, it is easy to see $T/\tr_U(T) = \sst{2} \oplus \sst{2\\1}$, thus this is a wide $\tau$-tilting module corresponding to $\HH_{[\UU,\TT]}$.

  On the other hand, consider $U':= 0$ and $T':= \sst{2} \oplus \sst{2\\1}$. We can check that $[\Fac U',\Fac T']$ is an ICE interval similarly, and its heart is $\Filt \{\sst{2}, \sst{2\\1} \}$ by reading labels. Thus it coincides with $\HH_{[\UU,\TT]}$.
  Actually, we have $T'/\tr_{U'} (T') = T' =  \sst{2} \oplus \sst{2\\1}$, thus both intervals $[\UU,\TT]$ and $[\Fac U',\Fac T']$ give the same wide $\tau$-tilting module and ICE-closed subcategory.
  Now consider whether these intervals are sincere, see Definition \ref{def:sincere-int}.
  We can check that $[U,T]$ satisfies the condition in Corollary \ref{cor:howtowttilt}, but $[U',T']$ does not, hence $[U,T] \in \sint\Lambda$ and $[U',T'] \not\in \sint\Lambda$. This can also be checked by using Corollary \ref{cor:posetstr}.

  Let us look at non-examples. Consider the interval $[\Fac \sst{3}, \Fac (\sst{2} \oplus \sst{3\\2})]$. We have $(\Fac \sst{3})^+ = \Fac (\sst{1} \oplus \sst{3\\2} \oplus \sst{3})$, and it does not contain $\Fac (\sst{2} \oplus \sst{3\\2})$.
  Thus this interval is not an ICE interval by Theorem \ref{thm:b}. In fact, its heart is $\Filt\{ \sst{2},\sst{3\\2}\}$ by Proposition \ref{prop:brickseq}, which is equal to $\add \{ \sst{2},\sst{3\\2}\}$.
  Then the cokernel of the natural embedding $\sst{2} \hookrightarrow \sst{3\\2}$ does not belong this category, thus it is not ICE-closed.

  Now we can compute all the wide $\tau$-tilting modules using Corollary \ref{cor:fromttint} or \ref{cor:howtowttilt}. By calculating $\ccok$ of them or $\Filt$ of labels, we obtain the Hasse quiver of $\ice\Lambda$, Figure \ref{fig:nakice}. In this figure, the red vertices are ICE-closed subcategories which are not torsion classes.

  \begin{figure}[htp]
    \begin{tikzpicture}[scale = .6]
      \node (0) at (0,0) {0};
      \node[rectangle,draw] (1) at (4,2) {$\sst{1}$};
      \node[rectangle,draw] (2) at (-4,2) {$\sst{2}$};
      \node[rectangle,draw] (3) at (0,2) {$\sst{3}$};
      \node[block=2] (323) at (-3,4) {\nodepart{one}$\sst{3 \\ 2}$\nodepart{two}$\sst{3}$};
      \node[block=2] (221) at (0,5) {\nodepart{one}$\sst{2}$\nodepart{two}$\sst{2 \\ 1}$};
      \node[block=2] (13) at (2,4) {\nodepart{one}$\sst{1}$\nodepart{two}$\sst{3}$};
      \node[block=2] (232) at (-6,6) {\nodepart{one}$\sst{2}$\nodepart{two}$\sst{3 \\ 2}$};
      \node[block=3] (22132) at (-4,8) {\nodepart{one}$\sst{2}$\nodepart{two}$\sst{2 \\ 1}$\nodepart{three}$\sst{3 \\ 2}$};
      \node[block=3] (1323) at (0,8) {\nodepart{one}$\sst{1}$\nodepart{two}$\sst{3 \\ 2}$\nodepart{three}$\sst{3}$};
      \node[block=2] (121) at (4,8) {\nodepart{one}$\sst{1}$\nodepart{two}$\sst{2 \\ 1}$};
      \node[block=3] (12132) at (0,10) {\nodepart{one}$\sst{1}$\nodepart{two}$\sst{2 \\ 1}$\nodepart{three}$\sst{3 \\ 2}$};
      \node[block=2,red] (132) at (5,5) {\nodepart{one}$\sst{1}$\nodepart{two}$\sst{3 \\ 2}$};
      \node[rectangle,draw,red] (32) at (2,2) {$\sst{3 \\ 2}$};
      \node[block=2,red] (213) at (-6,4) {\nodepart{one}$\sst{2 \\ 1}$\nodepart{two}$\sst{3}$};
      \node[rectangle,draw,red] (21) at (-2,2) {$\sst{2 \\ 1}$};

      \draw[->] (12132) -- (22132);
      \draw[->] (12132) -- (1323);
      \draw[->] (12132) -- (121);
      \draw[->] (22132) -- (232);
      \draw[->] (22132) -- (221);
      \draw[->] (1323) -- (323);
      \draw[->] (1323) -- (13);
      \draw[->] (121) -- (221);
      \draw[->] (121) -- (1);
      \draw[->] (232) -- (323);
      \draw[->] (232) -- (2);
      \draw[->] (221) -- (2);
      \draw[->] (323) -- (3);
      \draw[->] (13) -- (1);
      \draw[->] (13) -- (3);
      \draw[->] (2) -- (0);
      \draw[->] (3) -- (0);
      \draw[->] (1) -- (0);
      \draw[->] (1323) -- (132);
      \draw[->] (132) -- (1);
      \draw[->] (132) -- (32);
      \draw[->] (323) -- (32);
      \draw[->] (32) -- (0);
      \draw[->] (22132) -- (213);
      \draw[->] (213) -- (21);
      \draw[->] (213) -- (3);
      \draw[->] (221) -- (21);
      \draw[->] (21) -- (0);
    \end{tikzpicture}
    \caption{$\Hasse(\ice\Lambda)$, where the vertices are the corresponding wide $\tau$-tilting modules}
    \label{fig:nakice}
  \end{figure}

  We can check that $\Hasse(\ice\Lambda)$ actually contains $\Hasse(\tors\Lambda)$ as a full subquiver as claimed in Proposition \ref{prop:asfullsub}.
  Besides, observe that $\Hasse(\ice\Lambda)$ satisfies the same property as Corollary \ref{cor:arrow}, that is, there are exactly $|M|$ arrows which start at $M$ for every $M \in \wttilt\Lambda$.
\end{example}
The last observation in the previous example raises the following question.
\begin{question}\label{question}
  Which classes of algebras $\Lambda$ satisfy the following property: there are exactly $|M|$ arrows in $\Hasse(\wttilt\Lambda)$ which start at $M$ for every $M \in \wttilt\Lambda$?
\end{question}
Corollary \ref{cor:arrow} shows that every hereditary algebra satisfies this property, and the previous example also satisfies it.
If there is a theory of \emph{mutations of wide $\tau$-tilting $\Lambda$ modules} as in the hereditary case exists, then $\Lambda$ satisfies this property.
Unfortunately, the previous example shows that one cannot expect such a theory similar to the hereditary case. Indeed, there is an arrow $\sst{2} \oplus \sst{2\\1} \oplus \sst{3\\2} \to \sst{2\\1} \oplus \sst{3}$ in Figure \ref{fig:nakice}.
Along this arrow, two indecomposable direct summands change, namely, $\sst{2}$ and $\sst{3\\2}$. Thus it seems that one cannot interpret this arrow as representing a mutation of $\sst{2} \oplus \sst{2\\1} \oplus \sst{3\\2}$ at one indecomposable direct summand.

\begin{example}\label{ex:nonnak}
  Let $Q$ be the following quiver
  \[
  \begin{tikzcd}[row sep=0]
    1 \rar & 2 \rar & 3 \rar[loop right]
  \end{tikzcd}
  \]
  and consider $\Lambda':=kQ/I$, where $I$ is a two-sided ideal generated by all the paths of length 2.
  Then the Auslander-Reiten quiver of $\mod\Lambda'$ is as follows:
  \[
  \begin{tikzpicture}[scale=.6]
   \node (3) at (-1,-1) {$\sst{3}$};
   \node (33) at (0,0) {$\sst{3 \\ 3}$};
   \node (23) at (0,-2) {$\sst{2 \\ 3}$};
   \node (233) at (1,-1) {$\sst{2 3 \\ 3}$};
   \node (2) at (2,0) {$\sst{2}$};
   \node (3') at (2,-2) {$\sst{3}$};
   \node (12) at (3,1) {$\sst{1 \\ 2}$};
   \node (1) at (4,0) {$\sst{1}$};

   \draw[->] (3) -- (33);
   \draw[->] (33) -- (233);
   \draw[->] (3) -- (23);
   \draw[->] (23) -- (233);
   \draw[->] (233) -- (2);
   \draw[->] (233) -- (3');
   \draw[->] (2) -- (12);
   \draw[->] (12) -- (1);

   \draw[dashed] (3) -- (233);
   \draw[dashed] (33) -- (2);
   \draw[dashed] (23) -- (3');
   \draw[dashed] (2) -- (1);
  \end{tikzpicture}
  \]
  We depict $\Hasse(\tors\Lambda')$ with the brick labeling in Figure \ref{fig:hassesttilt}.
  \begin{figure}[htp]
    \begin{tikzpicture}[scale=.7]
      \node (0) at (3,-1) {0};
      \node[rectangle,draw] (33) at (6,1) {$\sst{3 \\ 3}$};
      \node[rectangle,draw] (1) at (0,1) {$\sst{1}$};
      \node[rectangle,draw] (2) at (3,1) {$\sst{2}$};
      \node[block=2] (133) at (3,4) {\nodepart{one}$\sst{1}$\nodepart{two}$\sst{3 \\ 3}$};
      \node[block=2] (122) at (1,4) {\nodepart{one}$\sst{1 \\ 2}$\nodepart{two}$\sst{2}$};
      \node[block=2] (232) at (5,4) {\nodepart{one}$\sst{2 \\ 3}$\nodepart{two}$\sst{2}$};
      \node[block=2] (121) at (-1,3) {\nodepart{one}$\sst{1 \\ 2}$\nodepart{two}$\sst{1}$};
      \node[block=3] (12133) at (-1,6) {\nodepart{one}$\sst{1 \\ 2}$\nodepart{two}$\sst{1}$\nodepart{three}$\sst{3 \\ 3}$};
      \node[block=3] (12232) at (3,6) {\nodepart{one}$\sst{1 \\ 2}$\nodepart{two}$\sst{2 \\ 3}$\nodepart{three}$\sst{2}$};
      \node[block=2] (2333) at (7,6) {\nodepart{one}$\sst{2 \\ 3}$\nodepart{two}$\sst{3 \\ 3}$};
      \node[block=3] (122333) at (3,8) {\nodepart{one}$\sst{1 \\ 2}$\nodepart{two}$\sst{2 \\ 3}$\nodepart{three}$\sst{3 \\ 3}$};

      \node[blabel] (33b0) at (4.5,0) {$\sst{3}$};
      \node[blabel] (122333b12133) at (1,7) {$\sst{2}$};
      \node[blabel] (2b0) at (3,0) {$\sst{2}$};
      \node[blabel] (122333b2333) at (5,7) {$\sst{1}$};
      \node[blabel] (1b0) at (1.5,0) {$\sst{1}$};
      \node[blabel] (122333b12232) at (3,7) {$\sst{3}$};
      \node[blabel] (12133b121) at (-1,4.5) {$\sst{3}$};
      \node[blabel] (12133b133) at (1,5) {$\sst{1 \\ 2}$};
      \node[blabel] (12232b122) at (2,5) {$\sst{2 \\ 3}$};
      \node[blabel] (12232b232) at (4,5) {$\sst{1}$};
      \node[blabel] (2333b232) at (6,5) {$\sst{3}$};
      \node[blabel] (2333b33) at (6.5,3.5) {$\sst{2}$};
      \node[blabel] (122b121) at (0,3.5) {$\sst{2}$};
      \node[blabel] (122b2) at (2.5,1.75) {$\sst{1}$};
      \node[blabel] (232b2) at (3.5,1.75) {$\sst{2 \\ 3}$};
      \node[blabel] (121b1) at (-0.5,2) {$\sst{1 \\ 2}$};
      \node[blabel] (133b1) at (1,2) {$\sst{3}$};
      \node[blabel] (133b33) at (5,2) {$\sst{1}$};

      \draw[-] (122333) -- (122333b12133);
      \draw[-] (122333) -- (122333b12232);
      \draw[-] (122333) -- (122333b2333);
      \draw[-] (12133) -- (12133b121);
      \draw[-] (12133) -- (12133b133);
      \draw[-] (12232) -- (12232b122);
      \draw[-] (12232) -- (12232b232);
      \draw[-] (2333) -- (2333b232);
      \draw[-] (2333) -- (2333b33);
      \draw[-] (122) -- (122b121);
      \draw[-] (122) -- (122b2);
      \draw[-] (133) -- (133b1);
      \draw[-] (133) -- (133b33);
      \draw[-] (232) -- (232b2);
      \draw[-] (121) -- (121b1);
      \draw[-] (1) -- (1b0);
      \draw[-] (2) -- (2b0);
      \draw[-] (33) -- (33b0);

      \draw[->] (122333b12133) -- (12133);
      \draw[->] (122333b12232) -- (12232);
      \draw[->] (122333b2333) -- (2333);
      \draw[->] (12133b121) -- (121);
      \draw[->] (12133b133) -- (133);
      \draw[->] (12232b122) -- (122);
      \draw[->] (12232b232) -- (232);
      \draw[->] (2333b232) -- (232);
      \draw[->] (2333b33) -- (33);
      \draw[->] (122b121) -- (121);
      \draw[->] (122b2) -- (2);
      \draw[->] (133b1) -- (1);
      \draw[->] (133b33) -- (33);
      \draw[->] (232b2) -- (2);
      \draw[->] (121b1) -- (1);
      \draw[->] (1b0) -- (0);
      \draw[->] (2b0) -- (0);
      \draw[->] (33b0) -- (0);
    \end{tikzpicture}
    \caption{$\Hasse(\tors\Lambda')$}
    \label{fig:hassesttilt}
  \end{figure}

  We can compute all ICE intervals, wide $\tau$-tilting modules and ICE-closed subcategories in the same way as in Example \ref{ex:nak}, which we omit. Then we obtain $\Hasse(\ice\Lambda')$ as we depict in Figure \ref{fig:hasseice}, where we represent each ICE-closed subcategory by the corresponding wide $\tau$-tilting module, and the red vertices are those which are not support $\tau$-tilting modules.
  We can see from this Hasse quiver that this $\Lambda'$ also satisfies the property in Question \ref{question}.
  \begin{figure}[htp]
    \begin{tikzpicture}[scale=.6]
      \node (0) at (3,-1) {0};
      \node[rectangle,draw] (3) at (5,1) {$\sst{3 \\ 3}$};
      \node[rectangle,draw] (1) at (1,1) {$\sst{1}$};
      \node[rectangle,draw] (2) at (3,1) {$\sst{2}$};
      \node[rectangle,draw,red] (12) at (-1.5,1) {$\sst{1 \\ 2}$};
      \node[rectangle,draw,red] (23) at (7.5,1) {$\sst{2 \\ 3}$};
      \node[block=2] (1a3) at (3,4) {\nodepart{one}$\sst{1}$\nodepart{two}$\sst{3 \\ 3}$};
      \node[block=2,red] (12a3) at (0,2.5) {\nodepart{one}$\sst{1 \\ 2}$\nodepart{two}$\sst{3 \\ 3}$};
      \node[block=2] (1a2) at (1,4) {\nodepart{one}$\sst{1 \\ 2}$\nodepart{two}$\sst{2}$};
      \node[block=2] (2a23) at (5,4) {\nodepart{one}$\sst{2 \\ 3}$\nodepart{two}$\sst{2}$};
      \node[block=2,red] (1a23) at (8,4) {\nodepart{one}$\sst{1}$\nodepart{two}$\sst{2 \\ 3}$};
      \node[block=2] (1a12) at (-3,3) {\nodepart{one}$\sst{1 \\ 2}$\nodepart{two}$\sst{1}$};
      \node[block=3] (1a12a3) at (-1,6) {\nodepart{one}$\sst{1 \\ 2}$\nodepart{two}$\sst{1}$\nodepart{three}$\sst{3 \\ 3}$};
      \node[block=3] (1a2a23) at (3,6) {\nodepart{one}$\sst{1 \\ 2}$\nodepart{two}$\sst{2 \\ 3}$\nodepart{three}$\sst{2}$};
      \node[block=2] (2a3) at (7,6) {\nodepart{one}$\sst{2 \\ 3}$\nodepart{two}$\sst{3 \\ 3}$};
      \node[block=3] (1a2a3) at (3,8) {\nodepart{one}$\sst{1 \\ 2}$\nodepart{two}$\sst{2 \\ 3}$\nodepart{three}$\sst{3 \\ 3}$};

      \draw[->] (1a2a3) -- (1a12a3);
      \draw[->] (1a2a3) -- (1a2a23);
      \draw[->] (1a2a3) -- (2a3);
      \draw[->] (1a12a3) -- (1a12);
      \draw[->] (1a12a3) -- (1a3);
      \draw[->] (1a2a23) -- (1a2);
      \draw[->] (1a2a23) -- (2a23);
      \draw[->] (2a3) -- (2a23);
      \draw[->] (2a3) -- (3);
      \draw[->] (1a2) -- (1a12);
      \draw[->] (1a2) -- (2);
      \draw[->] (1a3) -- (1);
      \draw[->] (1a3) -- (3);
      \draw[->] (2a23) -- (2);
      \draw[->] (1a12) -- (1);
      \draw[->] (1) -- (0);
      \draw[->] (2) -- (0);
      \draw[->] (3) -- (0);
      \draw[->] (1a12a3) -- (12a3);
      \draw[->] (12a3) -- (12);
      \draw[->] (1a12) -- (12);
      \draw[->] (12) -- (0);
      \draw[->] (12a3) -- (3);
      \draw[->] (1a2a23) -- (1a23);
      \draw[->] (1a23) -- (1);
      \draw[->] (1a23) -- (23);
      \draw[->] (2a23) -- (23);
      \draw[->] (23) -- (0);
    \end{tikzpicture}
    \caption{$\Hasse(\ice\Lambda')$}
    \label{fig:hasseice}
  \end{figure}
\end{example}
Due to the lack of the theory of mutations of wide $\tau$-tilting modules, we computed the Hasse quivers of ICE-closed subcategories directly by computing all indecomposables in each ICE-closed subcategory. This raises the following question.
\begin{question}
  Is there any theoretic interpretation of arrows in $\Hasse(\ice\Lambda)$?
\end{question}

\begin{ack}
  The authors would like to thank their supervisor Hiroyuki Nakaoka for his encouragement and useful comments.
  They would also like to thank Osamu Iyama for helpful discussions.
  The first author is supported by JSPS KAKENHI Grant Number JP18J21556.
\end{ack}

\end{document}